\newtheorem{theorem}{Theorem}[section]
\newtheorem{lemma}[theorem]{Lemma}
\newtheorem{proposition}[theorem]{Proposition}
\newtheorem{corollary}[theorem]{Corollary}
\theoremstyle{definition}
\newtheorem{definition}[theorem]{Definition}
\newtheorem{example}[theorem]{Example}
\theoremstyle{remark}
\newtheorem{remark}[theorem]{Remark}
\numberwithin{equation}{section}
\numberwithin{equation}{section}
\newcommand{\be}{\begin{equation}}
\newcommand{\ee}{\end{equation}}
\newcommand{\N}{{\mathbb N}}
\newcommand{\Z}{{\mathbb Z}}
\newcommand{\R}{{\mathbb R}}
\newcommand{\h}{{\mathcal H}}
\def\ol{\overline}
\def\wt{\widetilde}
\begin{document}

\title{Monopoles, dipoles, and harmonic functions on Bratteli diagrams}

\author{Sergey Bezuglyi}
\address{Department of Mathematics, Institute for Low Temperature Physics, Kharkiv 61103, Ukraine}
\curraddr{Department of Mathematics, University of Iowa, Iowa City,
52242 IA, USA}
\email{bezuglyi@gmail.com}

\author{Palle E.T. Jorgensen}
\address{Department of Mathematics, University of Iowa, Iowa City,
52242 IA, USA}
\email{palle-jorgensen@uiowa.edu}

\subjclass[2010]{37B10, 37L30, 47L50, 60J45}

\dedicatory{To the memory of Ola Bratteli}

\keywords{Bratteli diagram, electrical network, monopole, dipole, harmonic function}

\begin{abstract}

In our study of electrical networks we develop two themes: finding explicit formulas for special classes of functions  defined on the vertices of a transient network, namely monopoles, dipoles, and harmonic functions. Secondly, our interest is focused on the properties of electrical networks supported on Bratteli diagrams. We show that the structure of Bratteli diagrams allows one to describe algorithmically harmonic functions as well as monopoles and dipoles. We  also  discuss some special classes of Bratteli diagrams (stationary, Pascal, trees), and we give conditions under which the harmonic functions defined on these diagrams have finite energy.
\end{abstract}

\maketitle

\tableofcontents

\section{Introduction}

The interest in discrete harmonic analysis includes both classical roots, as well as new and recent  research directions.
But for both the classical and more recent vintage papers, the question of explicit formulas for the harmonic functions, and their finite energy properties, seems to not yet have received  systematic attention.
While there are many approaches, our present paper focuses on one in particular. To make the results more relevant, we further narrow our focus to a particular class of discrete structures, infinite sets of vertices $V$ and edges $E$.

One of the classical directions in the subject is motivated by the problem of current flow in electrical network; addressing questions like computation of electrical resistance distance, and of voltage drop between distant vertices,  induced by assignment of currents into the network in question. In these models, a graph $G := (V, E)$ (see Definition \ref{def electrical network}) represents resistors assigned to the edges of $G$, i.e., points in $E$. Since conductance is the reciprocal of electrical resistance, by an {\em electrical network}, we mean a graph $G$, and a given symmetric function $c$  defined on $E$. From this we then define a Laplace operator, a Markov model, and three Hilbert spaces, $l^2(V)$, finite energy functions on $V$, and finite dissipation functions on $E$. This is done with the two laws of electrical networks in mind, Ohm's Law, and Kirchhoff's Law; for details see for example \cite{Bott49, Chung2010, Diekman2010, Dutkay_Jorgensen2010, Dutkay_Jorgensen2011, Dutkay_Jorgensen2011-1, JorgensenTian2015, Jorgensen_Pearse2010, Jorgensen_Pearse2011, Jorgensen_Pearse2013, Jorgensen_Pearse2014, Powers1976, QuianZhang2011, Tsuchiay1967}.
    For more recent developments motivated by geometry of probability measures, infinite path space measures, sampling, boundary representations of harmonic functions, expanders, long-range order, and phase transition questions, see for example \cite{Smale_Zhou2009, SmaleZhou2009-1, Grimmett_Holvord_Peres2014}.
In addition to the themes mentioned above, we add that the past decades have seen a number of independent breakthroughs on related topics covering analysis on infinite models. Of special note and relevance are the following \cite{Ancona_Lyons_Peres1999,  Furstenberg_Weiss2003, Cartier1973, Furstenberg_Katznelson_Weiss1990, Georgakopoulus_Haeseler_Keller2015, Keller_Lenz_Warzel2013, Keller_Lenz_Schmidt_Wirth2015, Mokobodzki_Pinchon1984, Peres_Sousi2012, Petit2012}.

In order to be more specific in explaining our work, we fix the settings. They are an electrical network $(G,c)$, supported by a locally finite connected graph $G = (V,E)$ together with a symmetric conductance function $c = c_{xy}$; the Laplace operator $(\Delta u)(x) = \sum_{y \sim x} c_{xy}(u(x) - u(y))$ where $u : V \to \R$ (see Definition \ref{def of Laplace operator and harmonic}); the transition probabilities matrix $P = (p(x,y))$ whose entries determine a reversible random walk $(X_n)$ on the vertex set $V$ of $G$; the Hilbert space $\h_E$ of functions on  $V$ of finite energy (see (\ref{eq def of norm energy})). Our  interest is focused on the following important classes of functions from $\h_E$: {\em monopoles, dipoles, and harmonic functions} (see Definition \ref{def of Laplace operator and harmonic} for harmonic functions, and relations   (\ref{eq-def for w_x})  and (\ref{eq dipole v_xy}) for the definition of monopoles and dipoles). The existence and properties of these functions are closely related to various properties of electrical networks, first of all, {\em recurrence} and {\em transience} (see Definition \ref{def recurrence and transience}).

In this paper, we study  the following problems: It is well known, after the article \cite{Nash-Williams1959}, that an electrical network is transient if and only it has a monopole of finite energy. On the other hand, any network always has dipoles in the space $\h_E$. But, as far as we know, explicit formulas for monopoles and dipoles have not so far  been given for arbitrary transient networks. One of our main results is the following theorem.

\begin{theorem}
Let $(V,E,c)$ be a transient electrical network with transition probabilities matrix $P$. Let $G(x,y)$ be the Green's function determined by $P$. Then, for any vertex $x \in V$, the function
$$
w_x : a\mapsto w_x(a) = \frac{G(a,x)}{c(x)}, \ \ \ a\in V,
$$
is a monopole in $\h_E$, and
$$
a\mapsto v_{x_1,x_2}(a) := \frac{G(a,x_1)}{c(x_1)} - \frac{G(a,x_2)}{c(x_2)}, \ \ \ a\in V,
$$
defines a dipole from $\h_E$ where $x_1, x_2$ are any vertices from $V$.
\end{theorem}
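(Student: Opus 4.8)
The plan is to verify that $w_x$ satisfies the two defining requirements of a monopole: that it solves the source equation $\Delta w_x = \delta_x$ pointwise on $V$, and that it lies in $\h_E$, i.e.\ has finite energy. Once the monopole case is settled, the dipole statement follows immediately by linearity, since $v_{x_1,x_2} = w_{x_1} - w_{x_2}$ and hence $\Delta v_{x_1,x_2} = \delta_{x_1} - \delta_{x_2}$.

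First I would record the factorization of the Laplacian through the transition operator. Writing $(Pu)(a) = \sum_b p(a,b) u(b)$ with $p(a,b) = c_{ab}/c(a)$, a direct computation gives $(\Delta u)(a) = c(a)\bigl[(I-P)u\bigr](a)$. Reading the first-step identity $\sum_b p(a,b) G(b,x) = G(a,x) - \delta_{a,x}$ off the Neumann series $G(\cdot,x) = \sum_{n\ge 0} p^{(n)}(\cdot,x)$, one then obtains
\[
(\Delta w_x)(a) = \frac{c(a)}{c(x)}\Bigl[G(a,x) - \sum_b p(a,b) G(b,x)\Bigr] = \frac{c(a)}{c(x)}\,\delta_{a,x} = \delta_{a,x}.
\]
This establishes the source equation; transience enters here only to guarantee convergence of the series defining $G$, so that $w_x$ is a well-defined finite function on $V$.

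The genuinely delicate point is finite energy, and this is where transience does the real work. The formal identity $\|w_x\|_{\h_E}^2 = \sum_a w_x(a)\,(\Delta w_x)(a) = w_x(x) = G(x,x)/c(x)$ would finish the proof at once, since $G(x,x) < \infty$ is precisely transience; but the discrete Green (summation-by-parts) identity invoked here is valid only after the boundary terms on the infinite graph are controlled, and one cannot presume $w_x \in \h_E$ before it is proved. To make this rigorous I would pass through a finite exhaustion $V_1 \subset V_2 \subset \cdots$ with $\bigcup_n V_n = V$, letting $w_x^{(n)}$ be the monopole of the network grounded off $V_n$. On each finite network Green's identity holds with no boundary contribution, giving $\|w_x^{(n)}\|_{\h_E}^2 = w_x^{(n)}(x) = G_n(x,x)/c(x)$, where $G_n$ is the Green's function of the walk killed on exiting $V_n$. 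The monotonicity $G_n(x,x) \uparrow G(x,x)$ and $w_x^{(n)} \uparrow w_x$ pointwise bounds the energies uniformly, and lower semicontinuity of the energy form (Fatou) then forces $w_x \in \h_E$ with $\|w_x\|_{\h_E}^2 = G(x,x)/c(x)$.

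Finally, for the dipole I would apply the same Green's identity, now legitimate since $w_{x_1}, w_{x_2} \in \h_E$, to get $\|v_{x_1,x_2}\|_{\h_E}^2 = v_{x_1,x_2}(x_1) - v_{x_1,x_2}(x_2)$, a finite combination of the values $G(x_i,x_j)/c(x_j)$; alternatively the triangle inequality in $\h_E$ already places $v_{x_1,x_2} \in \h_E$. The main obstacle throughout is this very first use of summation by parts on the infinite network: once the exhaustion argument legitimizes it, with transience keeping the finite energies bounded, everything else is routine bookkeeping.
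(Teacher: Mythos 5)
Your proposal is correct, but it reaches both halves of the theorem by a genuinely different route than the paper. For the source equation the paper never touches the Neumann series directly: it writes $w_x = \frac{G(x,x)}{c(x)}h_x$, where $h_x(a) = \mathbb{E}_a(\chi_{\{x\}}\circ X_{\tau(x)}) = F(a,x)$ is the hitting probability, and combines the identities $G(a,x)=F(a,x)G(x,x)$ and $G(x,x)=(1-U(x,x))^{-1}$ with a Markov-property computation showing that $h_x$ is harmonic off $x$; your first-step identity $\sum_b p(a,b)G(b,x)=G(a,x)-\delta_{a,x}$ is the same fact in compressed form, and is arguably cleaner. The bigger divergence is finite energy. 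The paper establishes membership in $\h_E$ by expanding $\langle w_x,f\rangle_{\h_E}$ and rearranging the double sum into $\sum_a (\Delta w_x)(a)f(a)$ --- precisely the summation-by-parts step you flag as needing justification on an infinite network --- and otherwise leans on the cited result of Jorgensen--Pearse that $G(\cdot,x)\in\h_E$ for transient networks. Your exhaustion argument (killed Green's functions $G_n\uparrow G$, the boundary-free Green identity on finite networks, Fatou/lower semicontinuity) supplies exactly the missing control, so on this point your write-up is more rigorous and self-contained than the paper's. What the paper's route buys in exchange is the stronger reproducing property $\langle w_x,f\rangle_{\h_E}=f(x)$ claimed for all $f\in\h_E$ (your argument, run through finitely supported $f$, gives it only on $\mathcal Fin$), and the paper reuses that property in its corollary to get $\langle v_{x_1,x_2},f\rangle_{\h_E}=f(x_1)-f(x_2)$; your fallback to the triangle inequality is nevertheless enough for the statement as given, which only asserts $\Delta v_{x_1,x_2}=\delta_{x_1}-\delta_{x_2}$ together with finite energy. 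One small caveat: Fatou yields only $\|w_x\|^2_{\h_E}\le G(x,x)/c(x)$, not the equality you assert; equality needs an extra step (monotone convergence of the energies, or applying the reproducing identity to $w_x$ itself once membership is known), but since the theorem requires only finiteness this does not affect your proof.
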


The other main theme in our work is based on the notion of Bratteli diagrams (see Definition \ref{def BD}).
Bratteli diagrams were introduced in \cite{Bratteli1972}
originally as a computational device in the study of classification problems in representation theory; more specifically, the problem considered by Bratteli in 1972 was that of classifying the isomorphism classes for a certain family of $C^*$-algebras, the approximately finite-dimensional $C^*$-algebras, now referred to as AF-algebras. This problem in turn was motivated by questions in quantum statistical mechanics. But since then, these diagrams (now called Bratteli diagrams) have found numerous applications in a host of other areas of mathematics, including, combinatorics, the study of algorithms (for example, algorithms for fast discrete Fourier transforms), representation theory \cite{Bratteli_Jorgensen_Kim_Roush2002},  symbolic dynamics, including the theory of automata (in the sense of von Neumann), and the study of orbit equivalence. We mention here a few papers related to Cantor dynamics only, see e.g.  \cite{Furstenberg_Katznelson_Weiss1990, Herman_Putnam_Skau1992, Giordano_Putnam_Skau1995, BKMS2010, Durand_survey2010, Bezuglyi_Handelman2014, Bezuglyi_Karpel2015}.

 Bratteli diagrams constitute a particular class of {\em infinite graphs} (i.e., vertices and edges), see Definition \ref{def BD} below. In their original formulation by Bratteli, they were intended as a device for keeping track of multiplicity "lines" in infinite systems of finite-dimensional representations arising in inductive limit constructions. For us, the most important particularity of a Bratteli diagram $B = (V,E)$ is the fact that the vertex set is graded in levels, $V = \bigcup_{n\geq 0}V_n$,  and the edges link vertices if they are in neighboring levels only. Moreover, there are no edges between the vertices of the same level.

This structure of a Bratteli diagram makes clearer the action of the Laplace operator $\Delta$, and that of the matrix $P$. The latter is naturally partitioned into a sequence of matrices (denoted by $(\overleftarrow{P}_n)$ in the paper) such that $\overleftarrow{P}_n : \R^{|V_{n+1}|} \to  \R^{|V_{n}|}$. Every function $f$ on $V$ is identified with a sequence of vectors $(f_n)$ where $f_n \in  \R^{|V_{n}|}$ is the restriction of $f$ to $V_n$, see Figure 3. Analyzing the action of $\Delta$ on $f$, we can find out conditions when $f$ is harmonic, i.e., $\Delta f =0$.

\begin{theorem}
Let $(B(V,E), c)$ be a weighted Bratteli diagram with associated sequences of matrices  $(\overleftarrow{P}_n)$. Then a sequence of vectors $f_n \in \R^{|V_n|}$ represents a harmonic function $f  = (f_n) : V \to \R$ if and only if for any $n\geq 1$
$$
f_n - \overrightarrow{P}_{n-1} f_{n-1} = \overleftarrow{P}_n f_{n+1},
$$
where $\overrightarrow{P}_{n} = \overleftarrow{P}_n^T$.
\end{theorem}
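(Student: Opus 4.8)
The plan is to reduce the condition $\Delta f = 0$ to the fixed-point equation $f = Pf$ for the transition matrix, and then to read that equation off one level at a time, exploiting the defining feature of a Bratteli diagram that every edge joins two consecutive levels. First I would unfold the Laplacian: $(\Delta f)(x) = \sum_{y\sim x} c_{xy}(f(x)-f(y)) = c(x)f(x) - \sum_{y\sim x} c_{xy} f(y)$, where $c(x) = \sum_{y\sim x} c_{xy}$. Since $c(x) > 0$ and $p(x,y) = c_{xy}/c(x)$, dividing by $c(x)$ shows that $\Delta f = 0$ holds at $x$ if and only if $f(x) = \sum_{y\sim x} p(x,y) f(y)$; thus $f$ is harmonic exactly when $f = Pf$ pointwise.

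Next I would fix $n \geq 1$ and $x \in V_n$ and split the neighbor sum along the grading $V = \bigcup_{m} V_m$. Because there are no edges inside a level and edges only between adjacent levels, the neighbors of $x$ lie in $V_{n-1}\cup V_{n+1}$, so
$$
f(x) = \sum_{y\in V_{n-1}} p(x,y) f(y) + \sum_{y\in V_{n+1}} p(x,y) f(y).
$$
By the definition of the partitioned matrices, the second sum is the $x$-component of $\overleftarrow{P}_n f_{n+1}$, since $\overleftarrow{P}_n : \R^{|V_{n+1}|}\to\R^{|V_n|}$ gathers the downward transition probabilities from $V_n$ to $V_{n+1}$; the first sum is the $x$-component of $\overrightarrow{P}_{n-1} f_{n-1}$, where $\overrightarrow{P}_{n-1} = \overleftarrow{P}_{n-1}^{T}$ is the upward block of $P$ fixed by the earlier partition. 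Hence the displayed identity reads $f_n = \overrightarrow{P}_{n-1} f_{n-1} + \overleftarrow{P}_n f_{n+1}$, and rearranging yields $f_n - \overrightarrow{P}_{n-1} f_{n-1} = \overleftarrow{P}_n f_{n+1}$. The converse follows by running the same equivalences in reverse: if the relation holds for every $n \geq 1$, then $f(x) = \sum_{y\sim x} p(x,y) f(y)$ at each $x \in V_n$, whence $\Delta f = 0$ there.

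The step I expect to be the main obstacle is the bookkeeping in the second paragraph: one must keep the downward block ($V_n \to V_{n+1}$) and the upward block ($V_n \to V_{n-1}$) of $P$ strictly apart, and confirm that the upward block acting on $f_{n-1}$ is \emph{precisely} $\overrightarrow{P}_{n-1} f_{n-1}$ with $\overrightarrow{P}_{n-1} = \overleftarrow{P}_{n-1}^{T}$. This identification is where the symmetry $c_{xy} = c_{yx}$ of the conductance function enters and is the only point beyond a formal rewriting of a finite sum at each vertex. Finally I would address the base level $n = 0$ separately: the root has neighbors only in $V_1$, so harmonicity there is $f_0 = \overleftarrow{P}_0 f_1$, and I would note how the conventions fixed earlier accommodate this boundary case so that the equivalence captures harmonicity on all of $V$.
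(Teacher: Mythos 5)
Your proof is correct and is essentially the paper's own argument: expand $\Delta f=0$ at $x\in V_n$, divide by $c(x)$ to pass to $f=Pf$, split the neighbor sum over $V_{n-1}\cup V_{n+1}$ (no edges within a level), and recognize the two pieces as $(\overrightarrow{P}_{n-1}f_{n-1})(x)$ and $(\overleftarrow{P}_{n}f_{n+1})(x)$ --- the paper merely records this first in conductance form, $C_{n}f_{n+1}=D_nf_n-C_{n-1}^{T}f_{n-1}$, before normalizing by the diagonal matrix $D_n$. Your closing remark about the root is a point of extra care worth keeping: the displayed conditions for $n\geq 1$ by themselves only express harmonicity on $V\setminus V_0$, and the boundary equation $f_0=\overleftarrow{P}_0 f_1$ (which the paper imposes separately, as the equation $\sum_{x\in V_1}c^{(0)}_{ox}f_1(x)=0$ under the normalization $f(o)=0$, in step (I) of its algorithm) must indeed be adjoined for harmonicity on all of $V$.
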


This theorem allows one to construct an algorithm for verifying that a given function is harmonic. Analogously, we can work with dipoles and monopoles determined on Bratteli diagrams. It is interesting  that one can then easily construct examples of Bratteli diagrams for which the space of harmonic functions is either trivial, or finite-dimensional, or infinite-dimensional.

The next theme  in the paper is related to an integral representation of harmonic functions defined on a transient Bratteli diagram network via the Poisson kernel. The following result is in a spirit of \cite[Theorem 1.1]{Ancona_Lyons_Peres1999}. We prove that if $f = (f_n), f_n = f|_{V_n},$ is a given function on $V$, then
$$
h_n(x) := \mathbb E_x( f_n \circ X_{\tau(V_n)}) = \int_{\Omega_x} f_n(X_{\tau(V_n)}(\omega)) d\mathbb P_x(\omega), \ \ n\in \N,
$$
is harmonic on $V \setminus V_n$, and $h_n(x) = f_n(x), x \in V_n$,
where $(\Omega_x, \mathbb P_x)$ is the probability space of infinite paths starting at $x$. This result is a basis for the following theorem.

\begin{theorem}
Let $f = (f_n) \geq 0$ be a function on $V$ such that $\overleftarrow{P}_n f_{n+1} = f_n$. Then the sequence $(h_n(x))$ converges pointwise to a harmonic function $H(x)$. Moreover, for every $x\in V$, there exists $n(x)$ such that $h_i(x) = H(x), i \geq n(x)$. Equivalently, the sequence $(f_n\circ X_{\tau(V_n)})$ converges in $L^1(\Omega_x, \mathbb P_x)$.
\end{theorem}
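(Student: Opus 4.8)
The plan is to read all three assertions off from a single martingale. On the path space $(\Omega_x,\mathbb{P}_x)$ I would consider the random variables $Z_n:=f_n\circ X_{\tau(V_n)}$ together with the filtration $\mathcal{F}_n:=\sigma(X_0,\dots,X_{\tau(V_n)})$ generated by the trajectory up to its first visit to level $n$. Because a path started at $x\in V_m$ must cross level $n$ before level $n+1$, one has $\tau(V_n)\le\tau(V_{n+1})$, so the $\mathcal{F}_n$ increase with $n$, each $Z_n$ is $\mathcal{F}_n$-measurable, and by the definition recalled just before the theorem $h_n(x)=\mathbb{E}_x[Z_n]$.

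The first and central step is to show that $(Z_n,\mathcal{F}_n)$ is a nonnegative martingale, i.e. $\mathbb{E}_x[Z_{n+1}\mid\mathcal{F}_n]=Z_n$. Conditioning on the crossing point $X_{\tau(V_n)}=v$ and applying the strong Markov property at the stopping time $\tau(V_n)$ reduces this to the one-site identity $\mathbb{E}_v\bigl[f_{n+1}\circ X_{\tau(V_{n+1})}\bigr]=f_n(v)$ for $v\in V_n$. By the preliminary result stated in the excerpt (that $x\mapsto\mathbb{E}_x[g\circ X_{\tau(V_{n+1})}]$ is the harmonic extension of $g$ from $V_{n+1}$, agreeing with $g$ on $V_{n+1}$), the left-hand side is exactly $(\overleftarrow{P}_n f_{n+1})(v)$, so the hypothesis $\overleftarrow{P}_n f_{n+1}=f_n$ is \emph{precisely} the martingale property. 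I expect this identification of the conditional law of the next level-crossing with the transition block $\overleftarrow{P}_n$ to be the main obstacle, since one must account for the excursions the walk may make below level $n$ before it crosses to level $n+1$; this is the point at which the structure of the diagram (edges only between neighboring levels) and the transience of $(V,E,c)$ are genuinely used.

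Granting the martingale property, the remaining pointwise assertions are formal. Iterating the hypothesis gives the telescoping relation $\overleftarrow{P}_m\overleftarrow{P}_{m+1}\cdots\overleftarrow{P}_{n-1}f_n=f_m$ for all $n>m$, so that for $x\in V_m$
\[
h_n(x)=\mathbb{E}_x[Z_n]=\bigl(\overleftarrow{P}_m\cdots\overleftarrow{P}_{n-1}f_n\bigr)(x)=f_m(x),\qquad n\ge m .
\]
Thus the deterministic sequence $h_n(x)$ is constant from $n=m$ onward; this simultaneously yields the pointwise limit $H(x):=\lim_n h_n(x)$ and the eventual constancy with explicit threshold $n(x)=m$, the level of $x$. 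That $H$ is harmonic follows because it is the pointwise limit of the functions $h_n$, each harmonic off $V_n$, and because on any fixed vertex the defining relation $\overleftarrow{P}_n H_{n+1}=H_n$ passes to the limit.

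Finally I would obtain the $L^1$ statement from the convergence dichotomy for nonnegative martingales. By Doob's theorem $Z_n\to Z_\infty$ almost surely with $Z_\infty\in L^1(\Omega_x,\mathbb{P}_x)$, and $Z_n\to Z_\infty$ in $L^1$ if and only if the family $\{Z_n\}$ is uniformly integrable, equivalently if and only if $\mathbb{E}_x[Z_\infty]=\lim_n\mathbb{E}_x[Z_n]=\lim_n h_n(x)$, i.e. no mass escapes in the limit. This is exactly the equivalence asserted in the theorem: the stabilization of the expectations $h_n(x)$ forces the Fatou inequality $\mathbb{E}_x[Z_\infty]\le\lim_n h_n(x)$ to be an equality, and Scheffé's lemma (or the Vitali convergence theorem) then upgrades almost-sure convergence to $L^1$ convergence. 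The delicate point here, and the second place where nonnegativity together with the finiteness $|V_n|<\infty$ of each level must be invoked, is to verify that the martingale is closed by $Z_\infty$, i.e. that the limiting expectation does not drop; this closability is what the $L^1$ convergence encodes.
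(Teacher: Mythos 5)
Your proposal has a genuine gap, and it sits exactly at the step you yourself flag as the main obstacle: the claim that, for $v\in V_n$,
\[
\mathbb E_v\bigl[f_{n+1}\circ X_{\tau(V_{n+1})}\bigr]=(\overleftarrow{P}_n f_{n+1})(v),
\]
so that the hypothesis $\overleftarrow{P}_n f_{n+1}=f_n$ becomes the martingale property of $Z_n=f_n\circ X_{\tau(V_n)}$. This identity does not follow from Lemma \ref{lem_harm_fn_h_n}, and it is false in general. That lemma only says that $h_{n+1}$ is harmonic off $V_{n+1}$ and equals $f_{n+1}$ on $V_{n+1}$; harmonicity at $v\in V_n$ reads
\[
h_{n+1}(v)=\sum_{y\in V_{n-1},\ y\sim v}p(v,y)\,h_{n+1}(y)+(\overleftarrow{P}_n f_{n+1})(v),
\]
and the first sum, which records precisely the excursions below level $n$, need not vanish. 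Indeed the row $\overleftarrow{P}_n(v,\cdot)$ is not even a probability vector: its entries sum to $\sum_{z\in V_{n+1}}c_{vz}/c(v)<1$ whenever $v$ has a neighbor in $V_{n-1}$, whereas the hitting distribution of $V_{n+1}$ from $v$ is a probability measure (harmonic measure). A concrete test case is the ray viewed as a Bratteli diagram: $|V_n|=1$, single edges with $c_{n,n+1}=\lambda^n$, $\lambda>1$, which is transient. There the hypothesis forces $f_{n+1}=\frac{1+\lambda}{\lambda}f_n$, while $Z_n=f_n(X_{\tau(V_n)})=f_n$ deterministically (the walk hits every level a.s.), so $\mathbb E_x[Z_{n+1}\mid\mathcal F_n]=f_{n+1}>f_n=Z_n$: your process is a strict submartingale, not a martingale, and $h_n(x)=f_n\to\infty$, so nothing stabilizes. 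Your telescoping identity $h_n(x)=(\overleftarrow{P}_m\cdots\overleftarrow{P}_{n-1}f_n)(x)$ fails for the same reason: the product of forward blocks is strictly sub-stochastic and is not the first-passage kernel to $V_n$. Since the $L^1$ conclusion in your last paragraph is derived from the martingale property, all three assertions collapse with this one step.

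This is also where your route departs from the paper's. The paper never attempts your identity; instead it invokes its Lemma \ref{lem_stopping_time}, asserting that by transience the walk a.s. eventually crosses consecutive levels at consecutive times, $\tau(V_{i+1})=\tau(V_i)+1$ for all large $i$, and only under that no-backtracking statement does it replace the law of $X_{\tau(V_n)+1}$ by one application of $\overleftarrow{p}^{(n)}$ to the law of $X_{\tau(V_n)}$. That lemma is the load-bearing ingredient your argument omits and cannot recover from Lemma \ref{lem_harm_fn_h_n} or from the hypothesis on $f$ alone. Note, moreover, that the ray example is unkind to that ingredient as well: after each first visit to level $i$ the walk steps backward with probability $1/(1+\lambda)$, so by the strong Markov property it backtracks at infinitely many levels a.s., and the conclusion of the theorem itself fails there ($h_n(x)\to\infty$). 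So the gap is not a technicality to be patched within your framework: without some substitute hypothesis that genuinely rules out backtracking (or a reformulation in terms of the hitting kernels of the levels rather than the one-step matrices $\overleftarrow{P}_n$), the martingale property you need is simply not available.
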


An essential part of our work is devoted to various examples. We consider trees, stationary Bratteli diagrams, and the Pascal graph to illustrate the general theory.
In all these case we give explicit formulas for harmonic functions and compute their energy.

Regarding to the estimation of the energy of harmonic functions, we prove the following lower bound.

\begin{theorem}
Let $f$ be a harmonic function on a weighted Bratteli diagram $(B, c)$.
Then
$$
\sum_{n =0}^\infty \frac{I_1^2}{\beta_n |V_n|} \leq   \|f\|_{\mathcal H_E}^2
$$
where $\beta_n = \max\{c(x) : x \in V_n\}$.
\end{theorem}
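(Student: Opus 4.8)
The plan is to exploit the graded structure of the Bratteli diagram. Because every edge of $B$ joins a vertex of $V_n$ to a vertex of $V_{n+1}$ and there are no edges inside a level, the energy splits as a sum over the successive ``cuts'' between consecutive levels; on each cut the net current equals the level-independent constant $I_1$, and a single Cauchy--Schwarz estimate converts this into a lower bound for that cut's contribution to the energy.

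Concretely, I would first fix notation. Let $E_n$ be the set of edges running between $V_n$ and $V_{n+1}$, and for $e \in E_n$ joining $x_e \in V_n$ to $y_e \in V_{n+1}$ write $\nabla_e f = f(x_e) - f(y_e)$. Since $B$ is bipartite between neighboring levels, the energy decomposes as
$$
\|f\|_{\h_E}^2 = \frac{1}{2}\sum_{x, y} c_{xy}\,(f(x) - f(y))^2 = \sum_{n = 0}^{\infty} \sum_{e \in E_n} c_e\,(\nabla_e f)^2 .
$$
Next I would record the current through the cut $E_n$, namely $I_{n+1} = \sum_{e \in E_n} c_e\,\nabla_e f$, and note the conservation law $I_{n+1} - I_n = \sum_{y \in V_n}(\Delta f)(y)$, obtained by grouping the terms of $\sum_{y \in V_n}(\Delta f)(y)$ according to whether the neighbor lies in $V_{n-1}$ or in $V_{n+1}$. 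Since $f$ is harmonic, the right-hand side vanishes for every $n \geq 1$, so $I_n = I_1$ for all $n$; this conservation of current is the one place where harmonicity is used.

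With this in hand the estimate is routine. For each $n$, Cauchy--Schwarz applied to $c_e \nabla_e f = \sqrt{c_e}\cdot(\sqrt{c_e}\,\nabla_e f)$ gives
$$
I_1^2 = I_{n+1}^2 = \Big(\sum_{e \in E_n} c_e\,\nabla_e f\Big)^2 \leq \Big(\sum_{e \in E_n} c_e\Big)\Big(\sum_{e \in E_n} c_e\,(\nabla_e f)^2\Big),
$$
whence $\sum_{e \in E_n} c_e\,(\nabla_e f)^2 \geq I_1^2 / \sum_{e \in E_n} c_e$. I would then bound the cut conductance by the upper level: $\sum_{e \in E_n} c_e = \sum_{x \in V_n} \sum_{y \in V_{n+1},\, y \sim x} c_{xy} \leq \sum_{x \in V_n} c(x) \leq |V_n|\,\beta_n$. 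Substituting and summing over $n \geq 0$ yields the claimed inequality.

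The argument is essentially free of obstacles: the main structural input is the conservation identity $I_n = I_1$, and the only point demanding care is that the cut conductance must be estimated against $V_n$ rather than $V_{n+1}$ in order to produce the denominator $|V_n|\beta_n$. Finiteness of each level $V_n$ guarantees that every cut sum and every use of Cauchy--Schwarz involves only finitely many terms, so the single limiting step is the final summation over $n$, where the monotone convergence of nonnegative terms presents no difficulty.
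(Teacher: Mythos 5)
Your proof is correct, and while it rests on the same two pillars as the paper's argument --- conservation of current across levels (the only place harmonicity is used) and Cauchy--Schwarz --- the decomposition you use is genuinely different and leaner. The paper argues vertex by vertex: harmonicity enters through the per-vertex Kirchhoff law $I_{in}(x)=I_{out}(x)$ (Lemma \ref{current for harm fns}); Cauchy--Schwarz is then applied twice at each $x\in V_n$ (once over the edges down to $V_{n-1}$, once over the edges up to $V_{n+1}$), giving $2(I_n(x))^2\le c(x)\sum_{y\sim x}c_{xy}(f(x)-f(y))^2$; and a further Cauchy--Schwarz over the vertices of the level, $I_1^2\le |V_n|\sum_{x\in V_n}(I_n(x))^2$ (Lemma \ref{lem I_n = const}), is needed before summing over $n$. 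You instead argue cut by cut: the energy splits over the edge sets $E_n$ because a Bratteli diagram is bipartite between consecutive levels, harmonicity is used only in the aggregate form $\sum_{y\in V_n}(\Delta f)(y)=0$ (which yields $I_{n+1}=I_n$ with no per-vertex bookkeeping), and a single Cauchy--Schwarz over the edges of each cut gives
$$
\sum_{e\in E_n}c_e(\nabla_e f)^2\ \ge\ \frac{I_1^2}{\sum_{e\in E_n}c_e}.
$$
This intermediate inequality is in fact sharper than the corresponding step in the paper: the cut conductance satisfies $\sum_{e\in E_n}c_e\le\min\bigl(\beta_n|V_n|,\ \beta_{n+1}|V_{n+1}|\bigr)$, so your route would even yield the theorem with the smaller quantity $\min\bigl(\beta_n|V_n|,\beta_{n+1}|V_{n+1}|\bigr)$, or the cut conductance itself, in the denominator. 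Your closing observation --- that the cut must be charged to the lower level $V_n$ in order to reproduce the stated denominator --- is exactly the right point of care, and it is the one place where your estimate and the paper's coincide.
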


The paper is organized as follows. Section \ref{sect_Basics} contains basic definition and facts about electrical networks, random walks on the graph underlying the network, the Laplace operator $\Delta$, the Hilbert space $\h_E$ of finite energy functions, harmonic functions, Bratteli diagrams, etc. The proof are usually skipped or rather sketchy because the formulated statements are already in the literature. The only exception is the subsection devoted to Bratteli diagrams where we discuss the question: when can a locally finite graph be realized as a Bratteli diagram? Section \ref{sect monopoles dipoles} deals with mostly new results about the energy space and the properties of harmonic functions, monopoles, and dipoles.
Our main results of this section is formulated in Theorem 1.1 above. In Section \ref{sect HF on BD}, we use the structure of a Bratteli diagram and matrices associated to the diagram in order to formulate an algorithmic method of constructing harmonic functions, monopoles, and dipoles. This methods answers the question about the existence of harmonic functions and gives examples when this space is trivial. In Section \ref{sect HF Poisson kernel}, we  discuss a method of representation of harmonic functions in terms of the Poisson kernel. Also we prove a number of properties of harmonic functions defined on a Bratteli diagram. Section \ref{sect HF on trees stationary BD}, contains examples of harmonic functions. We consider three cases when harmonic functions are defined on a binary tree, the Pascal graph, and a stationary Bratteli diagram. In all cases, we give explicit formulas for some classes of harmonic functions. The final section of the paper deals with the energy space. We give an lower bound for the energy of a harmonic function defined on a Bratteli diagram. This estimate is useful for determining whether a harmonic function has infinite energy. Also we give formulas for the energy of the harmonic functions found in Section  \ref{sect HF on trees stationary BD}.

\section{Basics on electrical networks, random walks,  and Bratteli diagrams}\label{sect_Basics}

\subsection{Electrical networks}
We shall need to make use of some facts on electrical networks; -- for the benefit of readers, we have included a brief fact summary of what is needed. Systematic accounts, and related, are in \cite{Bott49, Chung2010, Diekman2010, Jorgensen_Pearse2011, QuianZhang2011, Tsuchiay1967}.

By a graph $G = (V,E)$, we mean a connected undirected locally finite graph with single edges between vertices. The vertex set $V = V(G)$ is assumed to be countably infinite, and the edge set $E = E(G)$ has no loops. The notation $y \sim x$ means that $y$ is a nearest neighbor of $x$, so  $(x y) \in E$, and the set $\{ y \in V : y \sim x\}$ of all neighbors of $x$ is finite for any vertex $x$. We also write $e = (xy)$, and $s(e) = x, r(e) =y$, when the edge $e$ links $x, y \in V$.  . For any two vertices $x, y \in V$, there exists a finite path $\gamma = (x_0, x_1, ... , x_n)$ such that $x_0 = x, x_n = y$ and $(x_ix_{i+1}) \in E$ for all $i$. We will keep the assumption  that all considered graphs are  {\em infinite} without mentioning it in the statements.

\begin{definition}\label{def electrical network}  A {\em electrical network} $(G,c)$ is a weighted graph $G$ with a symmetric {\em conductance function} $c : V\times V\to [0, \infty)$, i.e., $c_{xy} = c_{yx}$ for any $(x y) \in E$. Moreover, it is required that $c_{xy} >0$ if and only if $(xy) \in E$.  This means that the conductance  function $c$ is actually defined on the edge set  $E$ of the graph $G$. The reciprocal value $r_{xy} = 1/c_{xy}$ is called the {\em resistance} of the edge $e = (xy)$. For any $x\in V$, we define the total conductance at $x$ as
$$
c(x) := \sum_{y \sim x} c_{xy}.
$$
The function $c$ is defined for every $x \in V$ since this sum is always finite.
\end{definition}

In some cases it may be useful to represent the conductance function $c$ as $c : [(V\times  V) \setminus \mbox{Diagonal}] \to [0, \infty)$ assuming that $c_{xy} = 0$ if $x,y$ are not neighbors in $G$.

Given an electrical network $(G, c) = (V,E,c)$ with a fixed conductance function $c$, the following three Hilbert spaces will be used:
$$
l^2(V) := \mbox{all\ functions\ $u$\ on\ $V$\ such\ that\ $||u||^2_{l^2} = \sum_{x\in V} |u(x)|^2$} < \infty,
$$
$$
l^2(V, c) := \mbox{all\ functions\ $u$\ on\ $V$\ such\ that\ $||u||^2_{l^2(V,c)} = \sum_{x\in V} c(x)|u(x)|^2$} < \infty,
$$
and
$$
\h_E:= \mbox{equivalence\ classes\ of\ functions\ on\ $V$\ such\ that}
$$
\be\label{eq def of norm energy}
||u||^2_{\h_E} = \frac1{2}\sum_{(xy)\in E}c_{xy}|u(x) -u(y)|^2 < \infty,
\ee
which is called the {\em finite energy space} (we say that $u_1$ and $u_2$ are equivalent if $u_1 - u_2 =\mathrm{constant}$).

\begin{definition}\label{def of Laplace operator and harmonic}
The {\em Laplacian} on $(G,c)$ is the linear operator $\Delta$ which is defined on the linear space of functions $f : V \to \R$ by the formula
\begin{equation}\label{Laplacian formula}
(\Delta f)(x) := \sum_{y \sim x} c_{xy}(f(x) - f(y)).
\end{equation}
A function $f : V \to \R$ is called {\em harmonic} on $(G,c)$ if $\Delta f(x) = 0$ for every $x\in V$.  If (\ref{Laplacian formula}) holds at each vertex of a set $W \subset V$, then we say that $f$ is harmonic on $W$.
\end{definition}

We will be studying {\em harmonic functions in exterior domains}. They are  the solutions to the following equations. Given $(V, E, c)$ as specified above, let $\Delta$ be the corresponding Laplace operator. Fix a finite subset $F \subset V$. For functions $\psi$ on $V$, we consider the following problem
\be\label{eq harm on F^c}
\Delta \psi = 0 \ \ \ \mbox{on}\ V\setminus F,
\ee
Of special interest are the classes when $|F| =1$ and $|F| = 2$.  If $F = \{x_0\}$, then the solutions $w = w_{x_0}$ to
\be\label{eq monopoles}
\Delta w_{x_0} = \delta_{x_0}
\ee
are called {\em monopoles}. If $F = \{x_1, x_2\}, x_1 \neq x_2$, then the solutions $v = v_{x_1, x_2}$ to
\be\label{eq dipoles}
\Delta v_{x_1, x_2} = \delta_{x_1} - \delta_{x_2}
\ee
are called {\em dipoles}.

We are concerned with the following two questions regarding equations (\ref{eq harm on F^c}) - (\ref{eq dipoles}):

(i) Find explicit formulas and algorithms for solutions;

(ii) When are these solutions of finite energy?

We remark, as for (\ref{eq dipoles}), one can show, with  the aid of Riesz in $\h_E$, that for every $x_1, x_2$ there is a unique solution $v = v_{x_1, x_2} \in \h_E$ such that
\be\label{eq Riesz for dipole}
\langle  v_{x_1, x_2}, f \rangle_{\h_E} = f(x_1) - f(x_2)
\ee
holds for all $f \in \h_E$. Moreover, (\ref{eq Riesz for dipole}) implies (\ref{eq dipoles}).
\\

We denote by $\mathcal Harm$ the set of classes of harmonic functions on $(G,c)$ where two harmonic functions, $f$ and $g$, are identified if $f - g = \mathrm{const}$. Clearly, every constant function is harmonic. With some abuse of notation we will identify an element $f$ of $\mathcal Harm$ with a corresponding harmonic function. This means that we can always choose a prescribed value for $f(x_0)$ at some fixed vertex $x_0$. Usually, we will require that  $f(x_0) =0$.
We say that $\mathcal Harm$ is {\em trivial} if it reduces to constant harmonic functions.

To any conductance function $c$ on a network $G$, we can associate a {\em reversible  Markov kernel} $P = (p(x,y))_{x,y \in V} $ with transition probabilities defined by  $p(x,y) = \dfrac{c_{xy}}{c(x)}$. Then $p(x, y)c(x) = p(y, x)c(y)$ for any $x,y \in V$. We say that the process is {\em reversible}. Define the probability transition operator for $ f : V \to  \R$ by setting
\be\label{eq P}
(P f)(x) = \sum_{y \sim x} p(x, y)f(y), \ \ x \in V.
\ee

In discrete harmonic analysis, two operations play a key role, the {\em Laplacian}  $\Delta$ (see (\ref{def of Laplace operator and harmonic})), and the {\em Markov operator} $P$ (see (\ref{eq P})). For many problems, one is even used in the derivation of properties of the other.
Both represent actions (operations) on appropriate spaces of functions, functions defined on the infinite set of vertices $V$. Since $V$ is infinite, we are faced with a variety of choices of infinite-dimensional function spaces. Because of spectral theory, we shall consider Hilbert spaces. But even restricting to Hilbert spaces, there are at least three natural candidates. Which one to use depends on the operator considered, and the questions asked; see (1) -- (3) below.

    We saw that both the Laplacian  $\Delta$, and the Markov operator $P$ have infinite by infinite matrix representations. These infinite by infinite matrices are special in that they have non-zero entries only in finite bands containing the matrix-diagonal (i.e., infinite banded matrices). This makes the algebraic matrix operations well defined.

   Now passing to appropriate Hilbert spaces, we note that the Laplacian  $\Delta$  will be an unbounded operator, albeit semi-bounded. By contrast we show in Lemma 2.3 that there is a weighted $l^2$-space such that the Markov operator $P$ is bounded, self-adjoint, and it has its spectrum contained in the finite interval $[- 1, 1]$. We caution, that in general this spectrum may be continuous, or have a mix of spectral types, continuous (singular or Lebesgue), and discrete.

From \cite{Jorgensen_Pearse2011, Dutkay_Jorgensen2011}, we know that:

(1) $\Delta$ is self-adjoint, generally unbounded operator with dense domain in $l^2(V)$;

(2) $\Delta$ is Hermitian, generally unbounded operator with dense domain in $\h_E$, but, in general, it is not self-adjoint;

(3) $P$ is bounded and self-adjoint in $l^2(V,c)$.

To illustrate these statements, we give a short proof of (3).

\begin{lemma}
Let $l^2(c)$ be the space of functions $u$ on $V$ satisfying the condition
$||u||^2_{l^2(c)} = \sum_{x\in V} c(x)|u(x)|^2$, and $P$ is defined by (\ref{eq P}). Then
\be\label{eq P is self-ad}
\langle u_1, Pu_2\rangle_{l^2(c)} = \langle Pu_1, u_2\rangle_{l^2(c)},\ \ u_1, u_2 \in l^2(c).
\ee
Moreover, the spectrum of $P$ is a subset of $[-1, 1]$, and $ - I \leq P \leq I$, i.e.,
\be\label{eq P inequalities}
-||u||^2_{l^2(c)} \leq \langle u, Pu \rangle_{l^2(c)} \leq  ||u||^2_{l^2(c)},\ \ u \in l^2(c).
\ee
\end{lemma}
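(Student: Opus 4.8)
The plan is to prove the self-adjointness relation (\ref{eq P is self-ad}) first, and then derive the spectral bound (\ref{eq P inequalities}) from it together with an application of the Cauchy--Schwarz inequality. For the self-adjointness, the key observation is the reversibility identity $p(x,y)c(x) = p(y,x)c(y)$, which is recorded just before the lemma and follows directly from $p(x,y) = c_{xy}/c(x)$ and the symmetry $c_{xy}=c_{yx}$. First I would write out
$$
\langle u_1, Pu_2\rangle_{l^2(c)} = \sum_{x\in V} c(x)\, \overline{u_1(x)} \sum_{y\sim x} p(x,y) u_2(y) = \sum_{x,y} c(x) p(x,y)\, \overline{u_1(x)}\, u_2(y),
$$
and then substitute $c(x)p(x,y) = c(y)p(y,x)$ and relabel the summation indices $x\leftrightarrow y$ to recognize the result as $\langle Pu_1, u_2\rangle_{l^2(c)}$. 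The only point requiring care is justifying the interchange of the two sums; since $P$ has finitely many nonzero entries in each row (the graph is locally finite) and we are working with $u_1,u_2\in l^2(c)$, absolute convergence must be checked, but this is routine given that the double sum is dominated by a product of $l^2(c)$-norms.

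Next I would establish the bound $\langle u, Pu\rangle_{l^2(c)} \leq \|u\|^2_{l^2(c)}$. The natural approach is to exploit the symmetric weight $c(x)p(x,y)=c(y)p(y,x)=:w(x,y)$ and write
$$
\langle u, Pu\rangle_{l^2(c)} = \sum_{x,y} w(x,y)\, \overline{u(x)}\, u(y).
$$
Using the symmetry of $w$, I would apply the elementary inequality $2\,\mathrm{Re}(\overline{u(x)}u(y)) \leq |u(x)|^2 + |u(y)|^2$ (equivalently a weighted Cauchy--Schwarz across the pair $(x,y)$), summing over the symmetric array to obtain
$$
\langle u, Pu\rangle_{l^2(c)} \leq \frac12 \sum_{x,y} w(x,y)\big(|u(x)|^2 + |u(y)|^2\big) = \sum_{x} |u(x)|^2 \sum_{y} w(x,y) = \sum_x c(x)|u(x)|^2,
$$
where the last step uses $\sum_y w(x,y) = c(x)\sum_y p(x,y) = c(x)$, since $P$ is a stochastic kernel with row sums equal to one. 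This gives the upper bound $P \leq I$; the lower bound $-I \leq P$ follows by the same computation applied with $-u(y)$ in place of $u(y)$, or equivalently by using $-2\,\mathrm{Re}(\overline{u(x)}u(y)) \leq |u(x)|^2 + |u(y)|^2$. Together these yield (\ref{eq P inequalities}), and since $P$ is bounded and self-adjoint with $-I\leq P\leq I$, its spectrum is contained in $[-1,1]$.

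I expect the main obstacle to be purely a matter of bookkeeping rather than genuine difficulty: namely, justifying the rearrangements of the double sums so that both the index swap in the self-adjointness argument and the symmetrization in the bound are valid on all of $l^2(c)$. The underlying algebra is forced by reversibility and stochasticity, but one should confirm that the relevant double series converge absolutely. The cleanest way to handle this is to observe that the symmetric weights $w(x,y)$ define a positive bilinear form and that local finiteness makes each row sum finite and equal to $c(x)$, so every interchange is legitimate. Once convergence is secured, the inequality $2\,\mathrm{Re}(\overline{u(x)}u(y))\le |u(x)|^2+|u(y)|^2$ does all the remaining work.
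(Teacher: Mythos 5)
Your proposal is correct and follows essentially the same route as the paper: self-adjointness via the reversibility identity $c(x)p(x,y)=c(y)p(y,x)$ with an index swap, and the bounds $-I\le P\le I$ from the weighted inequality $\pm 2\,\mathrm{Re}(\overline{u(x)}u(y))\le |u(x)|^2+|u(y)|^2$, which is exactly the paper's expansion of $\sum_{x,y}c_{xy}|u(x)\mp u(y)|^2\ge 0$. The only difference is cosmetic: you handle convergence by checking absolute summability directly, while the paper passes through finitely supported functions and density.
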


\begin{proof}
The fact that $P$ is a self-adjoint operator follows from the relations $c(y)p(y,x) = c(x)p(x,y)$ (the Markov kernel $P$ is reversible) and (\ref{eq P}) which are applied first to the dense subset of functions with finite support (for simplicity we assume that functions on $V$ are real-valued; the case of complex-valued functions is similar).

Next, it follows, from the inequality
$$
2\sum_{x \in V} (c(x) |u(x)|^2 - \langle u, Pu\rangle_{l^2(c)}) = \sum_{x,y \in V} c_{xy} |u(x) - u(y)|^2 \geq 0,
$$
that
$$
\langle u, Pu\rangle_{l^2(c)} \leq ||u||^2_{l^2(c)}.
$$
Similarly,
$$
2\sum_{x \in V} (c(x) |u(x)|^2 + \langle u, Pu\rangle_{l^2(c)}) = \sum_{x,y \in V} c_{xy} |u(x) + u(y)|^2 \geq 0,
$$
implies
$$
\langle u, Pu\rangle_{l^2(c)} \geq  -||u||^2_{l^2(c)}.
$$
This proves (\ref{eq P inequalities}), and $\mathrm{Spec}(P) \subset [-1,1]$.
\end{proof}

{\em Question}. Since $P$ is self-adjoint, there exists the spectral measure $E_P$ on the interval $[-1, 1]$ such that
$$
P = \int_{-1}^1 \lambda dE_P(\lambda).
$$
It would be interesting to find out under what conditions on $P$, this spectral measure has a gap, that is there exists $\lambda_0 < 1$ such that $E_P(\lambda_0, 1) =0$. More generally, one can ask about the properties of spectral measures for the operator $\Delta$ considered in both Hilbert spaces $l^2$ and $\h_E$ (see the definition of $\h_E$ above).
\\

We mention here two crucial facts about harmonic functions that will be used below. The first one, the {\em maximum principle} for harmonic functions, can be stated as follows. Let $(G, c) = (V, E, c)$ be an infinite electrical network, and let $G_1$ be a connected subgraph of $G$ with vertex set $W \subset V$. Let  $\partial W :=\{ x \in V : x \sim y \ \mbox{for \ some}\ y \in W\}$ be the outer boundary of $W$. Suppose  $h : V \to \R$ is a function that is harmonic  on $W$, and the supremum of $h$ is achieved at some point from $W$. Then the maximum principle states that $h$ is constant on $\ol W = W \cup \partial W$.

The second result is the so called {\em Dirichlet problem}. Let $(G, c)$ and $W$ be as above. The Dirichlet problem consists of solving the following  boundary problem:
\begin{equation}\label{eq Dirichlet probl}
\begin{cases}  (\Delta u) (x)  = g(x)  \ &  \mbox{for\ all }\  x\in W, \\
 u(x) = f(x)  & \mbox{for\ all}  \ x\in \partial W, \\
\end{cases}
\end{equation}
where $u : V \to \R$ is an unknown function, and the functions $g : W \to \R$ and $f : \partial W\to \R$ are given. If $W$ is finite, for all functions $g, f$ as above,
the Dirichlet problem (\ref{eq Dirichlet probl}) has a unique solution.

Suppose now that $W$ is finite, and $f$ is a function defined on $\partial W$. Then we conclude that there exists a unique harmonic function $h$ on $W$ such that $h = f$ on $\partial W$.
\\


\subsection{Infinite path space}

We shall need to make use of some facts on path-space analysis;-- for the benefit of readers, we have included a brief fact summary of what is needed. Systematic accounts, and related,  are in \cite{Du2012, Grimmett_Holvord_Peres2014, Keller_Lenz_Schmidt_Wirth2015, Peres_Sousi2012,  Woess2000, Woess2009}.

 Let $\Omega \subset V^{\infty}$ be the set of all infinite sequences $\omega =(x_i)_{i\geq 0}$ where $(x_ix_{i+1}) \in E$ for all $i$. Define $X_n : \Omega \to V$ by setting $X_n(\omega) = x_n$. Let $\Omega_x := \{\omega \in \Omega : X_0 = x\}$; then $\Omega$ is the disjoint union of subsets $\Omega_x$, $x \in V$.

Functions $e \mapsto I(e)$ on $E$ represent {\em current} in electrical network models. If $u$ is a voltage function on the vertex set $V$, define
\be\label{eq current}
I(xy) = (du)(xy) = c_{xy}(u(x) - u(y)), \ \ \  \forall e = (xy) \in E.
\ee
Setting
\be\label{eq isometry}
\h_{\mathrm{Diss}} =\{ I : E \to \R : \sum_{e\in E} \frac{1}{c_e}|I(e)|^2 < \infty\},
\ee
we define the associated Hilbert space of currents of finite dissipation.
We note that
$$
||d(u)||_{\h_{\mathrm{Diss}}} = ||u||_{\h_E}, \ \ \ \forall u \in \h_E,
$$
i.e., the mapping $d$ is an isometry from $\h_E$ into $\h_{\mathrm{Diss}}$, see (\ref{eq isometry}).

\begin{lemma}
The operator $P$ in (\ref{eq P}) defines the family of Markov measures $(\mathbb P_x : x \in V)$ such that $\mathbb P_x$ is supported by the corresponding set $\Omega_x$.
\end{lemma}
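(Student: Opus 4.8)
The plan is to obtain each $\mathbb{P}_x$ as the Kolmogorov (equivalently, Ionescu--Tulcea) extension of the finite-dimensional distributions prescribed by the one-step kernel $P$. First I would record that $P$ is genuinely stochastic: extending $c$ by $c_{xy} = 0$ when $(xy) \notin E$, we have $p(x,y) = c_{xy}/c(x) \geq 0$ and, since $c(x) = \sum_{y \sim x} c_{xy}$,
\be
\sum_{y \in V} p(x,y) = \frac{1}{c(x)} \sum_{y \sim x} c_{xy} = 1 \qquad \text{for every } x \in V.
\ee
Thus $(p(x,\cdot))_{x \in V}$ is a family of probability measures on the countable set $V$, and $p(x,y) > 0$ if and only if $(xy) \in E$.

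Next I would set up the measurable framework on the sequence space $V^{\N}_x := \{\omega = (x_i)_{i \geq 0} : x_0 = x\}$, equipped with the $\sigma$-algebra generated by the coordinate maps $X_n(\omega) = x_n$; since $V$ is countable and discrete this is simply the product $\sigma$-algebra. On cylinder sets
$$
C(x_0, \ldots, x_n) = \{\omega : X_0(\omega) = x_0, \ldots, X_n(\omega) = x_n\}, \qquad x_0 = x,
$$
I would define the premeasure $\mathbb{P}_x(C(x_0, \ldots, x_n)) = \prod_{i=0}^{n-1} p(x_i, x_{i+1})$. The consistency (Kolmogorov compatibility) condition then reduces to row-stochasticity: summing over the last coordinate,
$$
\sum_{x_{n+1} \in V} \mathbb{P}_x\big(C(x_0, \ldots, x_n, x_{n+1})\big) = \Big(\prod_{i=0}^{n-1} p(x_i, x_{i+1})\Big) \sum_{x_{n+1} \in V} p(x_n, x_{n+1}) = \mathbb{P}_x\big(C(x_0, \ldots, x_n)\big),
$$
so the cylinder assignments are consistent and nonnegative with total mass $\mathbb{P}_x(V^{\N}_x) = 1$.

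With consistency in hand, the Ionescu--Tulcea theorem (the natural tool here, as the data is a sequence of transition kernels and no topological regularity is needed on the countable state space) yields a unique probability measure $\mathbb{P}_x$ on $V^{\N}_x$ extending the premeasure, under which $(X_n)$ is the Markov chain with kernel $P$ started at $x$. Finally I would verify the support claim: any cylinder $C(x_0, \ldots, x_n)$ containing a forbidden transition, i.e.\ with $(x_i x_{i+1}) \notin E$ for some $i$, has $p(x_i, x_{i+1}) = 0$ and hence $\mathbb{P}_x$-measure zero; summing over the countably many such bad prefixes shows $\mathbb{P}_x(V^{\N}_x \setminus \Omega) = 0$, so $\mathbb{P}_x$ is concentrated on $\Omega_x = \{\omega \in \Omega : X_0 = x\}$. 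The only points requiring care are the consistency verification and this last concentration argument; both follow directly from stochasticity and the fact that $p$ vanishes off $E$, so there is no essential obstacle beyond a correct invocation of the extension theorem.
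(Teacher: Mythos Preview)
Your proposal is correct and follows essentially the same route as the paper: define $\mathbb{P}_x$ on cylinder sets via products of the transition probabilities $p(x_i,x_{i+1})$, then extend by Kolmogorov (the paper says ``Kolmogorov consistency'' where you invoke Ionescu--Tulcea, which is the same mechanism here on a countable state space). You supply more detail than the paper does---the explicit stochasticity check, the consistency computation, and the countable-union argument for $\mathbb{P}_x(\Omega_x)=1$---but the strategy is identical.
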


\begin{proof}
First $\mathbb P_x$ is defined on cylinder sets by the formula
$$
\mathbb P_x  (X_{1} = x_1, X_2 = x_2, ..., X_n = x_n  \ |\ X_0 =x) = p(x, x_1) p(x_1, x_2) \cdots p(x_{n-1}, x_n),
$$
and then it is extended to a probability measure on the Borel $\sigma$-algebra $\mathcal B(\Omega_x)$ by Kolmogorov consistency.
Thus, the sequence of random variables $(X_n)$ defines a {\em Markov chain} on $(\Omega_x, \mathbb P_x)$ such that the following identity holds:
$$
\mathbb P_x(X_{n+1} = y \ |\ X_n =z) = p(z,y)
$$
for any $y, z \in V$. The remaining details are obvious.
\end{proof}

Let $\lambda = (\lambda_x : x \in V)$ be a positive probability vector, $\sum_{x\in V} \lambda_x =1$. Define a probability measure $\mathbb P = \sum_{x\in V} \lambda_x \mathbb P_x$ on $\Omega$. Then $\mathbb P(\Omega_x) = \lambda_x$.

We recall the following well known result:

\begin{lemma}
The measure $\mathbb P$ is a Markov measure if and only if the probability distribution  $\lambda $ satisfies the relation $\lambda P = \lambda$, or $\sum_{y \sim x}\lambda_y p(y,x) = \lambda_x$. Furthermore,
$$
\mathbb P (X_{0} = x, X_1 = x_1, ..., X_n = x_n) =  \lambda_x p(x, x_1) p(x_1, x_2) \cdots p(x_{n-1}, x_n).
$$
\end{lemma}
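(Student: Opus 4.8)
The plan is to read ``$\mathbb P$ is a Markov measure'' as ``$\mathbb P$ is shift-invariant,'' i.e.\ invariant under the one-sided shift $\sigma : \Omega \to \Omega$, $(\sigma\omega)_n = \omega_{n+1}$. This is the only reading under which the stated equivalence can hold, since the product formula displayed in the lemma—and hence the Markov property of the process $(X_n)$—turns out to be valid for \emph{every} choice of $\lambda$, as the first step below shows. I would organize the whole argument around cylinder sets, which form a $\pi$-system generating $\mathcal B(\Omega)$, so that each identity between measures only needs to be checked on cylinders.

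\emph{Step 1 (the product formula).} First I would derive the displayed formula directly from the definition $\mathbb P = \sum_{x\in V}\lambda_x\,\mathbb P_x$. Evaluating both sides on the cylinder $C = \{X_0 = x,\, X_1 = x_1,\, \dots,\, X_n = x_n\}$ and using countable additivity gives $\mathbb P(C) = \sum_{y\in V}\lambda_y\,\mathbb P_y(C)$. Since $\mathbb P_y$ is supported on $\Omega_y = \{X_0 = y\}$ by the preceding lemma, every term with $y\neq x$ vanishes, and the surviving term is $\lambda_x\,\mathbb P_x(C)$. The cylinder formula for $\mathbb P_x$ from the preceding lemma then yields $\mathbb P(C) = \lambda_x\,p(x,x_1)\cdots p(x_{n-1},x_n)$. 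This holds with no hypothesis on $\lambda$, which is precisely why the nontrivial content of the lemma must be the stationarity condition $\lambda P = \lambda$.

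\emph{Step 2 (the equivalence).} With the product formula in hand I would compute $\mathbb P(\sigma^{-1}C)$ for a cylinder $C = \{X_0 = a_0,\dots,X_n = a_n\}$. Because $\sigma^{-1}C = \{X_1 = a_0, \dots, X_{n+1} = a_n\}$, decomposing over the value of $X_0$ and applying Step 1 gives
\[
\mathbb P(\sigma^{-1}C) = \sum_{b\in V}\lambda_b\,p(b,a_0)\,p(a_0,a_1)\cdots p(a_{n-1},a_n) = (\lambda P)_{a_0}\,p(a_0,a_1)\cdots p(a_{n-1},a_n),
\]
where the sum over $b$ is in fact finite since only the neighbors $b\sim a_0$ contribute. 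Comparing this with $\mathbb P(C) = \lambda_{a_0}\,p(a_0,a_1)\cdots p(a_{n-1},a_n)$, I conclude that $\mathbb P(\sigma^{-1}C) = \mathbb P(C)$ for every cylinder if and only if $(\lambda P)_{a_0} = \lambda_{a_0}$ for every $a_0\in V$, that is $\lambda P = \lambda$. The case $n = 0$ shows the condition is necessary (it is exactly the equality of the one-dimensional marginals $\mathbb P(X_1 = x) = (\lambda P)_x$ and $\mathbb P(X_0 = x) = \lambda_x$), while the displayed identity shows it is sufficient on the entire $\pi$-system; a standard $\pi$-$\lambda$ (monotone class) argument then upgrades invariance on cylinders to $\mathbb P\circ\sigma^{-1} = \mathbb P$ on all of $\mathcal B(\Omega)$.

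The routine parts are the countable-additivity bookkeeping in Step 1 and the monotone-class extension at the end. The one point that genuinely needs care—more conceptual than technical—is recognizing that the Markov property by itself imposes no constraint on $\lambda$, so that the asserted equivalence is really an equivalence with \emph{stationarity}; once ``Markov measure'' is understood as ``shift-invariant Markov measure,'' comparing the marginal $\mathbb P(X_1 = x) = (\lambda P)_x$ against $\mathbb P(X_0 = x) = \lambda_x$ is what drives the entire statement.
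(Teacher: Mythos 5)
Your proof is correct. Note that the paper itself gives no proof of this lemma at all: it is introduced with ``We recall the following well known result,'' so there is no argument of the authors' to compare against, and your write-up is essentially the standard one. Step 1 (the cylinder computation $\mathbb P(C)=\lambda_x\,p(x,x_1)\cdots p(x_{n-1},x_n)$, using that $\mathbb P_y$ is supported on $\Omega_y$ so only the $y=x$ term survives) and Step 2 (comparing $\mathbb P\circ\sigma^{-1}$ with $\mathbb P$ on the $\pi$-system of cylinders, with the $n=0$ cylinder giving necessity of $\lambda P=\lambda$ and a Dynkin/monotone-class argument giving sufficiency) are exactly how this classical fact is proved. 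Your interpretive observation is also the right one, and it is the only delicate point: since the product formula---and hence the Markov property of $(X_n)$ under $\mathbb P$---holds for \emph{every} initial distribution $\lambda$, the ``if and only if'' can only be about stationarity, i.e.\ shift-invariance of $\mathbb P$; this is what ``Markov measure'' means in the ergodic-theoretic setting of the paper (the shift $\sigma$ is indeed introduced there, just after this subsection). One could quibble that you spend effort justifying a reading the authors leave implicit, but making it explicit strengthens rather than weakens the proof: it shows why the displayed product formula is stated ``furthermore'' (it is unconditional) while the equivalence itself is the genuine content.
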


We remark that the equation $\lambda P = \lambda$ may not have solutions in the set of positive probability vectors $\lambda$, i.e., $\sum_{x\in V}\lambda_x = 1$ and $\lambda_x >0$.

Since $G$ is a connected graph, the Markov chain defined by $(X_n)$ is {\em irreducible}, that is, for any $x,y \in V$ there exists $n \in \N$ such that $p^{(n)}(x,y) > 0$, where $p^{(n)}(x,y)$ is  the $xy$-entry of $P^n$. It can be seen that $p^{(n)}(x,y) = \mathbb P_x(X_n = y)$.

In a slightly different terminology, it is said that the Markov kernel $P = (p(x,y))_{x,y\in V}$ determines a {\em random walk} on the weighted graph $(G, c)$. It is known that, for an irreducible matrix $P$, the random walk on the graph $G$ will  be either recurrent or transient.

\begin{definition}\label{def recurrence and transience}
One says that the random walk on $G = (V,E)$ defined by the transition matrix $P$ is {\em recurrent} if for any vertex $x \in V$ it returns to $x$ infinitely often with probability one. Otherwise, it is called {\em transient}. Equivalently, the random walk is recurrent if and only if, for all $x, y \in V$,
\be\label{eq recurrence}
\mathbb P_x (X_n = y \ \mbox{for \ infinitely\ many \ $n$}) = 1,
\ee
and it is transient if for every finite set $F \subset V$, and for all $x \in V$,
\be\label{eq_transience}
\mathbb P_x (X_n \in F \ \mbox{for \ infinitely\ many \ $n$}) = 0.
\ee
\end{definition}

With some abuse of terminology, we say also that an electrical network $(G,c)$ is {\em recurrent/transient} if the random walk $(X_n)$ defined on the vertices of $G$ by the transition probability matrix $P$ is recurrent/transient.

We collect several useful results about electrical networks in the following statement.

\begin{lemma}\label{lem several facts on P}
For $(G,c)$, $\Delta$, and $P$ as above, the following holds:

(1) $\Delta = c(I - P)$ and
$$
f \in \mathcal Harm \ \ \Longleftrightarrow \ \ Pf = f;
$$

(2) $P 1 = 1$ and $c^* P = c^*$ where $c^* $ is $c$ considered as the row vector;

(3) $P(f^2) \geq (Pf)^2$ for any function $f : V \to \R$.

\end{lemma}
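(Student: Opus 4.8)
The plan is to verify the three assertions of Lemma~\ref{lem several facts on P} directly from the definitions of $\Delta$ and $P$, exploiting the relation $p(x,y) = c_{xy}/c(x)$ throughout.

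\smallskip

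\emph{Part (1).} First I would compute $(c(I-P)f)(x)$ and compare it with $(\Delta f)(x)$. By definition $(Pf)(x) = \sum_{y\sim x} p(x,y) f(y) = \frac{1}{c(x)}\sum_{y\sim x} c_{xy} f(y)$, so multiplying by $c(x)$ gives $c(x)(f-Pf)(x) = c(x)f(x) - \sum_{y\sim x} c_{xy} f(y)$. Since $c(x) = \sum_{y\sim x} c_{xy}$, the first term equals $\sum_{y\sim x} c_{xy} f(x)$, and the difference is exactly $\sum_{y\sim x} c_{xy}(f(x)-f(y)) = (\Delta f)(x)$. This establishes $\Delta = c(I-P)$, where $c$ acts as the diagonal multiplication operator by $c(x)$. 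The equivalence $f\in\mathcal{Harm}\iff Pf=f$ is then immediate: since $c(x)>0$ for every $x$, $(\Delta f)(x)=0$ for all $x$ is equivalent to $(f-Pf)(x)=0$ for all $x$, i.e. $Pf=f$.

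\smallskip

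\emph{Part (2).} For $P1=1$ I would note that applying $P$ to the constant function $1$ gives $(P1)(x) = \sum_{y\sim x} p(x,y) = \frac{1}{c(x)}\sum_{y\sim x} c_{xy} = \frac{c(x)}{c(x)} = 1$, since the transition probabilities out of $x$ sum to one. For the left-invariance $c^*P = c^*$, I would compute the $y$-th entry of the row vector $c^*P$, namely $\sum_{x} c(x) p(x,y) = \sum_{x\sim y} c(x)\frac{c_{xy}}{c(x)} = \sum_{x\sim y} c_{xy} = c(y)$, which is the $y$-th entry of $c^*$. The reversibility relation $c(x)p(x,y) = c_{xy} = c_{yx} = c(y)p(y,x)$ is what makes this work cleanly.

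\smallskip

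\emph{Part (3).} This is the one genuinely nontrivial inequality, and it is where I expect the only real work to lie: it is a pointwise Cauchy--Schwarz (or Jensen) estimate for the Markov operator. Fixing $x$, I would view $p(x,\cdot)$ as a probability measure on the neighbors $\{y : y\sim x\}$ (its total mass is $1$ by Part (2)). Then $(Pf)(x) = \sum_{y\sim x} p(x,y) f(y)$ is the expectation of $f$ under this measure, while $(P(f^2))(x) = \sum_{y\sim x} p(x,y) f(y)^2$ is the expectation of $f^2$. The inequality $P(f^2)(x) \geq (Pf)(x)^2$ is therefore just the statement that the second moment dominates the square of the first moment, equivalently that the variance is nonnegative; concretely it follows from Cauchy--Schwarz, $\bigl(\sum_{y} p(x,y) f(y)\bigr)^2 = \bigl(\sum_y \sqrt{p(x,y)}\cdot \sqrt{p(x,y)}\,f(y)\bigr)^2 \le \bigl(\sum_y p(x,y)\bigr)\bigl(\sum_y p(x,y) f(y)^2\bigr) = (P(f^2))(x)$, using $\sum_y p(x,y)=1$ in the last step. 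Since this holds at every vertex $x$, the operator inequality follows.
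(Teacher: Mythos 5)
Your proof is correct, and all three verifications go through exactly as written: the factorization $\Delta = c(I-P)$ follows from $c(x)=\sum_{y\sim x}c_{xy}$ and the positivity of $c(x)$ gives the equivalence with $Pf=f$; stochasticity and reversibility give part (2); and Cauchy--Schwarz (equivalently, Jensen) against the probability weights $p(x,\cdot)$ gives part (3). Note that the paper itself supplies no proof of this lemma --- it is stated as a collection of known facts, with the proofs explicitly deferred to the literature --- so there is nothing to compare against; your argument is precisely the standard one that the paper implicitly invokes.
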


\subsection{Bratteli diagrams}\label{subsect BD} In our study of harmonic functions on infinite weighted graph we will consider a special class of such graphs, namely, Bratteli diagrams. We will see that the intrinsic structure of Bratteli diagrams influences the properties of harmonic functions defined on them. We give the definition and properties of Bratteli diagrams in this subsection.

Speaking informally, we can define a Bratteli diagram $B = B(V,E)$ as a locally finite graph whose vertex set  $V$ is a disjoint union of finite subsets (levels) $(V_n : n \in \N_0)$ such that there are no edges between vertices of the same set $V_n$  (see Figure 1 as an example of a Bratteli diagram), in more detail:

\begin{figure}[htb!]\label{fig Ex BD}
\begin{center}
\includegraphics[scale=0.7]{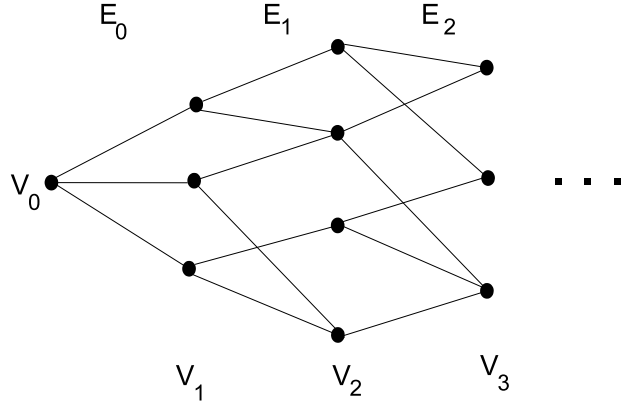}
\caption{Example of a Bratteli diagram $B =(V,E)$.}
\end{center}
\end{figure}

\begin{definition}\label{def BD}
A {\it Bratteli diagram} is a countably infinite graph $B=(V,E)$ such that the vertex
set $V =\bigcup_{i\geq 0}V_i$ and the edge set $E=\bigcup_{i\geq 0}E_i$
are partitioned into disjoint subsets $V_i$ and $E_i$ where

(i) $V_0=\{o\}$ is a single point called the top (or root) of $B$;

(ii) $V_i$ and $E_i$ are finite sets, $\forall i \geq 0$;

(iii) there exist $r : V \to E$ (range map $r$) and $s : V \to E$ (source map $s$), both from $E$ to $V$, such that $r(E_i)= V_{i+1}$, $s(E_i)= V_{i}$, and
$s^{-1}(x)\neq\emptyset$, $r^{-1}(x')\neq\emptyset$ for all $x\in V$
and $x'\in V\setminus V_0$.

The set of vertices $V_i$ is called the $i$-th level of the
diagram $B$.
\end{definition}

Given a Bratteli diagram $B$, the $n$-th {\em incidence matrix}
$A_{n}=(a^{(n)}_{x,y}),\ n\geq 0,$ is a $|V_{n}|\times |V_{n+1}|$
matrix such that $a^{(n)}_{x,y} = |\{e\in E_{n} : s(e) = x, r(e) = y\}|$
for  $x\in V_{n}$ and $y\in V_{n+1}$.

We say that a Bratteli diagram is {\em stationary} if $A_n = A$ for all $n\geq 1$.

A standard definition of a Bratteli diagram admits multiple edges between vertices of consecutive levels, i.e., $a^{(n)}_{x,y} \in \N_0$ for any vertices $x,y$ and any $n$.
On the other hand, every Bratteli diagram $B$ can be isomorphically transformed into a $0-1$ Bratteli diagram $B'$, i.e., every entry of incidence matrices of $B'$ is either 0 or 1 \cite{Herman_Putnam_Skau1992, Giordano_Putnam_Skau1995, Durand_survey2010}. Based on this observation, we will consider, without loss of generality, only 0-1 Bratteli diagrams.

\begin{definition} \label{def simple BD}
Let $B$ be  Bratteli diagram with incidence matrices $(A_n)$. Then $B$ is called {\em simple} if for any $n \geq 1$ there exists $m > n$ such that the product $A_n^m = A_n\cdots A_m >0$, i.e., all entries in $A_n^m$ are positive integers. Otherwise, $B$ is called non-simple.
\end{definition}

We note that independently of the simplicity of $B$ the graph $(V,E)$ defined by the Bratteli diagram $B$ is always connected because, for  every vertex $x\in V$, there is a finite path from the top of the diagram $o$ to $x$.

\begin{remark}[The path space $X_B$ of a Bratteli diagram $B$]:  It is customary  to assume that $X_B$ is a Cantor set, when a Bratteli diagram $B$ is considered in the context of dynamical systems. But we do not need this assumption later on, and we do not impose any restrictions to $X_B$.
\end{remark}

Let $G(V,E)$ be a connected locally finite graph. Under what conditions on $G$ can it be regarded as a Bratteli diagram? We consider here a few examples.

\begin{example}

(1) We first give an example of a  graph that cannot be represented as a Bratteli diagram; i.e., there is {\em no} system of finite sets $\{V_i\}_{i\in \N_0}$ having the properties listed in Definition \ref{def BD}. Consider a connected locally finite graph $G = (V,E)$ such that the following condition holds:
$$
\forall x\in V\ \ \exists y_1, y_2 \ \ \mbox{such\ that}\ \ y_1 \sim x, y_2\sim x \ \mbox{and} \ (y_1y_2) \in E.
$$
This means that there is no vertex in $V$ that can serve as the root of a Bratteli diagram because the set $V_1$ of the nearest neighbors always  has a  pair of vertices $y_1, y_2$ with $(y_1y_2) \in E$ .

(2) On the other hand, if $G$ is the graph, known as the   ``ladder'', then it can be represented as a Bratteli diagram, see Figure 2.

\begin{figure}[htb!]\label{fig ladder BD}
\begin{center}
\includegraphics[scale=0.8]{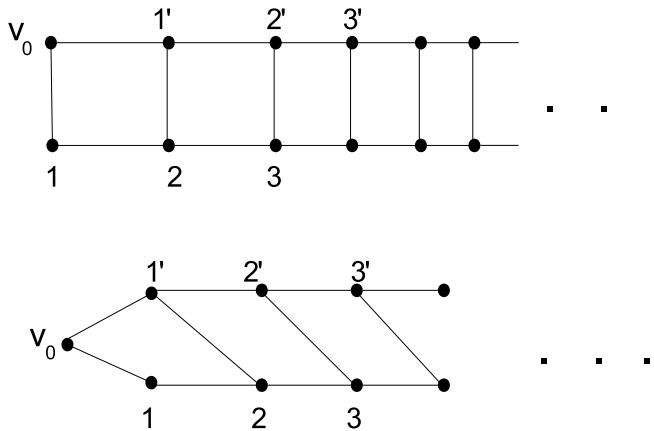}
\caption{Bratteli diagram from a ladder}
\end{center}
\end{figure}

(3) It is not difficult to give an example of a graph $G'$ that cannot be represented as a Bratteli diagram. Suppose that we start with the ``ladder'' $G$ and add new edges to $G$ by drawing the diagonals in every rectangle of Figure 2. Then we {\em claim} that $G'$ satisfies the condition given in (1) for any vertex $x \in V$. We obtain the same type of example if we start with a stationary  Bratteli diagram $B$ with incidence matrix $A =\left(
                       \begin{array}{cc}
                         1 & 1 \\
                         1 & 1 \\
                       \end{array}
                     \right)
$, and then we define a graph by  adding new edges that connect vertices $\{v_1(n), v_2(n)\} = V_n$ for every $n$. We conclude that such a graph is not a Bratteli diagram.

(4) One more example of a graph that can be viewed as a Bratteli diagram $B$ is the lattice $\Z^d$ for any $d\in \N$. To see this, we suppose that $d = 2$ for simplicity. Then we take $(0,0)$ as $V_0 = \{o\}$, and we set $V_n :=\{(x,y)\in \Z^d : |x| + |y| = n\}, n \geq 1$. Then $V_n$ is the $n$-th level of $B$. The set of edges $E_n$ between the levels $V_{n}$ and $V_{n+1}$ is inherited from the lattice. Clearly, we could take any vertex of $\Z^2$  as the root of the diagram.

\end{example}

For a finite path $\gamma(x, y)$ between $x, y \in V$, define its length $\ell(\gamma)$ as the number of edges from $E$ that form $\gamma$. Define
$$
\mathrm{dist} (x,y) = \min\{ \ell(\gamma) : \gamma(x, y)\}.
$$
Let $E(x, y)$ be the set of all finite paths $\gamma$ from $x$ to $y$.

\begin{proposition}\label{prop  graph is BD}
(1) A connected locally finite graph $G(V, E)$ has the structure of a Bratteli diagram if and only if:

(i) for every $ x \in V, \mathrm{deg}(x) \geq 2$,

(ii) there exists a vertex $x_0 \in V$ such that, for any $n \geq 1$, there are no edges between any vertices from the set $V_n :=\{y \in V : \mathrm{dist} (x_0, y) = n \}$.

(2) In general, the vertex $x_0$ is not unique: there are graphs  $G(V, E)$ that satisfy  (i) and (ii) for different vertices $x_0$ and  $y_0$ from $V$.
\end{proposition}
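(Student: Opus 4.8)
The plan is to grade the graph by distance from a suitable base vertex, writing throughout $S_n(x_0) := \{y : \mathrm{dist}(x_0,y) = n\}$ for the distance spheres and $B(x_0,n) := \{y : \mathrm{dist}(x_0,y) \le n\}$ for the balls; the key observation is that the grading of any Bratteli diagram must coincide with these spheres. For the \emph{only if} direction, suppose $G$ already carries a Bratteli structure with root $o$ and levels $(V_n)$. I would first show $V_n = S_n(o)$: since every edge of the diagram joins consecutive levels, any edge-path from $o$ to $y \in V_n$ changes the level index by $\pm 1$ at each step and so has length at least $n$, giving $\mathrm{dist}(o,y) \ge n$, while a descending path down the diagram gives $\mathrm{dist}(o,y) \le n$. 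With this identification, condition (ii) at $x_0 = o$ is exactly the absence of intra-level edges built into Definition \ref{def BD}. For (i), each non-root $x \in V_n$ has an incoming edge from $V_{n-1}$ (as $r^{-1}(x) \ne \emptyset$) and an outgoing edge to $V_{n+1}$ (as $s^{-1}(x) \ne \emptyset$), hence $\deg(x) \ge 2$; for the root $\deg(o) = |s^{-1}(o)|$, so the clause $\deg(o)\ge 2$ in (i) records the mild requirement that the top vertex have at least two children, as holds in all diagrams considered here.

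For the \emph{if} direction, fix a base point $x_0$ furnished by (ii) and set $V_n := S_n(x_0)$, so $V_0 = \{x_0\}$. The finiteness of each $V_n$ follows from local finiteness by induction: $B(x_0,0) = \{x_0\}$ is finite, and $B(x_0,n+1)$ is contained in the finite union $\bigcup_{y \in B(x_0,n)} \{z : z \sim y\}$, whence $V_n \subseteq B(x_0,n)$ is finite. For any edge $(uv)$ the triangle inequality gives $|\mathrm{dist}(x_0,u) - \mathrm{dist}(x_0,v)| \le 1$, and (ii) rules out the value $0$, so every edge joins some $V_n$ to $V_{n+1}$. Orienting each edge from its lower to its higher endpoint yields source and range maps with $s(E_n) = V_n$ and $r(E_n) = V_{n+1}$, and the penultimate vertex of a geodesic from $x_0$ to $x \in V_n$ $(n \ge 1)$ supplies an incoming edge, so $r^{-1}(x) \ne \emptyset$.

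The hard part will be the last axiom, $s^{-1}(x) \ne \emptyset$ for every $x$, i.e.\ that the grading has no sinks. Condition (i) does not force this by itself: a degree-two vertex can have both neighbors one level up --- for instance the far vertex of a $4$-cycle glued to the root --- and then it has no successor even though (i) and (ii) hold. The resolution I would pursue is that such a defect can be removed by the choice of base point: the claim to isolate as a lemma is that under (i) and (ii) one may select $x_0$ among the admissible base vertices so that the distance grading is simultaneously free of intra-level edges \emph{and} of sinks, and this is precisely where connectivity and infiniteness of $G$, together with (i) to exclude pendant leaves, genuinely enter. I expect this sink-freeness, and the accompanying correct choice of root, to be the only real obstacle; the remaining verifications are bookkeeping.

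Finally, for part (2) I would exhibit a vertex-transitive witness. Take $G = \Z^d$ and, for any base point, grade by the $\ell^1$-norm: moving along an edge changes $\|\cdot\|_1$ by exactly $\pm 1$, so no two adjacent vertices share a sphere and (ii) holds, while $\deg \equiv 2d \ge 2$ gives (i); moreover every vertex has a neighbor of strictly larger norm, so there are no sinks and $G$ is a genuine Bratteli diagram. Since the translations of $\Z^d$ are graph automorphisms, every vertex is an equally valid root, so any two distinct $x_0 \ne y_0$ both realize $G$ as a Bratteli diagram, establishing the non-uniqueness asserted in (2).
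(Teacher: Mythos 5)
You have located the real difficulty exactly where it is --- the no-sink axiom $s^{-1}(x)\neq\emptyset$ of Definition \ref{def BD} --- and your $4$-cycle example correctly shows that conditions (i) and (ii), with $x_0$ taken as root, do not imply it. (This is sharper than the paper itself, whose proof declares part (1) ``obvious,'' justified only by the remark that $\deg(x)\geq 2$ lets every finite path be continued to an infinite one; that remark is precisely the inference your example refutes.) However, the repair you propose is where the proposal breaks down: the lemma you leave unproven --- that under (i) and (ii) some choice of base point yields a distance grading with no intra-level edges and no sinks --- is false. Glue \emph{two} $4$-cycles to the root of an infinite ray: vertices $o,a,b,c,a',b',c',x_1,x_2,\dots$ with edges $(oa),(ac),(cb),(bo)$, $(oa'),(a'c'),(c'b'),(b'o)$, and $(ox_1),(x_1x_2),\dots$. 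This graph is connected, infinite, locally finite, every vertex has degree at least $2$, and the spheres about $x_0=o$, namely $\{a,b,a',b',x_1\}$, $\{c,c',x_2\}$, and $\{x_n\}$ for $n\geq 3$, contain no edges, so (i) and (ii) hold. But every path between the two cycles passes through $o$; hence if a base point $r$ lies in the first cycle or on the ray, then $\mathrm{dist}(r,c')=\mathrm{dist}(r,o)+2$ while both neighbors $a',b'$ of $c'$ are at distance $\mathrm{dist}(r,o)+1$, so $c'$ is a local maximum of $\mathrm{dist}(r,\cdot)$, i.e.\ a sink for the grading from $r$; symmetrically, $c$ is a sink whenever $r$ lies in the second cycle. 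Since (as you prove in the ``only if'' direction) any Bratteli structure forces the levels to be the distance spheres from the root, this graph carries no Bratteli structure whatsoever. So no choice of root can close the gap; in fact, read literally against Definition \ref{def BD}, the ``if'' direction of the proposition requires an additional hypothesis (for instance, that every vertex has a neighbor strictly farther from $x_0$), and the paper's ``obvious'' dismisses exactly this point.

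The remaining parts of your proposal are sound. The ``only if'' argument (levels are the distance spheres; in- and out-edges give $\deg\geq 2$ off the root) is correct, modulo the root-degree quibble you flag --- a diagram with $|V_1|=1$ has $\deg(o)=1$ --- which the paper ignores as well. For part (2), your $\Z^d$ witness is the same one the paper invokes; the paper then adds a more general construction (enlarging an arbitrary diagram by at most one vertex so that a prescribed $y_0$ becomes a second root), but the lattice example alone proves the statement as written. Note finally that your instinct behind the re-rooting lemma was not unreasonable: with a single $4$-cycle attached to a ray, re-rooting at the far vertex $c$ does repair the grading; it is only when two such defects must be repaired simultaneously that the strategy --- and with it the literal ``if'' direction --- collapses.
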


\begin{proof} The first part of the statement is obvious. We notice only that the requirement $\mathrm{deg}(x) \geq 2$ means that every finite path in this graph can be continued to an infinity path.

We have already mention that, for  the lattice $\Z^d$, any vertex can serve as the top (root) of a Bratteli diagram arising on $\Z^d$. We will show below that, beginning with an arbitrary diagram $B'$ with the top at $o$, and a fixed vertex $y_0\in V$, one can extend $B'$ to a new diagram $B$ by adding at most one new vertex and some edges so that both $o$ and $y_0$ will be the two roots for $B'$. To construct a graph with two different ``top'' vertices, suppose that we are given a Bratteli diagram $B' = (V', E')$ beginning at $o$. Assume, for simplicity, that  $y_0 \in V'_{1}$. Then the set $W_1$ of the nearest neighbors of $y_0$ consists of  $o$ and some vertices from $V'_2$ that form a subset $Y_2$. The other vertices from $V'_1$ are neighbors of $o$, and they are on the distance two from $y_0$. If it happens  that $Y_2 = V'_2$, then it is impossible to continue the path $(y_0, o, v_1)$ where $v_1 \in V'_1, v_1\neq y_0$. In this case we have to add a new vertex $z_1$ to the second level $V'_2$ and new edges between $V_1$ and $z_1$ to be able to construct the finite paths $(y_0, o, v_1, z_1)$ for any $v_1$. Simultaneously, we construct $W_2 = (V_1\setminus \{y_0\}) \cup Y_3$ where $Y_3$ is a subset of $V_3$ formed by the neighbors of vertices from $Y_2$. Again we repeat the described procedure if it happens that $Y_3 = V'_3$. Thus, we will produce a Bratteli diagram $B$ with two vertices  $o$ and $y_0$ serving as the roots of $B$. Regarding $o$ as the root of the diagram we obtain the new levels  $V_i \cup X_i$, and if  $y_0$ is considered as the root, then the corresponding levels are $W_i$. We notice that, by construction, there are no edges between the vertices from the same levels.  It is also clear that the same method works if one takes $y_0 \in V_m$ where $m >1$.
 \end{proof}

\begin{remark}
(1) On the other hand, it is easy to show that if we are not allowed to add new vertices, then there are Bratteli diagrams which can have only one root. To see this, take a simple Bratteli diagram $B$  such that, for any vertex $v \in V_1$ and for any vertex $w \in V_m$, there exists a finite path, $m >1$.  Then, if $w_0 \in V_m$, the vertices from $V_1$ are $(m-1)$-neighbors of $w_0$, that is $\mathrm{dist} (w_0, v) = m-1$. Hence, $o$ will be a ``sink'' for the graph whose paths start at $w_0$. This means that all paths of the form $(w_0, ... , v_1, o)$ cannot be continued.

(2) If $G = (V,E)$ admits two different representations by Bratteli diagrams, say $B = (V,E)$ and $B' = (V', E')$, then they have two different sequences of incidence matrices $(A_n)$ and $(A'_n)$ and conductance functions $c$ and $c'$. In Section \ref{sect HF on BD}, we will show how harmonic functions can be found in terms of the sequences of matrices associated to Bratteli diagrams.
\end{remark}

Let $G$ be a connected locally finite graph and $\omega \in \Omega$ be an infinite path, $\omega = (x_0, x_1, ..., )$ with $(x_ix_{i+1}) \in E$ for all $i$. We say that this path $\omega$ has no {\em self-intersections} if $x_n \notin \{x_0, x_1, ... , x_{n-1}\}$ for every $n$.

\begin{theorem}\label{thm BD in a graph}
Let $G = (V,E)$ be a connected locally finite graph that contains at least one path, $\omega$,  without self-intersection. Then $G$ contains a subgraph $H$ that is represented as a Bratteli diagram $B$ such that $\omega$ belongs to the path space $X_B$ of $B$. Moreover, the subgraph $H$ is maximal in the sense that if $H \subsetneq H'$, then $H'$ does not admit a representation as a Bratteli diagram. In particular, it can be the case  that $\omega$ is already the maximal subgraph $H$.
\end{theorem}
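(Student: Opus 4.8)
The plan is to first fix the root $o := x_0$ and record the structural rigidity of Bratteli diagrams that any subgraph of $G$ must respect. In any Bratteli diagram each edge joins two consecutive levels, so a path from the root to a level-$n$ vertex has length exactly $n$; consequently the level of a vertex is intrinsic, namely $\ell(v) = \mathrm{dist}(o,v)$ computed inside the diagram, and two vertices of the same level are never adjacent. Thus producing a Bratteli subgraph of $G$ amounts to choosing a set of vertices together with a leveling $\ell$ (with $\ell(x_n) = n$, so that $\omega$ becomes a path in $X_B$), keeping only those edges of $G$ that join consecutive levels (discarding every \emph{horizontal} edge between equal-level vertices), and guaranteeing that each vertex has a forward edge and each non-root vertex a backward edge. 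The first, and easiest, step is to note that the ray $\omega$ itself is already such a diagram: set $V_n = \{x_n\}$ and keep the single edge $(x_n x_{n+1})$. The no-self-intersection hypothesis makes the $x_n$ pairwise distinct, so the levels are genuine singletons, there are no horizontal edges, and every vertex has both a forward and (for $n \geq 1$) a backward edge. This establishes existence of at least one Bratteli subgraph containing $\omega$, and already exhibits the extreme situation in which $\omega$ is all of $H$.

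To obtain a \emph{maximal} such subgraph I would invoke Zorn's lemma. Let $\mathcal P$ be the set of Bratteli subgraphs $H \subseteq G$ with root $x_0$ that contain $\omega$ with $x_n$ at level $n$, partially ordered by inclusion of vertex and edge sets \emph{together with} agreement of the associated levelings. Then $\mathcal P \neq \emptyset$ since it contains $\omega$. Given a chain in $\mathcal P$, the levelings glue to a single function $\ell$ on the union $H_\infty$, and I claim $H_\infty \in \mathcal P$. Finiteness of each level $V_n$ is the key point, and it follows by induction from $V_0 = \{x_0\}$ and local finiteness: every level-$n$ vertex of $H_\infty$ has, in some member of the chain, a backward edge to a level-$(n-1)$ vertex, so $V_n(H_\infty)$ is contained in the finite neighborhood of the finite set $V_{n-1}(H_\infty)$. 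The remaining axioms are inherited from the members of the chain: $H_\infty$ has no horizontal edges because each member has none and $\ell$ is common, and the conditions $s^{-1}(x)\neq\emptyset$, $r^{-1}(x')\neq\emptyset$ hold because each vertex and each edge of $H_\infty$ already lies in some member where these are satisfied. Hence every chain has an upper bound and Zorn produces a maximal element $H = H^{*}$.

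It remains to upgrade maximality in $\mathcal P$ to the maximality asserted in the theorem, and this is where I expect the main obstacle. Suppose $H^{*} \subsetneq H'$ with $H'$ a Bratteli subgraph of $G$ having $\omega$ in its path space. Because the level of a vertex equals its distance to the root inside the diagram, the levelings of $H^{*}$ and $H'$ are both determined as graph distances; they agree along $\omega$ (each $x_n$ sits at level $n$ in both), and following edges of $H^{*}$, along which the level changes by exactly one in either diagram, forces the two levelings to agree on all of $V(H^{*})$ \emph{provided} the extra vertices of $H'$ do not create a shortcut lowering the distance of some old vertex to the root. Establishing this rigidity of the leveling under enlargement is the delicate part; once it is in place, $H'$ belongs to $\mathcal P$ and strictly dominates $H^{*}$, contradicting maximality, so no such $H'$ exists and $H^{*}$ is the desired maximal $H$, with $\omega \in X_{H^{*}}$ by construction. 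I would emphasize that this leveling rigidity, together with the forward-continuation requirement $s^{-1}(x)\neq\emptyset$ (which forbids adjoining a vertex unless it carries an entire infinite forward ray, and is precisely what defeats a naive level-by-level greedy construction), is the heart of the argument; it is exactly to avoid having to decide includability vertex-by-vertex that the abstract Zorn argument is convenient.
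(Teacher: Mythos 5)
Your existence step (the ray $\omega$ itself, with singleton levels, is already a Bratteli subgraph of $G$) and your Zorn argument (chains in $\mathcal P$ have upper bounds, with level-finiteness following by induction from local finiteness) are both sound. But the proof is incomplete at exactly the point you flag, and the gap is not merely ``delicate'' --- as your poset is set up, it cannot be closed. The theorem's maximality quantifies over \emph{every} Bratteli representation of a proper supergraph $H'$: any root, any leveling, with no requirement that $\omega$ lie in its path space at the prescribed levels. An element of $\mathcal P$, by contrast, is only maximal among diagrams rooted at $x_0$ whose leveling puts $x_n$ at level $n$. A proper supergraph $H'$ can violate all of these side conditions and still be a Bratteli diagram: adjoining new vertices can create a strictly shorter route from $x_0$ to some vertex of $H^{*}$, and since in any Bratteli diagram the level of a vertex equals its distance to the root, $H'$ is then forced to re-level part of $H^{*}$ (schematically: if $G$ contains both a path $x_0,x_1,x_2,x_3,x_4$ and a two-step path from $x_0$ to $x_4$ through a new vertex, then $H'$ can place $x_4$ at level $2$, so $\omega$ is no longer monotone in $H'$ and $H'\notin\mathcal P$). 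Such an $H'$ produces no contradiction with Zorn-maximality of $H^{*}$, so your final step never gets off the ground. The ``leveling rigidity'' you hope for is genuinely false, not just unproven: edge-consistency pins levels down only modulo $2$, and shortcuts through adjoined vertices can strictly lower them.

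The paper's proof uses a different mechanism that sidesteps this issue: maximality is obtained by exhaustion rather than by an abstract maximality principle. At each step the construction puts into $V_n$ \emph{all} candidate vertices (neighbors of $V_{n-1}$, which turn out to be exactly the vertices at distance $n$ from $x_0$) that carry no horizontal edge, discarding the rest; the leveling is thereby tied once and for all to graph distance from $x_0$. Maximality is then argued directly: any vertex $a$ with $\mathrm{dist}(x_0,a)=m$ that was excluded from $V_m$ is adjacent to a vertex of the constructed $V_m$, so $H\cup\{a\}$ acquires a horizontal edge and fails to be a Bratteli diagram. To repair your argument you would either need to emulate this greedy, take-everything-admissible construction, or else supplement Zorn with a classification of all possible re-rootings and re-levelings of supergraphs of $H^{*}$ --- and the latter is precisely what your proposal does not contain.
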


\begin{proof}
Let $\omega = (x_0, x_1, ... , x_n, ...)$ be an infinite path in $\Omega$ without self-intersections. We construct inductively  a Bratteli diagram $B$ whose root is $x_0$, i.e. $V_0 = \{x_0\}$, and $B$ satisfies the condition of the theorem. Define
$$
V'_1 := \{ y \in V : \mbox{dist} (x_0, y) = 1\}
$$
and
$$
V_1 := \{y \in V'_1 : (yy') \notin E, \ \ y'\in V'_1\}.
$$
Clearly, $x_1 \in V_1$. To define the next level $V_2$, we fix some $y \in V_1$ and consider $V'(y) :=\{ z\in V : z \notin V_0, \  \mbox{dist}(y, z) =1\}$. Set $V'_2 = \bigcup_{y \in V_1} V'(y)$. Then
$$
V_2 := \{z \in V'_2 : (zz') \notin E, \ \   z' \in V'_2\}.
$$
The set $V_2$ is disjoint with $V_0$ and $V_1$, and it is not empty because  $x_2\in V_2$. We see that $\mbox{dist}(x_0, x) = 2$ for every $z\in V_2$.

We now use induction. The described  procedure can be repeated word-for-word infinitely many times  for every $n$. Hence, we construct $H = \bigcup_n V_n$ as a subgraph of $G$ whose set of edges is inherited from $G$. By construction, $H$ is represented as a Bratteli diagram whose path space includes $\omega$, see Definition \ref{def BD} and Proposition \ref{prop graph is BD}.

To show that $H$ is a maximal subgraph, we suppose that $a\in V$ is such that $\mbox{dist}(x_0, a) = m$ and $a\notin V_m$. This means that if one adds $a$ to $H$, then there exists a vertex $b \in V_m$ such that $(ab) \in E$. Therefore, $H \cup \{a\}$ is not a Bratteli diagram.

 \end{proof}

\section{Energy space: monopoles and dipoles}\label{sect monopoles dipoles}

\subsection{Energy of harmonic function}
Denote by $\mathcal H_E$ the completion of functions $u : G\to \mathbb C$ with respect to the inner product
$$
\langle u, v \rangle_{\mathcal H_E} := \frac{1}{2} \sum_{x,y \in V} c_{xy}
(\ol{u(x)} - \ol{u(y)})( v(x) - v(y)).
$$
Thus,
\begin{equation}\label{norm in H_E}
\| u \|^2_{\mathcal H_E} := \frac{1}{2} \sum_{x,y \in V} c_{xy}
|u(x) - u(y)|^2.
\end{equation}
In other words, the Hilbert space $\mathcal H_E$ is formed by all functions $u$ for which the sum in (\ref{norm in H_E}) is finite. We call $\| u \|^2_{\mathcal H_E}$ the {\em energy} of the function $u$. Thus, elements of $\h_E$ are called functions of finite energy.

We remark that if $G$ is finite, then
\be\label{eq inner product finite graph}
\langle u, v \rangle_{\mathcal H_E} = \sum_{x\in G} \ol u(x)\Delta v(x)
\ee
and all harmonic functions of finite energy  are constant. Note that (\ref{eq inner product finite graph}) fails if $G$ is infinite. In the infinite case, there is a version of (\ref{eq inner product finite graph}), but it includes a second term on the right hand side that involves the boundary of $G$, see \cite{Jorgensen_Pearse2013}.
More generally, if a network $(G,c)$ is recurrent, then any harmonic function of finite energy is constant. Since $\Delta$ commutes with $f \to \ol f$, we may restrict attention to real valued functions.

The energy space $\mathcal H_E$ (see (\ref{eq def of norm energy})) for infinite graphs was  extensively studied in many papers, e.g. \cite{Jorgensen_Pearse2013, Jorgensen_Pearse2014, JorgensenTian2015}.  We mention here several notions and well known facts about the properties  of this Hilbert space.

It turns out that for harmonic functions one can find more convenient formulas for computing the energy. The following lemma is general.

\begin{lemma}\label{lem for energy of harm fns}
(i) Let $f \in \mathcal Harm_0$ on $(G,c)$. Then the energy norm can be found by the formulas:
\be\label{eq energy for harm}
\| f\|^2_{\mathcal H_E} = \frac{1}{2}\sum_{x \in V} c(x) ((Pf^2)(x) - f^2(x)),
\ee
and
\be\label{eq energy for harm 1}
\| f\|^2_{\mathcal H_E} = -\frac{1}{2} \sum_{x \in V} (\Delta f^2)(x).
\ee

(ii) If a given function $f$  on $V$ is harmonic off a finite set $F \subset V$, then it has finite energy if and only if the sums in (\ref{eq energy for harm}) and (\ref{eq energy for harm 1}) are finite.
\end{lemma}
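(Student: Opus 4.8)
The plan is to prove part (i) by a direct manipulation of the energy sum, using the identity $\Delta = c(I-P)$ from Lemma~\ref{lem several facts on P}(1) together with the harmonicity condition $Pf = f$. First I would expand the defining formula (\ref{norm in H_E}). Using the elementary algebraic identity $|f(x)-f(y)|^2 = f(x)^2 + f(y)^2 - 2f(x)f(y)$ and the symmetry $c_{xy} = c_{yx}$, I would rewrite
$$
\|f\|^2_{\mathcal H_E} = \frac{1}{2}\sum_{x,y\in V} c_{xy}\bigl(f(x)^2 - f(x)f(y)\bigr),
$$
where the two symmetric contributions of the $f(x)^2 + f(y)^2$ term have been combined. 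Collecting the inner sum over $y$ for fixed $x$, and recalling $c(x) = \sum_{y\sim x} c_{xy}$ and $p(x,y) = c_{xy}/c(x)$, the bracketed quantity becomes $c(x)f(x)^2 - c(x)f(x)(Pf)(x)$.

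The key step is then to invoke harmonicity. Since $f \in \mathcal Harm$, we have $Pf = f$, so $f(x)(Pf)(x) = f(x)^2$; but this alone would make the naive expression vanish, which signals that the correct route is to express everything through $f^2$ rather than $f$. I would instead compute $\frac12\sum_x c(x)\bigl((Pf^2)(x) - f^2(x)\bigr)$ directly and show it equals the energy. Expanding $(Pf^2)(x) = \sum_{y\sim x} p(x,y) f(y)^2$, we get
$$
\frac{1}{2}\sum_{x\in V} c(x)\bigl((Pf^2)(x) - f^2(x)\bigr)
= \frac{1}{2}\sum_{x\in V}\sum_{y\sim x} c_{xy}\bigl(f(y)^2 - f(x)^2\bigr),
$$
using $c(x)p(x,y) = c_{xy}$. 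The trick is now to symmetrize: because $f$ is harmonic, $\sum_{y\sim x} c_{xy}(f(y)-f(x)) = -(\Delta f)(x) = 0$ for every $x$, so adding the ``cross term'' $\sum_{x}\sum_{y\sim x} c_{xy}\,2f(x)(f(y)-f(x)) = 0$ costs nothing. Adding this zero quantity to the right-hand side turns $f(y)^2 - f(x)^2$ into $f(y)^2 - f(x)^2 + 2f(x)f(y) - 2f(x)^2 = -\bigl(f(x)-f(y)\bigr)^2$ up to a sign, after which relabelling and the symmetry of $c_{xy}$ recover $\|f\|^2_{\mathcal H_E}$. This establishes (\ref{eq energy for harm}). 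Formula (\ref{eq energy for harm 1}) follows immediately, since $(\Delta f^2)(x) = c(x)\bigl(f^2(x) - (Pf^2)(x)\bigr)$ by the factorization $\Delta = c(I-P)$, so $-\frac12\sum_x (\Delta f^2)(x) = \frac12\sum_x c(x)\bigl((Pf^2)(x) - f^2(x)\bigr)$.

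For part (ii), when $f$ is harmonic only off a finite set $F$, the cancellation $\sum_{y\sim x} c_{xy}(f(y)-f(x)) = 0$ fails at the finitely many vertices of $F$, but the discrepancy is a finite sum of finite quantities (each vertex has finitely many neighbors), hence bounded. Consequently the three expressions differ only by a finite correction, and finiteness of the energy is equivalent to finiteness of either sum. The main obstacle I anticipate is purely bookkeeping: justifying the rearrangement and symmetrization of the doubly-indexed sums, since for an infinite transient network these sums need not converge absolutely a priori. I would handle this by first carrying out the computation on finite exhaustions $V_N \uparrow V$ of the vertex set, where all manipulations are legitimate, and then passing to the limit, noting that convergence of $\|f\|^2_{\mathcal H_E}$ is exactly the hypothesis that makes the limiting identity meaningful.
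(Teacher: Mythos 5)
Your proof is correct and takes essentially the same approach as the paper's: both establish part (i) by completing the square pointwise at each vertex, using harmonicity $\sum_{y\sim x}c_{xy}(f(y)-f(x))=0$ to trade the cross term, and both obtain (\ref{eq energy for harm 1}) from $\Delta = c(I-P)$ applied to $f^2$; you merely run the computation from $\tfrac12\sum_x c(x)((Pf^2)(x)-f^2(x))$ toward the energy, while the paper runs it in the opposite direction, and your justification of (ii) via a finite correction supported on $F$ fills in what the paper dismisses as obvious. One cosmetic remark: in your completing-the-square step the zero cross term should be \emph{subtracted}, which yields $(f(x)-f(y))^2$ exactly (your displayed intermediate expression $f(y)^2 - f(x)^2 + 2f(x)f(y) - 2f(x)^2$ is garbled), but since that term vanishes identically for each fixed $x$ the argument is unaffected.
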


\begin{proof}
While the proof of (i) can be found in various papers, see e.g. \cite{Jorgensen_Pearse2011}, in order to highlight ideas, and for the benefit of readers, we include a brief sketch below.

We compute, for a harmonic function $f : V \to \R$,
\begin{eqnarray*}
  \| f\|^2_{\mathcal H_E}  &=& \frac{1}{2} \sum_{x,y \in V} c_{xy}
(f(x) - f(y))^2.  \\
   &=& \frac{1}{2} \sum_{x\in V} \left[c(x) f^2(x) - 2f(x)\sum_{y\sim x}c_{xy}f(y) + \sum_{y\sim x}c_{xy}f^2(y)\right]  \\
   &=&  \frac{1}{2} \sum_{x\in V} c(x) \left[ - f^2(x) + \sum_{y\sim x}p_{xy} f^2(y)\right]\\
   &=& \frac{1}{2}\sum_{x \in V} c(x) ((Pf^2)(x) - f^2(x)).
\end{eqnarray*}
For the other relation we have
\begin{eqnarray*}
  -\frac{1}{2} \sum_{x \in V} (\Delta f^2)(x) &=& - \sum_{x \in V}\sum_{y \in V} c_{xy} (f^2(x) - f^2(y)\\
   &=& - \frac{1}{2} \sum_{x \in V}\left[c(x) f^2(x) - \sum_{y \sim x}  c_{xy} f^2(y) \right]\\
   &=& - \frac{1}{2}\sum_{x \in V}  c(x) \left[ f^2(x) - \sum_{y \sim x}  \frac{c_{xy}}{c(x)} f^2(y) \right]\\
   &=& \frac{1}{2}\sum_{x \in V} c(x) ((Pf^2)(x) - f^2(x)).
\end{eqnarray*}

Statement (ii) is an obvious generalization of (i).

\end{proof}

\begin{corollary}\label{cor harm in l-1}
A harmonic function $f$ on $(G,c)$ has finite energy if and only if the function $x \mapsto P(f^2)(x) - f^2(x)$ belongs to $l^1(V,c)$.
\end{corollary}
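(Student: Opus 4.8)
The plan is to read the statement off directly from the energy identity already established in Lemma \ref{lem for energy of harm fns}; the only additional input needed is a sign observation showing that the relevant summand is nonnegative, after which ``finite energy'' and ``membership in $l^1(V,c)$'' become literally the same condition.

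First I would recall that $l^1(V,c)$ denotes the functions $u$ on $V$ with $\sum_{x\in V} c(x)|u(x)| < \infty$, and set $u(x) := (Pf^2)(x) - f^2(x)$. By the formula (\ref{eq energy for harm}) of Lemma \ref{lem for energy of harm fns}, a harmonic $f$ satisfies
$$
\|f\|^2_{\h_E} = \frac{1}{2}\sum_{x\in V} c(x)\,u(x),
$$
so the quantity controlling finite energy is precisely the $c$-weighted sum of $u$, and the corollary amounts to showing that this sum and the sum of $c(x)|u(x)|$ are finite together.

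The key step is therefore the pointwise inequality $u(x)\geq 0$ for all $x\in V$. This follows by combining the two relevant items of Lemma \ref{lem several facts on P}: harmonicity of $f$ gives $Pf=f$, whence $(Pf)^2 = f^2$, while the Jensen-type estimate $P(f^2)\geq (Pf)^2$ yields
$$
(Pf^2)(x) \geq (Pf)^2(x) = f^2(x),
$$
that is, $u(x)\geq 0$. With nonnegativity in hand one has $|u(x)| = u(x)$, so $\|u\|_{l^1(V,c)} = \sum_{x} c(x)u(x) = 2\|f\|^2_{\h_E}$, and the two expressions are finite simultaneously, proving the equivalence in both directions.

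I do not expect any genuine obstacle here: the content is entirely contained in Lemma \ref{lem for energy of harm fns} together with Lemma \ref{lem several facts on P}, and the passage between a convergent series of nonnegative terms and the series of its absolute values is automatic. The single point deserving explicit mention is the sign $u\geq 0$, since without it the identity for $\|f\|^2_{\h_E}$ would not directly control $\sum_x c(x)|u(x)|$; once the Jensen inequality and harmonicity are invoked, the statement is immediate.
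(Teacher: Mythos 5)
Your proof is correct and takes essentially the same route the paper intends: the corollary is read directly off Lemma \ref{lem for energy of harm fns}(ii), with the one genuinely needed observation being the sign $(Pf^2)(x) - f^2(x) \geq 0$, which you correctly derive from $Pf = f$ together with $P(f^2) \geq (Pf)^2$ (Lemma \ref{lem several facts on P}), so that finiteness of the energy sum and $l^1(V,c)$-membership coincide. As a minor remark, the nonnegativity is also visible without Jensen, since for harmonic $f$ the proof of the lemma gives the pointwise identity $c(x)\bigl((Pf^2)(x)-f^2(x)\bigr)=\sum_{y\sim x}c_{xy}\bigl(f(x)-f(y)\bigr)^2\geq 0$, which moreover shows both sides of the energy identity are well defined in $[0,\infty]$ even before finiteness is assumed.
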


We discuss now an application of the main result of \cite{Ancona_Lyons_Peres1999}. Suppose the following objects are given: a transient network $(G,c)$ with the matrix of transition probabilities $P = (p(x,y))$ that defines the Markov chain $(X_n)$ and the  probability path space $(\Omega, \mathbb P)$, As above, we denote by $(\Omega_x, \mathbb P_x)$ the probability measure space that is formed by all paths starting with $x\in V$ and the corresponding Markov measure. It was proved in \cite{Ancona_Lyons_Peres1999} that, given a function $f \in \h_E$, the sequence $(f\circ X_n)$ converges a.e. and in $L^2$ on the space $(\Omega_x, \mathbb P_x)$ for any $x$. Let $\wt f(\omega) = \lim_{n\to\infty} (f\circ X_n(\omega))$ defined a.e.

Let $\sigma : \Omega \to \Omega ; \sigma(\omega_0, \omega_1, ... ) = (\omega_1, \omega_2, ... )$ be the shift.

\begin{lemma}\label{lem measure P_x} In the above notation, $\Omega_x$ is represented as the disjoint union $\bigcup_{y\sim x} (\{x\} \times \Omega_y)$, and the measures $\mathbb P_x$ and $\mathbb P_y$ are related as follows:
\be\label{eq measures mathbb P_x and mathbb P_y}
d\mathbb P_x(\omega) = \sum_{y \sim x}  p(x,y) d\mathbb P_y(\sigma(\omega)),
\ee
for a.e. $\omega \in \Omega_x$.
\end{lemma}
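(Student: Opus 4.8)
The plan is to reduce the whole statement to a computation on cylinder sets, where the Markov measures $\mathbb P_x$ are defined outright, and then to propagate the resulting identity to all Borel sets by a standard uniqueness-of-measure argument. First I would establish the purely set-theoretic decomposition. Every $\omega \in \Omega_x$ satisfies $X_0(\omega) = x$, and its second coordinate $X_1(\omega)$ must be a neighbor of $x$, since $(X_0(\omega)\, X_1(\omega)) \in E$. Hence the sets $\Omega_x^{(y)} := \{\omega \in \Omega_x : X_1(\omega) = y\}$, indexed by $y \sim x$, partition $\Omega_x$. On each piece the shift acts as a bijection $\sigma : \Omega_x^{(y)} \to \Omega_y$: the path $\sigma(\omega) = (X_1(\omega), X_2(\omega), \dots)$ starts at $y$ and so lies in $\Omega_y$, while the inverse simply prepends $x$, sending $\eta = (y, \eta_1, \dots) \in \Omega_y$ to $(x, y, \eta_1, \dots) \in \Omega_x^{(y)}$. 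This identifies $\Omega_x^{(y)}$ with $\{x\}\times\Omega_y$ and yields the claimed disjoint union $\Omega_x = \bigcup_{y\sim x}(\{x\}\times\Omega_y)$; both $\sigma$ and the prepend map are measurable for the cylinder $\sigma$-algebras, so these are Borel isomorphisms.

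Next I would verify the measure identity on cylinders, which is exactly where the definition of $\mathbb P_x$ does the work. Fix $y \sim x$ and a cylinder $D = \{X_0 = y,\, X_1 = x_2, \dots, X_{n-1} = x_n\} \subset \Omega_y$. Its preimage inside the slice is $C := \sigma^{-1}(D)\cap\Omega_x^{(y)} = \{X_1 = y,\, X_2 = x_2, \dots, X_n = x_n\}$, and the defining product formula for $\mathbb P_x$ gives
$$
\mathbb P_x(C) = p(x,y)\,p(y,x_2)\cdots p(x_{n-1},x_n) = p(x,y)\,\mathbb P_y(D).
$$
Reading this as a statement about the pushforward, $\bigl(\sigma_*(\mathbb P_x|_{\Omega_x^{(y)}})\bigr)(D) = \mathbb P_x(C) = p(x,y)\,\mathbb P_y(D)$, so the two measures $\sigma_*(\mathbb P_x|_{\Omega_x^{(y)}})$ and $p(x,y)\,\mathbb P_y$ on $\Omega_y$ agree on every cylinder. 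A mass check is reassuring: both have total mass $p(x,y) = \mathbb P_x(\Omega_x^{(y)})$.

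I would then conclude by uniqueness. The cylinder sets form a $\pi$-system generating $\mathcal B(\Omega_y)$, and two finite measures agreeing on a generating $\pi$-system coincide on the full $\sigma$-algebra; hence $\sigma_*(\mathbb P_x|_{\Omega_x^{(y)}}) = p(x,y)\,\mathbb P_y$ as measures on $\Omega_y$. Summing over the finite neighbor set $\{y : y \sim x\}$ and transporting back through the prepend bijections gives, for every bounded Borel $g$ on $\Omega_x$,
$$
\int_{\Omega_x} g(\omega)\, d\mathbb P_x(\omega) = \sum_{y \sim x} p(x,y) \int_{\Omega_y} g\bigl((x,\eta)\bigr)\, d\mathbb P_y(\eta),
$$
which is precisely the content of the displayed identity when the informal symbol $d\mathbb P_y(\sigma(\omega))$ is understood as the contribution of the slice $\{x\}\times\Omega_y$. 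The main point requiring care is simply the bookkeeping of the non-globally-invertible shift: one must keep track that $\sigma$ is a bijection only after restricting to a single slice and that the prepend map is its genuine inverse there. Because the neighbor set is finite, there are no convergence or integrability subtleties, and the statement is in the end just the Markov property of $\mathbb P_x$ repackaged as a measure disintegration.
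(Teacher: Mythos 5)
Your proposal is correct and follows essentially the same route as the paper's own (very terse) proof: check the identity (\ref{eq measures mathbb P_x and mathbb P_y}) on cylinder sets one neighbor $y$ at a time, using the defining product formula for the Markov measures, and then extend to all Borel sets by a uniqueness-of-measure argument. Your write-up simply makes explicit what the paper leaves implicit, namely the slice-wise bijection $\sigma : \{\omega \in \Omega_x : X_1(\omega) = y\} \to \Omega_y$ and the $\pi$-system step that justifies the extension from cylinders to $\mathcal B(\Omega_y)$.
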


\begin{proof} It is obvious that if $y_1$ and $y_2$ are distinct neighbors of $x$, then the sets $(\{x\} \times \Omega_{y_1}$ and $(\{x\} \times \Omega_{y_2}$ do not intersect. Thus relation (\ref{eq measures mathbb P_x and mathbb P_y}) should be checked for a single neighbor $y$ of $x$. This result follows easily for $\omega$ belonging to any cylinder set $[x, y, \omega_2, ..., \omega_k]$, and then it can be extended to any Borel set.

\end{proof}

\begin{theorem}\label{thm HF from ALP}
Suppose that $\wt f \in L^1(\Omega_x, \mathbb P_x)$ for every $x \in V$. Then
$$
f(x) = \int_\Omega \wt f(\omega) d \mathbb P_x(\omega)
$$
is harmonic on $(G,c)$ if and only if
$$
\wt f(\omega) = \wt f(\sigma (\omega)).
$$
\end{theorem}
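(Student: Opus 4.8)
The plan is to prove both implications using the key measure-decomposition identity (\ref{eq measures mathbb P_x and mathbb P_y}) from Lemma \ref{lem measure P_x}, which relates $\mathbb P_x$ to the measures $\mathbb P_y$ on the shifted space via the transition probabilities $p(x,y)$. The central observation is that harmonicity of $f$ is equivalent to $Pf = f$ (by Lemma \ref{lem several facts on P}(1)), so I would recast the whole statement as: $f(x) = \int_\Omega \wt f \, d\mathbb P_x$ satisfies $Pf = f$ if and only if $\wt f$ is shift-invariant.

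First I would establish the direction ($\Leftarrow$). Assume $\wt f(\omega) = \wt f(\sigma(\omega))$ for a.e.\ $\omega$. Starting from the definition $f(x) = \int_{\Omega_x} \wt f(\omega)\, d\mathbb P_x(\omega)$, I would substitute the measure decomposition (\ref{eq measures mathbb P_x and mathbb P_y}) and the disjoint-union representation $\Omega_x = \bigcup_{y\sim x}(\{x\}\times\Omega_y)$. This gives
$$
f(x) = \sum_{y\sim x} p(x,y) \int_{\Omega_y} \wt f(\omega)\, d\mathbb P_y(\sigma\omega).
$$
Using shift-invariance $\wt f(\omega) = \wt f(\sigma\omega)$ lets me replace $\wt f(\omega)$ under the integral so that it reads as $\wt f$ evaluated on the shifted path, which is precisely $f(y) = \int_{\Omega_y}\wt f\, d\mathbb P_y$ after reindexing. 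Hence $f(x) = \sum_{y\sim x} p(x,y) f(y) = (Pf)(x)$, so $Pf = f$ and $f$ is harmonic.

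For the converse ($\Rightarrow$), I would argue that if $f$ is harmonic then $\wt f$ must be shift-invariant. The natural route is to use the convergence result of \cite{Ancona_Lyons_Peres1999} cited just above the theorem: $\wt f(\omega) = \lim_{n\to\infty} f(X_n(\omega))$ a.e. Since the shift $\sigma$ acts by $(X_n\circ\sigma)(\omega) = X_{n+1}(\omega)$, I would compute $\wt f(\sigma\omega) = \lim_n f(X_{n+1}(\omega)) = \lim_n f(X_n(\omega)) = \wt f(\omega)$, the middle equality being merely a reindexing of the same convergent sequence. This shows $\wt f$ is automatically shift-invariant whenever $\wt f$ arises as the a.e.\ tail limit of $f\circ X_n$ for a harmonic $f$; the harmonicity here guarantees (via $Pf=f$ and the martingale property of $(f\circ X_n)$) that this limit exists and equals the boundary value.

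The main obstacle I anticipate is the bookkeeping in the converse direction: one must be careful that $\wt f$ genuinely arises as the pathwise limit $\lim_n f\circ X_n$ and that this limit exists a.e.\ under the standing hypothesis $\wt f\in L^1(\Omega_x,\mathbb P_x)$. The forward implication leans on the fact that for harmonic $f$ the sequence $(f(X_n))$ is a martingale with respect to the natural filtration on $(\Omega_x,\mathbb P_x)$, so its a.e.\ limit is well-defined and shift reindexing is legitimate; invoking the \cite{Ancona_Lyons_Peres1999} convergence theorem supplies exactly this. The ($\Leftarrow$) direction is essentially a clean manipulation of (\ref{eq measures mathbb P_x and mathbb P_y}), so the real care is needed only in justifying the interchange of limit and integral (dominated convergence using the $L^1$ hypothesis) when identifying $\lim_n f(X_n)$ with the integral representation of $f$.
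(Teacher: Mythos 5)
Your ($\Leftarrow$) direction is correct and is essentially the paper's own argument: harmonicity is recast as $Pf=f$ via Lemma \ref{lem several facts on P}, and the decomposition $d\mathbb P_x(\omega) = \sum_{y\sim x}p(x,y)\,d\mathbb P_y(\sigma\omega)$ of Lemma \ref{lem measure P_x} turns $\sum_{y\sim x}p(x,y)f(y)$ into $\int_{\Omega_x}\wt f(\sigma\omega)\,d\mathbb P_x(\omega)$, which equals $f(x)$ once $\wt f = \wt f\circ\sigma$. The paper runs this single computation as a chain of equivalences, so that harmonicity of $f$ is identified with the integral identity $\int_{\Omega_x}\wt f\,d\mathbb P_x = \int_{\Omega_x}\wt f\circ\sigma\,d\mathbb P_x$ for every $x$, and then invokes the assumption $\wt f=\wt f\circ\sigma$ to close the argument.

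Your ($\Rightarrow$) direction, however, has a genuine gap. You base it on the identification $\wt f(\omega) = \lim_n f(X_n(\omega))$ a.e., where $f$ is the function appearing in the theorem. But in the theorem the logical order is reversed: $\wt f$ is the primitive object, an arbitrary function merely assumed to lie in $L^1(\Omega_x,\mathbb P_x)$ for every $x$, and $f$ is \emph{defined} from $\wt f$ by $f(x)=\int_{\Omega_x}\wt f\,d\mathbb P_x$. Nothing in the hypotheses says that $\wt f$ is the pathwise tail limit of this $f$; the result of \cite{Ancona_Lyons_Peres1999} quoted before the statement produces such a limit for a \emph{given} finite-energy function, which is not the situation here. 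Moreover, the identification cannot be recovered from harmonicity, because the map $\wt f\mapsto f$ is far from injective: if $h(\omega)=\eta\bigl(X_0(\omega),X_1(\omega)\bigr)$ with $\sum_{y\sim x}p(x,y)\eta(x,y)=0$ for all $x$, then $\int_{\Omega_x}h\,d\mathbb P_x=0$ for every $x$, so replacing $\wt f$ by $\wt f+h$ leaves $f$ (and its harmonicity) untouched while generically destroying shift-invariance. This also explains the red flag in your own argument: the reindexing $\lim_n f(X_{n+1})=\lim_n f(X_n)$ makes \emph{any} pathwise tail limit shift-invariant, with harmonicity playing no role, so your converse would ``prove'' shift-invariance for representatives $\wt f$ for which it fails. (Even granting that $(f\circ X_n)$ is a martingale when $f$ is harmonic, martingale convergence needs an $L^1$-boundedness hypothesis you do not have, and its limit $g$ satisfies $\int g\,d\mathbb P_x = f(x)$ only under uniform integrability --- and still need not coincide with $\wt f$.) The paper's converse avoids this pathwise detour entirely by staying with the equivalence chain; you should note, though, that passing from the resulting integral identity back to the pointwise statement $\wt f=\wt f\circ\sigma$ is itself a nontrivial step which the paper leaves implicit, so the converse is delicate in both treatments.
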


\begin{proof} We need to show that $f = Pf$ if and only the above condition holds. We compute
\begin{eqnarray*}
  f(x) = \sum_{y \sim x} p(x,y) f(y) & \Longleftrightarrow  & \\
  \int_{\Omega_x} \wt f(\omega) d\mathbb P_x(\omega) = \sum_{y \sim x} p(x,y) \int_{\Omega_y} \wt f (\gamma) d\mathbb P_y(\gamma) & \Longleftrightarrow  & \\
  \int_{\Omega_x} \wt f(\omega) d\mathbb P_x(\omega) = \int_{\Omega_x} \wt f(\sigma(\omega)) \sum_{y \sim x} p(x,y) d\mathbb P_y(\sigma(\omega))
\end{eqnarray*}
and the last formula is an  identity because of Lemma \ref{lem measure P_x} and the assumption of the theorem.

\end{proof}

\subsection{Properties of monopoles and dipoles}\label{rem dipole monopole}

Let $x, y$ be arbitrary distinct vertices of an electrical network $(G,c)$. Define the linear functional  $L = L_{xy} : \mathcal H_E \to \R$ by setting $L(u) = u(x) - u(y)$.
It can be shown using connectedness of $G$ that $|L(u)| \leq k\|u\|_{\mathcal H_E}$ where $k$ is a constant depending on $x$ and $y$. By the Riesz theorem, there exists a unique element $v_{xy} \in \mathcal H_E$ such that
\be\label{eq dipole v_xy}
\langle v_{xy}, u \rangle_{\h_E} = u(x) - u(y).
\ee
This element $v_{xy}$ is called a {\em dipole}.  If $o$ is a fixed vertex from $V$, we will use the notation $v_x$ instead of $v_{xo}$. Since, for any $u$, $\langle v_{xy} , u\rangle = \langle v_{x} , u\rangle - \langle v_{y} , u\rangle$, we see that $v_{xy} = v_x - v_y$, and it suffices to study function $v_x, x \in V$, only.  We notice that for any network $(G,c)$ a dipole $v_x$ is always in $\h_E$, and moreover the set $\{v_x : x \in V\}$ is dense in $\h_E$.

The uniqueness of the dipole $v_{xy}$ in $\h_E$ allows one to define a distance in $V$ (see, e.g. \cite{Jorgensen_Pearse2011}):

\begin{lemma}\label{lem resistance dist}
Set, for any $x,y \in V$,
$$
\mathrm{dist}(x, y) = ||v_{xy}||^2_{\h_E}.
$$
Then $\mathrm{dist}(x, y)$ is a metric on $V$, which is called the resistance distance.
\end{lemma}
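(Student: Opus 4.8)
The plan is to verify the three axioms of a metric for $d(x,y) := \|v_{xy}\|^2_{\h_E}$, relying throughout on the defining relation (\ref{eq dipole v_xy}), namely $\langle v_{xy}, u\rangle_{\h_E} = u(x) - u(y)$, and on its consequence $\Delta v_{xy} = \delta_x - \delta_y$ noted after (\ref{eq dipoles}). The basic structural fact I would record first is additivity of dipoles: for any $x,y,z \in V$ and every $u \in \h_E$,
$$
\langle v_{xz}, u\rangle_{\h_E} = u(x) - u(z) = \big(u(x) - u(y)\big) + \big(u(y) - u(z)\big) = \langle v_{xy} + v_{yz}, u\rangle_{\h_E},
$$
so by uniqueness of the Riesz representative, $v_{xz} = v_{xy} + v_{yz}$; in particular $v_{xx} = 0$ and $v_{yx} = -v_{xy}$.

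Symmetry and the definiteness conditions are then immediate. From $v_{xx}=0$ we get $d(x,x)=0$, and from $v_{yx} = -v_{xy}$ we get $d(y,x) = \|{-v_{xy}}\|^2_{\h_E} = d(x,y)$. For definiteness, suppose $x \neq y$ but $d(x,y)=0$; then $v_{xy}$ is the zero element of $\h_E$, whence $u(x) - u(y) = \langle v_{xy}, u\rangle_{\h_E} = 0$ for \emph{every} $u \in \h_E$. This is impossible, because the indicator function $\delta_x$ lies in $\h_E$ (indeed $\|\delta_x\|^2_{\h_E} = c(x) < \infty$) and satisfies $\delta_x(x) - \delta_x(y) = 1$. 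Hence $d(x,y) > 0$ for $x \neq y$.

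The heart of the lemma is the triangle inequality $d(x,z) \le d(x,y) + d(y,z)$. Expanding the energy norm of $v_{xz} = v_{xy} + v_{yz}$ gives
$$
d(x,z) = d(x,y) + d(y,z) + 2\langle v_{xy}, v_{yz}\rangle_{\h_E},
$$
so the inequality is \emph{equivalent} to $\langle v_{xy}, v_{yz}\rangle_{\h_E} \le 0$. Applying (\ref{eq dipole v_xy}) to $u = v_{xy}$ rewrites this inner product as $v_{xy}(y) - v_{xy}(z)$, and so it suffices to prove the \emph{minimum principle} for the dipole $v_{xy}$: that it attains its least value at its sink $y$, i.e. $v_{xy}(y) \le v_{xy}(a)$ for all $a \in V$.

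This minimum principle is where the real work lies. Since $\Delta v_{xy} = \delta_x - \delta_y$, the function $v_{xy}$ is harmonic on $V \setminus \{x,y\}$, and $(\Delta v_{xy})(x) = +1 > 0$ shows $v_{xy}(x)$ strictly exceeds the weighted average of its neighbors' values, so $x$ cannot be a point where the infimum is attained; moreover, if the infimum were attained at some $a_0 \notin \{x,y\}$, then harmonicity at $a_0$ together with connectedness of $G$ would, by the maximum principle applied to $-v_{xy}$, force $v_{xy}$ to be constant, contradicting $\Delta v_{xy} \neq 0$. Thus the infimum, once it is known to be attained, can only sit at $y$. The genuine obstacle is precisely to prevent the infimum from escaping to infinity on the infinite graph $G$. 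In the transient setting this is controlled directly by the representation $v_{xy}(a) = G(a,x)/c(x) - G(a,y)/c(y)$: since $G(a,\cdot) \to 0$ as $a \to \infty$, the function $v_{xy}$ vanishes at infinity and its extrema are confined to a finite set, after which the interior argument applies. In full generality I would instead exhaust $G$ by finite connected subnetworks $G_k \uparrow G$, use the elementary fact that effective resistance is a metric on each finite $G_k$, and pass to the limit, invoking convergence of the finite-network dipoles to $v_{xy}$ in $\h_E$ and pointwise; making this limit rigorous, and checking that it transfers the triangle inequality to $(G,c)$, is the technical crux of the proof.
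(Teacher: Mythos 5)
Your handling of the first three metric axioms is correct and complete: additivity $v_{xz}=v_{xy}+v_{yz}$ via uniqueness of the Riesz representative, symmetry, and strict positivity using $\delta_x\in\h_E$ with $\|\delta_x\|^2_{\h_E}=c(x)$ are all sound. (For reference, the paper itself offers no internal proof of this lemma; it simply cites \cite{Jorgensen_Pearse2011}, so there is no argument of the paper's to compare against step by step.) Your reduction of the triangle inequality to $\langle v_{xy},v_{yz}\rangle_{\h_E}\le 0$, i.e.\ to the minimum principle $v_{xy}(y)\le v_{xy}(a)$ for all $a\in V$, is also exactly the right move; that is where the whole content of the lemma sits.

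But that minimum principle is precisely what you do not prove, and both of your attempts at it fail. In the transient case you invoke ``$G(a,\cdot)\to 0$ as $a\to\infty$'', which is false for general transient networks: attach a copy of $\mathbb{Z}_{+}$ with unit conductances (a recurrent ray) to a vertex $x$ of $\mathbb{Z}^3$. The resulting network is still transient, yet every vertex $a$ on the ray must walk down the ray to escape, so $F(a,x)=1$ and hence $G(a,x)=G(x,x)$ does not decay along the ray; no vanishing-at-infinity argument confines the extrema of $v_{xy}$ to a finite set. In the general case you openly defer the crux (convergence of finite-subnetwork dipoles to $v_{xy}$ and transfer of the inequality in the limit); note moreover that this exhaustion produces the \emph{free} resistance, whose identification with $\|v_{xy}\|^2_{\h_E}$ is itself a theorem, not a given. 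The standard way to close the gap, with no transience hypothesis and no exhaustion, is the contraction (Markov) property of the energy form: set $M=v_{xy}(x)$, $m=v_{xy}(y)$ (note $M-m=\langle v_{xy},v_{xy}\rangle_{\h_E}\ge 0$) and let $u=\min\{M,\max\{v_{xy},m\}\}$ be the truncation of $v_{xy}$ to $[m,M]$. Truncation contracts every edge difference, so $\|u\|_{\h_E}\le\|v_{xy}\|_{\h_E}$, while still $u(x)-u(y)=M-m$. Then
$$
\|v_{xy}\|^2_{\h_E}=u(x)-u(y)=\langle v_{xy},u\rangle_{\h_E}\le \|v_{xy}\|_{\h_E}\,\|u\|_{\h_E}\le \|v_{xy}\|^2_{\h_E},
$$
forcing equality in Cauchy--Schwarz, hence $u=v_{xy}$ in $\h_E$ and pointwise (match values at $x$). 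Thus $v_{xy}$ takes values in $[m,M]$, which is the minimum principle you need, and your reduction then finishes the proof.
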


 By definition, a {\em monopole} at $x \in V$ is an element $w_x \in \h_E$  such that
\be\label{eq-def for w_x}
\langle w_{x}, u \rangle_{\h_E} = u(x)
\ee
for any $u \in \mathcal H_E$. In contrast to case of dipoles, there are networks  $(G,c)$ that do not have monopoles in $\h_E$. In general, the following classical result holds.

\begin{lemma}\label{lem transience - monopoles}
An electrical network $(G,c)$ is transient if and only if there exists a monopole in $\h_E$.
\end{lemma}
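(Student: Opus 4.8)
The plan is to realize the monopole as a limit of solutions of finite Dirichlet problems and to read off transience from the growth of their energies. Fix the base vertex $o$, and recall that by (\ref{eq monopoles}) and (\ref{eq-def for w_x}) a monopole at $o$ is precisely a function $w \in \h_E$ with $\Delta w = \delta_o$; via the discrete Green identity (\ref{eq inner product finite graph}) this is the $\h_E$-version of the assertion that the functional $u \mapsto u(o)$ is bounded. First I would fix an exhaustion $G_1 \subset G_2 \subset \cdots$ of $G$ by finite connected subgraphs with $\bigcup_n G_n = V$, and on each $G_n$ solve the finite boundary problem $\Delta w^{(n)} = \delta_o$ on the interior $W_n$ with $w^{(n)} = 0$ on $\partial W_n$. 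This has a unique solution by the unique solvability of (\ref{eq Dirichlet probl}) on finite $W$, and the maximum principle gives $w^{(n)} \geq 0$ together with $w^{(n)} \leq w^{(n+1)}$ pointwise.

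The second step is the key identity. Applying (\ref{eq inner product finite graph}) to the finite network $G_n$ with $u = v = w^{(n)}$, the grounding $w^{(n)}|_{\partial W_n} = 0$ annihilates the boundary contribution and leaves $\|w^{(n)}\|^2_{\h_E} = \sum_x w^{(n)}(x)\,\Delta w^{(n)}(x) = w^{(n)}(o)$. Using $\Delta = c(I-P)$ from Lemma \ref{lem several facts on P} one rewrites $w^{(n)} = \sum_{k\geq 0} P_n^k(\delta_o/c)$, where $P_n$ is the substochastic kernel killed on exiting $W_n$, so that $c(o)\,w^{(n)}(o) = G_n(o,o) = \sum_{k\geq 0} p_n^{(k)}(o,o)$ is the Green's function of the killed walk. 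As $n \to \infty$ these increase to $G(o,o) = \sum_{k\geq 0} p^{(k)}(o,o)$, which is finite exactly when the walk is transient.

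Both directions now follow from a single projection observation. For a finitely supported $u$ vanishing outside $W_n$, Green's identity gives $\langle w^{(n)}, u\rangle_{\h_E} = u(o)$, and the same holds for $\langle w^{(m)}, u\rangle_{\h_E}$ when $m > n$ and for any global monopole $w$; hence $w^{(n)}$ is the $\h_E$-orthogonal projection of both $w^{(m)}$ and $w$ onto the subspace of such $u$. If a monopole $w \in \h_E$ exists, this forces $\|w^{(n)}\|^2_{\h_E} \leq \|w\|^2_{\h_E}$, so $G(o,o) < \infty$ and the network is transient. Conversely, if the network is transient then $\sup_n \|w^{(n)}\|^2_{\h_E} = G(o,o)/c(o) < \infty$, and the projection relation yields the Pythagorean identity $\|w^{(m)}\|^2_{\h_E} = \|w^{(n)}\|^2_{\h_E} + \|w^{(m)} - w^{(n)}\|^2_{\h_E}$, so $(w^{(n)})$ is Cauchy and converges in $\h_E$ to some $w$. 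Since energy convergence controls every edge difference, $w^{(n)} \to w$ pointwise up to a constant, and passing to the limit in $\Delta w^{(n)} = \delta_o$ (a finite sum at each vertex) shows $\Delta w = \delta_o$, i.e. $w$ is a monopole.

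The main obstacle is upgrading the elementary pointwise monotone convergence of the $w^{(n)}$ to convergence in the energy norm, which is what actually produces an element of $\h_E$; this is exactly where the identification of $w^{(n)}$ as an orthogonal projection, and the resulting Pythagorean relation, does the work, and it simultaneously delivers the a priori bound $\|w^{(n)}\|_{\h_E} \leq \|w\|_{\h_E}$ used in the other direction. The other delicate point is the bookkeeping of boundary terms: one must check that grounding $w^{(n)}$ on $\partial W_n$ genuinely kills the boundary sum in (\ref{eq inner product finite graph}), and that for interior vertices the finite-graph Laplacian agrees with the restriction of $\Delta$ on $G$, so that the clean identity $\|w^{(n)}\|^2_{\h_E} = w^{(n)}(o)$ holds.
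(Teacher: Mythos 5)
Your proof is correct, but it is worth noting that the paper itself does not prove Lemma \ref{lem transience - monopoles} at all: it cites \cite{Nash-Williams1959} and \cite[Theorem 2.12]{Woess2000}, and only later, in Theorem \ref{thm formula for monopole w_x}, verifies directly that $w_x(a)=G(a,x)/c(x)$ is a monopole when the network is transient. So you have supplied a self-contained argument for a statement the paper outsources, and your route is the classical exhaustion one: solve grounded finite Dirichlet problems $w^{(n)}$, identify $c(o)\,w^{(n)}(o)=G_n(o,o)$ with the Green's function of the killed walk, and use the fact that $w^{(n)}$ is the $\h_E$-orthogonal projection of any global monopole (and of the later $w^{(m)}$) onto the functions supported in $W_n$ to get both the a priori bound $\|w^{(n)}\|^2\le\|w\|^2$ (monopole $\Rightarrow$ $G(o,o)<\infty$ $\Rightarrow$ transient) and the Pythagorean Cauchy estimate (transient $\Rightarrow$ the $w^{(n)}$ converge in energy to a monopole). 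The two approaches complement each other: yours proves both implications at once and produces the monopole as an energy-norm limit with the sharp identity $\|w\|^2_{\h_E}=G(o,o)/c(o)$, needing only the finite Dirichlet problem and elementary Hilbert-space geometry, while the paper's Green's-function computation gives the closed-form monopole immediately but covers only the direction ``transient $\Rightarrow$ monopole'' and leans on cited facts (finiteness of energy of $G(\cdot,x)$) for the analytic content. Note also that your limit is the same function: $w^{(n)}=G_n(\cdot,o)/c(o)$ increases to $G(\cdot,o)/c(o)$, i.e., exactly the formula of Theorem \ref{thm formula for monopole w_x}. The points you flag as delicate (killing the boundary term in the analogue of (\ref{eq inner product finite graph}) for the grounded extension, and matching the finite-graph Laplacian with $\Delta$ at interior vertices) are genuine but routine, since all the relevant sums are finite for finitely supported functions on a locally finite graph.
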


In this connection we refer to the paper \cite{Nash-Williams1959} where it is proved that  transience is equivalent to the existence of a flow to infinity of finite energy. We also refer to \cite[Theorem 2.12]{Woess2000}, where this and other relevant results are discussed.

The roles and properties of dipoles and monopoles can be seen from the following statement.

\begin{proposition}\label{prop prop of energy space}
(1) Let $(G,c)$ be a weighted graph and $o$ a fixed vertex from $V$, and let $v_{x} \in \mathcal H_E$ be a dipole corresponding to a vertex $x\in V$. Then
\be\label{eq dipole for v_x}
\Delta v_x = \delta_x - \delta_o.
\ee
More generally, the dipole $v_{xy}$ satisfies the equation $\Delta v_{xy} = \delta_x - \delta_y$. The set  $\mathrm{span} \{v_x\}$ is dense in $\mathcal H_E$.

(2) For any $x\in V$, the Dirac function $\delta_x$ is in $\h_E$, and
$$
c(x) v_x - \sum_{y\sim x} c_{xy}v_y = \delta_x.
$$

(3) If $w_x$ is a monopole corresponding to $x \in V$, then $\Delta w_x = \delta_x$. Moreover $v_{xy} = w_x - w_y$, $x,y \in V$; thus if a monopole $w_{x_0}$ exists as an element of $\h_E$ for some $x_0$, then $w_x$ exists in $\h_E$ for every vertex $x$.

(4) $\mathcal H_E = \mathcal Fin \oplus \mathcal Harm_0$ where $\mathcal Fin$ is the closure of $\mathrm{span}\{\delta_x\}$ with respect to the norm $\|\cdot\|_{\mathcal H_E}$ and $\mathcal Harm_0 = \mathcal Harm \cap \h_E$.
\end{proposition}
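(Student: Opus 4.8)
The unifying tool for all four parts is a summation-by-parts (Green-type) identity: for every $f \in \h_E$ and every $z \in V$,
\be
\langle f, \delta_z \rangle_{\h_E} = (\Delta f)(z).
\ee
I would establish this first by expanding the energy inner product $\tfrac12\sum_{a,b} c_{ab}(f(a)-f(b))(\delta_z(a)-\delta_z(b))$; since $\delta_z(a) - \delta_z(b)$ vanishes unless exactly one of $a,b$ equals $z$, the double sum collapses to the finite sum $\sum_{y\sim z} c_{zy}(f(z)-f(y)) = (\Delta f)(z)$, with no convergence issue because only the finitely many edges incident to $z$ contribute. The same expansion applied to $f = \delta_x$ gives $\|\delta_x\|_{\h_E}^2 = c(x) < \infty$, which already proves the first assertion of (2), namely $\delta_x \in \h_E$.

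With this identity, parts (1) and (3) follow immediately. For (1), taking $f = v_x$ and using the defining relation (\ref{eq dipole v_xy}) with $u = \delta_z$ gives $(\Delta v_x)(z) = \langle v_x, \delta_z\rangle_{\h_E} = \delta_z(x) - \delta_z(o) = (\delta_x - \delta_o)(z)$ for every $z$, i.e. $\Delta v_x = \delta_x - \delta_o$; since $v_{xy} = v_x - v_y$ (already noted above), $\Delta v_{xy} = \delta_x - \delta_y$. For the density of $\mathrm{span}\{v_x\}$ I would show its orthogonal complement is trivial: if $\langle v_x, u\rangle_{\h_E} = u(x) - u(o) = 0$ for all $x$, then $u$ is constant, hence $u = 0$ in $\h_E$. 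Part (3) is identical in form: $f = w_x$ together with (\ref{eq-def for w_x}) gives $(\Delta w_x)(z) = \langle w_x, \delta_z\rangle_{\h_E} = \delta_z(x) = \delta_x(z)$, so $\Delta w_x = \delta_x$; the uniqueness in the Riesz representation forces $v_{xy} = w_x - w_y$; and if $w_{x_0}\in\h_E$ exists, then $w_x := w_{x_0} + v_{x x_0}$ satisfies $\langle w_x, u\rangle_{\h_E} = u(x_0) + (u(x) - u(x_0)) = u(x)$, exhibiting a monopole at every $x$.

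For the operator identity in (2), I would test $c(x)v_x - \sum_{y\sim x} c_{xy} v_y$ against an arbitrary $u \in \h_E$: linearity and (\ref{eq dipole v_xy}) collapse the left side to $c(x)u(x) - \sum_{y\sim x} c_{xy} u(y) = (\Delta u)(x)$, the $u(o)$ terms cancelling because $\sum_{y\sim x} c_{xy} = c(x)$, while the Green identity gives $\langle \delta_x, u\rangle_{\h_E} = (\Delta u)(x)$ as well; equality for all $u$ yields the claim as elements of $\h_E$. Finally, part (4) is the Royden-type orthogonal decomposition: the Green identity shows $u \in \mathcal Fin^{\perp}$ iff $\langle u, \delta_x\rangle_{\h_E} = (\Delta u)(x) = 0$ for all $x$, i.e. iff $u \in \mathcal Harm \cap \h_E = \mathcal Harm_0$; thus $\mathcal Fin^{\perp} = \mathcal Harm_0$, and $\h_E = \mathcal Fin \oplus \mathcal Harm_0$ follows from the projection theorem.

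I do not expect a serious obstacle, since the Green identity reduces every step to a finite computation; the one point requiring care is the justification that $\langle f, \delta_z\rangle_{\h_E} = (\Delta f)(z)$ holds for \emph{all} $f\in\h_E$ with no boundary correction. This is exactly where the finite support of the gradient of $\delta_z$ is used, and it is what distinguishes these local identities, valid for every network, from the global identity (\ref{eq inner product finite graph}), which fails in the infinite case.
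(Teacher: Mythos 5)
Your proof is correct, and it is a genuine argument where the paper offers none: the paper's entire ``proof'' of this proposition is a deferral to the references \cite{Jorgensen_Pearse2011, Jorgensen_Pearse2013}. Your unifying lemma, the local Green identity $\langle u,\delta_z\rangle_{\h_E}=(\Delta u)(z)$ for every $u\in\h_E$, is precisely the reproducing-kernel mechanism underlying those references, so in effect you have reconstructed inside the paper the standard argument it outsources. Note also that the same kind of computation appears later in the paper itself, in the proof of Theorem \ref{thm formula for monopole w_x}, where $\langle w_x,f\rangle_{\h_E}$ is evaluated by summation by parts over all of $V$; your version is cleaner at exactly this point, because you only ever pair a finite-energy function against $\delta_z$, whose ``gradient'' is supported on the finitely many edges incident to $z$, so the collapse of the double sum needs no rearrangement or Fubini-type justification, whereas the paper's later computation interchanges infinite sums without comment. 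What your route buys: all four parts follow from one finite, local identity valid on every network (no transience or boundary hypotheses), it yields $\|\delta_x\|^2_{\h_E}=c(x)$ for free, and part (4) becomes a one-line application of the projection theorem once $\mathcal Fin^{\perp}=\mathcal Harm_0$ is identified. One small point to keep in mind: since elements of $\h_E$ are classes modulo constants, the point evaluation $u\mapsto u(x)$ used in the paper's definition (\ref{eq-def for w_x}) of a monopole is not well defined on classes (only differences $u(x)-u(y)$ are); this imprecision belongs to the paper's definitions rather than to your argument, and your care in asserting the identity of part (2) only ``as elements of $\h_E$'' is exactly the right way to handle it.
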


\begin{proof} The proof of these and more results can be found in \cite{Jorgensen_Pearse2011, Jorgensen_Pearse2013}.
\end{proof}

\begin{remark} (1) We observe that, in the space of functions  $u$ on $V$, the solution set of the equation $(\Delta u)(z) = (\delta_x - \delta_y)(z)$ is, in general, infinite because the function $u + h$ satisfies the same equation for any $h \in \h arm$. The meaning of Proposition \ref{prop prop of energy space} (1) is the fact that the dipole $v_{xy}$ from (\ref{eq dipole v_xy}) is a unique solution of this equation if it is considered as an element of the space $\h_E$.

(2) It is worth noting that we will use the same terms, monopoles and dipoles, for functions $w_x$ and $v_x$ on $V$ that satisfy the relations $\Delta w_x = \delta_x$ and $\Delta v_x = \delta_x - \delta_o$, respectively.

\end{remark}

\begin{corollary}
Let $x_0 \in V$ be a fixed vertex. Then $w_{x_0}$ is a monopole if and only if it is a finite energy harmonic function on $V \setminus \{x_0\}$.
\end{corollary}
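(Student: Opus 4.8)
The plan is to read the corollary through the defining relation $\Delta w_{x_0} = \delta_{x_0}$ and to exploit the fact that the point mass $\delta_{x_0}$ vanishes everywhere except at the single vertex $x_0$. The forward direction is then immediate. Assuming $w_{x_0}$ is a monopole, Proposition \ref{prop prop of energy space}(3) gives $\Delta w_{x_0} = \delta_{x_0}$; since $\delta_{x_0}(x) = 0$ for every $x \neq x_0$, this says precisely that $(\Delta w_{x_0})(x) = 0$ on $V \setminus \{x_0\}$, i.e. $w_{x_0}$ is harmonic off $x_0$. Finiteness of the energy is automatic, since by definition a monopole is an element of $\h_E$.

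For the converse I would start from $w_{x_0} \in \h_E$ with $(\Delta w_{x_0})(x) = 0$ for all $x \neq x_0$. Then the function $\Delta w_{x_0}$ is supported on the single vertex $\{x_0\}$, so $\Delta w_{x_0} = \lambda\,\delta_{x_0}$ with $\lambda := (\Delta w_{x_0})(x_0)$. With the normalization $\lambda = 1$ built into the definition of a monopole (equivalently, rescaling $w_{x_0}$ by $1/\lambda$ when $\lambda \neq 0$), this is exactly the monopole equation $\Delta w_{x_0} = \delta_{x_0}$, and since $w_{x_0} \in \h_E$ it is a monopole.

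To connect the differential formulation $\Delta w_{x_0} = \delta_{x_0}$ with the Riesz characterization $\langle w_{x_0}, u\rangle_{\h_E} = u(x_0)$ in (\ref{eq-def for w_x}), I would use the duality identity $\langle \delta_{x_0}, u\rangle_{\h_E} = (\Delta u)(x_0)$ for all $u \in \h_E$. This follows from Proposition \ref{prop prop of energy space}(2): expanding $\delta_{x_0} = c(x_0)v_{x_0} - \sum_{y \sim x_0} c_{x_0 y} v_y$ by means of the dipole relation (\ref{eq dipole v_xy}) and using $c(x_0) = \sum_{y \sim x_0} c_{x_0 y}$, the terms involving the base point $o$ cancel and one is left with $\sum_{y \sim x_0} c_{x_0 y}(u(x_0) - u(y)) = (\Delta u)(x_0)$. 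Combined with the summation-by-parts identity $\langle w_{x_0}, u\rangle_{\h_E} = \sum_x u(x)\,(\Delta w_{x_0})(x)$, which is legitimate here because $\Delta w_{x_0}$ is finitely supported, this yields $\langle w_{x_0}, u\rangle_{\h_E} = \lambda\, u(x_0)$, tying the energy pairing to the coefficient $\lambda$ and making the equivalence of the two formulations transparent.

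The point I expect to be delicate is the normalization together with the justification of the summation-by-parts step. The hypothesis ``harmonic off $x_0$'' pins down $\Delta w_{x_0}$ only up to the scalar $\lambda = (\Delta w_{x_0})(x_0)$, so one must either fix $\lambda = 1$ by convention or rescale; this is why I treat the monopole equation as holding after normalization. The more serious subtlety is that the identity $\langle w, u\rangle_{\h_E} = \sum_x u(x)\,(\Delta w)(x)$ is \emph{not} valid for a general finite-energy $w$: a boundary term intervenes, and it is precisely this phenomenon that allows nonconstant harmonic functions of finite energy to exist on a transient network, occupying the summand $\mathcal{Harm}_0$ orthogonal to $\mathcal{Fin}$ in the splitting $\h_E = \mathcal{Fin} \oplus \mathcal{Harm}_0$ of Proposition \ref{prop prop of energy space}(4). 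The identity I need holds only because $\Delta w_{x_0}$ has finite support, and verifying that no boundary contribution survives in that case is, I expect, the main obstacle in turning this sketch into a complete argument.
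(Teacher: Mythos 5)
The step that fails is in your third paragraph: the claim that the summation-by-parts identity $\langle w_{x_0}, u\rangle_{\h_E} = \sum_x u(x)\,(\Delta w_{x_0})(x)$ is ``legitimate here because $\Delta w_{x_0}$ is finitely supported.'' Finite support of the Laplacian does not kill the boundary term. Take any nonconstant $h \in \mathcal Harm_0$ --- the paper itself produces such functions, e.g.\ the symmetric harmonic functions $f_\lambda$ on the weighted binary tree of Proposition \ref{prop formula for HF on tree}, which have finite energy when $\lambda>1$ by the lemma in Section \ref{sect HF and energy}. Then $\Delta h \equiv 0$ is (vacuously) finitely supported, yet $\langle h, h\rangle_{\h_E} = \|h\|^2_{\h_E} > 0$ while $\sum_x h(x)(\Delta h)(x) = 0$. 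Worse, the implication you are using this identity to prove --- that a finite-energy solution of $\Delta w = \delta_{x_0}$ satisfies the Riesz identity (\ref{eq-def for w_x}) --- is itself false whenever $\mathcal Harm_0 \neq \{0\}$: if $w_{x_0}$ is a genuine monopole in the sense of (\ref{eq-def for w_x}) and $h \in \mathcal Harm_0$ is nonzero, then $w_{x_0}+h$ has finite energy, is harmonic on $V\setminus\{x_0\}$, and satisfies $\Delta(w_{x_0}+h) = \delta_{x_0}$, but $\langle w_{x_0}+h,\, h\rangle_{\h_E} = h(x_0) + \|h\|^2_{\h_E} \neq h(x_0)$. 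So what you call ``the main obstacle'' in your closing paragraph is not a verification postponed for later; it is an obstruction, and no completion of that argument exists.

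The repair is to read ``monopole'' the way the paper does in the Remark immediately preceding the corollary: a function satisfying $\Delta w_{x_0} = \delta_{x_0}$, regarded as a monopole in $\h_E$ when it has finite energy. This relaxed reading is forced in any case: under the strict Riesz reading the corollary's converse fails on the counterexample $w_{x_0}+h$ above, and under any reading your rescaling device breaks down when $(\Delta w_{x_0})(x_0) = 0$, as happens for every nonconstant element of $\mathcal Harm_0$, so the normalization must be treated as built into the symbol $w_{x_0}$. With that reading, the corollary is exactly your first two paragraphs --- Proposition \ref{prop prop of energy space}(3) in one direction, the support observation plus the built-in normalization in the other --- and this is also the paper's own (unwritten) proof, since the corollary appears there without argument as an immediate consequence of the Proposition and the Remark. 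Your third paragraph should be deleted rather than completed: the Riesz property determines $w_{x_0}$ uniquely in $\h_E$, whereas ``finite energy and harmonic off $x_0$'' determines it only modulo $\mathcal Harm_0$, and the corollary lives at the level of the latter.
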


It is not hard to see that the notions of monopoles and dipoles can be extended to more general classes of functions.

\begin{proposition}\label{prop multipoles} Let $F = \{x_0, ... , x_N\}$ be a finite subset of $V$ with $N+1$ distinct vertices. Let $\alpha_i$ be positive numbers such that $\sum_{i=1}^{N} \alpha_i =1$. Then there exists a unique solution $v = v_{F, \alpha} \in \h_E$ such that
\be\label{eq multipoles}
\langle v, f\rangle_{\h_E} = f(x_0) - \sum_{i=1}^{N} \alpha_i f(x_i)
\ee
hold for all $f \in \h_E$. Moreover, the solution $v$ to (\ref{eq multipoles}) satisfies $$
\Delta v = \delta_{x_0} - \sum_{i=1}^{N} \alpha_i \delta_{x_i}.
$$
\end{proposition}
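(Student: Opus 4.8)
The plan is to realize the functional on the right-hand side of (\ref{eq multipoles}) as a finite positive combination of the dipole functionals already constructed in (\ref{eq dipole v_xy}), and then invoke the Riesz representation theorem exactly as in the definition of $v_{xy}$.

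First I would introduce the linear functional $L = L_{F,\alpha} : \h_E \to \R$ by $L(f) = f(x_0) - \sum_{i=1}^N \alpha_i f(x_i)$, and check that it is well defined on $\h_E$, whose elements are equivalence classes modulo constants. This is precisely where the hypothesis $\sum_{i=1}^N \alpha_i = 1$ enters: replacing $f$ by $f + k$ for a constant $k$ alters $L(f)$ by $k\big(1 - \sum_{i=1}^N \alpha_i\big) = 0$, so $L$ descends to a genuine functional on equivalence classes. Next, using $\sum_{i=1}^N \alpha_i = 1$ to write $f(x_0) = \sum_{i=1}^N \alpha_i f(x_0)$, I would rewrite
$$
L(f) = \sum_{i=1}^N \alpha_i \big(f(x_0) - f(x_i)\big) = \sum_{i=1}^N \alpha_i \langle v_{x_0 x_i}, f\rangle_{\h_E},
$$
where $v_{x_0 x_i}$ is the dipole of (\ref{eq dipole v_xy}).

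Setting $v := \sum_{i=1}^N \alpha_i v_{x_0 x_i} \in \h_E$, linearity of the inner product yields $\langle v, f\rangle_{\h_E} = L(f)$ for every $f \in \h_E$, which is exactly (\ref{eq multipoles}); this exhibits the solution explicitly rather than abstractly. Boundedness of $L$ is then immediate, since $|L(f)| \leq \big(\sum_{i=1}^N \alpha_i \|v_{x_0 x_i}\|_{\h_E}\big)\|f\|_{\h_E}$, though it is not logically needed once $v$ is displayed. Uniqueness is the standard argument: if $v'$ also satisfies (\ref{eq multipoles}), then $\langle v - v', f\rangle_{\h_E} = 0$ for all $f \in \h_E$, forcing $v = v'$ in $\h_E$.

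Finally, for the Laplacian identity I would apply $\Delta$ to the representation $v = \sum_{i=1}^N \alpha_i v_{x_0 x_i}$. By Proposition \ref{prop prop of energy space}(1) each dipole satisfies $\Delta v_{x_0 x_i} = \delta_{x_0} - \delta_{x_i}$, so by linearity of $\Delta$,
$$
\Delta v = \sum_{i=1}^N \alpha_i(\delta_{x_0} - \delta_{x_i}) = \Big(\sum_{i=1}^N \alpha_i\Big)\delta_{x_0} - \sum_{i=1}^N \alpha_i \delta_{x_i} = \delta_{x_0} - \sum_{i=1}^N \alpha_i \delta_{x_i},
$$
again using $\sum_{i=1}^N \alpha_i = 1$. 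The only genuinely delicate point is the passage from the Riesz identity (\ref{eq multipoles}) to the pointwise Laplacian equation; here I rely on the fact recorded after (\ref{eq Riesz for dipole}) and in Proposition \ref{prop prop of energy space} that the Riesz characterization of a dipole already forces the corresponding Laplacian equation, so that no separate computation with $\Delta$ is required beyond taking the linear combination above.
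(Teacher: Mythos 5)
Your proof is correct, but it takes a genuinely different route from the paper's. The paper applies the Riesz representation theorem abstractly to the functional $L_{F,\alpha}(f) = f(x_0) - \sum_{i=1}^N \alpha_i f(x_i)$ to obtain $v$, and then must separately establish the Laplacian identity; it does so by forming $w := \Delta v - \bigl(\delta_{x_0} - \sum_{i=1}^N \alpha_i \delta_{x_i}\bigr)$ and verifying that $w$ pairs to zero against the system of dipoles $\{v_{oy}\}$, whose span is dense in $\h_E$ (the paper's sketch is in fact quite terse here, and the orthogonality relation is only asserted). You instead display the solution explicitly as the convex combination $v = \sum_{i=1}^N \alpha_i\, v_{x_0 x_i}$, which makes the Riesz identity (\ref{eq multipoles}) an immediate consequence of linearity of the inner product and the defining property (\ref{eq dipole v_xy}) of each dipole, and makes the Laplacian identity follow by linearity of $\Delta$ from the per-dipole equations $\Delta v_{x_0 x_i} = \delta_{x_0} - \delta_{x_i}$ of Proposition \ref{prop prop of energy space}(1). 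Your approach buys an explicit formula for $v_{F,\alpha}$ and entirely sidesteps the density-plus-orthogonality step, which is the least transparent part of the paper's argument; it also isolates cleanly where the hypothesis $\sum_{i=1}^N \alpha_i = 1$ is used (well-definedness of $L$ on equivalence classes modulo constants, the rewriting of $L$ as a combination of differences, and the recombination of the delta terms). The paper's route, by contrast, would generalize unchanged to functionals not presented as combinations of dipoles, but for this statement your construction is the more economical and self-contained of the two; note also that, as in the paper, positivity of the $\alpha_i$ is never actually needed, only that they sum to one.
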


\begin{proof}
The argument is based on the Riesz' theorem applied to the Hilbert space $\h_E$, and is analogous the proof of existence of dipoles in $\h_E$. Assuming $v$ satisfies  (\ref{eq multipoles}), we verify that
$$
w := \Delta v -  ( \delta_{x_0} - \sum_{i=1}^{N} \alpha_i \delta_{x_i})
$$
satisfies $\langle w, v_{oy} \rangle_{\h_E}$ for all $y\in V\setminus \{o\}$, where $\{v_{oy}\}$ is the system of dipoles.
\end{proof}

\subsection{Green's function, dipoles, and monopoles for transient networks}
We shall need to make use of some facts on monopoles, dipoles, and energy Hilbert space; -- for the benefit of readers, we have included a brief fact summary of what is needed. Systematic accounts, and related,  are in \cite{Jorgensen_Pearse2010, Jorgensen_Pearse2014, Georgakopoulos2010}.

As was mentioned above, the Hilbert space $\h_E$ always contains dipoles (see Remark \ref{rem dipole monopole} and Proposition \ref{prop prop of energy space} for the definition and results). Here we will show how a dipole can be found in the space $\h_E$ by an explicit formula assuming that the electrical network $(G,c)$  is transient.

Let $(G,c) = (V,E,c)$ be an electrical network, and $P = (p(x,y) : x, y \in V)$ is the transition probabilities operator where $p(x,y) = \dfrac{c_{xy}}{c(x)}$. Then $P$ defines a random walk $(X_n)$ on $V$ such that $X_n(\omega) = x_n$ where the sequence $\omega = (x_0, ... , x_n, ...) \in \Omega$. For a fixed vertex $a\in V$, we consider the probability space $(\Omega_a, \mathbb P_a) $ where $\Omega_a$ consists of infinite paths that start at $a$, and $\mathbb P_a$ is the corresponding Markov measure on $\Omega_a$.

We recall a few important definitions and facts from theory of Markov chains (see e.g. \cite{Woess2000, Woess2009}). Let $F$ be a subset of $V$ (we will be primarily interested in the case when $F= \{x_1, ... , x_N\}$  is finite). For a probability space $(\Omega_a, \mathbb P_a)$, define the {\em stopping time}
$$
\tau(F)(\omega) = \min\{n \geq 0 : X_n(\omega) \in F\}
$$
with $\omega \in \Omega_a$. It is obvious that
$$
\{\omega \in \Omega : \tau(F) = k \} = \bigcup_{i=1}^N \{\omega \in \Omega : \tau(\{x_i\}) = k \}.
$$
The {\em hitting time}  is defined by
 $$ T(F) =\min\{n \geq 1 : X_n(\omega) \in F\}.
$$
 If $F= \{x\}$ is a singleton, then we write $\tau(x)$ and $T(x)$ for the stopping and hitting times, respectively.

Let $f^{(n)}(x, y) = \mathbb P_x [\tau(y) = n]$, $u^{(n)}(x,x) = \mathbb P_x [T(x) = n]$, and $p^{(n)} (x,y) = \mathbb P_x[X_n = y]$. Then the following quantities are crucial for the study of Markov chains:
$$
G(x,y) = \sum_{n \in \N_0} p^{(n)} (x,y), \ \ F(x,y) = \sum_{n \in \N_0} f^{(n)} (x,y), \ \ U(x,x) = \sum_{n \in \N} u^{(n)} (x,x)
$$

\begin{remark} We recall that $G(x,y)$ is called the Green's function and the quantity $G(x,y)$ is the expected number of visits of $(X_n)$ to $y$ when the random walk starts at $x$. It is well known that the random walk (or, the network $(G,c)$) is transient if and only if $G(x, y) < \infty$ for any $x,y \in V$ \cite{Yamasaki1979}.
This results was rediscovered in \cite{Jorgensen_Pearse2013}  in the context of monopoles and dipoles. Moreover, it was proved in \cite{Jorgensen_Pearse2013} that if the random walk is transient, then, for every $x\in V$, the function $G(x, \cdot)$ is finite energy, i.e., is in $\h_E$. See also Theorem \ref{thm formula for monopole w_x} below.

\end{remark}

The following properties of these functions are well known (see e.g. \cite{Woess2000}).

\begin{lemma}\label{lem relations on G F U} Let $(G, c) =  (V,E,c)$ be an electrical   network. Then, for any pair of vertices $x,y \in V$,
\be\label{eq G U}
G(x,x) = \frac{1}{1 - U(x,x)},
\ee
\be\label{eq G F}
G(x, y) = F(x,y) G(y,y),
\ee
\be\label{eq U F}
U(x,x) = \sum_{y \sim x} p(x,y) F(y,x),
\ee
\be\label{eq F F}
F(x,y) = \sum_{z \sim x} p(x,z) F(z,y), \ \ (x\neq y).
\ee

\end{lemma}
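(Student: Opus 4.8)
The plan is to derive all four identities from two elementary probabilistic decompositions — the first-passage decomposition and first-step (first-return) analysis — together with the time-homogeneity and the Markov property of the chain $(X_n)$. Since every quantity involved ($p^{(n)}$, $f^{(n)}$, $u^{(n)}$) is nonnegative, every interchange and rearrangement of the resulting double series is justified by Tonelli's theorem, and the transience hypothesis (so that $G(x,y)<\infty$, equivalently $U(x,x)<1$) guarantees that all sums evaluated at the relevant point are finite.

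First I would establish (\ref{eq G F}). For a walk started at $x$ that sits at $y$ at time $n$, let $k = \tau(y)$ be the first time it reaches $y$; partitioning the event $\{X_n = y\}$ over the value of $k\in\{0,\dots,n\}$ and applying the Markov property at the stopping time $\tau(y)$ yields the convolution identity $p^{(n)}(x,y) = \sum_{k=0}^{n} f^{(k)}(x,y)\, p^{(n-k)}(y,y)$ for all $n\ge 0$. Summing over $n\ge 0$ and recognizing the Cauchy product of the two nonnegative series gives $G(x,y) = F(x,y)\,G(y,y)$. Next, for (\ref{eq G U}) I would run the same argument but decompose a return to $x$ according to the first return time $T(x)$: for $n\ge 1$, $p^{(n)}(x,x) = \sum_{k=1}^{n} u^{(k)}(x,x)\, p^{(n-k)}(x,x)$. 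Summing over $n\ge 1$ and using $p^{(0)}(x,x)=1$ produces $G(x,x) - 1 = U(x,x)\,G(x,x)$, and solving for $G(x,x)$ yields the desired formula; transience ensures $U(x,x)<1$, so the solution is the finite value $1/(1-U(x,x))$.

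Finally, (\ref{eq U F}) and (\ref{eq F F}) come from conditioning on the first step. For a walk from $x$ we have $X_1 = y$ with probability $p(x,y)$; since the first return $T(x)$ and (for $x\neq y$) the first passage $\tau(y)$ both require at least one step, the Markov property at time $1$ gives $u^{(n)}(x,x) = \sum_{y\sim x} p(x,y)\, f^{(n-1)}(y,x)$ and, for $x\neq y$, $f^{(n)}(x,y) = \sum_{z\sim x} p(x,z)\, f^{(n-1)}(z,y)$, both valid for $n\ge 1$. Summing over $n\ge 1$ and reindexing the shift (noting $f^{(0)}(x,y)=0$ when $x\neq y$) then gives $U(x,x) = \sum_{y\sim x} p(x,y)\,F(y,x)$ and $F(x,y) = \sum_{z\sim x} p(x,z)\,F(z,y)$.

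The only genuine subtlety — and the step I would be most careful with — is the rigorous invocation of the Markov property at the random (stopping) times $\tau(y)$ and $T(x)$ that underlies the convolution identities; this is where the strong Markov property and time-homogeneity of the reversible chain do the real work. Once these decompositions are in hand, everything else reduces to summing nonnegative series by Tonelli and elementary algebra, and no appeal to finiteness is needed beyond what transience already supplies.
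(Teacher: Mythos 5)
Your proof is correct: the first-passage (renewal) decomposition behind $G(x,y)=F(x,y)G(y,y)$ and $G(x,x)(1-U(x,x))=1$, plus first-step analysis for the $U$--$F$ and $F$--$F$ identities, with the strong Markov property at the stopping times and Tonelli justifying the series manipulations, is exactly what is needed, and your care with the shifted $f^{(0)}$ terms and with finiteness (transience giving $U(x,x)<1$ so the algebraic solving step is legitimate) is sound. Note that the paper gives no proof of this lemma at all --- it states these identities as well known and cites \cite{Woess2000} --- and your argument is precisely the classical proof found in that reference, so there is nothing in the paper to compare against beyond the citation.
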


\begin{remark}\label{rem reversibility F} It follows from the reversibility of the Markov chain $(X_n)$ defined by transition probabilities $P = (p(x,y))$ that for the functions $F(x,y)$ and $G(x,y)$ satisfy the properties:
$$
c(x) F(x, y) = c(y)F(y, x) \ \  \ \mathrm{and} \ \ \ c(x) G(x, y) = c(y)G(y, x).
$$
Moreover, since the quantity $F(x,y)$ can be treated  as the probability of the event
that the random walk starting at $x$ reaches $y$, we can write $F(x, y) =\mathbb Pr[x \to y]$.

\end{remark}

We will frequently use the following statement.

\begin{lemma}\label{lem h_1 in terms of F}
Let $F$ be a subset of $V$ and let $x\in F$ be any fixed vertex. We define
$$
h_x(a) := \mathbb E_a(\chi_{\{x\}}\circ X_{\tau(F)}).
$$
Then
$$
h_x(a) =  \mathbb E_a(\chi_{\{x\}}\circ X_{\tau(x)}) = F(a, x), \ \ \ \forall a\in V.
$$
\end{lemma}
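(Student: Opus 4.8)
The plan is to prove both equalities by a pathwise comparison of random variables on $(\Omega_a, \mathbb P_a)$, followed by integration. I begin with the rightmost equality, which only requires unwinding the definitions. Since $\tau(x) = \min\{n \ge 0 : X_n = x\}$ is the first visit to the singleton $\{x\}$, on the event $\{\tau(x) < \infty\}$ one has $X_{\tau(x)} = x$ by construction, so $\chi_{\{x\}} \circ X_{\tau(x)} = \mathbf 1_{\{\tau(x) < \infty\}}$, adopting the convention that the integrand is $0$ when $\tau(x) = \infty$. Integrating and partitioning $\{\tau(x) < \infty\}$ according to the value of $\tau(x)$ gives
$$
\mathbb E_a\big(\chi_{\{x\}} \circ X_{\tau(x)}\big) = \mathbb P_a(\tau(x) < \infty) = \sum_{n \in \N} \mathbb P_a[\tau(x) = n] = \sum_{n \in \N} f^{(n)}(a, x) = F(a, x),
$$
which is the second identity. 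The case $a = x$ is included, since then $\tau(x) = 0$ and both sides equal $1 = F(x,x)$.

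For the first equality the key structural observation is that $x \in F$ forces the inclusion $\{n : X_n = x\} \subseteq \{n : X_n \in F\}$, so that $\tau(F) \le \tau(x)$ along every trajectory. I would then establish the pathwise identity
$$
\chi_{\{x\}} \circ X_{\tau(F)} = \chi_{\{x\}} \circ X_{\tau(x)} \qquad \mathbb P_a\text{-a.e.}
$$
by splitting $\Omega_a$ according to the first-entrance vertex $X_{\tau(F)}$. On $\{X_{\tau(F)} = x\}$ the first visit to $F$ is already a visit to $x$, whence $\tau(F) = \tau(x) < \infty$ and both sides equal $1$; on $\{\tau(F) = \infty\}$ the trajectory never meets $F$, hence never meets $x$, so $\tau(x) = \infty$ and both sides equal $0$. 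The delicate case is $\{X_{\tau(F)} = y\}$ with $y \in F \setminus\{x\}$, where the left-hand integrand is $0$, and one must account for the right-hand integrand.

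To treat that case I would invoke the strong Markov property at the stopping time $\tau(F)$: conditionally on $X_{\tau(F)} = y$, the post-$\tau(F)$ trajectory $(X_{\tau(F)+k})_{k \ge 0}$ is a copy of the walk issued from $y$, with the first passage through $F$ already realized at $y$. The remaining verification then amounts to checking that the two integrands agree $\mathbb P_a$-almost surely on $\{X_{\tau(F)} \neq x\}$ once each trajectory is accounted for exactly once by its first passage through $F$; this is the heart of the matter and is where I expect the main obstacle to lie, since it is precisely the point that distinguishes the first-entrance record of $F$ from an unconstrained visit to $x$. Having settled the pathwise identity, integration yields $h_x(a) = \mathbb E_a(\chi_{\{x\}} \circ X_{\tau(x)})$, and combining this with the first paragraph gives $h_x(a) = F(a,x)$ for every $a \in V$. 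The strong Markov property applied at $\tau(F)$, together with the inclusion $\{n : X_n = x\} \subseteq \{n : X_n \in F\}$, is the tool I would rely on throughout.
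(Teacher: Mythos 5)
Your handling of the second equality is correct and is precisely the paper's argument: pathwise $\chi_{\{x\}}\circ X_{\tau(x)} = \mathbf 1_{\{\tau(x)<\infty\}}$, and decomposing over the value of $\tau(x)$ gives $\sum_n \mathbb P_a[\tau(x)=n] = \sum_n f^{(n)}(a,x) = F(a,x)$ (only note that the paper's $F(a,x)$ sums over $n\in\N_0$, which is what absorbs your separately treated case $a=x$).

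The first equality is where your proposal has a genuine gap, and the obstacle you flagged in the case $\{X_{\tau(F)}=y\}$, $y\in F\setminus\{x\}$, cannot be overcome: the pathwise identity $\chi_{\{x\}}\circ X_{\tau(F)} = \chi_{\{x\}}\circ X_{\tau(x)}$ is simply false there. On that event the left integrand is $0$, while the strong Markov property at $\tau(F)$ — the very tool you propose — shows that the conditional probability that $\tau(x)<\infty$, i.e.\ that the right integrand equals $1$, is $F(y,x)$, which is strictly positive on any connected network by irreducibility. So the identity fails not only pathwise but in expectation: $h_x(a)=\mathbb P_a(X_{\tau(F)}=x)$ is in general strictly smaller than $\mathbb P_a(\tau(x)<\infty)=F(a,x)$. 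The cleanest counterexample is $a\in F\setminus\{x\}$: then $\tau(F)=0$ forces $h_x(a)=0$, whereas $F(a,x)>0$; this is exactly what Lemma \ref{lem properties of h_i} records as $h_i(x_j)=\delta_{ij}$, so the lemma as stated is internally inconsistent with that result unless $F=\{x\}$ is a singleton, in which case $\tau(F)=\tau(x)$ and the first equality is a tautology. (A quantitative instance: simple random walk on $\Z$ with $c\equiv 1$, $F=\{0,2\}$, $x=0$, $a=1$ gives $h_x(a)=1/2$ but $F(1,0)=1$.) You should know that the paper's own proof offers no more than the assertion that the first equality ``follows directly from the fact that $x\in F$'', which is valid only in the singleton case and glosses over precisely the point you isolated; the replacement $\mathbb E_y(\chi_{\{x_i\}}\circ X_{\tau(F)}) = F(y,x_i)$ made later in the computation of the matrix $M$ for $F=\{x_1,x_2\}$ inherits the same defect, whereas the uses with $F=\{x\}$ (e.g.\ Theorem \ref{thm formula for monopole w_x}) are unaffected. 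In short: what you identified as the ``heart of the matter'' is not a missing lemma in your write-up but an actual failure of the statement for $|F|\geq 2$, and your case analysis, pushed one step further via the strong Markov property, is exactly how one disproves it.
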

\begin{proof}
The first equality follows directly from the fact that $x\in F$, and the second one is due to the obvious observation that
$$
\int_{\Omega_a} \chi_{\{x\}}( X_n(\omega)) d\mathbb P_a(\omega) = f^{(n)}(a, x).
$$
\end{proof}

In the next two lemmas, we discuss the other properties of the function $h_x$ which will also be in use below.

\begin{lemma} \label{lem properties of h_i}
Given a subset $F = \{x_1, ... , x_N\}$ of vertices from $V$, we set
$$
h_i(a) := \mathbb E_a(\chi_{\{x_i\}}\circ X_{\tau(F)}) = \int_{\Omega_a} \chi_{\{x_i\}}(X_{\tau(F)}(\omega)) d\mathbb P_a(\omega), \ i =1, ... ,N,
$$
where $a$ is an arbitrary vertex in $V$. Then
$$
(\Delta h_i)(a) = 0, \ \ a\in V\setminus F, \ \ \mbox{and}\ \ h_i(x_j) = \delta_{ij}.
$$
In other words,
$$
h_i(a) = \begin{cases} 1, \ & a  = x_i\\
                                                 0, \ &  a \in F \setminus \{x_i\}\\
                                                 \mbox{\rm{harmonic}}, \ &  a \in V \setminus F.

\end{cases}
$$
\end{lemma}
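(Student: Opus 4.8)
The plan is to verify the boundary values $h_i(x_j)=\delta_{ij}$ directly from the definition of the stopping time $\tau(F)$, and then to establish harmonicity on $V\setminus F$ by a first-step (Markov) decomposition identifying $h_i$ with $Ph_i$ off $F$. The decisive tools are the definition $\tau(F)(\omega)=\min\{n\geq 0: X_n(\omega)\in F\}$ (where crucially $n=0$ is admitted), the disintegration of $\mathbb P_a$ from Lemma \ref{lem measure P_x}, and the identity $\Delta=c(I-P)$ from Lemma \ref{lem several facts on P}(1). Throughout, on the event $\{\tau(F)=\infty\}$ (possible for a transient walk) the random variable $X_{\tau(F)}$ is undefined and I adopt the standard convention that the integrand $\chi_{\{x_i\}}(X_{\tau(F)})$ vanishes there; since $|h_i|\leq 1$, all integrals are well defined.

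For the boundary values I would argue that if $a=x_j\in F$, then for $\mathbb P_{x_j}$-almost every $\omega\in\Omega_{x_j}$ one has $X_0(\omega)=x_j\in F$, whence $\tau(F)(\omega)=0$ and $X_{\tau(F)}(\omega)=x_j$. Consequently $\chi_{\{x_i\}}(X_{\tau(F)}(\omega))=\chi_{\{x_i\}}(x_j)=\delta_{ij}$ pointwise, and integrating against $\mathbb P_{x_j}$ gives $h_i(x_j)=\delta_{ij}$.

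For harmonicity I would fix $a\in V\setminus F$ and carry out a first-step analysis. Writing a path based at $a$ as $\omega=(a,y,\omega_2,\dots)$, so that $\sigma(\omega)=(y,\omega_2,\dots)\in\Omega_y$ and $X_n(\sigma(\omega))=X_{n+1}(\omega)$ for all $n$, the hypothesis $a\notin F$ forbids a hit at time $0$. I then claim the almost-sure identities $\tau(F)(\omega)=1+\tau(F)(\sigma(\omega))$ together with the matching of hitting vertices $X_{\tau(F)}(\omega)=X_{\tau(F)}(\sigma(\omega))$: if $m=\tau(F)(\sigma(\omega))$ is the first index with $X_m(\sigma(\omega))\in F$, then $X_{m+1}(\omega)\in F$ while $X_k(\omega)\notin F$ for $0\leq k\leq m$, so the first hit of $\omega$ occurs at time $m+1$ at the same vertex; and if $\tau(F)(\sigma(\omega))=\infty$ then $\tau(F)(\omega)=\infty$ as well, so both integrands vanish by convention. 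In all cases $\chi_{\{x_i\}}(X_{\tau(F)}(\omega))=\chi_{\{x_i\}}(X_{\tau(F)}(\sigma(\omega)))$, a function of $\sigma(\omega)$ alone. Integrating over $\Omega_a$ and applying the disintegration $d\mathbb P_a(\omega)=\sum_{y\sim a}p(a,y)\,d\mathbb P_y(\sigma(\omega))$ of Lemma \ref{lem measure P_x}, I obtain
$$
h_i(a)=\sum_{y\sim a}p(a,y)\int_{\Omega_y}\chi_{\{x_i\}}(X_{\tau(F)}(\gamma))\,d\mathbb P_y(\gamma)=\sum_{y\sim a}p(a,y)\,h_i(y)=(Ph_i)(a).
$$
Thus $h_i=Ph_i$ on $V\setminus F$, and since $\Delta=c(I-P)$ we conclude $(\Delta h_i)(a)=c(a)\bigl(h_i(a)-(Ph_i)(a)\bigr)=0$ for every $a\in V\setminus F$, which is exactly the claimed harmonicity off $F$.

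I expect the main obstacle to be the careful measure-theoretic bookkeeping of the stopping time under the shift $\sigma$ for paths based at $a\notin F$: one must justify, almost surely, both the index shift $\tau(F)(\omega)=1+\tau(F)(\sigma(\omega))$ and the matching $X_{\tau(F)}(\omega)=X_{\tau(F)}(\sigma(\omega))$, including the behavior on the event $\{\tau(F)=\infty\}$. Once these identities are checked on cylinder sets and extended to all of $\Omega_a$ as in Lemma \ref{lem measure P_x}, the remaining steps reduce to the disintegration of $\mathbb P_a$ and the algebraic identity $\Delta=c(I-P)$.
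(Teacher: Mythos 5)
Your proof is correct and follows essentially the same route as the paper: a direct check that $\tau(F)=0$ at points of $F$ gives the boundary values $h_i(x_j)=\delta_{ij}$, followed by a first-step decomposition showing $h_i=Ph_i$ on $V\setminus F$, whence $(\Delta h_i)(a)=c(a)\bigl(h_i(a)-(Ph_i)(a)\bigr)=0$ there. The only difference is one of bookkeeping: where the paper conditions on $X_1=b$ and invokes the Markov property in one line, you substantiate that same step via the shift map and the disintegration of Lemma \ref{lem measure P_x}, and you additionally treat the event $\{\tau(F)=\infty\}$ (which the paper leaves implicit, and which can occur with positive probability on a transient network) with the standard vanishing convention.
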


\begin{proof} It follows from the definition of $h_i, i=1, ... ,N,$ that, since $F(x_i, x_i) =1$, we see that  $h_i(x_i) =1$, and $h_i(x_j) =0$ for $j\neq i$.

Fix a vertex $a \in V \setminus F$ and show that $(\Delta h_i)(a) = 0$. Equivalently, we verify that $Ph(a) = h(a)$:
\begin{eqnarray*}
h_i(a)  &=& \sum_{b\sim a} p(a, b) \mathbb E_a(\chi_{\{x_i\}} \circ X_{\tau(F)} | X_1 = b)\\
   &=&  \sum_{b\sim a} p(a, b) \mathbb E_b( \chi_{\{x_i\}} \circ X_{\tau(F)} )\ \ \mbox{(by\ the\ Markov\ property}) \\
   &=&  \sum_{b\sim a} p(a, b) h_i(b)\\
   & = & (Ph_i)(a)
\end{eqnarray*}
\end{proof}

\begin{corollary}\label{cor interpolation}
Let $F= \{x_1, ... , x_N\}$ be a finite subset of $V$, and $h_{x_i}$ is defined as in Lemma \ref{lem h_1 in terms of F}. Suppose that $\varphi : F \to \R$ is a given function on $F$. Define
\begin{equation}\label{eq interpolation}
\Phi(a) := \sum_{i=1}^N \varphi(x_i) h_{x_i}(a).
\end{equation}
Then $\Phi$ is a solution of the Dirichlet problem
$$
\begin{cases} (\Delta \Phi)(a)= 0, & a \in V\setminus F,\\
\Phi(a) = \varphi(a), & a \in F.
\end{cases}
$$
\end{corollary}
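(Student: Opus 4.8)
The plan is to derive Corollary \ref{cor interpolation} directly from the two properties of the functions $h_{x_i}$ that were just established in Lemma \ref{lem properties of h_i}, since $\Phi$ is defined as a finite linear combination of these and both defining conditions of the Dirichlet problem are linear in $\Phi$. The only genuine content is to check the two cases separately: the boundary condition on $F$ and the harmonicity on $V \setminus F$.

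First I would verify the boundary condition. For a vertex $x_j \in F$, I evaluate
$$
\Phi(x_j) = \sum_{i=1}^N \varphi(x_i) h_{x_i}(x_j).
$$
By Lemma \ref{lem properties of h_i} we have $h_{x_i}(x_j) = \delta_{ij}$, so every term vanishes except $i = j$, giving $\Phi(x_j) = \varphi(x_j)$. Since $x_j$ was an arbitrary point of $F$, this establishes $\Phi = \varphi$ on $F$.

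Next I would verify harmonicity off $F$. Fix $a \in V \setminus F$. Because the Laplacian $\Delta$ is a linear operator (see Definition \ref{def of Laplace operator and harmonic}), it commutes with finite linear combinations, so
$$
(\Delta \Phi)(a) = \sum_{i=1}^N \varphi(x_i) (\Delta h_{x_i})(a).
$$
Lemma \ref{lem properties of h_i} asserts that $(\Delta h_{x_i})(a) = 0$ for every $a \in V \setminus F$ and every $i$, whence $(\Delta \Phi)(a) = 0$. This completes the verification that $\Phi$ solves the stated Dirichlet problem.

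I do not expect any serious obstacle here, as the corollary is essentially a repackaging of Lemma \ref{lem properties of h_i} using the linearity of $\Delta$; the substance of the argument already resides in that lemma, whose proof invokes the Markov property. The one point worth a sentence of care is the linearity step $\Delta(\sum_i \varphi(x_i) h_{x_i}) = \sum_i \varphi(x_i)(\Delta h_{x_i})$, which is immediate from the defining formula (\ref{Laplacian formula}) since the coefficients $\varphi(x_i)$ are constants and the sum is finite. I would remark, if desired, that $\Phi$ is in fact the unique solution of this Dirichlet problem when one additionally prescribes boundary behavior or works on a finite $W$, as guaranteed by the uniqueness statement for (\ref{eq Dirichlet probl}) recorded earlier, but strictly speaking the corollary as stated asks only for existence of a solution, which the explicit formula (\ref{eq interpolation}) provides.
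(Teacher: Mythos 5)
Your proof is correct and follows the same route as the paper's own argument: both verify the boundary condition via $h_{x_i}(x_j) = \delta_{ij}$ and the harmonicity on $V \setminus F$ via linearity of $\Delta$ together with Lemma \ref{lem properties of h_i}. Your write-up merely spells out the linearity step that the paper leaves implicit.
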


\begin{proof}
Indeed, this can be seen from Lemmas \ref{lem h_1 in terms of F} and \ref{lem properties of h_i} because $h_{x_i}(x_j) = \delta_{ij}$ and $h_{x_i}$ is harmonic on $V \setminus F$. Furthermore, relation (\ref{eq interpolation}) represents an interpolation formula for a given function $\varphi$.
\end{proof}

It is worth noticing that we have not used so far our assumption about transience of $(G,c)$. Just based on the definition of the function $h_x$, we can give an upper bound for the energy of $h_x$. It will be proved in Theorem \ref{thm formula for monopole w_x}  that the energy of $h_x$ is finite for a transient network $(G,c)$.

\begin{lemma}\label{lem energy of h_x} Let $h_x$ be defined as in Lemma \ref{lem h_1 in terms of F}. Then
$$
\|h_x\|_{\h_E}^2  < \frac{1}{2} c(x) \sum_{a\in V} \mathbb Pr[x \to a](1 - \mathbb Pr[a \to x]).
$$
\end{lemma}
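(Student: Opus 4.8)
The plan is to read off the structure of $h_x$ from the earlier lemmas, write the energy in its edge form, and then pass to the stated vertex sum using the elementary bound $h_x^2 \le h_x$ (valid because $0 \le h_x \le 1$) together with reversibility.

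First I would record what $h_x$ is. Applying Lemma \ref{lem h_1 in terms of F} with $F = \{x\}$ gives $h_x(a) = F(a,x) = \mathbb{Pr}[a \to x]$ for every $a \in V$; in particular $0 \le h_x \le 1$ and $h_x(x) = F(x,x) = 1$. By Lemma \ref{lem properties of h_i} (again with $F = \{x\}$), $h_x$ is harmonic off $x$, i.e. $(Ph_x)(a) = h_x(a)$ for all $a \ne x$, while at $x$ one has $(Ph_x)(x) = \sum_{y \sim x} p(x,y) F(y,x) = U(x,x)$ by (\ref{eq U F}).

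Next I would expand (\ref{eq def of norm energy}). Squaring out and using $c_{ab} = c_{ba}$ puts the energy in the form $\|h_x\|_{\h_E}^2 = \tfrac12 \sum_{a} c(a)\big(h_x^2(a) - 2 h_x(a)(Ph_x)(a) + (Ph_x^2)(a)\big)$. The crucial estimate is that $h_x(b)^2 \le h_x(b)$ for every $b$, hence $(Ph_x^2)(a) \le (Ph_x)(a)$. Inserting this, at each harmonic vertex $a \ne x$ the bracket is at most $h_x(a) - h_x^2(a) = h_x(a)(1 - h_x(a))$, while the single non-harmonic vertex $x$ contributes a bracket bounded by $1 - U(x,x)$, i.e. by $\|h_x\|_{\h_E}^2/c(x)$; moving the resulting $\tfrac12\|h_x\|_{\h_E}^2$ back to the left yields $\|h_x\|_{\h_E}^2 \le \sum_{a} c(a)\,h_x(a)(1 - h_x(a))$. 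Using reversibility in the form $c(a)F(a,x) = c(x)F(x,a)$ (Remark \ref{rem reversibility F}), each summand equals $c(x)\,\mathbb{Pr}[x \to a]\,(1 - \mathbb{Pr}[a \to x])$, so that $\|h_x\|_{\h_E}^2 \le c(x)\sum_{a}\mathbb{Pr}[x \to a](1 - \mathbb{Pr}[a \to x])$; the stated inequality, with its factor $\tfrac12$ and strict sign, then follows from the observation below that this series in fact diverges for a transient network.

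The step I expect to be the main obstacle is the careful treatment of the distinguished vertex $x$, where $h_x$ is not harmonic: the fully symmetrized estimate $(s-t)^2 \le s(1-t) + t(1-s)$ applied edge by edge is too lossy (it merely reproduces the energy on the right and is vacuous), so $h_x^2 \le h_x$ must be exploited through the $P$-form above while the boundary term at $x$, which is finite and governed by $U(x,x)$, is tracked separately. This is also where the constant and the strict inequality are pinned down: $\mathbb{Pr}[x \to a](1 - \mathbb{Pr}[a \to x])$ is precisely the probability that $a$ is first reached only after the last visit of the walk to $x$, and summing over $a$ counts the distinct vertices swept out during the final escape to infinity, which for a transient walk are infinitely many; hence the right-hand series strictly dominates the finite quantity $\|h_x\|_{\h_E}^2 = c(x)(1 - U(x,x))$ with ample room to spare, and any fixed constant such as $\tfrac12$ is admissible. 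A secondary technical point is that the double sums are not absolutely convergent in general, so the rearrangements should first be performed on finite exhaustions of $V$ and then passed to the limit.
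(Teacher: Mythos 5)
Your computation follows essentially the same route as the paper's proof: expand the energy in the $P$-form $\|h_x\|_{\h_E}^2=\tfrac12\sum_a c(a)\bigl(h_x^2(a)-2h_x(a)(Ph_x)(a)+(Ph_x^2)(a)\bigr)$, exploit $h_x^2\le h_x$, and convert $c(a)F(a,x)$ into $c(x)F(x,a)$ by reversibility (Remark \ref{rem reversibility F}). In one respect you are more careful than the paper: the paper's chain of equalities applies the relation $\sum_b p(a,b)F(b,x)=F(a,x)$ at \emph{every} vertex, including $a=x$, where (\ref{eq F F}) does not apply and the sum instead equals $U(x,x)$ by (\ref{eq U F}); isolating the non-harmonic vertex $x$, as you insist on doing, is exactly the repair that computation needs. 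Your probabilistic divergence claim is also correct: for a transient walk, $F(x,a)(1-F(a,x))$ is the probability that $a$ is first reached after the last visit to $x$, and since a.s. infinitely many vertices are first visited after that (finite) time, $\sum_a F(x,a)(1-F(a,x))=\infty$; this rightly exposes the stated bound as vacuous on transient networks.

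The genuine gap is in how you dispose of the term at $x$. You bound its bracket by $1-U(x,x)$ and then identify this with $\|h_x\|_{\h_E}^2/c(x)$; but the identity $\|h_x\|_{\h_E}^2=c(x)(1-U(x,x))$ is nowhere proved in your argument. It is the energy-equals-capacity statement, which in the paper follows only from the monopole property of $G(\cdot,x)/c(x)$ established \emph{later}, in Theorem \ref{thm formula for monopole w_x}, and it is not elementary. The same unproved fact is needed twice more: the step ``move $\tfrac12\|h_x\|_{\h_E}^2$ to the left'' is legitimate only if $\|h_x\|_{\h_E}^2<\infty$, and your final fallback --- the right-hand series diverges, hence strictly dominates ``the finite quantity $\|h_x\|_{\h_E}^2$'' --- is again the comparison ``finite $<$ infinite'', which presupposes finiteness of the energy, precisely the nontrivial point deferred to the later theorem. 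What your rigorous steps do establish is only the weaker inequality $\|h_x\|_{\h_E}^2\le \tfrac12 c(x)\sum_a F(x,a)(1-F(a,x))+\tfrac12 c(x)(1-U(x,x))$, with an extra term that cannot be removed without one of the unproved facts above; incidentally, this weaker bound is also all that the paper's own computation yields once its slip at $a=x$ is corrected. Note finally that transience is not a hypothesis of the lemma (the paper stresses this just before stating it), and your divergence argument is unavailable in the recurrent case: there $F\equiv 1$, both sides vanish, and the strict inequality fails outright, so the fallback cannot close that case either.
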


\begin{proof} We use the equalities $c_{ab} = c(a)p(a,b)$ and $\sum_b p(a,b)F(b,x) = F(a,x)$ (Lemma \ref{lem relations on G F U}, and the inequality $F(b,x)^2 < F(b,x)$ in order to calculate the energy of $h_x$:

\begin{eqnarray*}
  \|h_x\|_{\h_E}^2  &=&  \frac{1}{2} \sum_{a,b} c_{ab}(h_x(a) - h_x(b))^2\\
  &=& \frac{1}{2} \sum_{a,b} c_{ab}(\mathbb E_a(\chi_{\{x\}} \circ X_{\tau(x)}) -
\mathbb E_b(\chi_{\{x\}} \circ X_{\tau(x)}))^2 \\
   &=&   \frac{1}{2} \sum_{a,b} c_{ab}(F(a,x) - F(b, x))^2 \\
   &=& \frac{1}{2} \sum_{a}\left[ c(a)F(a,x)^2 - 2F(a,x)\sum_{b\sim a}c_{ab}F(b, x) +  \sum_{b\sim a}c_{ab}F(b, x)^2\right] \\
   &=& \frac{1}{2} \sum_{a}\left[ - c(a)F(a,x)^2  + \sum_{b\sim a}c(a)p(a,b)F(b, x)^2\right]\\
  & < & \frac{1}{2} \sum_{a}\left[ - c(a)F(a,x)^2  + c(a)F(a,x)\right] \\
  &=& \frac{1}{2} c(x) \sum_{a\in V} \mathbb Pr[x \to a](1 - \mathbb Pr[a \to x]).
\end{eqnarray*}
The last equality follows from Remark \ref{rem reversibility F}.
\end{proof}

From now on, we focus on the case when the network $(G,c)$ is {\em transient}, and $N=2$. Our goal is to find a formula for $v_{x_1x_2}$ solving the equation $\Delta v_{x_1, x_2} = \delta_{x_1} - \delta_{x_2}$ in the space $\h_E$ for any fixed $x_1, x_2\in V$.  Then, as was stated in Proposition \ref{prop prop of energy space}, the function $v_{x_1x_2}$ will be a dipole. Simultaneously, we will find a formula for  a monopole $w_x \in \h_E$ at $x \in V$ satisfying the equation $\Delta w_x = \delta_x$.

Suppose that $F = \{x_1, x_2\}$, and $x_1\neq x_2$. Let  $h_1$ and $h_2$ be the function defined in Lemma \ref{lem properties of h_i}. Consider the matrix
\be\label{eq def M}
M:= \left(
  \begin{array}{cc}
   (\Delta h_1)(x_1)  &  (\Delta h_2)(x_1) \\
\\
     (\Delta h_1)(x_2) &  (\Delta h_2)(x_2) \\
  \end{array}
\right)
\ee
and compute its entries using Lemma \ref{lem h_1 in terms of F}. To do this, we apply the relation $\Delta h  = c( I -P) h$, which is valid for any function $h$ on $V$. We first find the off-diagonal entries for $i \neq j$:
\begin{eqnarray*}
  (\Delta h_i)(x_j) &=& c(x_j)[(I - P)h_i](x_j)\\
   &=& c(x_j) [h_i(x_j) - \sum_{y \sim x_j} p(x_j,y) h_i(y)] \\
   &=& c(x_j) [ - \sum_{y \sim x_j} p(x_j,y) \mathbb E_y(\chi_{\{x_i\}}
\circ X_{\tau(F)})]\\
& = &  - c(x_j) \sum_{y \sim x_j} p(x_j,y) \mathbb E_y(\chi_{\{x_i\}}
\circ X_{\tau(x_i)})\\
 &= & - c(x_j)  \sum_{y \sim x_j} p(x_j,y) F(y, x_i)  \\
  &=& - c(x_j) [ F(x_j, x_i) ].
\end{eqnarray*}
We have used here the fact that $h_i(x_j) =\delta_{ij}$, relation (\ref{eq F F}), and
Lemma \ref{lem h_1 in terms of F}.

Similarly, we compute the diagonal entries for $i =j$:
\begin{align}\label{eq diagonal entry}
\begin{split}
(\Delta h_i)(x_i) &=  c(x_i)[(I - P)h_i](x_i)
\\
  & = c(x_i) [h_i(x_i) - \sum_{y \sim x_i} p(x_i,y) h_i(y)]
\\
& = c(x_i)[1 - \sum_{y \sim x_i} p(x_i,y) \mathbb E_y(\chi_{\{x_i\}}
\circ X_{\tau(x_i)})]
\\
& =c(x_i) (1 - U(x_i, x_i)),\\
\end{split}
\end{align}
where (\ref{eq U F}) is used.

Thus, we have proved the first assertion  of the next lemma, stating that the matrix $M$ admits the following factorization.

\begin{lemma}\label{lem matrix M}
The matrix $M$ defined in (\ref{eq def M}) is represented as follows:
\be
M = \left(
      \begin{array}{cc}
        c(x_1) & 0 \\
\\
        0 & c(x_2) \\
      \end{array}
    \right) \left(
              \begin{array}{cc}
                1 - U(x_1, x_1) &  - F(x_1, x_2) \\
\\
                 - F(x_2, x_1) & 1 - U(x_2, x_2) \\
              \end{array}
            \right).
\ee
Moreover,
\be
\det M = \frac{c(x_1)c(x_2)( 1 - G(x_1, x_2)G(x_2, x_1))}{G(x_1, x_1)G(x_2, x_2)}
\ee
and
$$
\det M = 0\ \ \Longleftrightarrow \ \ G(x_1, x_2) = \sqrt{\frac{c(x_2)}{c(x_1)}}.
$$
\end{lemma}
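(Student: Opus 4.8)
The factorization of $M$ is already in hand from the entrywise computations preceding the statement: the off-diagonal entries were shown to equal $(\Delta h_i)(x_j) = -c(x_j)F(x_j,x_i)$ and the diagonal entries to equal $(\Delta h_i)(x_i) = c(x_i)(1 - U(x_i,x_i))$, which is precisely the asserted product of the diagonal matrix $\mathrm{diag}(c(x_1), c(x_2))$ with the matrix carrying $1-U$ on the diagonal and $-F$ off it. So the plan is to start from this factorization and read off $\det M$ as a product of the two determinants.

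First I would compute
$$
\det M = c(x_1)c(x_2)\left[(1 - U(x_1,x_1))(1 - U(x_2,x_2)) - F(x_1,x_2)F(x_2,x_1)\right],
$$
the diagonal factor contributing $c(x_1)c(x_2)$. The next step is to convert every entry into Green's-function data via Lemma \ref{lem relations on G F U}: relation (\ref{eq G U}) gives $1 - U(x_i,x_i) = 1/G(x_i,x_i)$, and relation (\ref{eq G F}) gives $F(x_i,x_j) = G(x_i,x_j)/G(x_j,x_j)$. Substituting these and pulling out the common denominator $G(x_1,x_1)G(x_2,x_2)$ yields
$$
\det M = \frac{c(x_1)c(x_2)\left(1 - G(x_1,x_2)G(x_2,x_1)\right)}{G(x_1,x_1)G(x_2,x_2)},
$$
which is the claimed formula.

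For the characterization of when $\det M$ vanishes, I would first observe that transience guarantees $0 < G(x_i,x_i) < \infty$ (indeed $G(x_i,x_i) \geq 1$) and that $c(x_i) > 0$, so the prefactor $c(x_1)c(x_2)/(G(x_1,x_1)G(x_2,x_2))$ is strictly positive and finite. Hence $\det M = 0$ if and only if $G(x_1,x_2)G(x_2,x_1) = 1$. The final step invokes the reversibility identity from Remark \ref{rem reversibility F}, namely $c(x_1)G(x_1,x_2) = c(x_2)G(x_2,x_1)$, to write $G(x_2,x_1) = (c(x_1)/c(x_2))G(x_1,x_2)$; inserting this into $G(x_1,x_2)G(x_2,x_1) = 1$ gives $(c(x_1)/c(x_2))G(x_1,x_2)^2 = 1$, and taking the positive square root (legitimate since $G \geq 0$) produces $G(x_1,x_2) = \sqrt{c(x_2)/c(x_1)}$.

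The computation is essentially routine once the factorization is granted; the only point requiring care is the passage from $1 - G(x_1,x_2)G(x_2,x_1) = 0$ to the stated radical, where one must use reversibility to relate the two off-diagonal Green's values rather than assume $G$ is symmetric, which holds only when $c(x_1) = c(x_2)$. I expect this reversibility step to be the sole genuinely content-bearing part of the argument.
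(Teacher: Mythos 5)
Your proposal is correct and follows essentially the same route as the paper: the factorization is read off from the entrywise computations of $(\Delta h_i)(x_j)$ preceding the lemma, the determinant formula comes from substituting the relations $1-U(x_i,x_i)=1/G(x_i,x_i)$ and $F(x_i,x_j)=G(x_i,x_j)/G(x_j,x_j)$ of Lemma \ref{lem relations on G F U}, and the vanishing criterion uses the reversibility identity $c(x_1)G(x_1,x_2)=c(x_2)G(x_2,x_1)$ exactly as in the paper. Your added observation that transience makes the prefactor $c(x_1)c(x_2)/(G(x_1,x_1)G(x_2,x_2))$ strictly positive and finite is a small point the paper leaves implicit, and it is a worthwhile clarification rather than a deviation.
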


\begin{proof} We first notice that, by Remark \ref{rem reversibility F},  the off-diagonal entries in $M$ are equal. It remains to show that
$$
\det \left(
              \begin{array}{cc}
                1 - U(x_1, x_1) &  - F(x_1, x_2) \\
                 - F(x_2, x_1) & 1 - U(x_2, x_2)
              \end{array}
     \right) = \frac{ 1 - G(x_1, x_2)G(x_2, x_1)}{G(x_1, x_1)G(x_2, x_2)}.
$$
Indeed, applying Lemma \ref{lem relations on G F U}, we obtain
\begin{eqnarray*}
(1 - U(x_1, x_1))( 1 - U(x_2,x_2))& - &F(x_1, x_2)F(x_2, x_1)\\
\\
 &= & \frac{1}{G(x_1, x_1)G(x_2, x_2)} - F(x_1, x_2)F(x_2, x_1)\\
\\
   & =&  \frac{ 1 -F(x_1, x_2)F(x_2, x_1) G(x_1, x_1)G(x_2, x_2)}{ G(x_1, x_1)G(x_2, x_2)}\\
\\
   &=& \frac{ 1 - G(x_1, x_2)G(x_2, x_1)}{G(x_1, x_1)G(x_2, x_2)}.
\end{eqnarray*}

The last statement of the lemma is based on the  fact that the Markov chain is reversible, see Remark \ref{rem reversibility F}, that is
\be\label{eq for G(x,y)}
c(x_1) G(x_1,x_2) = c(x_2)G(x_2,x_1), \ \ \ x_1, x_2 \in V.
\ee
  We conclude that $\det M = 0$ if and only if $G(x_1, x_2)G(x_2, x_1) = 1$, if and only if the pair $(x_1, x_2)$ is degenerate in the following sense:
$$
G(x_1, x_2) = \sqrt{\frac{c(x_2)}{c(x_1)}}.
$$

\end{proof}

Now we are ready to prove our main results of this section.

\begin{theorem}\label{thm formula for monopole w_x}
Let $(G,c)$ be  a transient network, and $x$ a fixed vertex in $V$. Let $h_x$ be the function defined in Lemma \ref{lem h_1 in terms of F}, i.e.,
$$
h_x(a) := \mathbb E_a(\chi_{\{x\}} \circ X_{\tau(x)}), \ \ a \in V.
$$
Then the function
\be\label{eq def of w_x}
w_x(a) :=  \frac{G(a,x)}{c(x)}, \ \ a \in V,
\ee
is a monopole at $x$. In other words,  $w_x\in \h_E$ and it satisfies the equation $\Delta w_x = \delta_x$.

\end{theorem}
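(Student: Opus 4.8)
The plan is to reduce everything to the function $h_x$ of Lemma \ref{lem h_1 in terms of F}, for which the relevant facts (harmonicity off $x$, boundary values, and an energy bound) are already in place. First I would record the algebraic identity tying $w_x$ to $h_x$. By Lemma \ref{lem h_1 in terms of F} we have $h_x(a) = F(a,x)$, and by (\ref{eq G F}) the Green's function factors as $G(a,x) = F(a,x)\,G(x,x)$. Hence $w_x(a) = G(a,x)/c(x) = \frac{G(x,x)}{c(x)}\,h_x(a)$, so $w_x$ is simply a positive scalar multiple of $h_x$. This single observation transfers all the structural information about $h_x$ to $w_x$.

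Next I would verify the equation $\Delta w_x = \delta_x$. There are two equivalent routes. The quickest is to compute directly from the Green's function: since $\Delta = c(I-P)$ (Lemma \ref{lem several facts on P}), $(\Delta w_x)(a) = \frac{c(a)}{c(x)}\big(G(a,x) - (PG(\cdot,x))(a)\big)$, and shifting the defining series $G(\cdot,x) = \sum_{n\ge 0} p^{(n)}(\cdot,x)$ by one step (the interchange being legitimate by nonnegativity and transience, $G(a,x)<\infty$) gives $(PG(\cdot,x))(a) = G(a,x) - p^{(0)}(a,x) = G(a,x) - \delta_x(a)$; thus $(\Delta w_x)(a) = \frac{c(a)}{c(x)}\delta_x(a) = \delta_x(a)$. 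Alternatively, via the multiplicative identity above, $w_x$ is harmonic off $x$ because $h_x$ is (Lemma \ref{lem properties of h_i}), while at $x$ one uses the diagonal computation (\ref{eq diagonal entry}), $(\Delta h_x)(x) = c(x)(1-U(x,x))$, together with $1-U(x,x) = 1/G(x,x)$ from (\ref{eq G U}), to get $(\Delta w_x)(x) = \frac{G(x,x)}{c(x)}\cdot\frac{c(x)}{G(x,x)} = 1$.

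It remains to show $w_x \in \h_E$, i.e.\ that its energy is finite. Since $w_x = \frac{G(x,x)}{c(x)} h_x$, we have $\|w_x\|_{\h_E}^2 = \big(\tfrac{G(x,x)}{c(x)}\big)^2 \|h_x\|_{\h_E}^2$, so it suffices to bound $\|h_x\|_{\h_E}^2$. Lemma \ref{lem energy of h_x} already supplies the estimate $\|h_x\|_{\h_E}^2 < \tfrac12 c(x)\sum_{a\in V} \mathbb{Pr}[x\to a](1 - \mathbb{Pr}[a\to x])$, and $w_x$ is harmonic off the single vertex $x$, so Lemma \ref{lem for energy of harm fns}(ii) and Corollary \ref{cor harm in l-1} apply as well. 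The whole content of the theorem therefore rests on showing that the bounding series converges, and this is exactly where transience enters: because $G(x,x)<\infty$, the relevant return quantities are summable, forcing the energy to be finite.

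I expect this convergence step to be the main obstacle. The difficulty is not the formal manipulation but the fact that, on an infinite network, the naive term-by-term rearrangements of the energy sum fail (a careless interchange would even collapse the energy to zero), so the finiteness must be extracted honestly from the transience hypothesis rather than from cancellation. Concretely, I would either show directly that $\sum_{a} \mathbb{Pr}[x\to a](1-\mathbb{Pr}[a\to x]) < \infty$ using reversibility ($c(a)F(a,x) = c(x)F(x,a)$, Remark \ref{rem reversibility F}) to re-express the summand in terms of Green's function quantities, or argue via a finite exhaustion of $G$ on which the monopole energy equals $w_x(x) = G(x,x)/c(x)$ and pass to the limit monotonically; in either case $G(x,x)<\infty$ is the decisive input.
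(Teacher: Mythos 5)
Your first two steps are sound and essentially match the paper: the identity $w_x = \frac{G(x,x)}{c(x)}h_x$ (via (\ref{eq G F}) and (\ref{eq G U})) and the verification of $\Delta w_x = \delta_x$. Indeed, your direct route through the series shift $PG(\cdot,x) = G(\cdot,x) - \delta_x$ is cleaner than the paper's computation via (\ref{eq diagonal entry}), and the interchange is legitimate by nonnegativity and transience. The genuine gap is in the third step, which is the actual content of the theorem: you never prove that the energy is finite, and the route you lean on cannot work. The estimate of Lemma \ref{lem energy of h_x} is only an upper bound, and that bound is generically \emph{infinite} on transient networks. Take the simple random walk on $\Z^3$ (all conductances $1$): reversibility gives $F(x,a) = F(a,x) \asymp |a-x|^{-1}$ as $|a-x| \to \infty$, so
$$
\sum_{a\in V} F(x,a)\,\bigl(1 - F(a,x)\bigr) \asymp \sum_{a\in \Z^3} \frac{1}{|a-x|} = \infty,
$$
since the number of lattice points at distance $r$ grows like $r^2$. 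Thus "showing that the bounding series converges" is impossible in general, even though $w_x$ does have finite energy there; transience does not make these quantities summable. Your alternative (exhaustion by finite subnetworks and a monotone limit) is a viable classical strategy, but you only name it; nothing in the proposal carries it out, so the finite-energy claim stands unproved.

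The paper closes this step by a mechanism absent from your proposal: it proves the Riesz reproducing identity $\langle w_x, f\rangle_{\h_E} = f(x)$ for all $f \in \h_E$ by summation by parts, splitting $\frac{1}{2}\sum_{a,b} c_{ab}(w_x(a)-w_x(b))(f(a)-f(b))$ into two sums, each of which collapses to $\frac{1}{2}\sum_a (\Delta w_x)(a) f(a) = \frac{1}{2}f(x)$ using the already-proved equation $\Delta w_x = \delta_x$. This identity is exactly the defining property (\ref{eq-def for w_x}) of a monopole in $\h_E$, so it delivers membership in the energy space and the monopole property in one stroke; in particular, pairing with $w_x$ itself yields $\|w_x\|_{\h_E}^2 = w_x(x) = G(x,x)/c(x) < \infty$. (A fully airtight version of this would still require justifying the rearrangement of the double sum, e.g.\ by first pairing against finitely supported $f$ or by the exhaustion you allude to, but that summation-by-parts identity is the missing idea that replaces the doomed attempt to sum $F(x,a)(1-F(a,x))$.)
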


\begin{proof}  We first show that $w_x$ is a multiple of $h_x$. It follows from Lemma \ref{lem h_1 in terms of F} and (\ref{eq G F}) that
\begin{eqnarray*}
w_x(a) & = & \frac{G(a,x)}{c(x)} \\
   &=& \frac{F(a,x)G(x,x)}{c(x)}\\
   &=& \frac{1}{c(x)(1 - U(x,x))}h_x(a).
\end{eqnarray*}
To prove the theorem, it suffices to show that
$$
 (i)\ \  (\Delta w_x)(a) = \delta_x(a), \ \ \ \mathrm{and} \ \ \ (ii) \ \
\langle w_x, f\rangle_{\h_E} = f(x), \ \ \forall f \in \h_E.
$$
In (\ref{eq diagonal entry}) and Lemma \ref{lem properties of h_i} we showed that
$$
(\Delta h_x)(a) = \begin{cases}
\dfrac{c(x)}{G(x,x)}, & a =x\\
0, & a\neq x.
\end{cases}
$$
Hence, (i) is proved.

To see that (ii) holds, we compute, for $f\in \h_E$,
\begin{eqnarray*}
\langle w_x, f\rangle_{\h_E}  &=&  \frac{1}{2}\sum_{a,b \in V}c_{ab} (w_x(a) - w_x(b)) (f(a) - f(b)) \\
   &=& \frac{1}{2}\sum_{a\in V}\left[\sum_{b \in V}c_{ab} (w_x(a) - w_x(b))f(a) \right]  \\
   & + & \frac{1}{2}\sum_{b\in V}\left[\sum_{a \in V}c_{ab} (w_x(b) - w_x(a)f(b)\right] \\
   &=& \frac{1}{2}\sum_{a\in V}  (\Delta w_x)(a)f(a) + \frac{1}{2}\sum_{b\in V}  (\Delta w_x)(b)f(b) \\
   &=& \frac{1}{2} f(x) +  \frac{1}{2} f(x)\\
   &=& f(x).
  \end{eqnarray*}
The theorem is proved.
\end{proof}

\begin{remark} (1)  It follows from Theorem \ref{thm formula for monopole w_x} that  the monopole $w_x$ has finite energy and
$||w_x||_{\h_E} = \dfrac{G(x,x)}{c(x)} \|h_x\|_{\h_E}$.

(2) Moreover, we deduce from relation (\ref{eq def of w_x}) the following result:  an electrical network $(G,c)$ is transient if and only if the function $a \mapsto G(a,x)$ has finite energy for every fixed $x\in V$.
\end{remark}

\begin{corollary}\label{cor dipole formula}
Let $x_1, x_2$ be any distinct vertices in $V$. Set
$$
v_{x_1, x_2}(a) := (w_{x_1} - w_{x_2})(a) = \frac{G(a, x_1)}{c(x_1)}  -  \frac{G(a, x_2)}{c(x_2)},
$$
 Then $v_{x_1, x_2}$ is a dipole in $\h_E$, i.e., $(\Delta v_{x_1, x_2})(a) = (\delta_{x_1} - \delta_{x_2}) (a)$, and it has finite energy.
\end{corollary}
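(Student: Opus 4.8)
The plan is to deduce everything from Theorem \ref{thm formula for monopole w_x} by exploiting the linearity of both the Laplacian $\Delta$ and the energy inner product; no new analytic estimate is needed. Since $(G,c)$ is transient, Theorem \ref{thm formula for monopole w_x} guarantees that for each fixed vertex $x$ the function $w_x(a) = G(a,x)/c(x)$ belongs to $\h_E$ and is a genuine monopole, and in particular this applies to both $w_{x_1}$ and $w_{x_2}$. The first step is membership in $\h_E$: because $\h_E$ is a vector space, $v_{x_1,x_2} = w_{x_1} - w_{x_2}$ lies in $\h_E$ automatically, and therefore has finite energy.

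Next I would verify the defining reproducing property of the dipole. By part (ii) of Theorem \ref{thm formula for monopole w_x} we have $\langle w_{x_i}, f\rangle_{\h_E} = f(x_i)$ for every $f \in \h_E$ and $i=1,2$. Subtracting and using bilinearity of $\langle\cdot,\cdot\rangle_{\h_E}$ gives $\langle v_{x_1,x_2}, f\rangle_{\h_E} = f(x_1) - f(x_2)$ for all $f\in \h_E$, which is exactly the defining relation (\ref{eq dipole v_xy}). By the Riesz uniqueness already invoked in Subsection \ref{rem dipole monopole}, this identifies $v_{x_1,x_2}$ as the canonical dipole $v_{x_1 x_2}$, not merely as some solution of the Laplace equation.

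Finally, for the Laplacian identity I would apply $\Delta$ termwise: since $\Delta$ is linear and $\Delta w_{x_i} = \delta_{x_i}$ by part (i) of Theorem \ref{thm formula for monopole w_x}, we obtain $\Delta v_{x_1,x_2} = \delta_{x_1} - \delta_{x_2}$ pointwise on $V$. Equivalently, this is precisely the content of Proposition \ref{prop prop of energy space}(3), which asserts $v_{xy} = w_x - w_y$.

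The main point is that there is no real obstacle of substance here: all the genuine work — convergence of the Green's function, finiteness of the energy of $w_x$, and the reproducing identity $\langle w_x, f\rangle_{\h_E} = f(x)$ — was already carried out in Theorem \ref{thm formula for monopole w_x}. The corollary is purely an assembly step by linearity, and the only place meriting a moment's care is confirming, via the Riesz representation together with uniqueness, that the identity $\langle v_{x_1,x_2}, f\rangle_{\h_E} = f(x_1)-f(x_2)$ singles out $v_{x_1,x_2}$ as the distinguished finite-energy dipole.
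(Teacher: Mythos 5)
Your proposal is correct and follows essentially the same route as the paper: both arguments deduce the corollary purely by linearity of $\Delta$ and of $\langle\cdot,\cdot\rangle_{\h_E}$ from the two conclusions of Theorem \ref{thm formula for monopole w_x} (the equation $\Delta w_{x_i}=\delta_{x_i}$ and the reproducing identity $\langle w_{x_i},f\rangle_{\h_E}=f(x_i)$). The only cosmetic difference is that the paper also records the expression of $v_{x_1,x_2}$ as a combination of $h_1,h_2$, which plays no essential role, while you make explicit the (correct) appeal to Riesz uniqueness and to the vector-space structure of $\h_E$ for finiteness of the energy.
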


\begin{proof}
From the definition of $v_{x_1,x_2}$, we see that
$$
v_{x_1,x_2} = \frac{G(x_1, x_1)}{c(x_1)} h_1 + \frac{G(x_2, x_2)}{c(x_2)} h_2.
$$
Then by Theorem \ref{thm formula for monopole w_x}, we have
$$
\Delta v_{x_1,x_2} = \Delta w_{x_1} - \Delta w_{x_2} = \delta_{x_1} - \delta_{x_2}.
$$
 Moreover,
$$
\langle v_{x_1,x_2}, f\rangle_{\h_E} = \langle w_{x_1}, f \rangle_{\h_E}  -  \langle w_{x_2}, f \rangle_{\h_E} = f(x_1) - f(x_2).
$$
This proves that  the dipole $v_{x_1,x_2}$ belongs to the energy space $\h_E$.
\end{proof}

We finish this section with another result regarding dipoles.

\begin{theorem}\label{thm dipole formula}
Let $(G, c)$ be  a transient electrical network, and let $x_1, x_2$ be any two distinct vertices in $V$ such that the Green's function $G$ (see Lemma \ref{lem relations on G F U}) satisfies the relation
$$
G(x_1, x_2) \neq \sqrt{\frac{c(x_2)}{c(x_1)}}.
 $$
 Let $M$ be the matrix defined by (\ref{eq def M}). Then the function
$$
\ol v_{x_1, x_2}(a) = \alpha h_1(a) + \beta h_2(a), \ \ a\in V,
$$
is a dipole defined on $V$, where the coefficients $\alpha$ and $\beta$ are determined as  the solution to the equation
\be\label{eq sol for M}
M\left(
   \begin{array}{c}
     \alpha \\
     \beta \\
   \end{array}
 \right) = \left(
   \begin{array}{r}
   1\\
    -1\\
   \end{array}
 \right)
\ee
Equivalently, we can write
$$
\ol v_{x_1, x_2}(a) = [h_1(a), h_2(a)]M^{-1}\left(
   \begin{array}{r}
   1\\
    -1\\
   \end{array}
 \right), \qquad a\in V.
$$
\end{theorem}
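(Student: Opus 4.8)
The plan is to exploit that $\ol v_{x_1,x_2}$ is, by construction, a linear combination of the two functions $h_1, h_2$ from Lemma \ref{lem properties of h_i}, each of which is harmonic on $V \setminus F$ with $F = \{x_1, x_2\}$. First I would observe that for any scalars $\alpha,\beta$ the function $\ol v := \alpha h_1 + \beta h_2$ inherits harmonicity off $F$: by linearity of $\Delta$ and Lemma \ref{lem properties of h_i}, $(\Delta \ol v)(a) = \alpha (\Delta h_1)(a) + \beta (\Delta h_2)(a) = 0$ for every $a \in V \setminus F$. Hence $\Delta \ol v$ is supported on the two vertices $x_1, x_2$, and to make $\ol v$ a dipole (in the sense of the Remark following Proposition \ref{prop prop of energy space}) it suffices to arrange $(\Delta \ol v)(x_1) = 1$ and $(\Delta \ol v)(x_2) = -1$, i.e.\ $\Delta \ol v = \delta_{x_1} - \delta_{x_2}$.

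Next I would read off these two values directly from the matrix $M$ of (\ref{eq def M}). By linearity,
\[
\begin{pmatrix} (\Delta \ol v)(x_1) \\ (\Delta \ol v)(x_2) \end{pmatrix}
= \alpha \begin{pmatrix} (\Delta h_1)(x_1) \\ (\Delta h_1)(x_2) \end{pmatrix}
+ \beta \begin{pmatrix} (\Delta h_2)(x_1) \\ (\Delta h_2)(x_2) \end{pmatrix}
= M \begin{pmatrix} \alpha \\ \beta \end{pmatrix},
\]
which is exactly the left-hand side of (\ref{eq sol for M}). Consequently, choosing $(\alpha,\beta)^T$ to solve $M(\alpha,\beta)^T = (1,-1)^T$ forces $(\Delta \ol v)(x_1) = 1$ and $(\Delta \ol v)(x_2) = -1$; combined with the vanishing off $F$ this gives $\Delta \ol v = \delta_{x_1} - \delta_{x_2}$.

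It then remains to justify that the system (\ref{eq sol for M}) has a unique solution and to rewrite it in closed form. Here I would invoke Lemma \ref{lem matrix M}: the stated hypothesis $G(x_1,x_2) \neq \sqrt{c(x_2)/c(x_1)}$ is precisely the condition $\det M \neq 0$. Thus $M$ is invertible, the solution $(\alpha,\beta)^T = M^{-1}(1,-1)^T$ exists and is unique, and substituting into $\ol v = \alpha h_1 + \beta h_2 = [h_1, h_2](\alpha,\beta)^T$ yields the equivalent matrix formula in the statement. Finally, to confirm $\ol v \in \h_E$ (so that it is a genuine element of the energy space, not merely a solution of the Laplace equation), I would note that each $h_i$ is harmonic off the finite set $F$ in a transient network and hence of finite energy --- this is the content of the estimate in Lemma \ref{lem energy of h_x}, applied as in Theorem \ref{thm formula for monopole w_x} to the transient case --- so the finite linear combination $\ol v$ again lies in $\h_E$.

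The computation is short; the only real content is the two structural facts already in hand, namely the identification $\big((\Delta \ol v)(x_1), (\Delta \ol v)(x_2)\big)^T = M(\alpha,\beta)^T$ (immediate from the definition of $M$ and harmonicity off $F$) and the translation of the non-degeneracy hypothesis into $\det M \neq 0$ via Lemma \ref{lem matrix M}. I expect the main obstacle to be purely bookkeeping: making sure the finite-energy claim is supplied rather than taken for granted, since $\Delta \ol v = \delta_{x_1} - \delta_{x_2}$ alone determines $\ol v$ only up to a finite-energy harmonic function, so membership in $\h_E$ must be read off the explicit representation through $h_1, h_2$ rather than inferred from the differential equation.
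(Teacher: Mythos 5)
Your proposal is correct and follows essentially the same route as the paper's own proof: linearity of $\Delta$ together with Lemma \ref{lem properties of h_i} gives harmonicity off $F$, the definition of $M$ in (\ref{eq def M}) turns the two remaining conditions into the system (\ref{eq sol for M}), and Lemma \ref{lem matrix M} translates the hypothesis $G(x_1,x_2)\neq\sqrt{c(x_2)/c(x_1)}$ into $\det M\neq 0$, hence unique solvability. Your additional finite-energy verification is sound but not required here: the theorem claims only that $\ol v_{x_1,x_2}$ is a dipole \emph{defined on} $V$, i.e., a solution of $\Delta \ol v_{x_1,x_2} = \delta_{x_1} - \delta_{x_2}$ in the sense of the remark following Proposition \ref{prop prop of energy space}, which is exactly what the paper's proof establishes.
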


\begin{proof} It follows from Lemma \ref{lem matrix M} that the assumption of the theorem implies that (\ref{eq sol for M}) has a unique solution.

We need to verify that  the function $\ol v_{x_1, x_2}$ satisfies
\be\label{eq for ol v}
\Delta \ol v_{x_1, x_2} = \delta_{x_1} - \delta_{x_2}.
\ee
 Indeed, one has
$$
(\Delta \ol v_{x_1, x_2})(a) = \alpha (\Delta h_1)(a) + \beta (\Delta h_2)(a).
$$
Substituting  $a = x_1$ and $a = x_2$ in the above equality, we obtain
$$
\alpha (\Delta h_1)(x_1) + \beta (\Delta h_2)(x_1) = 1,
$$
$$
\alpha (\Delta h_1)(x_2) + \beta (\Delta h_2)(x_2) = -1
$$
since the vector $(\alpha, \beta)^T$ was chosen satisfying equation (\ref{eq sol for M}). We recall that, by Lemma \ref{lem properties of h_i}, the functions $h_1$ and $h_2$ are harmonic on $V\setminus F$. Hence, (\ref{eq for ol v}) is proved.

\end{proof}

\begin{remark} It follows from Corollary \ref{cor dipole formula} and Theorem \ref{thm dipole formula} that the functions $v_{x_1,x_2}$ and $\ol v_{x_1,x_2}$
satisfy the same equation. Therefore their difference $f =  v_{x_1,x_2} - \ol v_{x_1,x_2}$ is a harmonic function which again is a linear combination of $h_1$ and $h_2$.

\end{remark}

\section{Existence of harmonic functions on a Bratteli diagram}\label{sect HF on BD}

In this section, we will study the space $\mathcal Harm$ of harmonic functions on  arbitrary  Bratteli diagrams. It will be proved that the dimension of  $\mathcal Harm$ depends on the structure of an underlying Bratteli diagram and can be either finite (for a rather restrictive class of stationary Bratteli diagrams) or infinite. Moreover, we will show that Bratteli diagrams $B = (V, E)$ of ``bottleneck'' type have only trivial harmonic functions defined on the set of all vertices $V$. It is worth mentioning that harmonic functions as elements of the energy space $\h_E$ will be  considered in Section \ref{sect HF and energy}.

\subsection{Characterization of harmonic functions}

We first recall our basic settings. Let $B = (V,E)$ be a 0-1 Bratteli diagram with the sequence of 0-1 incidence matrices $A_n$. We assume that the conductance function $c$  is defined on $E$ and takes positive value at every edge $e$. Since every edge $e$ is uniquely determined by a pair of vertices $(x,y)$, we write also $c_e = c_{xy} = c_{yx}$. Based on the structure of the vertex set $V = \coprod_{n\geq 0}V_n$ and edge set $E= \coprod_{n\geq 0}E_n$ of the Bratteli diagram $B$,
we define a sequence of matrices $(C_n)_{n \ge 0}$ which is naturally related to the incidence matrices $(A_n)$ and the conductance function $c$:
$$
C_n = (c^{(n)}_{xy}),
$$
where, by definition,
$$
 c^{(n)}_{xy} := \begin{cases} c_{xy},\  & x =s(e), y=r(e), e \in E_n\\
 0, & \mbox{otherwise}\\
\end{cases}
$$
Then $c^{(n)}_{xy} > 0$  if and only if $a^{(n)}_{xy} =1$ and the size of $C_n$ is $|V_n| \times |V_{n+1}|, n \ge 0$.
In particular, $C_0 $ is a row matrix with entries $(c^{(0)}_{ox} : x \in V_1)$. It is helpful to remember that for every $n$, the matrix $C_n$ determines a linear transformation $C_n : \R^{|V_{n+1}|}  \to \R^{|V_{n}|}$.

We note that the order of indexes in $c^{(n)}_{xy}$ is important: although the values of the conductance function $c$ depend on edges only, the entry  $c^{(n)}_{yx}$ belongs to the transpose $C_n^T$ of $C_n$.

It is said that the sequence of matrices $(C_n)$ is {\em  associated to the weighted Bratteli diagram $(B,c)$}.

Together with the sequence of associated matrices $(C_n)$, we will consider two other sequences of matrices. They are denoted by $(\overleftarrow{P}_n)$ and $(\overrightarrow{P}_{n-1})$, and their entries are defined by the formulas
$$
\overleftarrow{p}_{xz}^{(n)} = \frac{c^{(n)}_{xz}}{c_n(x)}, \ \ x\in V_n, z \in V_{n+1},
$$
$$
\overrightarrow{p}_{xy}^{(n-1)} = \frac{c^{(n-1)}_{yx}}{c_n(x)}, \ \ x\in V_n, y \in V_{n-1}.
$$
This means, in particular,  that $\overleftarrow{P}_0$ is a row matrix, and,  for all $n$, $\overrightarrow{P}_{n} = \overleftarrow{P}_{n}^T$ where $T$ stands for the transpose matrix.

\begin{remark}
(1) In order to clarify the essence of our notation, let us imagine a Bratteli diagram as an infinite graph that is expanding in the ``horizontal'' direction from left to right, that is it starts at the top vertex $o$ and passes consequently  through the ``vertical'' levels $V_n$. Then the arrows used in the notation of the matrices show how the transformations defined by the matrices act: $\overleftarrow{P}_n$ sends $\R^{|V_{n+1}|}$ to $\R^{|V_{n}|}$ and $\overrightarrow{P}_{n-1}$ sends $\R^{|V_{n-1}|}$ to $\R^{|V_{n}|}$, see Figure 3.

(2) The matrix $P$ of transition probabilities has a simple form. It can be schematically represented as follows
$$
P = \left(
  \begin{array}{cccccc}
    0 & \overleftarrow{P}_0 & 0 & 0 & \cdots &\cdots \\
    \overrightarrow{P}_0 & 0 & \overleftarrow{P}_1 & \cdots &\cdots \\
    0 & \overrightarrow{P}_1 & 0 & \overleftarrow{P}_2 & \cdots &\cdots \\
    0 & 0 & \overrightarrow{P}_2 & 0 & \overleftarrow{P}_3 & \cdots  \\
    \cdots &\cdots &\cdots &\cdots  &  \cdots &\cdots \\
  \end{array}
\right).
$$
Here every entry $P_{ij}, i,j = 0,1,2, ... ,$ corresponds a block matrix whose rows are enumerated by vertices from $V_i$ and columns are enumerated by vertices from $V_j$.
\end{remark}

\begin{figure}[ht!]
\begin{center}
\includegraphics[scale=0.75]{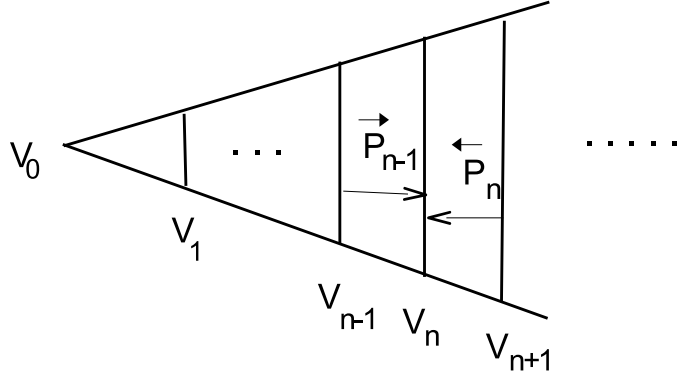}
\caption{Matrices  acting on a Bratteli diagram.}
\label{BD_via_matrices}
\end{center}
\end{figure}

Suppose that $f : V \to \R$ is a harmonic function on a weighted graph $(G,c)$, i.e., $(\Delta f) (x) =0$ for all $x \in V$. Then it follows from (\ref{Laplacian formula}) that $f \in \mathcal Harm$ if and only if
\begin{equation}\label{harm function in c-terms}
c(x) f(x)  = \sum_{y\sim x} c_{xy}f(y), \ \ \forall x \in V.
\end{equation}

If $G$ is a Bratteli diagram $B = (V,E)$, then we can extract more information from relation (\ref{harm function in c-terms}). We first note that any function $f : V \to \R$ can be  uniquely identified with a sequence $(f_n : n \in \N_0)$ of vectors where $f_n = f|_{V_n}\in \R^{|V_{n}|}$. In particular, the function $x \mapsto c(x) : V \to \R$ can be equivalently written as a sequence of vectors $(c_n : n \in \N_0)$. Next, for $x\in V_n$, relation (\ref{harm function in c-terms}) is expanded as
$$
c(x) f(x)  = \sum_{y\in V_{n-1}} c^{(n-1)}_{xy}f(y) + \sum_{z\in V_{n+1}} c^{(n)}_{xz}f(z)
$$
or, in matrix notation,
\be
(D_n f_n)(x) = (C_{n-1}^T f_{n-1})(x) + (C_{n} f_{n+1})(x)
\ee
where  $D_n$ is the diagonal matrix with $d^{(n)}_{xx} = c_n(x), x \in V_n$.

On the other hand, if $x\in V_n$, then $c(x) = c_n(x)$ can be represented as follows:
$$
c(x) = \sum_{y\in V_{n-1}} c^{(n-1)}_{xy} + \sum_{z\in V_{n+1}} c^{(n)}_{xz} =
(C_{n-1}^T \mathbf {1}_{n-1})(x) + (C_{n} \mathbf {1}_{n+1})(x)
$$
where $\mathbf {1}_n = (1, ... , 1)^T \in \mathbb R^{|V_n|}$. Finally, we conclude  from these relations that, for any $x\in V_n$ and any $n$,
\begin{equation}\label{C_n relations with x for harm f-n}
C_{n-1}^T (f_{n-1} - f_n(x)\mathbf {1}_{n-1})(x) + C_{n}(f_{n+1} - f_n(x) \mathbf {1}_{n+1})(x) = 0.
\end{equation}

We summarize the above argument in the following statement.

\begin{theorem}\label{thm ness-suff matrix cond for harm fns}
Let $(B, c) = (V,E, c)$ be a weighted 0-1 Bratteli diagram with conductance function $c$. Let $(C_n)$ be  the sequence of matrices  associated to $(B,c)$. Then a function $f : V\to \R$ is harmonic if and only the sequence of vectors $(f_n)$ where $f_n = f|_{V_n}$ satisfies
\be\label{D_n and C_n represents harm fn}
C_{n} f_{n+1} = D_n f_n - C_{n-1}^T f_{n-1},\ \ \ n\in \N,
\ee
or, equivalently, relation (\ref{C_n relations with x for harm f-n}) holds.
\end{theorem}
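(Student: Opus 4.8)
The plan is to exploit the one structural feature that distinguishes a Bratteli diagram from a general locally finite graph: by Definition \ref{def BD} no two vertices of the same level are joined by an edge, so for $x\in V_n$ with $n\geq 1$ the neighbor set $\{y:y\sim x\}$ partitions into the vertices of $V_{n-1}$ incident to $x$ through edges of $E_{n-1}$ and the vertices of $V_{n+1}$ incident to $x$ through edges of $E_n$. I would begin from the pointwise form of harmonicity (\ref{harm function in c-terms}), namely $c(x)f(x)=\sum_{y\sim x}c_{xy}f(y)$, and split the right-hand sum along this two-level partition.

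The first step is to match each of the two partial sums with a matrix action. If $y\in V_{n-1}$ is a neighbor of $x\in V_n$, the connecting edge $e\in E_{n-1}$ has $s(e)=y$, $r(e)=x$, so its conductance is the entry $c^{(n-1)}_{yx}$ of $C_{n-1}$, which is the $(x,y)$-entry of $C_{n-1}^T$; hence $\sum_{y\in V_{n-1}}c_{xy}f(y)=(C_{n-1}^Tf_{n-1})(x)$. If instead $z\in V_{n+1}$, the edge $e\in E_n$ has $s(e)=x$, $r(e)=z$, with conductance $c^{(n)}_{xz}$, the $(x,z)$-entry of $C_n$, so $\sum_{z\in V_{n+1}}c_{xz}f(z)=(C_nf_{n+1})(x)$. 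Since $c(x)f(x)=(D_nf_n)(x)$, reading (\ref{harm function in c-terms}) off coordinate by coordinate over $x\in V_n$ gives precisely the vector identity $D_nf_n=C_{n-1}^Tf_{n-1}+C_nf_{n+1}$, which rearranges to (\ref{D_n and C_n represents harm fn}). Every step here is an equality at a fixed vertex $x$, so the reasoning is reversible: reading off the $x$-coordinate of (\ref{D_n and C_n represents harm fn}) recovers $(\Delta f)(x)=0$, and the two conditions are equivalent level by level.

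For the equivalent centered form (\ref{C_n relations with x for harm f-n}) I would invoke the companion identity $c(x)=(C_{n-1}^T\mathbf{1}_{n-1})(x)+(C_n\mathbf{1}_{n+1})(x)$, which records the total conductance at $x$ through the same partition. Multiplying it by $f_n(x)$ and subtracting the result from the identity $D_nf_n=C_{n-1}^Tf_{n-1}+C_nf_{n+1}$ (the two left-hand sides both equal $(D_nf_n)(x)$ and cancel) turns it into the symmetric relation in which $f_n(x)$ is subtracted from each coordinate of $f_{n-1}$ and $f_{n+1}$; this is exactly $(\Delta f)(x)=\sum_{y\sim x}c_{xy}(f(x)-f(y))=0$ written out along levels, yielding (\ref{C_n relations with x for harm f-n}).

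Conceptually the argument is pure bookkeeping once the neighbor splitting is in hand, and the only point demanding care is the index management: one must keep straight which level an edge belongs to, and hence whether its conductance surfaces in $C_{n-1}^T$ or in $C_n$. The one genuine boundary case is the root $o$, where $V_0=\{o\}$ has no predecessor level and the term $C_{n-1}^Tf_{n-1}$ simply drops out; harmonicity at $o$ then reads $C_0f_1=D_0f_0$. I would record this as the $n=0$ instance of (\ref{D_n and C_n represents harm fn}) under the convention $C_{-1}:=0$ (equivalently, the $n=0$ case of (\ref{C_n relations with x for harm f-n})), so that the stated family of identities, together with this root relation, characterizes $\mathcal Harm$ completely.
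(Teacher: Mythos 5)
Your proof is correct and takes essentially the same route as the paper: both split the neighbor sum of a vertex $x\in V_n$ along the two adjacent levels (using that a Bratteli diagram has no intra-level edges), read off the coordinate identity $(D_nf_n)(x)=(C_{n-1}^Tf_{n-1})(x)+(C_nf_{n+1})(x)$, and pass to the centered form via the total-conductance identity $c(x)=(C_{n-1}^T\mathbf{1}_{n-1})(x)+(C_n\mathbf{1}_{n+1})(x)$. Your explicit handling of the root (the $n=0$ case under the convention $C_{-1}:=0$, so that harmonicity at $o$ reads $C_0f_1=D_0f_0$) is a point the paper only addresses implicitly in its later discussion, and it makes the ``only if'' direction genuinely complete.
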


\begin{proof}
The assertion ``if'' means that when a harmonic function $f = (f_n)$ is given, then the vectors $(f_n)$ must satisfy (\ref{D_n and C_n represents harm fn}). This fact has been already proved above. For ``only if'', suppose that a sequence of vectors  $f_n \in \R^{|V_n|}, n\ge 1,$ satisfying (\ref{C_n relations with x for harm f-n}) exists. Then it is again straightforward to verify that the corresponding function $f$ is harmonic, i.e., (\ref{harm function in c-terms}) holds.
\end{proof}

We address now the question about the existence of  solutions of  (\ref{C_n relations with x for harm f-n}) and (\ref{D_n and C_n represents harm fn}). Recall that we can always assume, without loss of generality, that $f(o) =0$ for any harmonic function $f = (f_n)$ defined on a Bratteli diagram with the root $o$. Our first goal is to find conditions on vectors $(f_n)$ under which $(f_n)$ represents a harmonic function. Since $f(o) = 0$, we obtain that the vector $f_1$ must satisfy the equation
\begin{equation}\label{harm f-n on V_0}
C_0^T f_1 =0 \ \ \ \mbox{or} \ \ \ \sum_{x \in V_1} c^{(0)}_{ox} f_1(x) = 0.
\end{equation}
Denote the solution set of (\ref{harm f-n on V_0}) by $\mathcal N_1$. Then $\mathcal N_1$ is a $(|V_1| -1)$-dimensional subspace of $\R^{|V_1|}$.

Next, it follows from (\ref{C_n relations with x for harm f-n}) that, for any $f_1 \in \mathcal N_1$ and a vertex $x \in V_1$,
\begin{equation}\label{harm f-n on V_1}
C_0^T(-f_1(x)\mathbf 1_0)(x) + C_1(f_2 - f_1(x)\mathbf 1_2)(x) =0.
\end{equation}
Since $\mathbf 1_0 =1$, relation  (\ref{harm f-n on V_1}) is transformed into
$$
- c_{ox}^{(0)} f_1(x) + (C_1f_2)(x) - f_1(x) \sum_{z\in V_2}c^{(1)}_{xz}  = 0.
$$
Hence, using (\ref{harm function in c-terms}),  we obtain that $(C_1f_2)(x) = c_1(x)f_1(x)$ for arbitrary $x\in V_1$ where $c_1 = c|_{V_1}$. Thus,
\be\label{harm f-n on V_1 short form}
C_1f_2 = D_1f_1 \ \ \ \mbox{or\ equivalently, } \ \ \ \overleftarrow{P}_1 f_2 = f_1.
\ee

It follows that a solution of  (\ref{harm f-n on V_1 short form}) exists if and only if  the vector $f_1 = (f_1(x) : x \in V_1)$ belongs to $\mbox{Col} (\overleftarrow{P}_1)$, the column space of the matrix $\overleftarrow{P}_1$.

We take one more step to see what happens in the general situation. If $x\in V_2$, then equation (\ref{C_n relations with x for harm f-n}) is
$$
(C_2f_3)(x) = f_2(x)(C_2 \mathbf 1_3)(x) + f_2(x) (C_1^T\mathbf 1_1)(x) - (C_1^Tf_1)(x).
$$
The latter can  also be written  in vector form
\be\label{harm f-n on V_2 short form}
\overleftarrow{P_2}f_3 = f_2 - \overrightarrow{P}_1f_1.
\ee
Thus, we conclude that (\ref{harm f-n on V_2 short form}) has a solution if and only if $f_2 - \overrightarrow{P}_1f_1$ belongs to $\mbox{Col} (\overleftarrow{P}_2)$. We notice that formula (\ref{harm f-n on V_1 short form}) is of the same nature as  (\ref{harm f-n on V_2 short form})    because $f_0 = 0$.

It follows from the above arguments that  the following statement (which is a corollary of  Theorem \ref{thm ness-suff matrix cond for harm fns})  holds. We illustrate this assertion in Figure 3.

\begin{corollary}\label{cor existence of harmonic f-n}
Let $(B(V,E), c)$ be a weighted Bratteli diagram with associated sequences of matrices $(\overrightarrow{P}_n)$ and $(\overleftarrow{P}_n)$. Then a sequence of vectors $(f_n)$ ($f_n \in \R^{|V_n|}$) represents a harmonic function $f  = (f_n) : V \to \R$ if and only if for any $n\geq 1$
\be\label{eq ness-suff for harmonic f-n}
f_n - \overrightarrow{P}_{n-1} f_{n-1} = \overleftarrow{P}_n f_{n+1}.
\ee
\end{corollary}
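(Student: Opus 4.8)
The plan is to obtain this corollary directly from Theorem \ref{thm ness-suff matrix cond for harm fns} by passing from the conductance matrices $C_n$, $C_{n-1}^T$, $D_n$ to the stochastic matrices $\overleftarrow{P}_n$ and $\overrightarrow{P}_{n-1}$ through a single left-multiplication by $D_n^{-1}$. The key observation is that the entrywise definitions
$$
\overleftarrow{p}^{(n)}_{xz} = \frac{c^{(n)}_{xz}}{c_n(x)}, \qquad \overrightarrow{p}^{(n-1)}_{xy} = \frac{c^{(n-1)}_{yx}}{c_n(x)}
$$
are exactly the statements that $\overleftarrow{P}_n = D_n^{-1} C_n$ and $\overrightarrow{P}_{n-1} = D_n^{-1} C_{n-1}^T$, since $D_n$ is the diagonal matrix with $(x,x)$-entry $c_n(x)$. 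Because each $c_n(x) = c(x) = \sum_{y \sim x} c_{xy}$ is a finite strictly positive number (every vertex has at least one neighbor and the conductance is strictly positive on edges), $D_n$ is invertible, with $D_n^{-1}$ diagonal of entries $1/c_n(x)$.

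First I would take the characterization from Theorem \ref{thm ness-suff matrix cond for harm fns}, namely $C_n f_{n+1} = D_n f_n - C_{n-1}^T f_{n-1}$ for $n \in \N$, and multiply both sides on the left by $D_n^{-1}$. Using $D_n^{-1} C_n = \overleftarrow{P}_n$ and $D_n^{-1} C_{n-1}^T = \overrightarrow{P}_{n-1}$ together with $D_n^{-1} D_n = I$, this yields
$$
\overleftarrow{P}_n f_{n+1} = f_n - \overrightarrow{P}_{n-1} f_{n-1},
$$
which is precisely (\ref{eq ness-suff for harmonic f-n}) after transposing the two terms on the right. Since $D_n^{-1}$ is invertible, the transformation is reversible, so (\ref{eq ness-suff for harmonic f-n}) holds for every $n \geq 1$ if and only if (\ref{D_n and C_n represents harm fn}) does. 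The two systems are therefore equivalent, and by Theorem \ref{thm ness-suff matrix cond for harm fns} each characterizes harmonicity of $f = (f_n)$.

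For completeness I would check the base case $n = 1$ separately, where the normalization must be consistent with the root condition $f(o) = f_0 = 0$. Here the term $\overrightarrow{P}_0 f_0$ vanishes and (\ref{eq ness-suff for harmonic f-n}) reduces to $\overleftarrow{P}_1 f_2 = f_1$, recovering the already-derived relation (\ref{harm f-n on V_1 short form}). This matches the observation in the text that (\ref{harm f-n on V_1 short form}) is the $n=1$ instance of the general relation once $f_0 = 0$ is imposed. There is no genuine obstacle in this argument: the only points deserving a word of care are the invertibility of $D_n$ (immediate from positivity of $c(x)$) and the bookkeeping of the left/right arrow conventions, so that the transpose $C_{n-1}^T$ is correctly identified with the forward operator $\overrightarrow{P}_{n-1}$ rather than with $\overleftarrow{P}_{n-1}$.
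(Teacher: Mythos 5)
Your proof is correct and takes essentially the same route as the paper: the paper obtains the corollary from Theorem \ref{thm ness-suff matrix cond for harm fns} by normalizing the relation $C_n f_{n+1} = D_n f_n - C_{n-1}^T f_{n-1}$ row-by-row by $c_n(x)$ (carried out explicitly for $n=1,2$ and then asserted in general), which is exactly your left-multiplication by $D_n^{-1}$. Your packaging via the identities $\overleftarrow{P}_n = D_n^{-1}C_n$ and $\overrightarrow{P}_{n-1} = D_n^{-1}C_{n-1}^T$, with invertibility of $D_n$ (from strict positivity of $c(x)$) giving the equivalence in both directions, is simply a cleaner matrix-level statement of the paper's argument.
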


This results allows us to formulate one more corollary:

\begin{corollary}\label{cor existence of harmonic f-n-1} In notation of Corollary \ref{cor existence of harmonic f-n}, the space of harmonic functions, $\h arm$, is nontrivial on a weighted Bratteli diagram $(B, c)$  if and only if there exists a sequence of non-zero vectors $f =(f_n)$, where $f_n \in \R^{|V_n|}$, such that

\be\label{eq ness-suff for harmonic f-n}
f_n - \overrightarrow{P}_{n-1} f_{n-1} \in \mbox{Col} (\overleftarrow{P}_n).
\ee

\end{corollary}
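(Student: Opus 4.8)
The plan is to obtain this as a direct consequence of Corollary~\ref{cor existence of harmonic f-n}, the key being that the containment $f_n-\overrightarrow{P}_{n-1}f_{n-1}\in\mathrm{Col}(\overleftarrow{P}_n)$ is exactly the condition that the defining recurrence can be solved for the next vector $f_{n+1}$. Throughout I normalize harmonic functions by $f(o)=0$, so that $f_0=0$; with this convention a harmonic function belongs to the constant class (is trivial in $\mathcal Harm$) precisely when its sequence of restrictions $(f_n)$ is identically zero. Thus $\mathcal Harm$ is nontrivial if and only if some harmonic function is represented by a nonzero sequence $(f_n)$, $f_n\in\R^{|V_n|}$, and the task is to match this with the stated membership condition.

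The forward implication is immediate. Suppose $\mathcal Harm$ is nontrivial and pick a non-constant harmonic $f=(f_n)$ normalized by $f_0=0$; then $(f_n)$ is a nonzero sequence. By Corollary~\ref{cor existence of harmonic f-n} it satisfies $f_n-\overrightarrow{P}_{n-1}f_{n-1}=\overleftarrow{P}_n f_{n+1}$ for every $n\ge 1$, and since the right-hand side is a vector in the range of $\overleftarrow{P}_n$, we obtain $f_n-\overrightarrow{P}_{n-1}f_{n-1}\in\mathrm{Col}(\overleftarrow{P}_n)$ for all $n$, as required.

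For the converse I would run the level-by-level construction already described before the statement, using membership precisely as the solvability condition at each stage. Starting from $f_0=0$ and a choice of $f_1$ in the kernel $\mathcal N_1=\{\,g:\overleftarrow{P}_0 g=0\,\}$ coming from harmonicity at the root, membership at level $n$, namely $f_n-\overrightarrow{P}_{n-1}f_{n-1}\in\mathrm{Col}(\overleftarrow{P}_n)$, guarantees that $\overleftarrow{P}_n x=f_n-\overrightarrow{P}_{n-1}f_{n-1}$ has a solution, which one selects as $f_{n+1}$. Carrying this out for all $n$ produces a sequence satisfying the exact recurrence of Corollary~\ref{cor existence of harmonic f-n}, hence a genuine harmonic function; if the sequence can be kept nonzero then $\mathcal Harm$ is nontrivial.

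The delicate point, and the main obstacle, is the honest matching of the two existence statements in the converse: the membership hypothesis is stated for a prescribed sequence, whereas the construction selects each $f_{n+1}$ from the solution set of $\overleftarrow{P}_n x=f_n-\overrightarrow{P}_{n-1}f_{n-1}$ and so in general alters the sequence. What must be argued carefully is that ``membership holds at every level'' is exactly the invariant ``the inductive procedure never stalls'': once each $f_{n+1}$ is taken to be a solution, the membership relations become the equalities of Corollary~\ref{cor existence of harmonic f-n}, so the constructed sequence is harmonic and the two existence statements coincide. Two auxiliary points then need attention: first, that a nonzero sequence can indeed be produced, which I would arrange by initializing the construction at the lowest level $m$ at which a nonzero admissible vector exists (the available freedom living in $\mathcal N_1$ at the root and in $\ker\overleftarrow{P}_n$ thereafter); and second, that the root equation $\overleftarrow{P}_0 f_1=0$, the degenerate $n=0$ instance with no lower level, is included as part of harmonicity and does not obstruct this choice.
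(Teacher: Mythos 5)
Your forward implication is fine and is exactly what the paper intends: the equality of Corollary \ref{cor existence of harmonic f-n} trivially implies membership, and under the normalization $f(o)=0$ a non-constant harmonic function yields a sequence that is not identically zero. (This normalization is in fact essential: without it the membership condition is vacuous, since on \emph{any} diagram the all-ones sequence satisfies it while representing only the constant harmonic function.) The genuine gap is in the converse, and you have put your finger on precisely the right difficulty --- the hypothesis prescribes one fixed sequence $(f_n)$, while the level-by-level construction replaces $f_{n+1}$ by some solution $\tilde f_{n+1}$ of $\overleftarrow{P}_n x = f_n - \overrightarrow{P}_{n-1}f_{n-1}$ --- but your resolution of it is an unproved assertion, and the assertion is false. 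After the replacement, the hypothesis tells you nothing about $\tilde f_{n+1} - \overrightarrow{P}_n f_n$ at the next level, so ``membership holds for the prescribed sequence'' is \emph{not} the invariant ``the recursion never stalls.'' Concretely, take the Bratteli diagram with $|V_n|=1$ for every $n$ (a single infinite ray), any conductance. Each $\overleftarrow{P}_n$ is a positive $1\times 1$ matrix, so $\mathrm{Col}(\overleftarrow{P}_n)=\R$, and the sequence $f_0=0$, $f_n=1$ for $n\geq 1$ satisfies membership at every level; yet harmonicity at the root forces $f(v_1)=f(o)$ and then, inductively, $f\equiv\mathrm{const}$, so $\mathcal Harm$ is trivial. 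The same phenomenon survives even if one also imposes the root equation on the prescribed sequence (e.g.\ $|V_1|=2$, $|V_n|=1$ for $n\geq 2$, with the sequence $(0,1,1,\dots)$), so your second ``auxiliary point'' is not a side issue either: the membership condition simply does not see the root equation, and it does not chain across levels.

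What is true --- and what the paper actually establishes via its algorithm and Proposition \ref{prop suff cond for exist harm fns} --- is the recursive version of the statement: $f_1$ must lie in $\mathcal N_1$ (the root equation), and at each level membership must hold for the vectors \emph{produced by the construction so far}, i.e.\ $\mathrm{Col}(\overleftarrow{P}_n)\cap \mathcal G_n \neq \{0\}$ with the sets $\mathcal N_n, \mathcal G_n$ generated inductively. Under that reading your converse becomes the tautology you describe: solvability at each step is exactly the ability to extend, and the constructed sequence satisfies the equalities of Corollary \ref{cor existence of harmonic f-n} by fiat. So the repair is a change in quantifier structure --- the existential claim must range over sequences generated by the recursion, not over arbitrary prescribed sequences --- rather than an argument that the two existence statements coincide; they do not. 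As written, your proof of the ``if'' half has an unfixable gap (to be fair, the corollary itself is stated too loosely in the paper, and your instinct to flag the mismatch was exactly right; the error was in claiming it closes).
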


\subsection{Algorithmic construction of harmonic functions on Bratteli diagrams}\label{subsect algorithm}

Corollary \ref{cor existence of harmonic f-n-1} is used for formulation  an {\em algorithm} for finding conditions on a weighted Bratteli diagram $(B,c)$ that would guarantee the existence of nontrivial harmonic functions on  $(B, c)$. Moreover, we also give some simplified sufficient conditions for which relation (\ref{eq ness-suff for harmonic f-n}) holds.
\\

\textbf{\textit{Algorithm}}:

(I) Find $\mathcal N_1$, the solution set of $C_0 f_1 =0$.

(II) Find $\mathcal N_2 = \{f_2 \in \R^{|V_2|} : \overleftarrow{P}_1 f_2 \in \mathcal N_1\}$. If $\mbox{Col}(\overleftarrow{P}_1)\cap \mathcal N_1 =\{0\}$ (this is possible only if $\mbox{dim}(\mbox{Col})(\overleftarrow{P}_1) = 1$), then the space $\mathcal N_2$ is trivial, and this means that $\mathcal Harm$ is trivial.

(III) Consider the space $\mathcal G_2 = \{ f_2 - \overrightarrow{P}_1 f_1 \in \R^{|V_2|} :  f_2 \in \mathcal N_2,\ f_1 \in \mathcal N_1\}$. Let $\mathcal N_3 = \{ f_3 \in \R^{|V_3|} : \overleftarrow{P}_2 f_3 \in \mathcal G_2\}$. If $\mathcal N_3$ is trivial, i.e., $\mbox{Col}(\overleftarrow{P}_2) \cap \mathcal G_2 = \{0\}$, we have to stop the algorithm.

(IV) The general case, when $n$ is arbitrary, repeats step (III). We define $\mathcal N_{n+1} = \{ f_{n+1} \in \R^{|V_{n+1}|} : \overleftarrow{P}_{n} f_{n+1} \in \mathcal G_{n}\}$ where $\mathcal G_{n} = \{ f_{n} - \overrightarrow{P}_{n-1} f_{n-1} \in \R^{|V_{n}|} :  f_{n} \in \mathcal N_{n},\ f_{n-1} \in \mathcal N_{n-1}\}$. If for every $n$ the space $\mbox{Col}(\overleftarrow{P}_n) \cap \mathcal G_n \neq \{0\}$, then any nontrivial solution of the equation $ \overleftarrow{P}_{n} f_{n+1} = g_n$ represents a non-constant harmonic function, where $g_n \in \mathcal G_n$.

Thus, we conclude that the following statement holds.

\begin{proposition} \label{prop suff cond for exist harm fns}
The space $\mathcal Harm$ of  harmonic functions on a weighted Bratteli diagram $(B,c)$ is nontrivial if and only if  for every $n$
\be\label{suff cond notrivial harm fns}
\mbox{Col}(\overleftarrow{P}_n) \cap \mathcal G_n \neq  \{0\}
\ee
where $\mathcal G_{n} = \{ f_{n} - \overrightarrow{P}_{n-1} f_{n-1} \in \R^{|V_{n}|} :  f_{n} \in \mathcal N_{n},\ f_{n-1} \in \mathcal N_{n-1}\}$, and  $\mathcal N_{n}$ is defined above.
In particular, if $\mbox{Rank}(\overleftarrow{P}_n) = |V_n|$ for all $n\geq 1$ then (\ref{suff cond notrivial harm fns}) is automatically satisfied.

\end{proposition}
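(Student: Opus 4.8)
The plan is to treat the proposition as the precise non-termination criterion for the Algorithm, anchored to Corollary~\ref{cor existence of harmonic f-n-1}. I would begin by observing that every space produced by the Algorithm is a linear subspace: each $\mathcal N_n$ is the solution set of a homogeneous linear system, and $\mathcal G_n$ is the image of the linear map $(f_n,f_{n-1})\mapsto f_n-\overrightarrow{P}_{n-1}f_{n-1}$ on $\mathcal N_n\times\mathcal N_{n-1}$, hence a subspace of $\R^{|V_n|}$. Since $0$ always lies in both $\mathcal G_n$ and $\mbox{Col}(\overleftarrow{P}_n)$, the condition really asserts a \emph{nonzero} common vector. The structural lemma I would prove by induction on $n$ is that $\mathcal N_n$ is exactly the set of admissible $n$-th levels: $g\in\mathcal N_n$ iff there is a finite chain $(f_1,\dots,f_{n-1},g)$, $f_k\in\mathcal N_k$, obeying $\overleftarrow{P}_k f_{k+1}=f_k-\overrightarrow{P}_{k-1}f_{k-1}$ for every $k<n$ (with $f_0=0$). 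Granting this, Corollary~\ref{cor existence of harmonic f-n} shows that such a chain extends one more level precisely when its defect $f_n-\overrightarrow{P}_{n-1}f_{n-1}$, which ranges over $\mathcal G_n$, lies in $\mbox{Col}(\overleftarrow{P}_n)$.

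For sufficiency I would argue constructively. Assuming $\mbox{Col}(\overleftarrow{P}_n)\cap\mathcal G_n\neq\{0\}$ for all $n$, I would build a nonzero coherent sequence $(f_n)$ by solving the recursion of Corollary~\ref{cor existence of harmonic f-n} forward: at each level choose the realizing vectors in $\mathcal N_n,\mathcal N_{n-1}$, solve $\overleftarrow{P}_n f_{n+1}=f_n-\overrightarrow{P}_{n-1}f_{n-1}$ for $f_{n+1}\in\mathcal N_{n+1}$, and use the inductive description of $\mathcal N_n$ to splice the choices into a single infinite sequence. Some $f_n$ being nonzero, Corollary~\ref{cor existence of harmonic f-n-1} then yields a nonconstant harmonic function. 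The point requiring attention here is coherence: the defining pairs $(f_n,f_{n-1})$ are chosen independently in the definition of $\mathcal G_n$, whereas a genuine chain couples consecutive levels, so I would need to verify that the freedom in $\ker\overleftarrow{P}_n$ suffices to reconcile the independent choices into one sequence.

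The necessity direction is the crux, and I expect it to be the main obstacle. From a nonconstant harmonic $f=(f_n)$ one gets $f_n-\overrightarrow{P}_{n-1}f_{n-1}=\overleftarrow{P}_n f_{n+1}\in\mathcal G_n\cap\mbox{Col}(\overleftarrow{P}_n)$ at every level, but this witness can \emph{vanish} — below the first level where $f$ is nonzero, or whenever $f_{n+1}\in\ker\overleftarrow{P}_n$ — so a single harmonic function need not certify a nonzero element of the intersection at every $n$. To repair this I would pass to the inverse system of finite-dimensional spaces of partial solutions, with the truncation maps, and use its Mittag--Leffler property: a nonzero harmonic function is a nonzero element of the inverse limit, which forces the stabilized images, and hence $\mbox{Col}(\overleftarrow{P}_n)\cap\mathcal G_n$, to be nonzero at each level. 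The delicate bookkeeping is separating the genuine contributions to $\mathcal G_n$ from the kernel directions $\ker\overleftarrow{P}_n$, which enlarge $\mathcal N_{n+1}$ without creating new defects; this is exactly where a naive witness argument can break down at low-rank levels, so I would expect the necessity direction to need either the inverse-limit stabilization above or a regularity hypothesis on the $\overleftarrow{P}_n$.

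Finally, the ``in particular'' clause falls out cleanly: if $\mbox{Rank}(\overleftarrow{P}_n)=|V_n|$ then $\overleftarrow{P}_n\colon\R^{|V_{n+1}|}\to\R^{|V_n|}$ is onto, so $\mbox{Col}(\overleftarrow{P}_n)=\R^{|V_n|}$ and the intersection condition reduces to $\mathcal G_n\neq\{0\}$. I would then check that full rank at every level propagates the nontriviality of $\mathcal N_1$, which is $(|V_1|-1)$-dimensional, up the diagram, keeping each $\mathcal G_n$ nonzero, so the forward construction of the sufficiency direction never terminates. This surjective regime is also where the kernel pathologies flagged above disappear, which is why the criterion is automatic there.
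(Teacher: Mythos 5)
You have, in effect, reconstructed the paper's own reasoning for the easy half and then correctly sensed that the hard half is where the trouble lies. It should be said plainly that the paper contains no proof of this proposition at all: it is introduced by ``Thus, we conclude that the following statement holds'' and is offered as a summary of the Algorithm (steps (I)--(IV)) together with Corollary \ref{cor existence of harmonic f-n}. Your sufficiency argument and your treatment of the full-rank clause are that same reasoning carried out with more care; in particular, the coherence problem you isolate is real and is passed over in silence by the paper -- in the definition of $\mathcal G_n$ the pairs $(f_n,f_{n-1})$ range over $\mathcal N_n\times\mathcal N_{n-1}$ independently, whereas a harmonic function couples them by $f_{n-1}=\overleftarrow{P}_{n-1}f_n$, so ``the algorithm does not stop'' does not immediately produce an infinite coherent chain. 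Your proposal flags this but does not resolve it, and neither does the paper.

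The genuine gap is in your necessity argument, and here your own hedge was the correct conclusion: no inverse-limit or Mittag--Leffler device can force $\mbox{Col}(\overleftarrow{P}_n)\cap\mathcal G_n\neq\{0\}$ at \emph{every} level, because the ``only if'' direction is false as stated. Concretely, let $B$ have levels $V_1=\{a_1,a_2\}$, $V_2=\{b\}$, $V_n=\{u_n,w_n\}$ for $n\geq 3$, with edges $o\sim a_i$, $a_i\sim b$, $b\sim u_3$, $b\sim w_3$, $u_n\sim u_{n+1}$, $w_n\sim w_{n+1}$, and $c\equiv 1$. The function vanishing on levels $0,1,2$ with $f(u_n)=n-2$, $f(w_n)=-(n-2)$ for $n\geq 3$ is harmonic on all of $V$ (a direct check at $o$, $a_i$, $b$, and along the two rays), so $\mathcal Harm$ is nontrivial; yet $\mathcal G_1=\mathcal N_1=\mathrm{span}\{(1,-1)\}$ while $\mbox{Col}(\overleftarrow{P}_1)=\mathrm{span}\{(1,1)\}$, so condition (\ref{suff cond notrivial harm fns}) fails at $n=1$ (and also at $n=2$, where $\mathcal G_2=\{0\}$). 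The mechanism is exactly the kernel phenomenon you identified: here $f_3\in\ker\overleftarrow{P}_2$, so the function is invisible to every witness $f_n-\overrightarrow{P}_{n-1}f_{n-1}$ on the first two levels, and the stabilized images of your inverse system detect it only from level $3$ onward -- they do not, and cannot, make the intersection nonzero at each $n$. Note that the same example refutes step (II) of the Algorithm, which asserts that $\mbox{Col}(\overleftarrow{P}_1)\cap\mathcal N_1=\{0\}$ forces $\mathcal Harm$ to be trivial; the defect is in the paper's statement, not merely in your attempt to prove it. What your inverse-limit construction actually proves is the correct replacement: $\mathcal Harm$ (normalized by $f(o)=0$) is nontrivial if and only if the stabilized images of the truncated-solution spaces are nonzero for some $n$; this coincides with (\ref{suff cond notrivial harm fns}) only under additional hypotheses on the maps $\overleftarrow{P}_n$ that rule out such kernel-supported solutions -- your ``regularity hypothesis.''
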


There are several obvious corollaries that follow from  Proposition \ref{prop suff cond for exist harm fns}. We discuss them in the following remark.

\begin{remark}

(1) If one needs to build a harmonic function explicitly for a given weighted Bratteli diagram $(B,c)$, then one can perform the following sequence of operations (we use the notation introduces above).

(i) Choose a vector $\ol f_1 \in \mathcal N_1$.

(ii) Check whether $\ol f_1 \in  \mbox{Col}(\overleftarrow{P}_1)$. If this is not the case, then there is no harmonic function $f = (f_i)$ such that $f_1 = \ol f_1$. If yes, find the solution set $\mathcal N_2$ of $\overleftarrow{P}_1 f_2 = \ol f_1$ and pick up a solution $\ol f_2$ from this set.

(iii)  For $\ol f_1$ and $\ol f_2$  determined in the previous steps, find a solution $\ol f_3$ of the equation $\overleftarrow{P}_2 f_3 = \ol f_2 - \overrightarrow{P}_1\ol f_1$  if it exists.

(iv) Repeat step (iii) for any $n$ and find  a vector $\ol f_{n+1}$ as a solution $\overleftarrow{P}_n f_{n+1} = \ol f_n - \overrightarrow{P}_{n-1}\ol f_{n-1}$. If this equation is consistent  for all $n$, then we detemine a harmonic function $\ol f = (\ol f_n)$ on $(B, c)$.

(2) For a weighted Bratteli diagram $(B, c)$, the property $\mbox{Rank}(\overleftarrow{P}_n) = |V_n|$ depends on both the structure of the Bratteli diagram, i.e., on the matrices $(A_n)$, and on the values of the function $c$. For instance, two columns of $A_n$ are identical if there are $x, y\in V_{n+1}$ such that the sets $s(r^{-1}(x))$ and $s(r^{-1}(y))$ coincide. But it is easy to make them linearly independent by an appropriate choice of the function $c$. Moreover, the property $\mbox{Rank}(\overleftarrow{P}_n) = |V_n|$ implicitly means that the sequence $(|V_n|)$ is not decreasing.

(3) As was mentioned in the algorithm above, if, for some $n >1$, the equation $\overleftarrow{P}_n f_{n+1} = g$ is inconsistent for any $g \in \mathcal G_n$, then the space $\mathcal Harm$ contains only constant harmonic functions defined on $V$.  This may happen only if $\mbox{Rank}(\overleftarrow{P}_n) < |V_n|$ for some level $V_n$. For a vertex $w\in V_{n+1}$, let $S(w) = s(r^{-1}(w))$ be the subset of vertices from $V_n$ that are connected to $w$ by an edge. Suppose that $S(w) = S(w')$ for $w, w' \in V_{n+1}$. If additionally $c(e) = c(e')$ where $s(e) = s(e')$, then the two columns of $\overleftarrow{P}_n$ coincide. For instance, it may happen for the simple random walk with $c=1$.

We consider now an example of weighted Bratteli diagram $(B,c)$ such that $|V_{i}| \leq |V_{i+1}|$ and $|V_{n+1}| < |V_n|$ where $n$ is the smallest number for which this inequality hold.  This means that the diagram looks like a ``bottleneck'' Bratteli diagram (see Figure 4).

\begin{figure}[ht!]
\begin{center}
\includegraphics[scale=0.8]{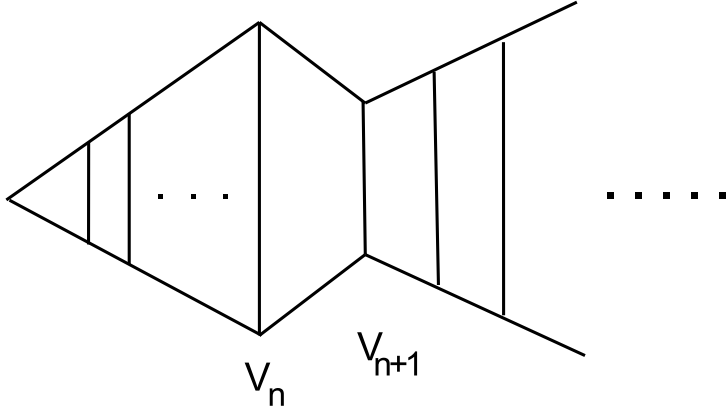}
\caption{Bottleneck Bratteli diagram}
\label{Bottleneck}
\end{center}
\end{figure}

We claim that in this case the space $\h arm$ of harmonic functions defined on $V$ is, in general, trivial. The reason for this is the simple observation that the number of linear equations in $\overleftarrow{P}_n f_{n+1} = g_n$ is bigger than that of unknowns. On the other hand, such a Bratteli diagram can have harmonic functions defined on a subset of $V$. Indeed, if the sequence $(|V_i|)$ is strictly increasing for all $i > n$, then there is a harmonic function $f$ which is nontrivially defined on $\bigcup_{i>n}V_i$. Thus, if a Bratteli diagram $B$ has infinitely many ``bottleneck'' levels, then $\h arm$ consists of constants only.

We also note that  the solution set  of the equation $\overleftarrow{P}_n f_{n+1} = g_n$ is always infinite provided $|V_n| < |V_{n+1}|$.

\end{remark}

The next interesting question is concerned the dimension of  the space of harmonic functions defined on a Bratteli diagram. We will show below that for a wide class of weighted Bratteli diagrams the dimension of $\h arm $ is infinite. On the other hand, there are examples of Bratteli diagrams for which the space of harmonic functions is nontrivial and finite-dimensional.

We refine the definition of a stationary Bratteli diagrams (see subsection \ref{subsect BD}): a weighted Bratteli diagram $(B, c)$ is called {\em stationary} if $A_n = A$ and $C_n = C$ for all $n \geq 1$.

It immediately follows from the  arguments we used in the algorithm construction that the following assertion holds.

\begin{theorem}\label{thm dimension of Harm}
(1) If a weighted Bratteli diagram $(B,c)$ is not of ``bottleneck'' type (that is  $|V_n| \leq |V_{n+1}|$ for every $n$), and, for infinitely many levels $n$, the strict inequality holds, then the space $\mathcal Harm$ is infinite-dimensional.

(2) There are stationary weighted Bratteli diagrams such that the space $\h arm$ is finitely dimensional.

\end{theorem}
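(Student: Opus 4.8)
The plan is to read both parts off the forward recursion of Corollary~\ref{cor existence of harmonic f-n}. Normalizing $f(o)=0$, so that $f_0=0$, a sequence $(f_n)$ represents a harmonic function precisely when $\overleftarrow{P}_0 f_1 = 0$ (equivalently $f_1\in\mathcal N_1$) and
\[
\overleftarrow{P}_n f_{n+1} = f_n - \overrightarrow{P}_{n-1} f_{n-1}, \qquad n\ge 1 .
\]
Thus constructing a harmonic function means running the algorithm of Subsection~\ref{subsect algorithm}, and the dimension of $\mathcal{Harm}$ is controlled by the freedom left when solving each such equation, i.e.\ by the kernels $\ker\overleftarrow{P}_n\subset\R^{|V_{n+1}|}$.

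For part (1) I would first place myself in the full-rank regime of Proposition~\ref{prop suff cond for exist harm fns}: under $|V_n|\le|V_{n+1}|$ one may take $\mathrm{Rank}(\overleftarrow{P}_n)=|V_n|$, so each $\overleftarrow{P}_n$ is surjective and every step of the recursion is solvable. Let $m_1<m_2<\cdots$ be the infinitely many levels with $|V_{m_k}|<|V_{m_k+1}|$; then $\dim\ker\overleftarrow{P}_{m_k}=|V_{m_k+1}|-|V_{m_k}|\ge 1$. For each $k$ I would build a harmonic function $\varphi^{(k)}$ by setting $\varphi^{(k)}_n=0$ for $n\le m_k$, choosing $\varphi^{(k)}_{m_k+1}$ to be a nonzero vector of $\ker\overleftarrow{P}_{m_k}$ (this satisfies the recursion at level $m_k$, whose right-hand side vanishes), and then extending to all higher levels by surjectivity. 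Each $\varphi^{(k)}$ is a nonzero harmonic function whose first nonvanishing level is exactly $m_k+1$, and these levels are pairwise distinct. A ``lowest-level'' argument then gives linear independence modulo constants: in any finite relation $\sum_k a_k\varphi^{(k)}=\mathrm{const}$, evaluating at the smallest level $m_{k}$ occurring in the sum forces the constant to vanish, and evaluating at the next level forces the corresponding coefficient to vanish, after which one induces upward. This exhibits infinitely many independent elements of $\mathcal{Harm}$.

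For part (2) I would go to the opposite extreme and make every solving step deterministic. Take a stationary weighted diagram with $|V_n|=N$ for all $n\ge 1$ and with the common matrix $\overleftarrow{P}$ invertible; this is automatic, for instance, for the $2\times 2$ incidence matrix $A=\left(\begin{smallmatrix}1&1\\0&1\end{smallmatrix}\right)$, since then $\overleftarrow{P}$ is upper triangular with strictly positive diagonal and hence $\det\overleftarrow{P}\neq 0$ for any positive conductances. The recursion becomes $f_{n+1}=\overleftarrow{P}^{-1}(f_n-\overrightarrow{P}_{n-1}f_{n-1})$, a constant-coefficient second-order linear recursion in which $f_{n+1}$ is uniquely determined by $f_{n-1},f_n$. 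Consequently, with $f_0=0$ fixed, the entire sequence $(f_n)$ is determined by $f_1$, while conversely every $f_1\in\mathcal N_1$ extends uniquely to a harmonic function. The assignment $f\mapsto f_1$ is therefore a linear isomorphism $\mathcal{Harm}\to\mathcal N_1$, giving $\dim\mathcal{Harm}=\dim\mathcal N_1=|V_1|-1=N-1<\infty$ (here $=1$ for the example, with $A_0=(1\ 1)$).

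The routine part — checking the recursion at the seam level $m_k$ and propagating solutions — is harmless; the genuine obstacle in part (1) is guaranteeing that the recursion never stalls. The size condition $|V_n|\le|V_{n+1}|$ alone does not force $\mathrm{Rank}(\overleftarrow{P}_n)=|V_n|$, since a degenerate choice of $c$ can make columns of $\overleftarrow{P}_n$ coincide, as the discussion following Proposition~\ref{prop suff cond for exist harm fns} notes. Thus the argument must either invoke the full-rank hypothesis of Proposition~\ref{prop suff cond for exist harm fns} or verify, for the diagram at hand, that each right-hand side $f_n-\overrightarrow{P}_{n-1}f_{n-1}$ stays in $\mathrm{Col}(\overleftarrow{P}_n)$ so that every $\varphi^{(k)}$ can be extended. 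Making that solvability precise, rather than the dimension bookkeeping, is where the real work lies.
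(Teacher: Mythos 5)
Your proposal follows essentially the same route as the paper's own proof (which is explicitly labelled a sketch): part (1) exploits the underdetermined recursion $\overleftarrow{P}_n f_{n+1} = f_n - \overrightarrow{P}_{n-1} f_{n-1}$ at levels where $|V_n| < |V_{n+1}|$, and part (2) exploits stationary diagrams for which each step of the recursion has a unique solution, so that $f \mapsto f_1$ identifies $\mathcal Harm$ with $\mathcal N_1$ and $\dim \mathcal Harm = |V_1| - 1$. Your write-up is in fact sharper on both counts: the seam construction of the $\varphi^{(k)}$ together with the lowest-level independence argument supplies the linear-independence step that the paper leaves implicit, and the triangular incidence matrix is a concrete instance of the ``additional assumptions (which are rather obvious)'' that the paper merely invokes for (2).

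The caveat you flag at the end is genuine, and it is a defect of the theorem as stated rather than of your argument: the cardinality hypothesis does not imply level-by-level solvability, and without it part (1) is false. Consider complete bipartite connections between consecutive levels, all conductances equal to $1$, and $|V_n| = n+1$. Harmonicity at $v \in V_n$ reads
$$
c(v)\, f_n(v) \;=\; \sum_{y \in V_{n-1}} f_{n-1}(y) \;+\; \sum_{z \in V_{n+1}} f_{n+1}(z),
$$
and since $c(v) = |V_{n-1}| + |V_{n+1}|$ is the same for every $v \in V_n$ while the right-hand side does not depend on $v$, every harmonic function is constant on each level, $f_n \equiv t_n$. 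Normalizing $f(o)=0$, the equation at the root gives $t_1 = 0$, the equation on $V_1$ gives $4t_1 = 3t_2$, hence $t_2 = 0$, and the three-term recursion $(2n+2)\, t_n = n\, t_{n-1} + (n+2)\, t_{n+1}$ then forces $t_n = 0$ for all $n$. So $\mathcal Harm$ is trivial although $|V_n|$ increases strictly; in the language of the algorithm, every $\overleftarrow{P}_n$ has rank one and already $\mathrm{Col}(\overleftarrow{P}_1) \cap \mathcal G_1 = \mathrm{span}\{\mathbf 1\} \cap \{\textstyle\sum_x f_1(x) = 0\} = \{0\}$, so the criterion of Proposition \ref{prop suff cond for exist harm fns} fails at the first step. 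Hence part (1) is correct only under a consistency hypothesis such as $\mathrm{Rank}(\overleftarrow{P}_n) = |V_n|$ --- exactly the regime in which your construction operates --- and the paper's sketch silently assumes the same thing when it asserts that the solution sets are infinite at every level.
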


\begin{proof} (Sketch) It follows from the fact that the cardinality of vertices at each level is an increasing sequence that the solution set $\mathcal N_n$ of the equation $\overleftarrow{P}_{n} f_{n+1} \in \mathcal G_{n}$ is infinite,  where we recall that $\mathcal G_{n} = \{ f_{n} - \overrightarrow{P}_{n-1} f_{n-1} \in \R^{|V_{n}|} :  f_{n} \in \mathcal N_{n},\ f_{n-1} \in \mathcal N_{n-1}\}$. Thus, we obtain the infinite-dimensional space of harmonic functions $f = (f_n)$, $f_n\in \mathcal N_n$.

The case of a stationary Bratteli diagram $B$ may lead to a finite-dimensional space $\h arm$. For instance, suppose that the equations
$$
\overleftarrow{P}_{n} f_{n+1} = f_n - \overrightarrow{P}_{n-1}f_{n-1}
$$
has a unique solution $f_{n+1}$ for every $n\geq 1$. This means that we have to impose some additional assumptions (which are rather obvious) on the sequence of matrices $\overleftarrow{P}_{n} $ that would guarantee the uniqueness of the solution. In this case, every harmonic function $f = (f_n)$ is completely determined by the vector $f_1$ that satisfies the equation $\overleftarrow{P}_0 f_1 = 0$.  Hence, $\dim (\h arm) = |V_1| - 1$. If the solution sets are not unique for infinitely many levels, then again the dimension of $\h arm$ is infinite.

\end{proof}

In Section \ref{sect HF on trees stationary BD}, we will return to stationary Bratteli diagrams and give an explicit formula for harmonic functions on a class of such diagrams.

\subsection{On the existence of monopoles and dipoles on a Bratteli diagram}

It turns out that the method of finding harmonic functions on a weighted Bratteli diagram $(B,c)$ works perfectly for another important classes of functions defined on $V$, namely, for {\em monopoles} and {\em dipoles}. We cite \cite{Jorgensen_Pearse2010, Jorgensen_Pearse2011, Dutkay_Jorgensen2010} for fundamentals about monopoles and dipoles in infinite networks.
We recall that, for a vertex $x\in V$, a function $w_x$ satisfying the equation $(\Delta w_x)(y) = \delta_x(y)$ is called a monopole. If additionally $w_x$ has  finite energy, then it defines an element of $\h_E$ also called a monopole.  It was mentioned in Remark \ref{rem dipole monopole} that if a monopole exists in $\h_E$ for some $x\in V$, then it exists for any vertex $z\in V$. Moreover, the existence of a monopole of finite energy on an electrical network $(G,c)$ is equivalent to the transience of this network.

We {\em claim} that all solutions of the equation $(\Delta w_x) = \delta_x(y)$ can be found according to the algorithm used for harmonic functions on $V$. We recall that a monopole $w_x$ can be treated as a harmonic function on the set $V \setminus \{x\}$. We are going to apply the algorithm described in subsection \ref{subsect algorithm} for monopoles and dipoles.

Suppose first that $x = o$. In order to determine a monopole $w_o$, we  solve the equation $\sum_{y\in V_1} c^{(0)}_{oy}f(y) = 1$ and find its solution set $\mathcal N'_1$. The other steps of this procedure are word-for-word repetition of the algorithm used for harmonic functions. If $x\in V_m, m \geq 1$, then to build a monopole $w_x$ we take the beginning of the algorithm to be the same as for harmonic functions. This means that the vectors $w_x(i) \in \mathbb R^{|V_i|}, i = 0,1, ... , m-1,$ can be found as  solutions of the sequence of equations
$$
\overleftarrow{P}_i w_x(i+1) = w_x(i) - \overrightarrow{P}_{i-1} w_x(i-1),
$$
where $w_x(i) = w_x|_{V_i}$. Here we assume  that these equations are consistent, otherwise we have only trivial solution. Thus, we can find $w_x(1), ... , w_x(m)$. The equation for $i =m$ is used to determine $w_x(m+1)$, and it looks slightly different
(it is written in vector form here):
$$
\overleftarrow{P}_m w_x(m+1) = w_x(m) - \overrightarrow{P}_{m-1} w_x(m-1) - \frac{1}{c(x)} \overline{\delta}_x
$$
where $\overline{\delta}_x$ is the vector $\{ \delta_x(y) : y\in V_m\}$. This relation follows from the equation $(\Delta w_x) = \delta_x(y)$,  $y \in V_m$. After that the procedure is the same as for harmonic functions.

In a similar manner, we can consider the set of {\em dipoles} on a Bratteli diagram $B = (V,E,c)$. We remind (see Remark \ref{rem dipole monopole}) that an element $v_x$ of $\h_E$ is called a dipole if it satisfies the equation $\Delta v_x = \delta_x - \delta_{o}$ where $x\in V$ and $o$ is the top of the diagram. It is known that dipoles always exist in $\h_E$ and can be found as follows \cite{Jorgensen_Pearse2010}. Let $f \in \h_E$ and $L_x f := f(x) - f(o)$, $x \in V$. Then $L_x$ is a bounded linear functional such that $|L_x f| \leq k \|f\|_{\h_E}$. Then $v_x$ is a unique element of $\h_E$ such that for any $f \in \h_E$
$$
\langle v_x, f\rangle = f(x) - f(o).
$$

One can again use the algorithm given for harmonic functions and apply it for determining dipoles as functions on $V$. Suppose $x\in V_m$. Since $v_x$ can be written as a sequence of vectors $(v_x(i))$, and $v_x$  must satisfy the equation
$$
(\Delta v_x)(y) = \begin{cases} 1, \ & y = x\\
                                                 -1, \ &  y = o\\
                                                 0, \ & y \in V \setminus \{o, x\},
\end{cases}
$$
we can solve consequently the equations $\sum_{y\in V_1} c^{(0)}_{oy}v_x(1)(y) = -1$,
$\overleftarrow{P}_i v_x(i+1) = v_x(i) - \overrightarrow{P}_{i-1} v_x(i-1)$ (for $i =1, ..., m-1$),  and finally $\overleftarrow{P}_m v_x(m+1) = v_x(m) - \overrightarrow{P}_{m-1} v_x(m-1) - \frac{1}{c(x)} \overline{\delta}_x$. Then we again proceed as in the case of harmonic functions.


\section{Harmonic functions through Poisson kernel}\label{sect HF Poisson kernel}

\subsection{Integral representation of harmonic functions}
In this subsection we will assume that the considered networks are transient; see Definition \ref{def recurrence and transience}. Motivated by the paper \cite{Ancona_Lyons_Peres1999}, we are going to find an integral representation of harmonic functions in terms of a Poisson kernel, and investigate the convergence of harmonic functions on the path space of a Bratteli diagram.

Let $(B, c)$ be  a weighted Bratteli diagram where $B = (V,E)$ and the conductance function $c$ is chosen so that the network $(B,c)$ is transient. The transition probabilities matrix $P = (p_{xy} : x,y \in V)$ defines a random walk on the set of all vertices $V$. Let $\Omega \subset V^{\infty}$ be the set of all paths $\omega = (x_0, x_1, ..., x_n,...)$ where $(x_{i-1}x_i) \in E$. For a fixed $x \in V$,  we denote by $\Omega_x$ the subset of $\Omega$ formed by those paths that starts with $x$. Then  $\mathbb P_x$ denotes the Markov measure on $\Omega_x$ generated by $P$ (see Section \ref{sect_Basics} for details).

Let $X_i : \Omega_x \to V$ be the random variable on $(\Omega_x, \mathbb P_x)$ such that $X_i(\omega) = x_i$. For a given vertex $x\in V$ and some level $V_n \subset V$ such that $x \notin V_n$, we determine the function of stopping time (more information on this notion can be found, for instance, in \cite{Du2012, Sokol2013}):
$$
\tau(V_n) (\omega) = \min\{i \in \N : X_i(\omega) \in V_n\}, \ \ \omega\in \Omega_x.
$$
For $x \in V_n$, we set $\tau(V_n)(\omega) = 0$. The value $\tau(V_n) (\omega)$ shows when the orbit $\omega$ reaches $V_n$ at the first time.

Assuming that the random walk $(X_i)$ defined by $P$ on $(B,c)$ is transient, we observe that  $\tau(V_n)(\omega) $ satisfies the following property.

\begin{lemma}\label{lem_stopping_time}
Let $(B,c)$ be a transient network, and $W_{n -1} = \bigcup_{i=0}^{n-1} V_i$. Then for every $n \in \N$ and any $x\in W_{n-1}$, there exists $m > n$ such that for $\mathbb P_x$-a.e. $\omega\in \Omega_x$
\be
\tau(V_{i+1})(\omega) = \tau(V_{i})(\omega) +1, \ \ i \geq m.
\ee
\end{lemma}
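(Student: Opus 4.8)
The plan rests on two structural facts. First, because a Bratteli diagram has no edges inside a level and edges only between consecutive levels $V_j$ and $V_{j+1}$, every step of the walk changes the level index by exactly one: writing $L_k(\omega)$ for the unique $j$ with $X_k(\omega)\in V_j$, we have $\abs{L_{k+1}(\omega)-L_k(\omega)}=1$ for all $k$. Consequently, setting $T_i(\omega):=\tau(V_i)(\omega)$, the discrete intermediate value property of a $\pm1$ path gives $T_i(\omega)<T_{i+1}(\omega)$, hence $T_{i+1}(\omega)\ge T_i(\omega)+1$, whenever these are finite; and equality $T_{i+1}=T_i+1$ holds precisely when the walk takes an \emph{upward} step at its first arrival to level $i$. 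Thus the assertion is equivalent to saying that, for a.e.\ $\omega$, there is $m=m(\omega)$ beyond which the walk ascends one level at a time, i.e.\ never first reaches a level $\ge m$ by a downward step.

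Second, I would record the form of transience needed. Each $V_j$, and hence each $W_j=\bigcup_{i\le j}V_i$, is finite, so Definition \ref{def recurrence and transience} gives $\mathbb P_x(X_k\in W_j \text{ for infinitely many }k)=0$ for every $j$. Therefore, for $\mathbb P_x$-a.e.\ $\omega$ there is a last visit to $W_j$; in particular $L_k(\omega)\to\infty$, every $T_i(\omega)$ is finite, and $T_i(\omega)\to\infty$. This secures all the finiteness needed in the first paragraph and reduces the problem to counting downward first-arrivals.

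The heart of the argument is to show that, for a.e.\ $\omega$, only finitely many of the events
$$
D_i:=\{T_{i+1}>T_i+1\}=\{\text{the walk steps down at its first arrival to }V_i\}
$$
occur; one may then take $m(\omega)$ larger than $n$ and than the largest index $i$ for which $\chi_{D_i}(\omega)=1$. To this end I would apply the strong Markov property at the stopping time $T_i$: conditioned on $X_{T_i}=y\in V_i$, the event $D_i$ is the event that the fresh walk started at $y$ descends on its first step and thereby enters $W_{i-1}$. Since a.e.\ path visits each fixed $W_j$ only finitely often, each level is undercut only finitely many times, and the aim is to sum these contributions to obtain $\sum_i\chi_{D_i}<\infty$ a.e., after which the (conditional) Borel--Cantelli lemma yields the claim.

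The main obstacle is exactly this summation. Transience, as used above, controls the number of returns to any \emph{single} finite set $W_j$, but the identity requires controlling downward first-arrivals \emph{simultaneously} across all sufficiently high levels, and a priori these could persist even though each individual level is visited only finitely often. The technical crux is therefore to upgrade ``each $W_j$ is left forever'' into a genuinely summable bound on the events $D_i$; I would attempt this by dominating $\sum_i\chi_{D_i}$ by the total number of downward steps performed after the last visit to $W_{n-1}$ and estimating the latter through the finiteness of the Green's function $G(x,x)$ guaranteed by transience (Lemma \ref{lem relations on G F U}). Once $\sum_i\chi_{D_i}<\infty$ is in hand, the choice of $m(\omega)$ above forces $T_{i+1}(\omega)=T_i(\omega)+1$ for all $i\ge m$, and the equivalent $L^1$-convergence statement follows by dominated convergence.
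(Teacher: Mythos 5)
Your reduction of the lemma is sound and is, in fact, more than the paper itself provides: on a Bratteli diagram every step changes the level index by exactly one, so, granting (from transience) that every $\tau(V_i)$ is finite $\mathbb P_x$-a.s., the claimed identity for all $i\geq m$ is exactly the statement that only finitely many of your events $D_i=\{X_{\tau(V_i)+1}\in V_{i-1}\}$ occur. The genuine gap is the one you flag yourself: the summability $\sum_i\chi_{D_i}<\infty$ a.s.\ is never established. Unfortunately this gap cannot be closed, because the claim is false --- even in the weakened, path-dependent form $m=m(\omega)$ that you (reasonably) substitute for the lemma's uniform $m$. Take the weighted binary tree of Example \ref{ex hf on a tree}, with $c(e)=\lambda^n$ on $E_n$ and $\lambda\geq 1$: it is transient (the level process moves upward with probability $2\lambda/(1+2\lambda)>1/2$ at every non-root vertex), yet every $y\in V_i$, $i\geq 1$, has one parent edge of conductance $\lambda^{i-1}$ and two child edges of conductance $\lambda^{i}$, so the down-step probability from $y$ is $\lambda^{i-1}/(\lambda^{i-1}+2\lambda^{i})=1/(1+2\lambda)$, uniformly positive. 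By the strong Markov property, $\mathbb P_x\bigl(D_i\mid\mathcal F_{\tau(V_i)}\bigr)=1/(1+2\lambda)$ for every $i\geq 1$, so the divergence half of the conditional Borel--Cantelli lemma (L\'evy's extension) forces $D_i$ to occur for infinitely many $i$, $\mathbb P_x$-a.s.; hence $\tau(V_{i+1})>\tau(V_i)+1$ infinitely often almost surely. Your proposed repair fails for the same reason: after the last visit to $W_{n-1}$ the walk still performs infinitely many down-steps a.s.\ (each step is downward with conditional probability at least $1/(1+2\lambda)$), and finiteness of the Green's function controls the number of visits to a fixed finite set, not the global count of down-steps.

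For the comparison you were asked against: the paper's entire proof of Lemma \ref{lem_stopping_time} is the single sentence that the result ``immediately follows'' from relation (\ref{eq_transience}), i.e.\ from ``every finite set is visited finitely often''. There is no argument there that you missed; your analysis pinpoints exactly why that inference is invalid --- leaving each $W_j$ forever rules out infinitely many returns to any fixed level, but the lemma requires excluding down-steps at first arrivals across all sufficiently high levels simultaneously, and these can persist while undercutting each individual level only once. So the defect is in the lemma, not in your plan. Note that the flaw propagates: the proof of Theorem \ref{thm convergence of h_n} invokes the lemma in the still stronger form of a single $m$ valid for a.e.\ $\omega$, which would force the down-step probability at the first arrival to level $m$ to vanish, and this fails for the paper's own weighted trees and the Pascal graph.
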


\begin{proof}
The result immediately follows from relation  (\ref{eq_transience}) of Definition \ref{def recurrence and transience}.
\end{proof}

We recall that any  real-valued function $ f $ on $V = \bigcup_n V_n$ is identified with a sequence of vectors $(f_n)$ where $f_n = f|_{V_n}$.

Now we fix a vector $f_n \in \R^{|V_n|}$ and define the function $h_n : X \to \R$ by setting
\be\label{eq for h_n}
h_n(x) := \mathbb E_x( f_n \circ X_{\tau(V_n)}) = \int_{\Omega_x} f_n(X_{\tau(V_n)}(\omega)) d\mathbb P_x(\omega), \ \ n\in \N.
\ee

\begin{lemma}\label{lem_harm_fn_h_n} For a given function $f = (f_n)$, and, for  every $n$, the function $h_n(x)$ is harmonic on $V \setminus V_n$ and $h_n(x) = f_n(x), x \in V_n$. Furthermore, $h_n(x)$ is uniquely defined on $W_{n-1}$.
\end{lemma}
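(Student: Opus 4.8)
The plan is to prove the three assertions separately and in order: the boundary identity $h_n(x)=f_n(x)$ on $V_n$ is immediate, harmonicity on $V\setminus V_n$ follows from the Markov property exactly as in Lemma \ref{lem properties of h_i}, and the uniqueness on $W_{n-1}$ is an instance of the Dirichlet problem on a finite set. First I would dispose of the boundary values: for $x\in V_n$ one has $\tau(V_n)(\omega)=0$ for every $\omega\in\Omega_x$, hence $X_{\tau(V_n)}(\omega)=X_0(\omega)=x$ and $h_n(x)=\mathbb E_x(f_n(x))=f_n(x)$. This needs nothing beyond the definition.

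Next I would verify harmonicity on $V\setminus V_n$ by checking the equivalent identity $(Ph_n)(x)=h_n(x)$ (Lemma \ref{lem several facts on P}(1)) at an arbitrary $x\notin V_n$. The key observation is that when $X_0(\omega)=x\notin V_n$ the walk has not reached $V_n$ at time $0$, so $\tau(V_n)(\omega)=1+\tau(V_n)(\sigma\omega)$, and on $\{\tau(V_n)<\infty\}$ we have $X_{\tau(V_n)}(\omega)=X_{\tau(V_n)(\sigma\omega)}(\sigma\omega)$. Writing $g:=(f_n\circ X_{\tau(V_n)})\,\mathbf 1_{\{\tau(V_n)<\infty\}}$ (with the convention that the integrand vanishes on the null-contribution event $\{\tau(V_n)=\infty\}$), this says precisely that $g(\omega)=g(\sigma\omega)$ whenever $X_0(\omega)\notin V_n$. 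Conditioning on the first step $X_1=b$ and using the Markov property in the form of Lemma \ref{lem measure P_x} then yields $h_n(x)=\sum_{b\sim x}p(x,b)\,\mathbb E_b(g)=\sum_{b\sim x}p(x,b)\,h_n(b)=(Ph_n)(x)$. Because $f_n$ lives on the finite set $V_n$ it is bounded, so $g$ is integrable and every expectation is finite; thus $h_n$ is harmonic at each $x\in V\setminus V_n$. This is word-for-word the manipulation in Lemma \ref{lem properties of h_i}, only with $\chi_{\{x_i\}}$ and $\tau(F)$ replaced by $f_n$ and $\tau(V_n)$.

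For the uniqueness on $W_{n-1}=\bigcup_{i=0}^{n-1}V_i$ I would invoke the Dirichlet problem (\ref{eq Dirichlet probl}) on the finite set $W_{n-1}$. Since $B$ is a Bratteli diagram, edges join only consecutive levels, and since every $x'\in V_n$ has an incoming edge ($r^{-1}(x')\neq\emptyset$ in Definition \ref{def BD}), the only neighbors of $W_{n-1}$ lying outside $W_{n-1}$ are the vertices of $V_n$, so the outer boundary is exactly $\partial W_{n-1}=V_n$. Consequently $h_n$ restricted to $\ol{W_{n-1}}=W_{n-1}\cup V_n$ solves $\Delta h_n=0$ on $W_{n-1}$ with boundary data $h_n|_{V_n}=f_n$, and as $W_{n-1}$ is finite this Dirichlet problem has a unique solution. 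Hence the values of $h_n$ on $W_{n-1}$ are forced by $f_n$, even though harmonic extensions above level $n$ need not be unique.

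The one place requiring genuine care is the second step: the bookkeeping for the stopping time together with the shift, and the convention on $\{\tau(V_n)=\infty\}$, which is exactly what makes the harmonicity valid even at vertices in levels above $n$. Once the shift-invariance $g=g\circ\sigma$ on $\{X_0\notin V_n\}$ is established, the remaining computation is the routine Markov-property argument already carried out for Lemma \ref{lem properties of h_i}, and the first and third assertions are essentially bookkeeping.
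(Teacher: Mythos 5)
Your proposal is correct and follows essentially the same route as the paper: the boundary identity from $\tau(V_n)=0$, harmonicity via the first-step Markov-property computation showing $(Ph_n)(x)=h_n(x)$, and uniqueness on $W_{n-1}$ from the finite Dirichlet problem with $\partial W_{n-1}=V_n$. Your extra bookkeeping (the shift identity $g=g\circ\sigma$ off $V_n$ and the convention on $\{\tau(V_n)=\infty\}$, which can indeed have positive probability when starting above level $n$) is a careful refinement of a point the paper's proof passes over silently, but it does not change the argument's structure.
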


\begin{proof}
It is easy to see from the definition of $h_n(x)$ that $h_n(x) = f_n(x)$ when $x \in V_n$ because in the relation $h_n(x) = \mathbb E_x( f_n \circ X_{\tau(V_n)}(\omega)) $ the right side does not depend on $\omega$ and $\tau(V_n) =0$.

In order to show that $h_n$ is harmonic on $V \setminus V_n$, we fix arbitrary $x \in V \setminus V_n$ and compute
\begin{eqnarray*}
h_n(x)  &=& \sum_{y\sim x} p(x, y) \mathbb E_x(f_n \circ X_{\tau(V_n)} | X_1 = y)\\
   &=&  \sum_{y\sim x} p(x, y) \mathbb E_y(f_n \circ X_{\tau(V_n)} )\ \ \mbox{(using\ the\ Markov\ property}) \\
   &=&  \sum_{y\sim x} p(x, y) h_n(y)\\
   & = & (Ph_n)(x)
\end{eqnarray*}

The fact that $h_n(x)$ is uniquely determined on $W_{n-1}$ follows from the uniqueness of the solution of the Dirichlet problem
$$
(\Delta u)(x) = 0, \ x \in W_{n-1}, \ \ \mbox{and} \ \ u(x) = f_n(x),\ x \in V_n
$$
where  $V_n = \partial W_{n-1}$ (see Section \ref{sect_Basics} where the Dirichlet problem is discussed).
\end{proof}

Our main result is based on the proved lemmas, and it is formulated in the following theorem.

\begin{theorem}\label{thm convergence of h_n}
Let $f = (f_n) \geq 0$ be a function on $V$ such that $\overleftarrow{P}_n f_{n+1} = f_n$. Then the sequence $(h_n(x))$ defined in (\ref{eq for h_n}) converges pointwise to a harmonic function $H(x)$. Moreover, for every $x\in V$, there exists $n(x)$ such that $h_i(x) = H(x), i \geq n(x)$. Equivalently, the sequence $(f_n\circ X_{\tau(V_n)})$ converges in $L^1(\Omega_x, \mathbb P_x)$.
\end{theorem}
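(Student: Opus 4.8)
The plan is to obtain the pointwise limit through a strong-Markov/martingale argument on the path space, and then to upgrade almost-sure convergence to $L^1$-convergence. Throughout I fix $x\in V$ and work in $(\Omega_x,\mathbb P_x)$, writing $\tau_n:=\tau(V_n)$ and $g_n:=f_n\circ X_{\tau_n}$, so that $h_n(x)=\mathbb E_x(g_n)$ by \eqref{eq for h_n}. Since $f\ge 0$ we have $g_n\ge 0$, and by Lemma \ref{lem_harm_fn_h_n} each $h_n$ is harmonic on $V\setminus V_n$ with $h_n=f_n$ on $V_n$. First I would record the one-step structure of a Bratteli walk: since edges join only consecutive levels, a path started below level $n$ meets $V_n$ strictly before $V_{n+1}$, so $\tau_n<\tau_{n+1}$ and $X_{\tau_n}\in V_n$.

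Conditioning on $\mathcal F_{\tau_n}$ and applying the strong Markov property at the stopping time $\tau_n$, the tower property gives
\[
\mathbb E_x\big(g_{n+1}\mid \mathcal F_{\tau_n}\big)=h_{n+1}\big(X_{\tau_n}\big),\qquad X_{\tau_n}\in V_n .
\]
The decisive step is then to evaluate $h_{n+1}$ on $V_n$ and identify it with $f_n$ by means of the hypothesis $\overleftarrow P_n f_{n+1}=f_n$: for $y\in V_n$ a first-step decomposition of $h_{n+1}(y)$ over the neighbours of $y$ produces the term $\sum_{z\sim y}p(y,z)f_{n+1}(z)=(\overleftarrow P_n f_{n+1})(y)=f_n(y)$ coming from the one-step ascent into $V_{n+1}$, while the harmonicity of $h_{n+1}$ off $V_{n+1}$ governs the contribution of the excursions of the walk below level $n$. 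Showing that these downward excursions do not spoil the equality $h_{n+1}\!\mid_{V_n}=f_n$, so that $\mathbb E_x(g_{n+1}\mid\mathcal F_{\tau_n})=g_n$ and $(g_n,\mathcal F_{\tau_n})$ is a nonnegative martingale, is precisely where the forward relation $\overleftarrow P_n f_{n+1}=f_n$ must be exploited, and I expect this identification to be the main obstacle of the whole argument.

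Granting this consistency, almost-sure convergence of $g_n$ follows, and the stronger assertion that $g_n$ is \emph{eventually constant} is the place where transience enters through Lemma \ref{lem_stopping_time}. Indeed, for $\mathbb P_x$-a.e. $\omega$ there is an index $m(\omega)$ beyond which $\tau_{i+1}(\omega)=\tau_i(\omega)+1$, so the successive hitting points $X_{\tau_i}(\omega)$ eventually trace a single ascending ray $X_{\tau_m}\to X_{\tau_{m+1}}\to\cdots$; I would then combine this straight-ascent structure with the consistency relations $h_{i+1}\!\mid_{V_i}=f_i$ to conclude that $g_i(\omega)=g_{m}(\omega)$ for all $i\ge m(\omega)$. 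Setting $H(x):=\lim_n h_n(x)$ and $n(x):=m$ gives $h_i(x)=H(x)$ for $i\ge n(x)$, and $H$ is harmonic on all of $V$, being a locally eventually constant pointwise limit of the functions $h_n$, each harmonic off $V_n$.

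Finally, for the $L^1$ reformulation I would argue that $(g_n)$ converges in $L^1(\Omega_x,\mathbb P_x)$: the sequence is nonnegative, converges $\mathbb P_x$-a.e. by the previous paragraph, and its means $\mathbb E_x(g_n)=h_n(x)$ converge to $\mathbb E_x(\lim_n g_n)=H(x)$; by Scheffé's lemma, a.e.\ convergence together with convergence of the $L^1$-norms yields convergence in $L^1(\Omega_x,\mathbb P_x)$. This is exactly the equivalent statement that $(f_n\circ X_{\tau(V_n)})$ converges in $L^1(\Omega_x,\mathbb P_x)$.
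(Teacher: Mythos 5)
Your proposal stalls exactly where you predict it will: the identification $h_{n+1}\!\mid_{V_n}=f_n$, equivalently the martingale property $\mathbb E_x(g_{n+1}\mid\mathcal F_{\tau_n})=g_n$ for $g_n:=f_n\circ X_{\tau(V_n)}$, is never proved --- you proceed by ``granting this consistency'' --- and everything downstream (martingale convergence, eventual constancy, Scheff\'e) hangs on it. Worse, the identification is false in general. The rows of $\overleftarrow{P}_n$ are normalized by the \emph{total} conductance $c(y)$, downward edges included, so they are strictly substochastic, and a first-step decomposition at $y\in V_n$ gives
\[
h_{n+1}(y)=\sum_{z\in V_{n+1}}p(y,z)f_{n+1}(z)+\sum_{w\in V_{n-1}}p(y,w)\,\mathbb E_w\bigl(h_{n+1}(X_{\tau(V_n)})\bigr)
=f_n(y)+\bigl(Q_n\,h_{n+1}\!\mid_{V_n}\bigr)(y),
\]
where $Q_n(y,y'):=\sum_{w\in V_{n-1}}p(y,w)\,\mathbb P_w\bigl(X_{\tau(V_n)}=y'\bigr)$ is a nonzero substochastic kernel on $V_n$ (a walk started below $V_n$ must return to $V_n$ before it can reach $V_{n+1}$). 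Hence $h_{n+1}\!\mid_{V_n}=(I-Q_n)^{-1}f_n\ge f_n$, with strict inequality wherever $f_n$ is not annihilated by $Q_n$: the downward excursions do spoil the equality, $(g_n)$ is only a nonnegative submartingale, and the hypothesis $\overleftarrow{P}_nf_{n+1}=f_n$ cannot repair this, because it is an averaged identity over the upward edges only and says nothing about the mass that leaks downward.

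This is also where your route departs from the paper's, which makes no martingale argument at all: it invokes Lemma \ref{lem_stopping_time} in the strong form stated there --- for fixed $x$ there is a single $n$ such that $\tau(V_{i+1})=\tau(V_i)+1$ for \emph{all} $i\ge n$, $\mathbb P_x$-a.e. --- so that beyond level $n$ downward excursions are a null event, the kernel $Q_i$ never enters, and
\[
h_{i+1}(x)=\sum_{y\in V_i}\sum_{z\in V_{i+1}}f_{i+1}(z)\,\overleftarrow{p}^{(i)}(y,z)\,\mathbb P_x\bigl(X_{\tau(V_i)}=y\bigr)=\sum_{y\in V_i}f_i(y)\,\mathbb P_x\bigl(X_{\tau(V_i)}=y\bigr)=h_i(x),
\]
giving stabilization, hence pointwise convergence, in two lines. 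Note that your own citation of that lemma is the weaker pointwise form, with $m(\omega)$ depending on the path; with only that form your argument cannot close: setting $n(x):=m$ is illegitimate because $m$ is a random variable rather than a number, the claim $g_i(\omega)=g_m(\omega)$ along an ascending ray does not follow from the averaged relation $\overleftarrow{P}_if_{i+1}=f_i$ (it would require $f_{i+1}$ to take the parent's value at each child), and the Scheff\'e step presupposes $\lim_n\mathbb E_x(g_n)=\mathbb E_x(\lim_n g_n)$, which for a nonnegative submartingale is precisely the uniform-integrability assertion that must be proved, not assumed.
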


\begin{proof} Fix some vertex $x$ in $V$, suppose $x \in V_{\ell}$. By Lemma \ref{lem_stopping_time}, we can find $n > \ell$ such that $\tau(V_{n+1})(\omega)
= \tau(V_{n})(\omega) +1$ for a.e. $\omega\in \Omega_x$. Then
\begin{eqnarray*}
  h_{n+1}(x)   & = &\mathbb E_x( f_{n+1} \circ X_{\tau(V_{n})+1})\\
   &=& \sum_{z\in V_{n+1}} f_{n+1}(z) \mathbb P_x(X_{\tau(V_{n})+1} = z \ |\ X_0 =x) \\
  &=& \sum_{y\in V_n} \sum_{z\in V_{n+1}} f_{n+1}(z) \overleftarrow{p}^{(n)}(y,z) \mathbb P_x(X_{\tau(V_{n})} = y \ |\ X_0 =x)  \\
   &=& \sum_{y\in V_n}  f_n(y)  \mathbb P_x(X_{\tau(V_{n})} = y\ |\ X_0 =x) \\
   & = & h_n(x).
\end{eqnarray*}
We proved that for any $x\in V$ the sequence $(h_n(x))$ stabilizes eventually, and we can  set $H(x) = \lim_n h_n(x)$. Obviously, $H$ is harmonic.

Let $\psi(\omega)$ be a function from $L^1(\Omega_x, \mathbb P_x)$ such that
$\|f_n\circ X_{\tau(V_{n})} - \psi \|_{L^1} \to 0$ as $n \to \infty$. It follows from the proved results that $H(x) = \mathbb E_x(\psi)$.

\end{proof}

\begin{remark} 1. We notice that the condition $\overleftarrow{P}_n f_{n+1} = f_n$ need not to be true for all $n$. It suffices to have this property for all sufficiently large $n$; the function $f_i$ can be chosen arbitrary for a finite set of $i$'s.

2. In \cite{Ancona_Lyons_Peres1999}, the following  statement was proved: If a reversible Markov chain $X_n$ is transient and $f$ is a (harmonic) function of finite energy, then $(f\circ X_n)$ converges almost everywhere. Our result above is of the same nature, but we do not require that the harmonic function has finite energy.

\end{remark}

\subsection{Properties of harmonic functions on a Bratteli diagram}

Given a function $f : V \to \R$, define the {\em current}  $I(x)$ through $x\in V$ as
$$
I(x) := \sum_{y\sim x} c_{xy}(f(x) - f (y)).
$$

The following statement represents a form of the Kirchhoff law and can serve as a  characterization of harmonic functions defined on vertices of a Bratteli diagram.

\begin{lemma}\label{current for harm fns}
 A function $f : V \to \R$ is  harmonic on a weighted Bratteli diagram $(B,c)$ if and only if for every $x \in V_n, n \geq 1$,
$$
I_{in}(x) := \sum_{y\in V_{n-1}} c_{xy}(f(x) - f (y)) = \sum_{z\in V_{n+1}} c_{xz}(f(z) - f(x)) =: I_{out}(x).
$$
Hence,  the incoming current is equal to outgoing current for every vertex if and only if the function $f$ is harmonic.

\end{lemma}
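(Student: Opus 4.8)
The plan is to read the statement off directly from the defining feature of a Bratteli diagram: every edge joins two consecutive levels, and there are no edges inside a single level $V_n$. Concretely, for a vertex $x \in V_n$ with $n \geq 1$, the neighbor set $\{y \in V : y \sim x\}$ splits as a disjoint union of $\{y \in V_{n-1} : y \sim x\}$ and $\{z \in V_{n+1} : z \sim x\}$, with no contribution from $V_n$ itself. This bilevel splitting is the only structural input needed, and it is precisely what distinguishes the Bratteli setting from a general locally finite graph.

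First I would substitute this partition of the neighbor set into the definition of the Laplacian. Writing
$$
(\Delta f)(x) = \sum_{y \sim x} c_{xy}(f(x) - f(y)) = \sum_{y \in V_{n-1}} c_{xy}(f(x) - f(y)) + \sum_{z \in V_{n+1}} c_{xz}(f(x) - f(z)),
$$
and recognizing the first sum as $I_{in}(x)$ and the second as $-I_{out}(x)$ (here the edge symmetry $c_{xy} = c_{yx}$ is what makes $I_{in}$ and $I_{out}$ the two oriented forms of the same edge contributions, so the signs line up), I obtain the identity $(\Delta f)(x) = I_{in}(x) - I_{out}(x)$ at every $x \in V_n$, $n \geq 1$. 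From here the equivalence is immediate: $(\Delta f)(x) = 0$ if and only if $I_{in}(x) = I_{out}(x)$. Since harmonicity of $f$ means $(\Delta f)(x) = 0$ for every $x$, the two-sided implication of the lemma follows by applying this vertex by vertex over all $x \in V_n$ with $n \geq 1$.

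The one point requiring care is the root $o \in V_0$, since the displayed condition quantifies over $n \geq 1$ while the concluding sentence speaks of \emph{every} vertex. At $o$ there is no level below, so the incoming sum is empty; adopting the convention $I_{in}(o) = 0$, the same decomposition gives $(\Delta f)(o) = -I_{out}(o)$, and the balance $I_{in}(o) = I_{out}(o)$ reduces to $I_{out}(o) = 0$, i.e. to $(\Delta f)(o) = 0$, which is exactly the boundary relation $C_0^T f_1 = 0$ of \eqref{harm f-n on V_0}. Thus, read with this convention at the root, current balance at every vertex is equivalent to harmonicity on all of $V$. I expect no serious obstacle here: the content is the combinatorial splitting of the neighbor set, the computation is a one-line rearrangement, and the only place to be deliberate is the bookkeeping at the root.
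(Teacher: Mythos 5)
Your proof is correct and is exactly the argument the paper has in mind: the paper states this lemma without any proof, treating it as immediate from the definition of the Laplacian together with the defining property of a Bratteli diagram that every neighbor of $x \in V_n$, $n \geq 1$, lies in $V_{n-1} \cup V_{n+1}$, which is precisely your identity $(\Delta f)(x) = I_{in}(x) - I_{out}(x)$. Your extra care at the root addresses a point the paper leaves implicit (and slightly loose): balance at the levels $n \geq 1$ alone does not control $(\Delta f)(o)$ --- a monopole at $o$ satisfies the displayed condition --- so the concluding ``every vertex'' sentence of the lemma really does require your convention $I_{in}(o)=0$, under which balance at $o$ becomes the root equation $\sum_{x\in V_1} c_{ox}(f(x)-f(o))=0$.
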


Based on this result,  we can define, for $x\in V_n$,
$$
I_n(x) := I_{in}(x), \ \ \ \mbox{and} \ \ \ I_n = \sum_{x\in V_n} I_n(x).
$$

\begin{lemma}\label{lem I_n = const}
Let $f$ be a harmonic function on a weighted Bratteli diagram $(B,c)$. Then,
for any $n\geq 1$,
$$
I_n = I_1,
$$
\be\label{sum I_n^2}
\sum_{x\in V_n} (I_n(x))^2 \geq \frac{I_1^2}{|V_n|},
\ee
where $I_1 = \sum_{x\in V_{1}} c_{ox}(f(x) - f (o))$.
\end{lemma}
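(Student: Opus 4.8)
The plan is to reduce everything to the single conservation identity $I_n = I_1$, after which the stated inequality is an immediate consequence of the Cauchy--Schwarz inequality. So I would first prove $I_n = I_1$ for all $n \geq 1$ by establishing that the flux is conserved from one level to the next, i.e. $I_n = I_{n-1}$ for $n \geq 2$, and then finish by induction; the base case $I_1 = \sum_{x \in V_1} c_{ox}(f(x) - f(o))$ is exactly the definition of $I_1(x) = I_{in}(x)$ summed over $V_1$, since $V_0 = \{o\}$.

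For the conservation step I would start from the definition $I_{n-1} = \sum_{y \in V_{n-1}} I_{in}(y)$ and apply Lemma \ref{current for harm fns} at the vertices of the positive level $V_{n-1}$, which is permitted because $n-1 \geq 1$; this replaces each incoming current by the corresponding outgoing one and gives $I_{n-1} = \sum_{y \in V_{n-1}} I_{out}(y) = \sum_{y \in V_{n-1}} \sum_{z \in V_n} c_{yz}(f(z)-f(y))$. The symmetry $c_{yz} = c_{zy}$ then lets me interchange the order of summation and read the double sum as $\sum_{z \in V_n} I_{in}(z) = I_n$, so that $I_{n-1} = I_n$. The same computation can be phrased as a discrete flux (divergence) argument: summing $\Delta f = 0$ over the vertices of $V_1 \cup \cdots \cup V_{n-1}$, all edges internal to this set cancel in pairs because $c_{xy}(f(x)-f(y))$ is antisymmetric in $(x,y)$, and only the boundary fluxes survive, namely the current $I_1$ entering from the root together with the current $-I_n$ leaving into $V_n$; harmonicity forces their sum to vanish, again yielding $I_n = I_1$.

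With $I_n = I_1$ in hand, the inequality follows by applying Cauchy--Schwarz to the pairing of $I_n(x)$ with the constant $1$ over the finite set $V_n$:
$$
I_1^2 = \left( \sum_{x \in V_n} I_n(x) \right)^2 \leq |V_n| \sum_{x \in V_n} (I_n(x))^2 ,
$$
which is precisely the claimed bound after dividing by $|V_n|$. I expect the only delicate point to be the index bookkeeping in the conservation step: one must apply harmonicity at level $V_{n-1}$ rather than $V_n$, and invoke the symmetry of $c$ at exactly the right place in order to re-sum the edge contributions over $V_n$. No analytic difficulties arise, since each $I_n$ is a finite sum over the finite level $V_n$, so every rearrangement of summation order is legitimate.
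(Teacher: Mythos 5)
Your proof is correct and follows essentially the same route as the paper: the conservation step $I_{n-1}=I_n$ is obtained exactly as in the paper's proof (apply Lemma \ref{current for harm fns} at one level to convert incoming into outgoing currents, then interchange the finite double sum using the symmetry of $c$), and the inequality is the same Cauchy--Schwarz application against the constant function $1$ on $V_n$. Your alternative phrasing via summing $\Delta f=0$ over $V_1\cup\cdots\cup V_{n-1}$ and cancelling internal edges is just a telescoped repackaging of that same level-by-level argument, so there is no substantive difference.
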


\begin{proof}
To prove the first relation it suffices to show that $I_n = I_{n+1}$. Indeed, if we denote $f_n = f|_{V_n}$, then we use Lemma \ref{current for harm fns} to obtain
\begin{eqnarray*}
I_{n+1} &= & \sum_{z\in V_{n+1}} \sum_{x\in V_n} c_{xz} (f_{n+1}(z) - f_n(x)) \\
  &= &\sum_{x\in V_n} \sum_{z\in V_{n+1}} c_{xz} (f_{n+1}(z) - f_n(x))  \\
   &=&  \sum_{x\in V_n} \sum_{y\in V_{n-1}} c_{xy} (f_{n}(x) - f_{n-1}(y)) \\
 & = & I_n
\end{eqnarray*}

We apply the Schwarz' inequality to the relation $\sum_{x\in V_n} I_n(x) = I_1$:
\begin{eqnarray*}
I_1^2 &=& \left(\sum_{x\in V_n} I_n(x)\right)^2 \\
   & \leq & (\sum_{x\in V_n} 1) \cdot \sum_{x\in V_n} (I_n(x))^2 \\
 &=& |V_n| \sum_{x\in V_n} (I_n(x))^2.
\end{eqnarray*}
Thus, (\ref{sum I_n^2}) is proved.
\end{proof}

We formulate the following statement for harmonic functions only although  it can be given in more general terms of subharmonic functions (that is $(\Delta f)(x) \le 0$ for every $x$) and superharmonic functions (that is $(\Delta f)(x) \ge 0$ for every $x$) functions which are not discussed here.

\begin{proposition}\label{prop max/min principl}
Let $(B, c)$ be a weighted Bratteli diagram and $G_n = \{o\} \cup V_1 \cup \cdots \cup V_n$. Then for any nontrivial harmonic function $f : V \to \R$
$$
\max \{f(x) : x \in G_n\} = \max \{f(x) : x \in \partial G_n = V_{n}\} =: M_{n}(f).
$$
$$
\min \{f(x) : x \in G_n\} = \min \{f(x) : x \in \partial G_n = V_{n}\} =: m_{n}(f).
$$
Moreover, for any $x, y \in G_n$,
\be\label{inequality for harm max - min}
f(x) - f(y) \leq M_{n}(f) - m_{n}(f),\ \ n\in \N.
\ee
The sequence $\{M_n(f)\}$ is strictly increasing, and the sequence $\{m_n(f)\}$ is strictly decreasing.
\end{proposition}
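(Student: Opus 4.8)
The plan is to deduce the whole statement from the strong maximum principle recorded in Section~\ref{sect_Basics}, applied not to $G_n$ itself but to the finite ``interior'' $W = G_{n-1}$, whose outer boundary is exactly the top level, $\partial W = V_n$, so that $\ol{W} = W \cup \partial W = G_n$. First I would record the two hypotheses the maximum principle needs: $G_{n-1}$ is connected, because every vertex of $G_{n-1}$ is joined to the root $o$ by a path lying entirely in $G_{n-1}$ (edges link only consecutive levels), and $f$ is harmonic on $W$ since it is harmonic on all of $V$. For the localization of the maximum, suppose $\max_{G_n} f$ were attained at some point of the interior $W = G_{n-1}$; then the maximum principle forces $f$ to be constant on $\ol W = G_n$, in which case the maximum is trivially also attained on $V_n$. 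Hence in every case $\max_{G_n} f = \max_{V_n} f =: M_n(f)$, which is the first equality (the case $n=0$ being trivial).

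Applying the same argument to the harmonic function $-f$ gives $\min_{G_n} f = \min_{V_n} f =: m_n(f)$, the second equality. The inequality \eqref{inequality for harm max - min} is then immediate from the definitions: for any $x, y \in G_n$ one has $f(x) \le M_n(f)$ and $f(y) \ge m_n(f)$, and subtracting yields $f(x) - f(y) \le M_n(f) - m_n(f)$. Monotonicity in the weak (non-strict) sense is equally quick from the localization, since $G_n \subset G_{n+1}$ gives
$$
M_n(f) = \max_{G_n} f \le \max_{G_{n+1}} f = M_{n+1}(f), \qquad m_n(f) = \min_{G_n} f \ge \min_{G_{n+1}} f = m_{n+1}(f).
$$

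The genuinely delicate point, and what I expect to be the main obstacle, is upgrading these to \emph{strict} inequalities. Here the plan is to argue by contradiction: if $M_{n+1}(f) = M_n(f)$, then $\max_{G_{n+1}} f$ is already attained inside $G_n$, i.e. at an interior point of $G_{n+1}$ (whose boundary is $V_{n+1}$), so the maximum principle forces $f$ to be constant on $G_{n+1}$. One would then like to propagate this constancy outward to conclude $f \equiv \mathrm{const}$ on all of $V$, contradicting nontriviality; the natural tool is harmonicity at the vertices of $V_{n+1}$ together with Lemma~\ref{lem I_n = const}, since constancy on $G_{n+1}$ makes the incoming current $I_{n+1}(x)$ vanish at every $x \in V_{n+1}$, hence $I_{n+1}=0$ and therefore $I_1=0$. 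The symmetric argument applied to $-f$ would handle the strict decrease of $m_n(f)$. I expect the hard part to be closing exactly this propagation step: constancy on a finite initial segment does not by itself force global constancy (a vanishing net current is compatible with nonconstant behaviour once a level branches), so this is precisely where the connectedness and nontriviality hypotheses must be invoked with care, and where one may need an additional structural assumption on $(B,c)$ to rule out an initial ``chain'' along which equality can legitimately occur.
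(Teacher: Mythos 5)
Your handling of the first three assertions is correct and is essentially the paper's own argument: the paper also localizes the extrema via the maximum principle of Section \ref{sect_Basics} (your choice $W=G_{n-1}$, $\partial W=V_n$, $\ol W=G_n$ is exactly the right way to make that precise), dismisses (\ref{inequality for harm max - min}) as immediate, and gets weak monotonicity from $G_n\subset G_{n+1}$. Likewise, your first step for strictness --- $M_k(f)=M_{k+1}(f)$ forces, via the maximum principle, $f\equiv M_k(f)$ on all of $G_{k+1}$ --- is the same reduction the paper performs (the paper phrases it through harmonicity at a maximizing vertex $x\in V_k$, whose neighbours all lie in $G_{k+1}$).

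The obstacle you flag at the end is not a defect of your proposal: it is a genuine gap in the paper's proof, and in fact the strict monotonicity claim is \emph{false} as stated. At the crucial moment the paper writes ``we conclude that $f$ must be a constant function'' with no justification; to propagate constancy past level $k+1$ one would need $f$ to attain its maximum over $G_{k+2}$ inside $G_{k+1}$, but $M_{k+2}(f)$ may exceed $M_{k+1}(f)$, so the maximum principle gives nothing further. A concrete counterexample: let $B$ start with a chain, $V_0=\{o\}$, $V_1=\{a\}$, $V_2=\{b\}$, then $V_3=\{z_+,z_-\}$, with a binary tree below each of $z_{\pm}$, and all conductances equal to $1$. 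Harmonicity at $o$, $a$, $b$ forces $f(o)=f(a)=f(b)$ (normalize to $0$) and $f(z_+)+f(z_-)=0$; take $f(z_{\pm})=\pm 1$ and extend down the tree by giving both children of a vertex with value $s$ and parent value $p$ the common value $\tfrac{3s-p}{2}$, so that
$$
(\Delta f)(v)=(s-p)+2\Bigl(s-\tfrac{3s-p}{2}\Bigr)=0 .
$$
This $f$ is harmonic on all of $V$ and nonconstant, yet $M_1(f)=M_2(f)=0$ and $m_1(f)=m_2(f)=0$, so neither sequence is strictly monotone. Your closing diagnosis is exactly right: equality $M_k(f)=M_{k+1}(f)$ occurs precisely when $f$ is constant on the initial segment $G_{k+1}$, and such initial constancy is perfectly compatible with global nonconstancy once a level branches. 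The correct statement is therefore that $\{M_n(f)\}$ and $\{m_n(f)\}$ are strictly monotone only from the largest initial segment on which $f$ is constant, or one must add a structural hypothesis killing such segments --- for instance $\ker C_n=\{0\}$ for all $n$ (which makes constancy on $G_{k+1}$ propagate level by level, but also forces $|V_{n+1}|\le |V_n|$), a condition your ``vanishing current'' computation via Lemma \ref{lem I_n = const} cannot replace, since $I_1=0$ alone carries no contradiction.
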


\begin{proof}
The fact that a harmonic function assumes its maximum and minimum values at boundary points is the well known property of harmonic functions. In other words, this principle states that if a harmonic function attains its maximum/minimum at an inner vertex of a connected subgraph then the function must be constant.  Relation (\ref{inequality for harm max - min}) is then obvious.

To show the last statement, we check that $M_n(f) < M_{n+1}(f)$ for every $n \geq 1$. Suppose this is not true. If it were $M_k(f) > M_{k+1}(f)$ for some $k$, then $f$ would be a constant since it takes its maximum at an inner vertex of the connected graph $G_{k+1}$. Assume now that  $M_k(f)  = M_{k+1}(f)$ for some $k$. Let $f(x) = M_k(f)$ for $x\in V_k $, and $f(z) = M_{k+1}(f) = M_k(f)$ for $z \in V_{k+1}$. Let $y \sim x$ be a vertex from $V_{k+1}$ (it may be that $x$ and $z$ are not adjacent). Because $f(x)$ is maximal for $G_{k+1}$, then $f(y) = f(x) = M_{k+1}$, and we conclude that $f$ must be a constant function. This is a contradiction.

The sequence $\{m_n(f)\}$ is considered similarly.
\end{proof}

It can be noticed that in conditions of Proposition \ref{prop max/min principl} one can always assert that the sequence $\{M_n(f)\}$ is formed by positive numbers and the sequence   $\{m_n(f)\}$ has only negative terms  provided $f(o) = 0$.

\begin{corollary}
Let $(B(V,E), c)$ be a weighted Bratteli diagram. A harmonic function $f :V \to \R$ belongs to $\ell^\infty(V)$ if and only if the sequences $\{M_n(f)\}$,  $\{m_n(f)\}$ have finite limits.
\end{corollary}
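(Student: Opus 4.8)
The plan is to reduce boundedness of $f$ on all of $V$ to the behaviour of the two monotone sequences $\{M_n(f)\}$ and $\{m_n(f)\}$ furnished by Proposition \ref{prop max/min principl}, exploiting the fact that the truncations $G_n$ exhaust $V$. First I would record the elementary but decisive observation that $V = \bigcup_{n\geq 0} G_n$ with $G_n \subset G_{n+1}$, so that every vertex lies in some $G_n$. Combined with the maximum/minimum part of Proposition \ref{prop max/min principl}, which identifies $M_n(f) = \max_{G_n} f$ and $m_n(f) = \min_{G_n} f$, this gives
$$
\sup_{x\in V} f(x) = \sup_{n} M_n(f), \qquad \inf_{x\in V} f(x) = \inf_{n} m_n(f).
$$

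Next I would invoke the monotonicity already established in Proposition \ref{prop max/min principl}: the sequence $\{M_n(f)\}$ is increasing and $\{m_n(f)\}$ is decreasing, so each converges in the extended reals to its supremum, respectively infimum,
$$
\lim_n M_n(f) = \sup_n M_n(f), \qquad \lim_n m_n(f) = \inf_n m_n(f).
$$
Consequently $\lim_n M_n(f)$ is finite if and only if $\{M_n(f)\}$ is bounded above, i.e.\ if and only if $\sup_V f < \infty$; and $\lim_n m_n(f)$ is finite if and only if $\{m_n(f)\}$ is bounded below, i.e.\ if and only if $\inf_V f > -\infty$.

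Finally I would assemble the equivalence. Membership $f \in \ell^\infty(V)$ means exactly that $f$ is bounded above and below, that is $\sup_V f < \infty$ and $\inf_V f > -\infty$, which by the two displays above is equivalent to both $\lim_n M_n(f)$ and $\lim_n m_n(f)$ being finite. This closes the argument. The only point requiring care — and the nearest thing to an obstacle — is that the strict monotonicity in Proposition \ref{prop max/min principl} is stated for a \emph{nontrivial} harmonic $f$, whereas the corollary is phrased for every harmonic $f$. I would therefore dispose of the constant case separately: if $f$ is constant then it is trivially in $\ell^\infty(V)$ and both sequences are constant, hence have finite limits, so the equivalence holds vacuously. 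For nonconstant $f$ the monotonicity is genuine and the above reasoning applies verbatim. No substantive difficulty arises; the whole proof rests on the single principle ``monotone sequence plus exhaustion of $V$ by the $G_n$.''
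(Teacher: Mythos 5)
Your proof is correct and is precisely the argument the paper intends: the corollary is stated without an explicit proof as an immediate consequence of Proposition \ref{prop max/min principl}, and your combination of the exhaustion $V=\bigcup_n G_n$ with the monotonicity of $\{M_n(f)\}$ and $\{m_n(f)\}$ (plus the careful separate treatment of constant $f$) is exactly the intended reasoning.
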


\section{Harmonic functions on trees, the Pascal graph, and stationary Bratteli diagrams}\label{sect HF on trees stationary BD}

In this section, we focus on some particular cases of Bratteli diagrams. They are trees, the Pascal graph,  and stationary Bratteli diagrams. In the latter case, the incidence matrix $A_n$ does not depend on level $n$.

\subsection{Harmonic functions on trees} Our goal is twofold: we first show that the algorithm of finding harmonic functions and monopoles/dipoles can readily be  applied to weighted trees. Working with a tree, we will use the notation introduced for Bratteli diagrams.

\begin{proposition}\label{prop HF on tree}
Let $T$ be a tree with conductance function $c$. The space $\h arm$ of harmonic functions on the electrical network $(T,c)$ is infinite-dimensional. Any harmonic function can be found by the algorithm given in Proposition \ref{prop suff cond for exist harm fns}.
\end{proposition}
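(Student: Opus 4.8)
The plan is to exploit the single defining feature of a tree: once $T$ is rooted at $o$ and stratified into levels $V_n = \{x : \mathrm{dist}(o,x) = n\}$, every vertex of $V_{n+1}$ has exactly one parent in $V_n$, because a tree has no cycles. In the notation of Section \ref{sect HF on BD} this says precisely that each column of the matrix $\overleftarrow{P}_n$ carries a single nonzero entry, or equivalently that the rows of $\overleftarrow{P}_n$ have pairwise disjoint supports. First I would record that every vertex has at least one child (this is the condition $s^{-1}(x)\neq\emptyset$ built into the Bratteli-diagram structure, and is forced by $\mathrm{deg}(x)\geq 2$ in Proposition \ref{prop  graph is BD}), so every row of $\overleftarrow{P}_n$ is nonzero.

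From disjoint supports of nonzero rows I get at once that the rows of $\overleftarrow{P}_n$ are linearly independent, hence $\mbox{Rank}(\overleftarrow{P}_n) = |V_n|$ for every $n\geq 1$; in particular $\mbox{Col}(\overleftarrow{P}_n) = \R^{|V_n|}$. This is exactly the sufficient condition in Proposition \ref{prop suff cond for exist harm fns}, so the intersection requirement $\mbox{Col}(\overleftarrow{P}_n)\cap\mathcal G_n\neq\{0\}$ collapses to $\mathcal G_n\neq\{0\}$ and never obstructs the construction. Concretely, at every stage the equation $\overleftarrow{P}_n f_{n+1} = f_n - \overrightarrow{P}_{n-1} f_{n-1}$ is consistent for an arbitrary right-hand side, so the recursion of Corollary \ref{cor existence of harmonic f-n} can always be continued. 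Since that recursion is equivalent to harmonicity, this simultaneously yields the second assertion of the proposition: every harmonic function arises as an output $(\ol f_n)$ of the algorithm of Subsection \ref{subsect algorithm}, the algorithm never being forced to stop.

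For infinite-dimensionality I would track the dimension gained level by level. Because each parent has at least one child and distinct children have distinct parents, $|V_{n+1}|\geq |V_n|$, and whenever $T$ genuinely branches at level $n$ (some vertex with two or more children) the inequality is strict. Full row rank makes the solution set of $\overleftarrow{P}_n f_{n+1} = g_n$ an affine space of dimension $|V_{n+1}| - |V_n|$, so each branching level contributes at least one new free parameter to the solution $(f_n)$. Accumulating the free parameters over infinitely many branching levels produces infinitely many linearly independent harmonic functions; this is the mechanism behind Theorem \ref{thm dimension of Harm}(1). To make the independence rigorous I would, at the $k$-th branching level, realize a prescribed nonzero element of $\ker \overleftarrow{P}_n$ on that level and then solve both downward and upward by the already-justified consistent recursion, obtaining a family that is manifestly linearly independent.

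The main obstacle is the borderline case in which $T$ branches only finitely often, for instance the root splitting into finitely many rays: there $|V_n|$ eventually stabilizes, $\overleftarrow{P}_n$ becomes square and invertible, the recursion determines $f_{n+1}$ uniquely from below, and $\mathcal Harm$ is merely finite-dimensional (of dimension $|V_1|-1$ in the extreme case). Thus the blanket claim of infinite-dimensionality must be read under the standing assumption that $T$ has infinitely many branching levels, which is the setting of all the examples treated here. I expect the only genuine work to be making this dichotomy explicit and verifying the independence of the constructed family; the rank computation and the appeal to Proposition \ref{prop suff cond for exist harm fns} are immediate.
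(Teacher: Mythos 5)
Your proof is correct and takes essentially the same route as the paper's: the tree structure forces every column of $\overleftarrow{P}_n$ to have exactly one nonzero entry, hence $\mathrm{Rank}(\overleftarrow{P}_n)=|V_n|$, the recursion of Corollary \ref{cor existence of harmonic f-n} is therefore always consistent, and the free parameters created at branching vertices (dimension $d_x-1$ at a vertex with $d_x$ children, in the paper's notation) give the infinite dimension. Your two refinements are both sound and sharpen the paper's write-up: the rank is correctly extracted from the nonzero, disjointly supported rows (the paper's phrase ``linearly independent columns'' is literally false whenever two children share a parent, since those columns are proportional), and your closing caveat is genuine --- for a tree with only finitely many branching vertices (e.g.\ finitely many rays glued at the root) the matrices $\overleftarrow{P}_n$ are eventually square and invertible, the space $\mathcal Harm$ is finite-dimensional, and the proposition's blanket claim fails, a hypothesis the paper's proof also assumes implicitly when it asserts that $\overleftarrow{P}_n f_{n+1}=g$ has infinitely many solutions for every $n$.
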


\begin{proof}
It is obvious that for any weighted tree $(T, c)$ the matrix $\overleftarrow{P}_n$, which maps $\mathbb R^{|V_{n+1}|}$ to $\mathbb R^{|V_{n}|}$, has the linearly independent columns, and therefore $\mathrm{Rank}(\overleftarrow{P}_n) = |V_n|$.
This means that Proposition \ref{prop suff cond for exist harm fns} holds for any tree. The equation $\overleftarrow{P}_n f_{n+1} = g$ has infinitely many solutions for every $n$. In fact, every row in $\overleftarrow{P}_n $ corresponds an  equation in the system $\overleftarrow{P}_n f_{n+1} = g$, and this equation is solved independently of the other rows because of the tree structure (every column of $\overleftarrow{P}_n$ has exactly one non-zero entry). This shows that $\h arm$ is infinite-dimensional. More precisely, in order to find a harmonic function on the tree $T$, one needs to solve the equation with respect to $f_{n+1}(z)$, for every $x\in V_n$,
\be\label{eq for tree}
f_{n}(y) - \overrightarrow{p}^{(n-1)}_{xy} f_{n-1}(x) = \sum_{z\sim x} \overleftarrow{p}^{(n)}_{xz} f_{n+1}(z),
\ee
where $y \in V_{n-1}$ is uniquely determined by $x$. Hence, the solution set of (\ref{eq for tree}) has dimension $d_x -1$ where $d_x$ is the number of successors of $x$ (see Figure 5).

\end{proof}

\begin{figure}[ht!]
\begin{center}
\includegraphics[scale=0.85]{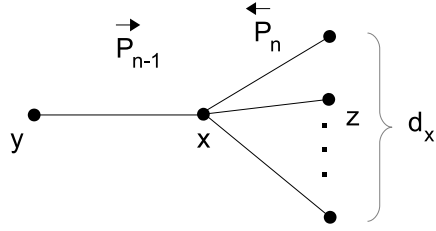}
\caption{Part of the tree corresponding to (\ref{eq for tree})}
\label{Fig_tree}
\end{center}
\end{figure}

\begin{example}[Symmetric harmonic functions on the binary tree]\label{ex hf on a tree}
In this example, we give explicit formulas for a class of harmonic functions defined on the binary tree. We regard  a homogeneous tree $T$ as a special case of a Bratteli diagram, and we keep the same notation as in Section \ref{sect HF on BD}.

Let $x_0$ be the root of the binary tree $T$, and let $V_n$ denote the set of vertices on the distance $n$ from the root. Next, we assume that the conductance function $c = c(e), e \in E$, has the property: $c(e) = \lambda^n$ for all $e \in E_n, n\geq 0$. Hence, the associated matrices $C_n$ are of the size $2^n \times 2^{n+1}$, and the $i$-th row of $C_n$ consists of all zeros but $c^{(n)}_{i, 2i-1} = c^{(n)}_{i, 2i} =\lambda^n$.
Denote by $x_n(1), ... , x_n(2^n)$ the vertices of $V_n$ enumerated from the top to the bottom, see Figure 6.

\begin{proposition}\label{prop formula for HF on tree} Let $(T, c)$ be the weighted binary tree defined above.
For each positive $\lambda$ there exists a unique harmonic function $f = f_\lambda$ satisfying the following conditions:

(1) $f(x_0) =0$;

(2) $f(x_1(1)) = - f(x_1(2)) = \lambda$ and
$$
f(x_n(1)) = - f(x_n(2^n)) = \frac{1 + \cdots + \lambda^{n-1}}{\lambda^{n-2}},\ n\geq 2;
$$

(3) function  $f$ is  constant on each of subtrees $T_i$  and $T_i'$ whose all infinite paths start at the roots  $x_i(1)$ and $x_i(2^i)$, respectively, and go through the vertices $x_{i+1}(2)$ and $x_{i+1}(2^{i+1} -1)$, $i \geq 1$ (see Figure 6).

\end{proposition}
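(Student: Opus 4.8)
The plan is to construct $f$ explicitly from the data in (1)--(3) and then check $\Delta f=0$ vertex by vertex, the point being that (1)--(3) leave no freedom at all. First I would record the decomposition of the vertex set of $T$: reading off the first place at which the geodesic from $x_0$ leaves the all--``top'' (resp.\ all--``bottom'') branch, every vertex lies either on the top spine $(x_n(1))_{n\ge 0}$, on the bottom spine $(x_n(2^n))_{n\ge 0}$, or in exactly one of the side subtrees $T_j,T_j'$ $(j\ge 1)$. Writing $a_n:=f(x_n(1))$, conditions (1) and (2) fix $a_0=0$, $a_1=\lambda$ and $a_n=\lambda^{\,2-n}\sum_{k=0}^{n-1}\lambda^{k}$ for $n\ge 2$, and they fix $f$ on the bottom spine to be $-a_n$. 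Since the root of $T_j$ is the spine vertex $x_j(1)$, condition (3) forces $f\equiv a_j$ on all of $T_j$ and, symmetrically, $f\equiv -a_j$ on $T_j'$. Thus $f$ is completely and uniquely determined by (1)--(3); the entire content of the proposition is that this $f$ is harmonic.

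Next I would reduce harmonicity to the two spines. If $v$ is any vertex of $T$ off the spines, then $v$ lies in some $T_j\setminus\{x_j(1)\}$ (or the reflected statement), and every neighbour of $v$---its parent and its two children---again lies in $T_j$; since $f$ is constant there, $(\Delta f)(v)=0$ immediately from (\ref{Laplacian formula}). Hence only the spine vertices need attention. At the root $x_0$ the two incident edges have conductance $\lambda^{0}=1$ and the neighbours carry $\pm\lambda$, so $(\Delta f)(x_0)=(0-\lambda)+(0+\lambda)=0$. At a top--spine vertex $x_n(1)$ with $n\ge 1$ the three neighbours are the parent $x_{n-1}(1)$, the next spine vertex $x_{n+1}(1)$, and the subtree entry $x_{n+1}(2)$; the last carries the value $a_n$, so its contribution cancels, and with conductance $\lambda^{n-1}$ on the edge to the parent and $\lambda^{n}$ on the edges to the children, harmonicity becomes the three--term recurrence
\[
\lambda^{n-1}(a_n-a_{n-1})+\lambda^{n}(a_n-a_{n+1})=0,
\qquad\text{equivalently}\qquad
\lambda\,a_{n+1}=(1+\lambda)a_n-a_{n-1}.
\]

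It then remains only to confirm that the explicit values in (2) satisfy this recurrence and to dispose of the bottom spine. Putting $S_n:=\sum_{k=0}^{n-1}\lambda^{k}$ so that $a_n=\lambda^{\,2-n}S_n$, the recurrence is equivalent to $S_{n+1}=(1+\lambda)S_n-\lambda S_{n-1}$, which follows at once from $S_{n+1}-S_n=\lambda^{n}=\lambda(S_n-S_{n-1})$; this gives existence. For the bottom spine I would invoke the reflection $R$ exchanging top and bottom children: $R$ is a graph automorphism preserving $c$ (conductances depend only on the level), and $f\circ R=-f$ by construction, so $(\Delta f)\circ R=\Delta(f\circ R)=-\Delta f$ and harmonicity at $x_n(1)$ transfers to $x_n(2^n)$. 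The only real obstacle here is administrative rather than analytic: one must set up the decomposition of the vertex set and the branch levels carefully enough that the spine vertices are seen to be the only places where $\Delta f$ is not trivially zero---after that the proof is the one-line recurrence check above.
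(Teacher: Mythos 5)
Your proof is correct, and its computational heart is the same as the paper's: both reduce the whole problem to harmonicity at the top-spine vertices, where constancy of $f$ on $T_n$ makes the contribution of the child $x_{n+1}(2)$ vanish, leaving precisely the paper's relation (\ref{eq hf on tree}) --- your recurrence $\lambda a_{n+1}=(1+\lambda)a_n-a_{n-1}$ --- which the closed form $a_n=\lambda^{2-n}(1+\lambda+\cdots+\lambda^{n-1})$ satisfies (the paper records this as (\ref{eq f_lambda})). Where you differ is in the surrounding logic. The paper argues by induction and appeals to the maximum/minimum principle: after checking harmonicity at $x_1(1)$ it observes $f(x_1(1))=f(x_2(2))$ and concludes that $f$ \emph{must} be constant on $T_1$, so constancy on the side subtrees is presented as forced by harmonicity together with conditions (1)--(2), and the uniqueness and constancy assertions are then declared obvious. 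You instead read (1)--(3) as a complete prescription of $f$ (making uniqueness automatic), verify directly that a function constant on a subtree is harmonic at every vertex whose neighbours all lie in that subtree, and transfer harmonicity from the top spine to the bottom spine through an explicit conductance-preserving reflection with $f\circ R=-f$. Your route is more self-contained --- no maximum principle, no induction, and the vertex-set decomposition and bottom-spine symmetry that the paper leaves implicit are spelled out --- while the paper's route buys the extra (if sketchily argued) insight that condition (3) is not an independent constraint but is itself dictated by harmonicity and conditions (1)--(2).
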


\begin{figure}[ht!]
\begin{center}
\includegraphics[scale=0.85]{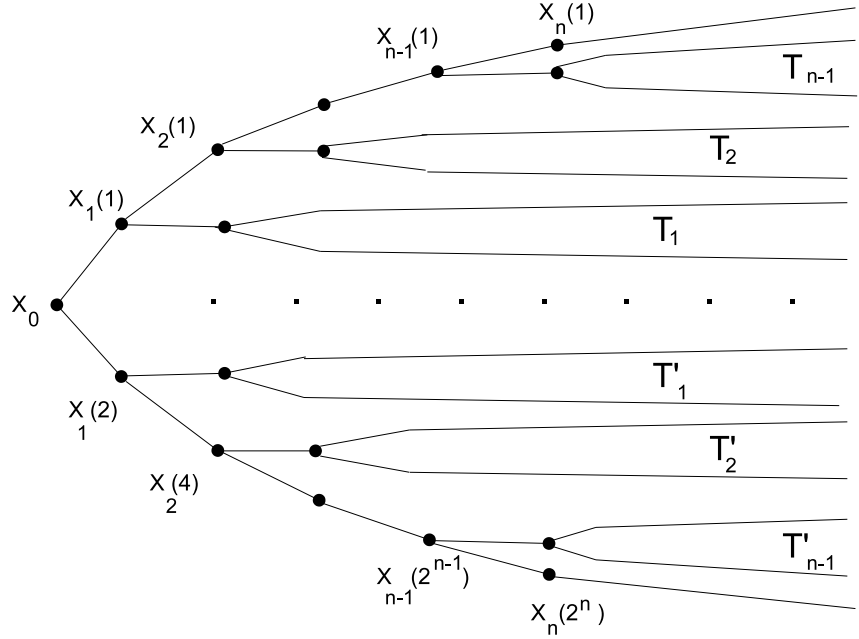}
\caption{Symmetric harmonic function on the binary tree}
\label{HF_on_tree}
\end{center}
\end{figure}

It is natural to call $f_\lambda$ the symmetric harmonic functions on the binary tree defined by $\lambda$. It should be clear that the same construction can be applied to any homogeneous tree.

\begin{proof}
We prove the statement by induction. It is straightforward to verify that the function $f$ is harmonic at $x_1(1)$ because $f(x_0) = 0, f(x_1(1)) = \lambda, f_2(x_2(1)) = 1 +\lambda$,  $f(x_1(1)) = f(x_2(2)) = \lambda$, and $c_{x_0x_1(1)}  =  1, c_{x_1(1)x_2(1)} =c_{x_1(1)x_2(2)} =\lambda$. We notice that the function $f$ is harmonic at $x_1(1)$ and takes the same value at $x_2(2)$. Therefore, by the maximum/minimum principle, it must be constant on the connected subgraph $T_1$.

Suppose that the proposition is proved for $ i = 1, ... , n-1$. We have to show that $f$ is harmonic at $x_n(1)$. In other words, $f$ must satisfy the relation
\be\label{eq hf on tree}
(2\lambda^n + \lambda^{n-1})f(x_n(1)) = \lambda^n f(x_{n+1}(1)) + \lambda^n f(x_{n+1}(2)) + \lambda^{n-1} f(x_{n-1}(1)).
\ee
But  (\ref{eq hf on tree}) holds if and only if
\be\label{eq f_lambda}
f(x_{n+1}(1)) = \frac{1 + \cdots + \lambda^{n}}{\lambda^{n -1}}.
\ee

The facts that $f$ is uniquely determined by conditions (1),  (2), and $f$ is constant on subtrees $T_i$ and $T'_i$ are now obvious.
\end{proof}

In Section \ref{sect HF and energy}, we will show that the found harmonic function $f$ on the binary tree has finite energy if and only if $\lambda > 1$.
\end{example}

If $\lambda =1$, then it is interesting to observe that the values of $f_1$ on $\{x_i(1)\}$ are natural  numbers, $f_1(x_i(1)) = i$. Moreover, $f_1 =  i$ everywhere on the subtree $T_i$.

\subsection{Harmonic functions on the Pascal graph}

We will discuss in this subsection the existence of harmonic functions on the {\em Pascal graph} and give an explicit formula for such a  harmonic function in the simplest case when the conductance function $c =1$.
By definition, the Pascal graph is the 0-1 Bratteli diagram  with the sequence of incidence matrices $(A_n)_{n\ge 0}$ of the size $(n+1) \times (n+2)$ where
$$
A_n =  \left(
  \begin{array}{ccccccc}
    1 & 1 & 0 & 0 & \cdots & 0 & 0\\
    0 & 1 & 1 & 0 & \cdots  & 0 & 0 \\
    0 & 0 & 1 & 1 & \cdots & 0  & 0\\
    \cdots & \cdots & \cdots & \cdots  & \cdots  & \cdots  & \cdots\\
    0 & 0 & 0 & 0& \cdots  & 1 & 1 \\
  \end{array}
\right)
$$
In other words, the nonnegative part of the lattice $\Z^2$ is represented as the Pascal graph, where $(0,0)$ is the top vertex of the diagram, and the levels $V_n$ are formed by the vertices $(x,y)$ from $\Z^2_+$ such that $x + y =n$, $n \geq 1$.

Every vertex $v\in V_n$ of the Pascal graph can be enumerated by two numbers (coordinates) $(n, i)$, where  $0 \leq i \leq n$ is the position of $v$ in $V_n$ (it is assumed that the set of vertices $\{(x, 0) : x \in \N_0\}$ is the upper bound line of the graph) (see Figure 7 where the meaning of the assigned numbers will be explained below).

We first {\em claim} that the algorithm of finding harmonic functions is applicable to the Pascal graph. This follows from the fact that, for any choice of the conductance function $c$, the rank of $\overleftarrow{P}_n$ equals $n+1$. More precisely, we claim that the equation
\be\label{eq HF for Pascal}
\overleftarrow{P}_n f_{n+1} = f_n - \overrightarrow{P}_{n-1} f_{n-1}
\ee
always has a solution for $f_{n+1}$ assuming that $f_n$ and $f_{n-1}$ have been determined in the previous steps. Moreover, the solution set of this equation is one-dimensional for every $n$. For instance, if we are looking for a harmonic function $f$ on $V$ satisfying $f(0,0) =0$, then $f(1, 0) = - f(1, 1)$.

Equation (\ref{eq HF for Pascal}) becomes more transparent if we additionally require that the
conductance function $c$ is defined by the rule $c(e) = \lambda^n$, for any $e \in E_n$, and the harmonic function $f$ vanishes at $(0,0)$. Then  one can easily find the explicit form of  $\overleftarrow{P}_n$ for any $n \geq 1$:
$$
\overleftarrow{P}_n =
\left(
  \begin{array}{cccccc}
    \frac{\lambda}{1 + \lambda} & \frac{\lambda}{1 + \lambda}  & 0 & 0 & \cdots  & 0\\
    0 & \frac{\lambda}{2 + \lambda}  & \frac{\lambda}{2 + \lambda} & 0 & \cdots & 0\\
    0 & 0 & \frac{\lambda}{2 + \lambda} & \frac{\lambda}{2 + \lambda} & \cdots & 0\\
    \cdots & \cdots & \cdots & \cdots & \cdots & \cdots \\
    0 & 0 & 0 & \cdots &  \frac{\lambda}{1 + \lambda}  &  \frac{\lambda}{1 + \lambda}  \\
  \end{array}
\right)
$$

We summarize the above observations in the following statement. We say that a harmonic function $h$ on the Pascal graph $(B, c)$ is {\em symmetric} if it satisfies  the condition $h(n, i) = - h(n, n-i)$ for any $n$ and $0 \leq i \leq n$.

\begin{lemma} Let $(B,c)$ be a weighted Pascal graph. Then  $\h arm$ is a non-empty infinite dimensional space containing the subspace of symmetric harmonic functions. Moreover, this subspace is also infinite dimensional.
\end{lemma}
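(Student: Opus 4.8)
The plan is to prove the two assertions separately, both by exhibiting explicit infinite linearly independent families built from the recurrence of Corollary \ref{cor existence of harmonic f-n}, namely $\overleftarrow{P}_n f_{n+1} = f_n - \overrightarrow{P}_{n-1} f_{n-1}$ (with $f_0 = 0$). The structural fact I would use throughout is the one recorded just before the statement: for the Pascal graph $\overleftarrow{P}_n$ maps $\R^{n+2}$ onto $\R^{n+1}$ with full rank $n+1$, so it is surjective and its kernel is exactly one-dimensional. Write $\kappa_n$ for a nonzero spanning vector of $\ker \overleftarrow{P}_n$. Consequently, at each level the equation $\overleftarrow{P}_n f_{n+1} = g_n$ is solvable for every right-hand side $g_n$, with a one-parameter affine family of solutions.

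For the first assertion I would construct, for each $k \geq 1$, a harmonic function $g^{(k)}$ with $g^{(k)}_n = 0$ for $n \le k$ and $g^{(k)}_{k+1} = \kappa_k$, continuing for $n > k$ by choosing any particular solution of $\overleftarrow{P}_n g^{(k)}_{n+1} = g^{(k)}_n - \overrightarrow{P}_{n-1} g^{(k)}_{n-1}$. Harmonicity is immediate: at levels below $k$ all entries vanish; at level $k$ the relation reads $\overleftarrow{P}_k \kappa_k = 0$, which holds since $\kappa_k \in \ker \overleftarrow{P}_k$; and the remaining levels are solved by construction using surjectivity. Linear independence of $\{g^{(k)}\}$ follows from a \emph{first nonzero level} argument: since $g^{(k)}$ vanishes on $V_0, \dots, V_k$ and $g^{(k)}_{k+1} = \kappa_k \neq 0$, a relation $\sum_{k=1}^N c_k g^{(k)} = 0$ evaluated successively on $V_2, V_3, \dots$ forces $c_1 = 0$, then $c_2 = 0$, and so on. (Passing to the quotient by constants is harmless here, as every $g^{(k)}$ vanishes at the root $o$.) Hence $\h arm$ is infinite-dimensional.

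For the symmetric subspace I would exploit the reflection $\sigma : (n,i) \mapsto (n, n-i)$, which is an automorphism of the weighted Pascal graph whenever $c$ is invariant under it, in particular for the conductances $c(e) = \lambda^n$ under consideration. Then $S : f \mapsto -f \circ \sigma$ is a linear involution that commutes with $\Delta$, hence preserves $\h arm$, and the symmetric harmonic functions are precisely the fixed-point space $\{ f : Sf = f\}$, which is visibly a subspace of $\h arm$. I would then symmetrize the functions from the first part: set $s^{(k)} := g^{(k)} + S g^{(k)}$, which is symmetric, harmonic, and still vanishes on $V_0, \dots, V_k$ (since $\sigma$ preserves levels). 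Its value on $V_{k+1}$ equals $\kappa_k - \kappa_k \circ \sigma$, so $s^{(k)}$ is supported from level $k+1$ onward exactly when $\kappa_k$ is $\sigma$-antisymmetric. A short computation of $\ker \overleftarrow{P}_n$, cut out by the bidiagonal relations $\overleftarrow{p}^{(n)}_{(n,i),(n+1,i)} u_i + \overleftarrow{p}^{(n)}_{(n,i),(n+1,i+1)} u_{i+1} = 0$ (giving $\kappa_n(n+1,i) = (-1)^i$ in the case $c(e) = \lambda^n$), shows that $\kappa_n$ is $\sigma$-antisymmetric precisely for even $n$. Restricting to even $k$ therefore yields an infinite family $\{s^{(2m)} : m \ge 1\}$ of symmetric harmonic functions with $s^{(2m)}_{2m+1} = 2\kappa_{2m} \neq 0$, to which the same first-nonzero-level argument applies verbatim, proving the symmetric subspace is infinite-dimensional.

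The main obstacle is the bookkeeping in the second part: one must verify that $\sigma$ is genuinely conductance-preserving (so that $S$ respects harmonicity and the intertwining $\overleftarrow{P}_n(v \circ \sigma) = (\overleftarrow{P}_n v) \circ \sigma$ holds), and, more delicately, determine the parity of the kernel vectors $\kappa_n$ under $\sigma$ so as to locate the levels at which a symmetric degree of freedom can be injected. Equivalently, one can bypass the explicit parity count by noting that $S$ acts as an affine involution on each one-dimensional solution coset $\{ f_{n+1} : \overleftarrow{P}_n f_{n+1} = g_n\}$ with $g_n$ antisymmetric, so that the coset always contains a fixed ($=$ symmetric) point; this guarantees a symmetric continuation at every step and is the cleaner route for a general reflection-invariant conductance. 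Everything else is routine linear algebra.
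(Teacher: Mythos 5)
Your proposal is correct, and it is substantially more of a proof than the paper itself provides: the paper states this lemma as a ``summary of the above observations,'' relying on the same two structural facts you use (for the Pascal graph $\overleftarrow{P}_n$ has full row rank $n+1$ for any conductance, hence is surjective with one-dimensional kernel, so the recurrence $\overleftarrow{P}_n f_{n+1} = f_n - \overrightarrow{P}_{n-1} f_{n-1}$ of Corollary \ref{cor existence of harmonic f-n} is solvable at every level with a one-parameter family of solutions), together with an appeal to the general machinery of Proposition \ref{prop suff cond for exist harm fns} and Theorem \ref{thm dimension of Harm}. What you add, and what the paper omits, is (a) an explicit linearly independent family: the ``delayed'' functions $g^{(k)}$ obtained by injecting the kernel vector $\kappa_k$ at level $k+1$, with independence via the first-nonzero-level argument (your check that the root condition $\overleftarrow{P}_0 g^{(k)}_1 = 0$ holds trivially, and that quotienting by constants is harmless, is exactly the bookkeeping the paper skips); and (b) any argument at all for the symmetric subspace. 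The paper never proves that symmetric harmonic functions form an infinite-dimensional space; your symmetrization $s^{(k)} = g^{(k)} + Sg^{(k)}$ with the parity computation $\kappa_n \circ \sigma = (-1)^{n+1}\kappa_n$ (so the injection works precisely at even $k$), or alternatively the affine-involution fixed-point argument on each solution coset, fills a genuine hole. Your computation of the kernel is right even though the displayed entries of $\overleftarrow{P}_n$ in the paper look off (the two nonzero entries in each row are equal because both outgoing edges of $(n,i)$ carry conductance $\lambda^n$, which is all the kernel computation needs). One caveat worth making explicit, as you partly do: the symmetric-subspace argument requires the conductance to be invariant under the reflection $\sigma(n,i) = (n,n-i)$ (true for $c(e) = \lambda^n$, the case the paper has in mind), whereas the first assertion holds for arbitrary $c$; the lemma as stated for a general ``weighted Pascal graph'' is really only justified under that invariance, a restriction the paper glosses over as well.
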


We consider here the electrical network $(B, 1)$ defined on the Pascal graph with conductance function $c =1$. For such $c$, we can find an explicit example of symmetric harmonic function.

\begin{proposition}\label{prop HF on Pascal} Define $h(0,0) = 0$ and  set, for every vertex $v = (n, i)$,
\be\label{HF on Pascal}
h(n,i) := \frac{n(n+1)}{2} - i(n+1),
\ee
 where  $0 \leq i \leq n$ and $n \geq 1$.
Then $h : V \to \mathbb R$ is an integer-valued harmonic function on $(B, 1)$ satisfying the symmetry condition $h(n, i) = - h(n, n-i)$ (see Figure 7) .
\end{proposition}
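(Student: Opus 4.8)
The plan is to pass to the lattice realization of the Pascal graph, where $h$ becomes a \emph{separated} function and harmonicity collapses to a one-dimensional second-difference identity. Recall that a vertex $(n,i)$ corresponds to the lattice point $(x,y) = (n-i,\,i)\in\Z^2_+$, so that $n = x+y$ and $i = y$; under this identification the Bratteli edges $(n,i)\sim(n+1,i)$ and $(n,i)\sim(n+1,i+1)$ become exactly the nearest-neighbour edges $(x,y)\sim(x+1,y)$ and $(x,y)\sim(x,y+1)$ of the grid on $\Z^2_+$. Using $n-2i = x-y$ and $n+1 = x+y+1$, I would rewrite the formula (\ref{HF on Pascal}) as
$$
h(x,y) = \frac{(x+y+1)(x-y)}{2} = t_x - t_y, \qquad t_k := \frac{k(k+1)}{2},
$$
where $t_k$ is the $k$-th triangular number. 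Two of the asserted properties then come for free: $h$ is integer-valued because each $t_k\in\Z$, and the symmetry $h(n,i) = -h(n,n-i)$ is precisely the antisymmetry $h(x,y) = -h(y,x)$ of $t_x - t_y$ under interchange of coordinates (note $(n,n-i)$ maps to $(y,x)$).

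The core observation is that $t_k$ has constant second difference, $t_{k+1}+t_{k-1}-2t_k = 1$ for all $k\ge 1$, which is immediate since $t_k$ is a quadratic in $k$ with leading coefficient $\tfrac12$. Because $c\equiv 1$, harmonicity at a vertex $v$ is the statement $\sum_{w\sim v} h(w) = \deg(v)\,h(v)$, i.e. $(\Delta h)(v)=0$. At an interior lattice point $(x,y)$ with $x,y\ge 1$ all four neighbours are present, and since $h = t_x - t_y$ separates,
$$
\sum_{w\sim(x,y)} h(w) - 4\,h(x,y) = \bigl(t_{x+1}+t_{x-1}-2t_x\bigr) - \bigl(t_{y+1}+t_{y-1}-2t_y\bigr) = 1 - 1 = 0,
$$
so $\Delta h$ vanishes at every interior vertex.

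It remains to treat the boundary vertices, where the graph Laplacian uses a reduced degree because one lattice neighbour lies outside $\Z^2_+$; this is the only step needing care. On the axis $y=0$ a point $(x,0)$ with $x\ge 1$ has degree $3$, and using $t_0 = 0$, $t_1 = 1$ together with the second-difference identity,
$$
h(x+1,0) + h(x-1,0) + h(x,1) = \bigl(t_{x+1}+t_{x-1}\bigr) + (t_x - t_1) = (2t_x+1) + t_x - 1 = 3t_x = 3\,h(x,0).
$$
The axis $x=0$ follows identically (or from the antisymmetry), and at the root $(0,0)$ of degree $2$ one has $h(1,0)+h(0,1) = (t_1-t_0)+(t_0-t_1) = 0 = 2\,h(0,0)$. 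Hence $\Delta h = 0$ at every vertex, and $h$ is the desired symmetric, integer-valued harmonic function on $(B,1)$. The main (mild) obstacle is purely bookkeeping: correctly translating the Bratteli-diagram edge relations into lattice adjacencies and accounting for the single dropped neighbour at each boundary vertex. Once the separated form $h = t_x - t_y$ is in hand, every case reduces to the one identity $t_{k+1}+t_{k-1}-2t_k = 1$, so no genuine computational difficulty remains; one could alternatively verify the recursion $\overleftarrow{P}_n f_{n+1} = f_n - \overrightarrow{P}_{n-1}f_{n-1}$ of Corollary \ref{cor existence of harmonic f-n}, but the lattice argument is more transparent.
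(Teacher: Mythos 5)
Your proof is correct and complete; it even covers the cases the paper's own proof leaves implicit. At bottom both arguments are the same kind of thing---a case-by-case verification that $\sum_{w\sim v}h(w)=\mathrm{deg}(v)\,h(v)$ at every vertex---but your organization of the computation is genuinely different. The paper stays in the Bratteli coordinates $(n,i)$ and checks the two identities $3h(n,0)=h(n-1,0)+h(n+1,0)+h(n+1,1)$ and $4h(n,i)=h(n-1,i-1)+h(n-1,i)+h(n+1,i)+h(n+1,i+1)$ by expanding the quadratic formula (\ref{HF on Pascal}) directly, leaving the degree-$2$ root and the edge $(n,n)$ to symmetry and to the reader. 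You instead pass to the lattice coordinates $(x,y)=(n-i,i)$ (the realization of the Pascal graph as $\Z^2_+$ that the paper itself uses when defining the graph), observe that $h$ separates as $t_x-t_y$ with $t_k=k(k+1)/2$, and reduce every vertex type---interior, both axes, and the root---to the single second-difference identity $t_{k+1}+t_{k-1}-2t_k=1$. What this buys: integer-valuedness and the antisymmetry $h(n,i)=-h(n,n-i)$ become immediate from $h=t_x-t_y$; harmonicity at interior vertices is a transparent cancellation (a discrete separation-of-variables analogue of the harmonicity of $\tfrac12(x^2-y^2)$); and the boundary vertices, where the reduced degree is the only delicate point, are handled uniformly by the same identity together with $t_0=0$, $t_1=1$. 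The one step that genuinely needed care---matching the Bratteli edges $(n,i)\sim(n+1,i)$, $(n,i)\sim(n+1,i+1)$ with the lattice edges $(x,y)\sim(x+1,y)$, $(x,y)\sim(x,y+1)$ and keeping track of the dropped neighbour on the axes---you carried out correctly, so nothing is missing.
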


\begin{proof}
In order to prove the proposition, it suffices to check that the following properties hold for any $(n, i)$:
$$
3h(n, 0) = h(n-1,0) + h(n+1, 0) + h(n+1, 1),
$$
in case when there are three neighbors for $v = (n,0)$, and
$$
4h(n, i) = h( n-1, i - 1) +h(n-1, i) + h(n+1, i) + h(n+1, i+1),
$$
in case when there are four neighbors for $v = (n,i)$. These relations are proved by direct computation based on (\ref{HF on Pascal}).
\end{proof}

\begin{figure}[ht!]
\begin{center}
\includegraphics[scale=0.75]{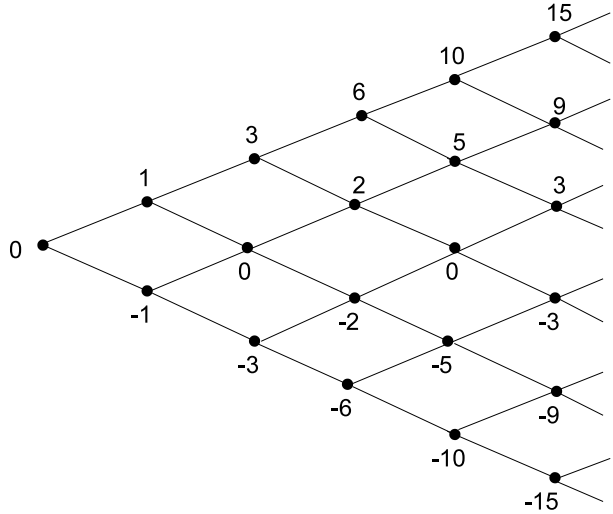}
\caption{Harmonic function on the Pascal graph.}
\label{Fig_Pascal}
\end{center}
\end{figure}

{\em Question}. It would be interesting to find explicit formulas for harmonic function on the Pascal graph $(G, c)$ with some non-trivial conductance $c$, in particular, for $c(e) = \lambda^n, e \in E_n$.

\subsection{Harmonic functions on stationary Bratteli diagrams}

We recall that for any weighted graph $(G, c)$ the question about the existence of nontrivial harmonic function can be completely answered in two following cases: (1) if the network $(G,c)$ is recurrent, then $\h arm$ consists of constant harmonic functions only, and (2) if $\sum_{e \in E} c(e) < \infty$, then again $\mathcal Harm$ is trivial \cite{Georgakopoulos2010}.

In case of stationary Bratteli diagrams, we can clarify the structure of $\h arm$.
Let $A$ be the incidence matrix of a stationary Bratteli diagram $B$, and suppose  $A$ has $d \times d$ size. Assume that the conductance function $c$ has the property: $c_e = c_{xy} = \lambda^n$ for any $e \in E_n$ with $s(e) =x \in V_n, r(e)= y \in V_{n+1}$. Then the associated matrix $C_n = \lambda^n A$ ($C_n$ is defined in Section \ref{sect HF on BD}).

We start with rewriting relation (\ref{C_n relations with x for harm f-n}) according to the made assumptions about $B$ and $c$. It has now the form
\be\label{eq stationary}
A^T(f_{n-1} - f_n(x) \mathbf 1_d) + \lambda A(f_{n+1} - f_n(x) \mathbf 1_d) =0
\ee
where $x \in V_n, n \in \N$ and $\mathbf 1_d = (1, ... , 1)^T \in \R^d$.

\begin{proposition}\label{prop stat BD}
Let the weighted Bratteli diagram $(B,c)$ be as defined above. Suppose that $A = A^T$ and $A$ is invertible. Then any harmonic function $f =(f_n)$ on $(B,c)$ can be found by the formula:
\be\label{eq HF stat BD}
f_{n+1} (x) = f_1(x) \sum_{i =0}^n \lambda^{-i}
\ee
where  $x \in V$.
\end{proposition}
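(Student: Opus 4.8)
The plan is to read the statement as a three–term recursion in the level index $n$ and to verify the closed form $f_{n+1}=f_1\sum_{i=0}^{n}\lambda^{-i}$ by induction, using the invertibility of $A$ to guarantee that the recursion has a unique solution at each step. First I would specialize the general harmonicity criterion (Theorem \ref{thm ness-suff matrix cond for harm fns}, equation (\ref{eq stationary})) to the stationary weighted diagram. Writing $D$ for the diagonal matrix of row sums $d_x=\sum_y A_{xy}$ of $A$ and using the hypothesis $A=A^T$ (so that column sums equal row sums and $A^Tv=Av$), equation (\ref{eq stationary}), after taking the $x$-th coordinate and collecting the terms containing $\mathbf 1_d$, collapses for $n\ge 2$ to the vector recursion
\be
A f_{n-1}+\lambda A f_{n+1}=(1+\lambda)D f_n,
\ee
together with the anchoring data $f_0=f(o)=0$ and the level-one relation. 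Here I would record the elementary identity $A\mathbf 1_d=D\mathbf 1_d$, which is exactly what makes the constant-type solutions admissible.

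Since $A$ is invertible, this recursion can be solved for the highest index, namely $f_{n+1}=\lambda^{-1}A^{-1}\big((1+\lambda)D f_n-A f_{n-1}\big)$, so the whole sequence $(f_n)$ is \emph{uniquely} determined by $f_1$ once $f_0=0$ is fixed; this is what justifies the phrase that \emph{any} harmonic function can be found by the formula. It then remains only to check that the candidate $f_{n+1}=S_n f_1$, with $S_n:=\sum_{i=0}^{n}\lambda^{-i}$, solves the recursion. Substituting $f_{n-1}=S_{n-2}f_1$, $f_n=S_{n-1}f_1$, $f_{n+1}=S_n f_1$ and factoring, the recursion becomes
\be
(S_{n-2}+\lambda S_n)\,A f_1=(1+\lambda)\,S_{n-1}\,D f_1.
\ee
The one computational step is the geometric-sum identity $S_{n-2}+\lambda S_n=(1+\lambda)S_{n-1}$, immediate from $S_n=S_{n-1}+\lambda^{-n}$; with it the scalar factors cancel (as $\lambda>0$ forces $S_{n-1}>0$) and the inductive step reduces to the single relation $A f_1=D f_1$.

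Hence the heart of the argument is to secure this anchoring relation $A f_1=D f_1$, equivalently $f_1\in\ker(D-A)$, from the harmonicity condition at level one together with $f(o)=0$; in the stationary normalization equation (\ref{harm f-n on V_1 short form}) reads $\lambda A f_2=D_1 f_1$, where $D_1$ is the diagonal of the level-one total conductances. This base case is the step I expect to be the main obstacle, precisely because the root $o$ is a genuinely special vertex whose edges are governed by $A_0$ rather than by $A$: one must verify that the level-one conductances are normalized so that $D_1=(1+\lambda)D$, after which $f_2=S_1 f_1=\tfrac{1+\lambda}{\lambda}f_1$ is forced and is consistent with $A f_1=D f_1$. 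Once the base case is established, the induction propagates $f_{n+1}=S_n f_1$ to all levels, and the uniqueness coming from the invertibility of $A$ shows that there are no other harmonic functions, completing the proof.
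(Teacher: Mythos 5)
Your proposal is technically sound in every step it actually carries out, and it follows the same broad strategy as the paper (turn harmonicity into a three-term recursion in the level index and resolve it with the geometric-sum identity $S_{n-2}+\lambda S_n=(1+\lambda)S_{n-1}$), but it is incomplete, and the gap is exactly the one you flag. Your vector form of harmonicity for $n\ge 2$, namely $A f_{n-1}+\lambda A f_{n+1}=(1+\lambda)Df_n$ with $D=\mathrm{diag}\bigl(\sum_y a_{xy}\bigr)$, is the correct restatement of the coordinate condition (\ref{C_n relations with x for harm f-n}), and your computation showing that $f_{n+1}=S_nf_1$ satisfies it precisely when $Af_1=Df_1$ is right. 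Moreover, since each $x\in V_1$ meets the root through a single edge of conductance $\lambda^0=1$, the level-one diagonal matrix is $D_1=I+\lambda D$, so the normalization $D_1=(1+\lambda)D$ you ask for holds only when $D=I$ (i.e.\ $A$ is a permutation matrix), and even then the anchoring relation becomes $Af_1=f_1$. None of this follows from the stated hypotheses ($A=A^T$, $A$ invertible); what you have actually proved is the formula for those harmonic functions whose first layer lies in $\ker(D-A)\cap\ker(I-A)$. That is a genuine gap, not a finishable base case.

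The comparison with the paper explains why you could not close it. The paper obtains the clean scalar recursion $\lambda(f_{n+1}(x)-f_n(x))=f_n(x)-f_{n-1}(x)$ by reading (\ref{eq stationary}) as an identity between vectors in $\R^d$ for each $x\in V_n$, cancelling the invertible symmetric $A$, and then evaluating at the coordinate $y=x$. But harmonicity at $x$ yields only the $x$-th coordinate of (\ref{eq stationary}) — that is exactly how (\ref{C_n relations with x for harm f-n}) was derived — so the cancellation of $A$ is unjustified, and it is precisely the step that makes the degree matrix $D$, which you correctly retained, disappear. The defect is in fact fatal to the proposition as stated: take $A=\left(\begin{smallmatrix}0&1\\1&0\end{smallmatrix}\right)$, symmetric and invertible. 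The resulting stationary diagram is a bi-infinite path through $o$, and its harmonic functions normalized by $f(o)=0$ are $f_{n+1}=S_nA^nf_1$ with $f_1=(t,-t)^T$; since $A$ swaps the two coordinates, these violate (\ref{eq HF stat BD}) for every $t\neq 0$, while conversely $f_{n+1}=S_nf_1$ with this $f_1$ fails to be harmonic at level one. So no argument from the stated hypotheses can prove Proposition \ref{prop stat BD}: the obstruction your careful bookkeeping surfaced is an error in the paper's own proof (the illegitimate passage from the coordinate equation to the vector equation), and the statement needs either a corrected formula (in the example above the factor $A^n$ is unavoidable) or an additional hypothesis forcing $Af_1=Df_1=f_1$.
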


\begin{proof}
We first observe that, as $V_n = V$ for $n \geq 1$, we can interpret  relation (\ref{eq stationary}) as a sequence of relations between vectors $f_n$ that hold on the same space $\R^d$, where $x$ is  any vertex from $V$. Moreover, using the properties of  $A$, we obtain
$$
(f_{n-1} - f_n(x) \mathbf 1_d) + \lambda (f_{n+1} - f_n(x) \mathbf 1_d) =0.
$$
From this equation between vectors, we deduce that it holds for any coordinate $y\in V$, in particular, for $y =x$. Hence,
$$
\lambda (f_{n+1}(x) - f_n(x)) = f_{n}(x) - f_{n-1}(x),
$$
and, for every $n \geq 1$,
$$
f_{n+1}(x)  - f_n(x) = \frac{1}{\lambda^n}(f_1(x) - f_0(x)) = \frac{1}{\lambda^n}f_1(x).
$$
Summation of these relations gives
$$
f_{n+1} (x) = f_1(x) \sum_{i =0}^n \lambda^{-i}.
$$
\end{proof}

It follows from Proposition \ref{prop stat BD} that we can explicitly describe elements of the space $\h arm$ for this class of weighted stationary Bratteli diagrams.

\begin{corollary} Let $(B, c)$ be as in Proposition \ref{prop stat BD}.

(1) The dimension of the space $\h arm$  is $d-1$ where $d = |V|$.

(2) If $\lambda > 1$, then every harmonic function on $(B, c)$ is bounded.

\end{corollary}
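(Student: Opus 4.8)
The plan is to read both assertions directly off the explicit formula (\ref{eq HF stat BD}) of Proposition \ref{prop stat BD}, which, under the standing hypotheses $A=A^{T}$ and $A$ invertible, expresses every harmonic function on $(B,c)$ through its first–level restriction. Throughout I work, as in Section \ref{sect_Basics}, with the normalization $f(o)=0$, so that $\h arm$ is identified with the space of such normalized harmonic sequences and the constants are already factored out. Here $d$ denotes the common size of the matrix $A$, so $|V_{1}|=d$.

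For part (1), I would first observe that the formula $f_{n+1}(x)=f_{1}(x)\sum_{i=0}^{n}\lambda^{-i}$ shows that the entire sequence $(f_{n})$ is determined by the single vector $f_{1}\in\R^{d}$; hence the linear map $f\mapsto f_{1}$ from $\h arm$ into $\R^{d}$ is injective. The next step is to pin down its image. Since $f(o)=0$ is fixed, the only condition imposed by harmonicity at the root is equation (\ref{harm f-n on V_0}), i.e. $\sum_{y\in V_{1}}c^{(0)}_{oy}f_{1}(y)=0$, which says exactly $f_{1}\in\mathcal N_{1}$; as recorded right after (\ref{harm f-n on V_0}), this $\mathcal N_{1}$ is a $(|V_{1}|-1)=(d-1)$-dimensional subspace of $\R^{d}$. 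Conversely, any $f_{1}\in\mathcal N_{1}$ should extend to a genuine harmonic function via (\ref{eq HF stat BD}), because the derivation in the proof of Proposition \ref{prop stat BD} is reversible. Granting this, $f\mapsto f_{1}$ is a linear isomorphism of $\h arm$ onto $\mathcal N_{1}$, whence $\dim\h arm=d-1$.

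For part (2), I would assume $\lambda>1$, so that $\lambda^{-1}<1$ and the partial sums $S_{n}:=\sum_{i=0}^{n}\lambda^{-i}$ increase to the finite limit $\tfrac{1}{1-\lambda^{-1}}$; in particular $0\le S_{n}\le\tfrac{1}{1-\lambda^{-1}}$ for every $n$. Then (\ref{eq HF stat BD}) gives
$$
|f_{n+1}(x)|=|f_{1}(x)|\,S_{n}\le\frac{1}{1-\lambda^{-1}}\,\max_{y\in V_{1}}|f_{1}(y)|,\qquad x\in V,\ n\ge 0.
$$
Because $V_{1}$ has only the $d$ vertices, the right–hand side is a finite constant independent of $x$ and $n$, so $\sup_{V}|f|<\infty$ and $f\in\ell^{\infty}(V)$.

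The geometric–series estimate in (2) is routine; the point that I expect to require the most care is the surjectivity claim in (1), namely that no level beyond the root imposes any additional constraint on $f_{1}$. I would address this by emphasizing that the passage from (\ref{eq stationary}) to the scalar recurrence $\lambda\bigl(f_{n+1}(x)-f_{n}(x)\bigr)=f_{n}(x)-f_{n-1}(x)$ uses only the symmetry and invertibility of $A$ and is therefore an equivalence, so that prescribing $f_{1}\in\mathcal N_{1}$ and propagating by the recurrence produces a sequence that is harmonic at every interior level as well. Verifying this equivalence carefully is the crux on which the exact count $d-1$ rests.
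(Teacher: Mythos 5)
Your part (2) is correct and is the paper's own (one-line) argument made explicit: for $\lambda>1$ the sums $S_n=\sum_{i=0}^{n}\lambda^{-i}$ are bounded by $\lambda/(\lambda-1)$, so (\ref{eq HF stat BD}) bounds $|f_{n+1}(x)|$ uniformly by $\frac{\lambda}{\lambda-1}\max_{y\in V_1}|f_1(y)|$. The first half of your part (1) is likewise a faithful expansion of the paper's proof: granting Proposition \ref{prop stat BD}, the formula makes $f\mapsto f_1$ injective, and the root equation (\ref{harm f-n on V_0}) places $f_1$ in the $(d-1)$-dimensional space $\mathcal N_1$, so $\dim\h arm\le d-1$.

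The genuine gap is exactly at the step you flagged as the crux: the derivation of Proposition \ref{prop stat BD} is \emph{not} reversible, and an arbitrary $f_1\in\mathcal N_1$ need not propagate to a harmonic function via (\ref{eq HF stat BD}). Harmonicity at a vertex $x\in V_n$ is only the $x$-th scalar component of the vector identity (\ref{eq stationary}), so going backwards you must re-verify each of these scalar equations. Doing so for $f_{n+1}=S_nf_1$, and using $A=A^{T}$ together with $S_{n-2}+\lambda S_n=(1+\lambda)S_{n-1}$, one finds that harmonicity at $x\in V_n$, $n\ge 2$, is equivalent to the extra condition $(Af_1)(x)=(A\mathbf 1)(x)\,f_1(x)$, while harmonicity at $x\in V_1$ --- where stationarity fails, because the neighbor below $x$ is the root joined by an edge of conductance $1$, not a copy of $A$ --- is equivalent to $\bigl(1+\lambda(A\mathbf 1)(x)\bigr)f_1(x)=(1+\lambda)(Af_1)(x)$. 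Neither follows from $\sum_{x}f_1(x)=0$. Concretely, take $d=2$, $A=\begin{pmatrix}0&1\\1&0\end{pmatrix}$ (symmetric and invertible) and $f_1=(1,-1)\in\mathcal N_1$: the formula gives $f_2=(1+\lambda^{-1})f_1$, and harmonicity at the first vertex of $V_1$ would require $(1+\lambda)f_1(1)=\lambda f_2(2)=(1+\lambda)f_1(2)$, i.e.\ $f_1(1)=f_1(2)$, which combined with the root condition forces $f_1=0$. (For this diagram $\h arm$ is in fact still one-dimensional, but its elements have the form $f_{n}=S_{n-1}A^{n-1}f_1$ rather than (\ref{eq HF stat BD}), so the dimension count can hold even though the surjectivity argument does not.) To close the gap you need either extra hypotheses on $A$ (for instance $A=I$, or conditions forcing $(Af_1)(x)=(A\mathbf 1)(x)f_1(x)$ on all of $\mathcal N_1$) or a lower-bound argument for $\dim\h arm$ that does not pass through surjectivity onto $\mathcal N_1$. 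The paper's proof silently assumes the same converse, so the defect is inherited; but your proposal turns that tacit assumption into an explicit claim, and the example above refutes it.
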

\begin{proof} The proof follows from relation (\ref{eq HF stat BD}).

\end{proof}

\section{Harmonic functions of finite and infinite energy}\label{sect HF and energy}

In this section we discuss some results about the energy of harmonic functions.
Recall that, given a function $u : V \to \R$ defined on the vertex set $V$ of an electrical network $(G, c)$, its energy $\|u\|_{\mathcal H_E}$ is computed by
(\ref{norm in H_E}). In case of a harmonic function $h \in \h arm$, one can also use  the formulas from Lemma \ref{lem for energy of harm fns} to find the energy of $h$.

Let $(B, c)$ be a weighted Bratteli diagram. Denote
\be\label{max/min for c}
\beta_n = \max\{c(x) : x \in V_n\}.
\ee

\begin{theorem}\label{thm infinite energy}
Let $f$ be a harmonic function on a weighted Bratteli diagram $(B, c)$.
Then
\be\label{energy estimation from below}
\sum_{n =0}^\infty \frac{I_1^2}{\beta_n |V_n|} \leq   \|f\|_{\mathcal H_E}^2,
\ee
where $I_1= \sum_{x\in V_{1}} c_{ox}(f(x) - f (o))$ was defined in Lemma \ref{lem I_n = const}.
\end{theorem}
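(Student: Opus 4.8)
The plan is to decompose the energy of $f$ into level-by-level contributions and to bound each contribution from below by Cauchy--Schwarz, using that the net current crossing every level equals the single constant $I_1$. Since $B$ is a Bratteli diagram, every edge lies in exactly one set $E_n$ of edges joining $V_n$ to $V_{n+1}$, so the energy splits as
$$
\|f\|_{\mathcal H_E}^2 \;=\; \sum_{(xy)\in E} c_{xy}\bigl(f(x)-f(y)\bigr)^2 \;=\; \sum_{n=0}^\infty \mathcal E_n, \qquad \mathcal E_n := \sum_{e\in E_n} c_e\,\bigl(f(r(e))-f(s(e))\bigr)^2,
$$
and it is then enough to prove $\mathcal E_n \ge I_1^2/(\beta_n|V_n|)$ for every $n\ge 0$.

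First I would identify the net signed current across the $n$-th level. Writing $\delta_e := f(r(e))-f(s(e))$ for $e\in E_n$, where $s(e)\in V_n$ and $r(e)\in V_{n+1}$, the very definition of $I_{n+1}$ through $I_{in}$ gives
$$
\sum_{e\in E_n} c_e\,\delta_e \;=\; \sum_{z\in V_{n+1}}\;\sum_{y\in V_n,\, y\sim z} c_{zy}\bigl(f(z)-f(y)\bigr) \;=\; I_{n+1},
$$
and by Lemma \ref{lem I_n = const} we have $I_{n+1}=I_1$ for all $n\ge 0$; for $n=0$ this is already the definition of $I_1$, since $V_0=\{o\}$.

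Next I would apply the Cauchy--Schwarz inequality with weights $c_e$ to the resulting identity $\sum_{e\in E_n} c_e\,\delta_e = I_1$:
$$
I_1^2 \;=\; \Bigl(\sum_{e\in E_n} c_e\,\delta_e\Bigr)^2 \;\le\; \Bigl(\sum_{e\in E_n} c_e\Bigr)\Bigl(\sum_{e\in E_n} c_e\,\delta_e^2\Bigr) \;=\; \Bigl(\sum_{e\in E_n} c_e\Bigr)\,\mathcal E_n .
$$
To finish I would bound the total conductance of $E_n$: grouping the edges by their source in $V_n$, each vertex $x\in V_n$ contributes $\sum_{z\sim x,\, z\in V_{n+1}} c_{xz}\le c(x)\le \beta_n$, hence $\sum_{e\in E_n} c_e \le |V_n|\,\beta_n$. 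Combining the two displays yields $\mathcal E_n \ge I_1^2/(\beta_n|V_n|)$, and summing over $n\ge 0$ gives the claimed bound.

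I do not expect a genuine obstacle here: the argument is conservation of current together with one application of Cauchy--Schwarz. The only points demanding care are the bookkeeping that the same constant $I_1$ flows across every level (which is exactly the content of Lemma \ref{lem I_n = const}) and the consistency check of the $n=0$ term, where $|V_0|=1$, $\beta_0=c(o)$, and $\sum_{e\in E_0} c_e = c(o)$ because $o$ is joined only to $V_1$.
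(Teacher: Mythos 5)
Your proof is correct, and it reaches the estimate by a genuinely different decomposition than the paper's. The paper slices the energy by \emph{vertex stars}: for a harmonic $f$ it applies Cauchy--Schwarz twice --- first at each vertex $x\in V_n$, splitting the current into its incoming and outgoing halves (via Lemma \ref{current for harm fns}) to get $2(I_n(x))^2 \le c(x)\sum_{y\sim x}c_{xy}(f(x)-f(y))^2$, and then over the vertices of $V_n$ through inequality (\ref{sum I_n^2}) of Lemma \ref{lem I_n = const} --- after which it must track the factor $\tfrac12$ arising because every edge is counted from both of its endpoints. You instead slice the energy by \emph{edge levels}, writing $\|f\|^2_{\mathcal H_E}=\sum_n \mathcal E_n$ with $\mathcal E_n=\sum_{e\in E_n}c_e\delta_e^2$, use conservation only through the identity $I_{n+1}=I_1$ (i.e.\ the first claim of Lemma \ref{lem I_n = const}, not the quadratic inequality (\ref{sum I_n^2})), and apply one weighted Cauchy--Schwarz over $E_n$. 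This buys two things: the intermediate bound $\mathcal E_n \ge I_1^2\big/\sum_{e\in E_n}c_e$, which is the natural ``effective conductance between consecutive levels'' estimate and is sharper than what appears in the paper's chain (the paper bounds $\tfrac12(\mathcal E_{n-1}+\mathcal E_n)$ from below by the same quantity), and the complete absence of double-counting bookkeeping, since each edge lies in exactly one $E_n$. Your explicit check of the $n=0$ term, where $\sum_{e\in E_0}c_e=c(o)=\beta_0|V_0|$, is also cleaner than the paper's star decomposition, which is slightly awkward at the root because $o$ has no incoming edges. What the paper's vertexwise route buys in exchange is that it reuses the per-vertex currents $I_n(x)$ and inequality (\ref{sum I_n^2}) exactly in the form already established in Lemma \ref{lem I_n = const}, so its proof is a direct assembly of previously displayed formulas.
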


\begin{proof} Let $ f : V \to \R$ be a function on a vertex set of $(B,c)$.
The norm $\|f\| = \|f\|_{\mathcal H_E}$  in the Hilbert space $\mathcal H_E$ is computed  by the formula
\begin{eqnarray*}
  \|f\|^2 &=& \frac{1}{2}\sum_{x,y \in V} c_{xy}(f(x) - f(y))^2  \\
   &=& \frac{1}{2}\sum_{n\in \N} \sum_{x\in V_n} \left[ \sum_{y\in V_{n-1}}
 c_{xy}(f(x) - f(y))^2  + \sum_{z\in V_{n+1}} c_{xz}(f(x) - f(z))^2  \right]\\
  \end{eqnarray*}
If $f$ is harmonic, then we can use Lemma \ref{current for harm fns}  and write
\begin{eqnarray*}
(I_n(x))^2  &= & \left(\sum_{y\in V_{n-1}} \sqrt{c_{xy}} (  \sqrt{c_{xy}} (f(x) - f(y))\right)^2 \ \ \ \ \ \ \ \mbox{(by\ the\ Schwarz'\ inequality)}\\
    &\leq &  \left( \sum_{y\in V_{n-1}} c_{xy} \right)  \left( \sum_{y\in V_{n-1}} c_{xy}  (f(x) - f(y))^2\right).
\end{eqnarray*}
Similarly, we can use the formula for outgoing current (Lemma \ref{current for harm fns}) and conclude that
$$
(I_n(x))^2 \leq \left( \sum_{z\in V_{n+1}} c_{xz} \right)  \left( \sum_{z\in V_{n+1}} c_{xz}  (f(x) - f(z))^2\right).
$$
Adding the last two inequalities, we obtain
\be\label{sum I_n(x)^2 leq}
2(I_n(x))^2 \leq c(x) \sum_{y \sim x} c_{xy}  (f(x) - f(y))^2.
\ee
It follows from (\ref{sum I_n^2}), (\ref{max/min for c}), and (\ref{sum I_n(x)^2 leq}) that
\begin{eqnarray*}
  \frac{I_1^2}{|V_n|}  & \leq &  \sum_{x \in V_n}  (I_n(x))^2\\
   & \leq & \frac{1}{2} \sum_{x\in V_n} c(x) \sum_{y \sim x} c_{xy}  (f(x) - f(y))^2 \\
   &=& \frac{1}{2} \beta_n \sum_{x\in V_n} \sum_{y \sim x} c_{xy}  (f(x) - f(y))^2. \\
  \end{eqnarray*}
Hence,
$$
\sum_{n=0}^\infty \frac{I_1^2}{\beta_n |V_n|} \leq \frac{1}{2} \sum_{n=0}^\infty \sum_{x\in V_n} \sum_{y \sim x} c_{xy}  (f(x) - f(y))^2 = \|f\|^2,
$$
and the proposition is proved.
\end{proof}

It immediately follows from the proved inequality (see (\ref{energy estimation from below})) that the following result holds.

\begin{corollary}\label{cor harm functions have inf energy}
Suppose that a weighted Bratteli diagram $(B,c)$ satisfies the condition
 \be\label{divergent series}
\sum_{n =0}^\infty (\beta_n |V_n|)^{-1} = \infty
\ee
where $V = \bigcup_n V_n$ and $\beta_n = \max\{c(x) : x \in V_n\}$.
Then any nontrivial harmonic function has infinite energy, i.e.,
$$
\mathcal Harm \cap \mathcal H_E = \{\mathrm{const}\}.
$$
In other words, such a $(B,c)$ does not support nonconstant harmonic functions of finite energy.
\end{corollary}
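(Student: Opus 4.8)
The plan is to derive the corollary from the \emph{recurrence} of $(B,c)$, together with the fact recorded in Section~\ref{sect_Basics} (immediately after \eqref{eq inner product finite graph}) that on a recurrent network every finite-energy harmonic function is constant; once recurrence is in hand, $\mathcal Harm\cap\mathcal H_E=\{\mathrm{const}\}$ is exactly that fact. I want to stress first where the naive reading fails. If $f$ is harmonic on all of $V$, then harmonicity at the root gives $\Delta f(o)=-I_1=0$ (the only neighbours of $o$ lie in $V_1$), so by Lemma~\ref{lem I_n = const} the net current $I_n=I_1$ vanishes at every level. Hence the left-hand side of \eqref{energy estimation from below} is identically $0$, and Theorem~\ref{thm infinite energy} yields only $0\le\|f\|_{\mathcal H_E}^2$. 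Thus the corollary does \emph{not} follow from Theorem~\ref{thm infinite energy} applied to $f$: the current estimate is vacuous exactly for source-free functions. This $I_1=0$ degeneracy is the main obstacle, and the work lies in producing recurrence by a different route.

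The device I would use is to feed Theorem~\ref{thm infinite energy} a root monopole instead of $f$. Let $w_o$ be any solution of $\Delta w_o=\delta_o$, i.e. any function harmonic on $V\setminus\{o\}$ carrying a unit source at $o$. Reinspecting the proof of Theorem~\ref{thm infinite energy}, the pointwise bound \eqref{sum I_n(x)^2 leq} uses only Kirchhoff's law $I_{\mathrm{in}}(x)=I_{\mathrm{out}}(x)$ at the individual vertex $x$, which holds at every $x\ne o$; and the flux identity $I_n=I_1$ of Lemma~\ref{lem I_n = const} holds for all $n\ge1$ when $w_o$ is harmonic off $o$. Now $\Delta w_o(o)=1$ forces $I_1=-1$, so running the estimate of Theorem~\ref{thm infinite energy} over the levels $n\ge1$ gives
$$\sum_{n\ge1}\frac{1}{\beta_n|V_n|}\ \le\ \|w_o\|_{\mathcal H_E}^2 .$$
Under hypothesis \eqref{divergent series} the left series diverges, since it differs from the full divergent series only by the single (finite) term at $n=0$. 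Therefore $\|w_o\|_{\mathcal H_E}^2=\infty$, and as $w_o$ was an arbitrary monopole at $o$, no monopole at $o$ has finite energy.

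I would then close the loop. By Proposition~\ref{prop prop of energy space}(3) a finite-energy monopole at any single vertex would yield one at every vertex, in particular at $o$; having excluded the latter, $\mathcal H_E$ contains no monopole whatsoever, so by Lemma~\ref{lem transience - monopoles} the network $(B,c)$ is recurrent. The stated recurrence principle then gives $\mathcal Harm\cap\mathcal H_E=\{\mathrm{const}\}$, completing the proof. As a cross-check (and an alternative to the monopole step), recurrence also follows directly from the Nash--Williams criterion \cite{Nash-Williams1959}: the edge sets $E_n$ are pairwise-disjoint cutsets separating $o$ from infinity, and $\sum_{e\in E_n}c_e\le\sum_{x\in V_n}c(x)\le\beta_n|V_n|$, so \eqref{divergent series} forces $\sum_n\big(\sum_{e\in E_n}c_e\big)^{-1}=\infty$ and hence recurrence. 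The delicate point throughout is recognizing that the current-based bound must be applied to a function with a genuine source ($I_1\ne0$), not to the harmonic $f$ itself, whose vanishing net current renders \eqref{energy estimation from below} empty.
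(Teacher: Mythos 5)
Your proof is correct, and it does not follow the paper's route --- in fact it repairs it. The paper presents this corollary as an ``immediate'' consequence of Theorem \ref{thm infinite energy}, with no further argument, and your objection to that derivation is well founded: for any $f\in\mathcal Harm$ (harmonic on \emph{all} of $V$, including the root), harmonicity at $o$ gives $I_1=-(\Delta f)(o)=0$, since the only neighbours of $o$ lie in $V_1$; equivalently, with $f(o)=0$, equation \eqref{harm f-n on V_0} ($C_0^T f_1=0$) is precisely $I_1=0$. So the left-hand side of \eqref{energy estimation from below} vanishes identically and Theorem \ref{thm infinite energy} yields only $0\le\|f\|^2_{\mathcal H_E}$ for the functions the corollary is about. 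The degeneracy is not hypothetical: the paper later invokes this corollary for the Pascal-graph function $h$ of Proposition \ref{prop HF on Pascal}, where $h(1,0)=-h(1,1)$ gives $I_1=0$, exactly the vacuous case. The conclusion there is true (a direct computation shows the energy diverges like $\sum_n n^3$), but it does not follow from \eqref{energy estimation from below} as applied to $h$.

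Your repair is sound in all its steps. The current estimate is nonvacuous precisely for functions with nonzero net flux, and you redeploy it correctly: for any solution of $\Delta w_o=\delta_o$ one has $I_1=-1$, while the flux identity of Lemma \ref{lem I_n = const} and the Kirchhoff-based bound \eqref{sum I_n(x)^2 leq} use harmonicity only at vertices $x\ne o$, so the estimate runs over $n\ge1$ and gives $\sum_{n\ge1}(\beta_n|V_n|)^{-1}\le\|w_o\|^2_{\mathcal H_E}$ (your edge bookkeeping matches the paper's proof of Theorem \ref{thm infinite energy}: the triple sum over $n\ge 1$ counts each edge at most twice, absorbing the factor $\tfrac12$ in \eqref{norm in H_E}). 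Hypothesis \eqref{divergent series} then excludes finite-energy monopoles at $o$; Proposition \ref{prop prop of energy space}(3) propagates the exclusion to every vertex; Lemma \ref{lem transience - monopoles} gives recurrence; and the fact recorded in Section \ref{sect_Basics} that recurrent networks carry only constant finite-energy harmonic functions closes the loop. Your Nash--Williams cross-check is also valid and is in fact the shortest complete proof, since the $E_n$ are pairwise disjoint cutsets separating $o$ from infinity and $\sum_{e\in E_n}c_e\le\sum_{x\in V_n}c(x)\le\beta_n|V_n|$. As for what each route buys: the monopole version of the current estimate retains the paper's quantitative flavour (an explicit lower bound on the energy of any monopole, hence a quantitative obstruction to transience), while the cutset criterion reaches the qualitative conclusion with the least machinery; the paper's literal one-line derivation, by contrast, proves nothing for source-free $f$ and should be regarded as a gap that your argument fills.
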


We illustrate this result by considering the following examples. We consider the harmonic functions found in Section \ref{sect HF on trees stationary BD} for the binary tree, the Pascal graph, and a stationary Bratteli diagram. In each of these cases, we compute the energy norm of harmonic functions.

\begin{example} [Binary tree] Let $T$ be the binary tree, and the conductance function $c$ is defined by the relation $c(e) = \lambda^n$ for all $e \in E_n, n\in \N_0$. We can treat $(T,c)$ as a special class of Bratteli diagrams, and we apply the notation used for Bratteli diagrams. Let $V_n = \{x_n(1), .... , x_n(2^n)\}$ be the vertices of the $n$-th level. We recall that a symmetric harmonic harmonic function $f_\lambda =(f_n)$ was found in Proposition \ref{prop formula for HF on tree}.

\begin{lemma} Fix $\lambda$, then each of the harmonic functions $f_\lambda$ obtained in Proposition \ref{prop formula for HF on tree} satisfies
$$
\|f_\lambda\|_{\h_E} < \infty \ \ \mathrm{if\ and\ only\ if} \ \  \lambda >1.
$$
\end{lemma}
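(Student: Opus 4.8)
The plan is to compute $\|f_\lambda\|_{\mathcal H_E}^2$ directly from the energy formula (\ref{norm in H_E}) by summing the edge contributions level by level, and then to determine for which $\lambda$ the resulting series converges. First I would recall the explicit structure of $f_\lambda$ from Proposition \ref{prop formula for HF on tree}: the function takes the value $\pm\frac{1+\cdots+\lambda^{n-1}}{\lambda^{n-2}}$ at the two ``extreme'' vertices $x_n(1)$ and $x_n(2^n)$, and it is \emph{constant} on each of the subtrees $T_i, T_i'$ hanging off the central spine. The crucial consequence of this last property is that on an edge lying entirely inside one of the constant subtrees, the increment $f(x)-f(y)$ vanishes, so that edge contributes $0$ to the energy. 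Hence only the edges along the two symmetric ``spines'' (the paths carrying $x_n(1)$ and $x_n(2^n)$) together with the single branching edge at each level where the spine splits off a constant subtree contribute to the sum.

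\textbf{Isolating the nonzero terms.} The key step is therefore to identify exactly which edges carry a nonzero increment and to sum their contributions. Along the upper spine the relevant increment between level $n-1$ and level $n$ is
$$
f(x_n(1)) - f(x_{n-1}(1)) = \frac{1+\cdots+\lambda^{n-1}}{\lambda^{n-2}} - \frac{1+\cdots+\lambda^{n-2}}{\lambda^{n-3}},
$$
and a short algebraic simplification of this telescoping-type difference should reduce it to a clean expression of order $\lambda^{-(n-1)}$ times a bounded factor. The corresponding edge has conductance $c(e)=\lambda^{n-1}$. By the symmetry $f(x_n(1))=-f(x_n(2^n))$ the lower spine contributes an equal amount, so the total energy is $2$ times the sum of the upper-spine contributions (each branching edge into a constant subtree contributes the same increment as the spine edge it shares a parent with, which only changes the overall constant). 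Thus I expect
$$
\|f_\lambda\|_{\mathcal H_E}^2 = \sum_{n\geq 1} \lambda^{n-1}\,(\text{increment}_n)^2 \cdot(\text{bounded multiplicity}),
$$
and the task becomes estimating the order of the general term.

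\textbf{Determining the term order.} Writing $S_n = 1+\lambda+\cdots+\lambda^{n-1}$, the spine value at level $n$ is $S_n/\lambda^{n-2}$, and the increment is $S_n/\lambda^{n-2}-S_{n-1}/\lambda^{n-3}$. Simplifying gives a difference whose leading behavior I expect to be $\Theta(\lambda^{-(n-1)})$ when $\lambda>1$ (since then $S_n$ is bounded away from $0$ and $\infty$ after normalization) and $\Theta(n)$ or larger when $\lambda\le 1$. Multiplying the squared increment $\Theta(\lambda^{-2(n-1)})$ by the conductance $\lambda^{n-1}$ yields a general term of order $\lambda^{-(n-1)}$; the geometric series $\sum_n \lambda^{-(n-1)}$ converges precisely when $\lambda>1$. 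For $\lambda=1$ one checks directly from the observation recorded after Example \ref{ex hf on a tree} that $f_1(x_n(1))=n$, so the increments are $\Theta(1)$, the conductances are $1$, and the series diverges; for $0<\lambda<1$ the increments grow and divergence is even clearer. This establishes the equivalence $\|f_\lambda\|_{\mathcal H_E}<\infty \iff \lambda>1$.

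\textbf{The main obstacle.} The genuine difficulty is not the convergence analysis — once the general term is pinned down to order $\lambda^{-(n-1)}$ the dichotomy is immediate — but rather the bookkeeping of \emph{which} edges contribute and with what multiplicity. The subtree-constancy property of $f_\lambda$ must be used carefully: I must verify that every edge not on the two spines lies inside a region where $f_\lambda$ is constant, so that the energy really collapses to a one-dimensional spine sum rather than a sum over the exponentially many vertices $2^n$ at each level. If instead the increments were spread over all $2^n$ vertices, the factor $2^n$ would dominate and the conclusion would change; so the heart of the proof is precisely the reduction to the spine, which is exactly where the special symmetric structure of $f_\lambda$ pays off.
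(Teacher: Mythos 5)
Your proposal follows essentially the same route as the paper's proof: use the subtree-constancy of $f_\lambda$ to kill all edge contributions except those along the two symmetric spines, compute the spine increments, and observe that the resulting series is geometric in $\lambda^{-1}$, hence convergent precisely for $\lambda>1$. The paper carries this out with the increment computed exactly: from $f_\lambda(x_i(1)) = \frac{\lambda^i-1}{\lambda^{i-2}(\lambda-1)}$ one gets $f_\lambda(x_i(1)) - f_\lambda(x_{i-1}(1)) = \lambda^{-(i-2)}$, so the spine terms are exactly $\lambda^{3-i}+\lambda^{2-i}$; this matches your order estimate $\Theta(\lambda^{-(n-1)})$ up to a bounded factor, and your separate treatment of $\lambda=1$ (increments $1$, conductances $1$, divergence) agrees with it.

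One bookkeeping slip is worth correcting, though it is harmless for the conclusion: the branching edges do \emph{not} contribute ``the same increment as the spine edge it shares a parent with'' --- they contribute zero. By Proposition \ref{prop formula for HF on tree}, the subtree $T_i$ on which $f_\lambda$ is constant is rooted at the spine vertex $x_i(1)$ itself, so the constant value on $T_i$ equals $f_\lambda(x_i(1))$; in particular $f_\lambda(x_{i+1}(2)) = f_\lambda(x_i(1))$, and the branch edge $(x_i(1),x_{i+1}(2))$ carries zero increment. Indeed your two claims are in tension: if $f_\lambda$ is constant on a subtree that contains its attaching vertex, the attaching edge cannot carry a nonzero increment. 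The correct picture is thus cleaner than the one you describe --- the energy is \emph{exactly} the two-spine sum, with no extra multiplicity. Since you only invoke the branch edges to within a bounded constant factor, your dichotomy $\|f_\lambda\|_{\mathcal H_E}<\infty \Longleftrightarrow \lambda>1$ stands as argued.
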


\begin{proof} By definition of $f_\lambda$, this function is constant on each subtree $T_i$, whose root is $x_i(1)$, and $x_{i+1}(2)$ is the unique neighbor of the root in $T_i$. By symmetry, the value of  $f_\lambda$ on the subtree $T'_i$, whose root is $x_i(2^i)$, is opposite to that on $T_i$. Moreover, by (\ref{eq f_lambda}),
$$
f_\lambda(x_i(1)) = \frac{\lambda^i -1}{\lambda^{i-2}(\lambda - 1)}.
$$
Then, the formula for the energy norm
$$
\|f_\lambda\|^2_{\h_E} = \frac{1}{2} \sum_{x\in V}\sum_{y\sim x} c_{xy} (f_\lambda(x) - f_\lambda(y))^2,
$$
gives nonzero contributions only for two infinite paths, $(x_0, x_1(1), ... , x_i(1), ...)$ and  $(x_0, x_1(2), ... , x_i(2^i), ...)$; furthermore,  these contributions are equal.
Thus,  $\|f_\lambda\|_{\h_E} < \infty$ if and only if the following series converges:
$$
\sum_{i =1}^\infty \lambda^{i-1}(f_\lambda(x_i(1)) - f_\lambda(x_{i-1}))^2 + \lambda^{i}(f_\lambda(x_i(1)) - f_\lambda(x_{i+1}))^2
$$
$$
= \sum_{i =1}^\infty \left( \left[\frac{\lambda^i -1}{\lambda^{i-2}(\lambda -1)}  - \frac{\lambda^{i-1} -1}{\lambda^{i-3}(\lambda -1)} \right]^2 \lambda^{i-1}  + \left[\frac{\lambda^i -1}{\lambda^{i-2}(\lambda -1)}  - \frac{\lambda^{i+1} -1}{\lambda^{i- 1}(\lambda -1)} \right]^2 \lambda^{i}  \right)
$$
$$
= \sum_{i =1}^\infty \left(\frac{1}{\lambda^{i-3}} + \frac{1}{\lambda^{i-2}} \right).
$$
The latter is obviously finite only for $\lambda >1$.
\end{proof}

\end{example}

\begin{example} [The Pascal graph] It is not difficult to realize the condition of Corollary  \ref{cor harm functions have inf energy} for a weighted Bratteli diagram $(B, c)$. For instance, suppose that $c_{xy} = 1$ for any edge $e = (xy)$, that is it defines the so called simple random walk on $B$. If the growth of the sequence $(|V_n|)$ is at most sublinear, then the series $\sum_{n =0}^\infty (\beta_n |V_n|)^{-1}$ diverges, and therefore any nonconstant harmonic function has infinite energy.

For example, this case is easily realized for the Pascal graph that we considered in Section \ref{sect HF on trees stationary BD}. When $c =1$ on the Pascal graph $B$, then Proposition \ref{prop HF on Pascal} gives us the explicitly defined harmonic  function $h$. It follows from Corollary \ref{cor harm functions have inf energy} that $\|h\|_{\h_E} = \infty$.
Moreover, we can {\em claim} that there is no harmonic functions of finite energy on the Pascal graph with the conductance function $c =1$.

On the other hand, there is a relatively simple formula for the energy norm of any harmonic function on the Pascal graph. We notice that the relation
$$
\| f\|^2_{\mathcal H_E} = \frac{1}{2}\sum_{x \in V} c(x) ((Pf^2)(x) - f^2(x)),
$$
used in Lemma \ref{lem for energy of harm fns}, can also be written (after some evident manipulation) as follows
$$
\| f\|^2_{\mathcal H_E} = \frac{1}{2}\sum_{x \in V} \sum_{y\sim x}c_{xy} (f^2(y) - f^2(x)).
$$
In the case when $c_{xy} = 1$, for all edges $e =(xy)$, we obtain that, for any nonconstant harmonic function $f$, the energy is
\be
\sum_{x \in V} \sum_{y\sim x} (f^2(y) - f^2(x)) = \infty.
\ee
A similar formula can be written for $c_{xy} = \lambda^n $ for any $e =(xy) \in E_n$. \end{example}

\begin{example}[Stationary Bratteli diagram]
In Proposition \ref{prop stat BD}, we described arbitrary harmonic function on a class of stationary Bratteli diagrams with $c_e \in \{\lambda^n : n \in \N_0\}$. We can find out under what condition a  harmonic function $f = (f_n)$ satisfying (\ref{eq HF stat BD}) has finite energy.

\begin{proposition} Suppose that  a stationary weighted Bratteli diagram $(B,c)$ satisfies conditions of Proposition \ref{prop stat BD} with $c_e = \lambda^n, e \in E_n$, and $\lambda > 1$.  Let $f = (f_n)$ be a harmonic function defined by (\ref{eq HF stat BD}). Then
$$
\|f\|_{\h_E} < \infty
$$
if and only if the vector $f_1(x)$ is constant.
\end{proposition}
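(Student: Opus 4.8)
The plan is to compute the energy level by level, exploiting that in a stationary diagram both the edge pattern and the conductances $c_e = \lambda^n$ on $E_n$ repeat at every level, together with the explicit form of $f$ from Proposition \ref{prop stat BD}. Writing $S_m = \sum_{i=0}^{m-1}\lambda^{-i}$, one has $f_m = S_m f_1$, and since $\lambda > 1$ the increasing sequence $S_m$ converges to $S_\infty = \lambda/(\lambda-1) < \infty$; this finiteness is exactly where the hypothesis $\lambda > 1$ is used. Because edges join only consecutive levels, the energy splits as $\|f\|_{\h_E}^2 = \sum_{n\ge 0} E_n$, where $E_n = \lambda^n \sum_{(xy)\in E_n}(f_n(x) - f_{n+1}(y))^2$. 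The term $E_0$ coming from the root is a finite sum and never affects convergence, so the whole question is decided by the behaviour of $E_n$ as $n\to\infty$.

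For the easy direction I would take $f_1 \equiv \kappa$ constant. Then for every edge $(xy)\in E_n$ we get $f_n(x) - f_{n+1}(y) = \kappa(S_n - S_{n+1}) = -\kappa\lambda^{-n}$, so $E_n = \kappa^2\lambda^{-n}\,|E_n|$. Since $|E_n|$ equals the fixed number of nonzero entries of $A$ for every $n\ge 1$, the series $\sum_n E_n$ is dominated by a convergent geometric series and $\|f\|_{\h_E} < \infty$.

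For the converse I would argue by contraposition: assume $f_1$ is not constant. Using that the graph on the $d$ vertices determined by the symmetric matrix $A=A^T$ is connected (see the remark below), non-constancy of $f_1$ forces an edge $x_0\sim y_0$ with $f_1(x_0)\ne f_1(y_0)$, and by stationarity this edge occurs between every consecutive pair of levels. Retaining only this single nonnegative term gives $E_n \ge \lambda^n\big(f_1(x_0)S_n - f_1(y_0)S_{n+1}\big)^2$. As $n\to\infty$ the bracket tends to $S_\infty(f_1(x_0)-f_1(y_0))\ne 0$, so $E_n \ge \tfrac12 S_\infty^2(f_1(x_0)-f_1(y_0))^2\,\lambda^n$ for all large $n$; hence $E_n\to\infty$ and $\|f\|_{\h_E} = \infty$. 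The two directions together yield the claimed equivalence.

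The step I expect to be most delicate is the extraction of a ``live'' edge from the non-constancy of $f_1$: this is precisely where one needs that the graph determined by $A$ is connected. For $A = I$ the conclusion would fail, since each vertical strand would then carry finite energy independently of the value of $f_1$ on it. I would therefore make the connectivity of the graph of $A$ an explicit standing hypothesis (it holds, for instance, when $B$ is simple), after which the remaining estimates are routine geometric-series bounds. I would also remark that, under the normalization $f(o)=0$ which forces $f_1\in\mathcal N_1$, a constant $f_1$ must vanish, so the equivalence says in effect that such a diagram supports no nonconstant harmonic function of finite energy.
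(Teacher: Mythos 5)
Your argument is correct and, at its core, follows the same route as the paper's: both compute the energy level by level from the explicit formula $f_n = S_n f_1$, $S_n = \sum_{i=0}^{n-1}\lambda^{-i}$, and both rest on the observation that an edge of $E_n$ joining $x\in V_n$ to $y\in V_{n+1}$ contributes $\lambda^n\bigl(S_n f_1(x)-S_{n+1}f_1(y)\bigr)^2$ to the energy, which decays like $\lambda^{-n}$ when $f_1(x)=f_1(y)$ and grows like $\lambda^{n}$ when $f_1(x)\neq f_1(y)$. The paper does this in a single expansion (see (\ref{eq energy stat BD1})--(\ref{eq energy stat BD2})), while you split it into an ``if'' direction (convergent geometric series) and a contrapositive ``only if'' direction (one divergent edge term); this is the same computation, organized differently.

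What you add is substantive: your connectivity caveat repairs a genuine gap in the paper's own proof, not in yours. The energy sum only involves pairs $x,y$ joined by an edge of $A$, so the computation forces $f_1(x)=f_1(y)$ only for adjacent pairs, i.e., constancy of $f_1$ on each connected component of the graph determined by $A$; the paper's last sentence silently upgrades this to ``for any $x,y\in V_1$''. Your example $A=I$ (symmetric and invertible, so all hypotheses of Proposition \ref{prop stat BD} hold) indeed refutes the statement as printed: with $d=2$ and $f_1=(1,-1)$ one obtains a harmonic function (harmonic also at the root, since $\sum_{x\in V_1}f_1(x)=0$) whose energy equals $2+2/(\lambda-1)<\infty$, although $f_1$ is not constant. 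So the extra hypothesis you propose -- connectivity of the graph of $A$, automatic for instance when $B$ is simple -- is exactly what the ``only if'' direction needs, and with it your proof is complete.
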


\begin{proof}
We recall that, for any $x\in V_n$, we obtain from (\ref{eq HF stat BD}) the relation $f_n(x) = f_1(x) \dfrac{\lambda^n - 1}{\lambda^{n-1}(\lambda -1)}$. Then we  compute
\begin{eqnarray*}
  ||f||^2_{\h_E} &=& \frac{1}{2} \sum_{x\in V}\sum_{y\sim x} c_{xy} (f(x) - f(y))^2  \\
   &= & \frac{1}{2} \sum_{n =1}^{\infty}\sum_{x\in V_n} \lambda^{n-1} \left[ \sum_{\substack{y\in V_{n-1} \\ y\sim x }} (f_n(x) - f_{n-1}(y))^2 +
\lambda \sum_{\substack{z\in V_{n+1} \\ z \sim x }} (f_n(x) - f_{n+1}(z))^2 \right].
\end{eqnarray*}
For every summand in the above formula, we have
\begin{align}\label{eq energy stat BD1}
\begin{split}
 \lambda^{n-1}(f_n(x) - f_{n-1}(y))^2  & =   (f_1(x)(\lambda^n -1)  - f_1(y)(\lambda^n -\lambda))^2\frac{1}{\lambda^{n-1}(\lambda -1)^2}
\\
  & = ((f_1(x) - f_1(y))\lambda^n + f_1(y)\lambda -f_1(x))\frac{1}{\lambda^{n-1}(\lambda -1)^2},
\end{split}
\end{align}
and similarly
\be\label{eq energy stat BD2}
\lambda^{n}(f_n(x) - f_{n+1}(z))^2 = ((f_1(x) - f_1(z))\lambda^{n+1} - f_1(x)\lambda + f_1(z))\frac{1}{\lambda^{n}(\lambda -1)^2}.
\ee
Substituting (\ref{eq energy stat BD1}) and (\ref{eq energy stat BD2}) in the formula for the energy norm, we immediately deduce that $ ||f||^2_{\h_E} < \infty$ if and only if $f_1(x) = f_1(y)$ for any $x,y \in V_1$.
\end{proof}

\end{example}

\begin{remark} In \cite{Georgakopoulos2010}, it was proved that if for an electrical network $(G, c)$ the total conductance is finite, i.e., $\sum_{e \in E} c(e) < \infty$, then there is no nontrivial harmonic function of finite energy.  If the network $G$ is represented by a weighted Bratteli diagram $(B, c)$, we have
$$
\sum_{e \in E} c(e) = \frac{1}{2} \sum_x \sum_y c_{xy} = \frac{1}{2} \sum_{n=0}^\infty \sum_{x \in V_n} c(x).
$$
Then we deduce from \cite{Georgakopoulos2010} that if, in particular,  $\sum_{n=0}^\infty \beta_n |V_n| < \infty$, then there is no nonconstant harmonic function of finite energy on $(B,c)$.

Thus, we obtain the following qualitative observation: there two classes of Bratteli
diagrams when all harmonic functions have  infinite energy:
(i) the sequence $(\beta_n |V_n|)$ is either decreasing sufficiently fast, or (ii) it is
not growing too fast.
\end{remark}

\bibliographystyle{alpha}
\bibliography{bibliography}

\end{document}